\newcommand{\hypalg}{\mathfrak{u}}
\DeclareMathOperator{\udim}{\underline{dim}}
\newcommand{\bdm}{\begin{displaymath}}
\newcommand{\edm}{\end{displaymath}}
\newcommand{\bthm}{\begin{thm}}
\newcommand{\ethm}{\end{thm}}
\newcommand{\blem}{\begin{lem}}
\newcommand{\elem}{\end{lem}}
\newcommand{\bcor}{\begin{cor}}
\newcommand{\ecor}{\end{cor}}
\newcommand{\beq}{\begin{equation}\label}
\newcommand{\eeq}{\end{equation}}
\newcommand{\bprop}{\begin{prop}}
\newcommand{\eprop}{\end{prop}}
\newcommand{\bdefn}{\begin{defn}}
\newcommand{\edefn}{\end{defn}}
\newcommand{\Z}{\mathbb{Z}}
\newcommand{\N}{\mathbb{N}}
\newcommand{\Q}{\mathbb{Q}}
\let\C\undefined
\newcommand{\C}{\mathbb{C}}
\newcommand{\h}{\mathfrak{h}}
\let\mc\mathcal
\let\mf\mathfrak
\let\Res\undefined
\DeclareMathOperator{\Res}{Res}
\renewcommand{\mod}{\textrm{mod}}
\renewcommand{\H}{\mathsf{H}}
\newcommand{\s}{\mathfrak{S}}
\DeclareMathOperator{\Supp}{\mathrm{Supp}}
\newcommand{\op}{\mathrm{op}}
\newcommand{\idot}{{\:\raisebox{2pt}{\text{\circle*{1.5}}}}}
\newcommand{\ds}{\dots}
\begin{document}  

\title[Highest weight theory for algebras with triangular decomposition]{Highest weight theory for finite-dimensional graded algebras with triangular decomposition}

\author{Gwyn Bellamy}

\address{School of Mathematics and Statistics, University Gardens, University of Glasgow, Glasgow, G12 8QW, UK}
\email{gwyn.bellamy@glasgow.ac.uk}

\author{Ulrich Thiel}
\address{School of mathematics and Statistics, University of Sydney, NSW 2006, Australia}
\email{ulrich.thiel@sydney.edu.au}

\begin{abstract}
We show that the category of graded modules over a finite-dimensional graded algebra admitting a triangular decomposition can be endowed with the structure of a highest weight category. When the algebra is self-injective, we show furthermore that this highest weight category has  tilting modules in the sense of Ringel. This provides a new perspective on the representation theory of such algebras, and leads to several new structures attached to them. There are a wide variety of examples in algebraic Lie theory to which this applies: restricted enveloping algebras, Lusztig’s small quantum groups, hyperalgebras, finite quantum groups, and restricted rational Cherednik algebras.

\end{abstract}

\blfootnote{February 27, 2018. Final version to appear in \textit{Adv. Math.}}

\maketitle
\tableofcontents

\section{Introduction}


The goal of this paper, and its sequel \cite{hwtpaper2}, is to develop new structures in the representation theory of a class of algebras commonly encountered in algebraic Lie theory: finite-dimensional $\mathbb{Z}$-graded algebras $A$ which admit a \textit{triangular decomposition}, i.e., a 
vector space decomposition 
\begin{equation}
A^- \otimes T \otimes A^+ \overset{\sim}{\longrightarrow} A
\end{equation}
into graded subalgebras given by the multiplication map, where we assume that $A^-$ is concentrated in negative degree, $T$ in degree zero, and $A^+$ in positive degree.

There are a variety of examples:
\begin{enumerate}\label{eq:VIPlist}
\item Restricted enveloping algebras $\overline{U}(\mf{g}_K)$;
\item Lusztig's small quantum groups $\mathbf{u}_{\zeta}(\mf{g})$, at a root of unity $\zeta$; \item Hyperalgebras $\hypalg_r(\mf{g}) \dopgleich \mathrm{Dist}(G_r)$ on the Frobenius kernel $G_r$; 
\item Finite quantum groups $\mc{D}$ associated to a finite group $G$;
\item Restricted rational Cherednik algebras (RRCAs) $\overline{\H}_{\bc}(W)$ at $t = 0$;
\item The center of smooth blocks of RRCAs at $t=0$;
\item RRCAs $\overline{\H}_{1,\bc}(W)$ at $t = 1$ in positive characteristic.
\end{enumerate}
There are many more examples, but the above examples are the ones we will address in more detail in Section \ref{example_section}. The representation theory of these algebras has important applications to other areas of mathematics. For instance, to symplectic algebraic geometry \cite{Baby,Singular,BellamyCounting,BellSchedThiel}, to algebraic combinatorics \cite{GriffMac,TomaszP}, and to algebraic groups in positive characteristic \cite{Jantzen,AJS}.  The applications mostly derive in one way or another from computing the graded character of irreducible modules. 

If we look at the list above, we can say that examples 1 to 3 share a ``common background'', as do examples 5 to 7, but taken in their totality, the algebras do not have much in common—except that they all admit a triangular decomposition. On the other hand, their representation theory behaves in a remarkably uniform way. This suggests that it is worthwhile developing a systematic approach to the representation theory of algebras with triangular decomposition.

This was begun by Holmes and Nakano \cite{HN}. They:
\begin{enumerate}[label=(\alph*)]
\item Defined four families of $A$-modules using the triangular decomposition: standard modules $\Delta(\lambda)$, costandard modules $\nabla(\lambda)$, proper standard modules $\Delta(\lambda)$, and proper costandard modules $\ol{\nabla}(\lambda)$, for each irreducible $T$-module $\lambda$. 
\item Showed that each $\Delta(\lambda)$ has a simple head $L(\lambda)$, and these modules are precisely the simple $A$-modules.
\item Showed that the projective cover $P(\lambda)$ of $L(\lambda)$ admits a filtration by standard modules $\Delta(\mu)$ and showed that the multiplicities $\lbrack P(\lambda): \Delta(\mu) \rbrack$ are independent of the filtration.
\item Showed Brauer reciprocity $\lbrack P(\lambda): \Delta(\mu) \rbrack = \lbrack \ol{\nabla}(\mu):L(\lambda) \rbrack$.
\end{enumerate}
All of the above hold both in the ungraded and in the graded category of $A$-modules. Our paper essentially starts from here. 

\subsection*{Highest weight structures} The prototypical example of a category with standard and costandard modules coming from a triangular decomposition is the Bernstein--Gelfand--Gelfand category $\mathcal{O}$ for a finite-dimensional complex semisimple Lie algebra. Of the many remarkable properties of this category, perhaps the most useful is the fact that it is a \textit{highest weight category}. This (categorical) concept was introduced by Cline–Parshall–Scott \cite{CPS} and has become an extremely influential idea in representation theory. It requires that there is a partial ordering on simple modules, such that the standard modules in a standard filtration of a projective $P(\lambda)$ have the property that $\Delta(\lambda)$ occurs precisely once, and that for all other $\Delta(\mu)$ occurring we have $\mu > \lambda$.  

For a finite dimensional algebra with triangular decomposition, we have standard modules and we know that projectives admit a standard filtration, so it is natural to ask whether the category of finite-dimensional $A$-modules is highest weight. In the examples listed above, it is easily checked that there \textit{cannot} exist any partial ordering on simple $T$-modules satisfying the above conditions. Even worse, each of these algebras is \textit{symmetric} (and not semi-simple). Hence, they have infinite global dimension. By results of Cline--Parshall-Scott \cite{CPS}, this implies that the category of finite-dimensional modules over $A$ cannot be a highest weight category.

We noticed that this is simply a case of asking the wrong question. Instead, one must consider the category $\mathcal{G}(A)$ of \textit{graded} finite-dimensional modules. This category has infinitely many simple objects, so the results from \cite{CPS} no longer apply. In fact, in this setting there is a very natural partial ordering on simple objects: it comes from the grading. More precisely, since $T$ is concentrated in degree zero, every graded simple $T$-module $\lambda$ is concentrated in a single degree $\deg \lambda$. This defines a function
\begin{equation} 
\deg \colon \Irr \mathcal{G}(T) \to \bbZ 
\end{equation}
on the set of irreducible objects in $\mathcal{G}(T)$, and this induces a partial ordering on $\Irr \mathcal{G}(A) \simeq \Irr \mathcal{G}(T)$. With respect to this ordering, we show that projective objects satisfy the requirements of a highest weight category, see Corollary \ref{semisimple_hwc}:

\begin{thm}
If $T$ is semi-simple, then $\mathcal{G}(A)$ is a highest weight category.
\end{thm}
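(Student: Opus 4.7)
The plan is to verify the Cline--Parshall--Scott axioms for $\mathcal{G}(A)$ using the partial order on $\mathrm{Irr}\,\mathcal{G}(A)$ defined by $\mu < \lambda$ if and only if $\deg\mu < \deg\lambda$. The weights are parameterized by Holmes--Nakano item (b) combined with the semisimplicity of $T$: each graded irreducible $T$-module is an ordinary simple $T$-module placed in a single degree, so $\mathrm{Irr}\,\mathcal{G}(T) \cong \mathrm{Irr}(T) \times \mathbb{Z}$ parameterizes $\mathrm{Irr}\,\mathcal{G}(A)$ via $\lambda \mapsto L(\lambda)$.

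The composition factor condition on $\Delta(\lambda)$ is essentially immediate from the grading. By the triangular decomposition, $\Delta(\lambda) \cong A^- \otimes \lambda$ as a graded vector space, so $\Delta(\lambda)$ is supported in degrees $\leq \deg\lambda$ with top-degree piece equal to the simple $T$-module $\lambda$. Any composition factor $L(\mu)$ is a graded subquotient of $\Delta(\lambda)$, hence supported in degrees $\leq \deg\lambda$; since the top of $L(\mu)$ lives in degree $\deg\mu$, this forces $\deg\mu \leq \deg\lambda$. Comparing the degree-$\deg\lambda$ pieces across a composition series, and using that $\lambda$ is simple as a $T$-module, shows that $[\Delta(\lambda):L(\lambda)] = 1$ and that no other composition factor achieves the top degree.

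The condition on the projective cover $P(\lambda)$ is the more substantial step. Holmes--Nakano item (c) supplies a standard filtration of $P(\lambda)$, whose top section must be $\Delta(\lambda)$ because $P(\lambda)$ has head $L(\lambda)$ and the head of $\Delta(\mu)$ is $L(\mu)$. To control the remaining sections I would invoke Brauer reciprocity (d), $[P(\lambda) : \Delta(\mu)] = [\overline{\nabla}(\mu) : L(\lambda)]$. A parallel degree analysis of the proper costandard $\overline{\nabla}(\mu)$---constructed dually from the triangular decomposition---shows that its composition factors are $L(\nu)$ with $\deg\nu \leq \deg\mu$, with multiplicity one at $\nu = \mu$. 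Consequently the right-hand side is nonzero only for $\deg\lambda \leq \deg\mu$, with equality forcing $\mu = \lambda$ and multiplicity then equal to one; this is exactly the $\Delta$-filtration axiom for a highest weight category.

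The main subtlety to address is that the weight poset is not interval-finite: the lower set $\{\mu : \mu \leq \lambda\}$ contains one element per irreducible of $T$ in every degree strictly below $\deg\lambda$, so the classical CPS framework does not apply verbatim and one must use a locally-finite variant of the axioms. This is harmless in the end because every finite-dimensional $A$-module has only finitely many composition factors and each $P(\lambda)$ is finite-dimensional with a finite standard filtration, so all relevant multiplicities are well-defined integers and the highest weight structure is preserved.
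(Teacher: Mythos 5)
Your proposal is correct and uses essentially the same key ingredients as the paper (Holmes--Nakano's standard filtration of projectives, Brauer reciprocity, and the degree-support analysis showing $\ol{\Delta}(\lambda)$ and $\ol{\nabla}(\lambda)$ have highest weight $\lambda$). The paper routes through the more general Losev--Webster standardly stratified framework---which additionally requires identifying the layers $\mathcal{G}_{\le d}(A)/\mathcal{G}_{<d}(A)\simeq\mathcal{G}_d(T)$ and exhibiting $\ol\Delta$ as an exact left adjoint of the quotient functor---and then specializes when $T$ is semisimple; your direct verification of the CPS filtration axioms is a valid shortcut to the special case. One small correction: the poset \emph{is} interval-finite. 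An interval $[\mu,\lambda]$ consists of $\mu$, $\lambda$, and the simples of degree strictly between $\deg\mu$ and $\deg\lambda$; since $\Irr\mathcal{M}(T)$ is finite (as $T$ is finite-dimensional), this is a finite set. It is the lower ideal $\{\mu:\mu\le\lambda\}$ that is infinite, but CPS explicitly permits this (category $\mathcal{O}$ being the motivating example), so no locally-finite variant of the axioms is actually needed.
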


In the body of the paper we actually do not assume that $T$ is semi-simple. We show more generally, without any additional assumptions to admitting a triangular decomposition, that $\mathcal{G}(A)$ is \textit{standardly stratified} in the sense of Cline--Parshall--Scott \cite{StratifiedEnd} and Losev--Webster \cite{UniqueLosevWebester}. Here, the distinction between standard and proper standard objects becomes important. We think it is an interesting problem to identify those highest weight categories which are equivalent, as highest weight categories, to $\mathcal{G}(A)$ for $A$ an algebra with triangular decomposition. In Section \ref{sec:multigrading} we describe how these results generalize naturally to algebras admitting a multi-grading. This is useful for studying examples such as hyperalgebras; see Section \ref{sec:hyperalg}. 

The theorem applies to \textit{all} examples mentioned at the beginning. The example of restricted rational Cherednik algebras at $t=0$ was actually our motivation for searching for a highest weight category structure. Namely, for rational Cherednik algebras at $t \neq 0$ the representation theory is extremely rich. In particular, category $\mathcal{O}$ as introduced by Ginzburg--Guay--Opdam--Rouquier \cite{GGOR} is a highest weight category with strong links to cyclotomic Hecke algebras, $q$-Schur algebras, Hilbert schemes etc. At $t=0$ this theory does not, a priori, exist and the limiting process $t \rightarrow 0$ is poorly understood. At $t = 0$, the focus shifts to restricted rational Cherednik algebras, which are finite-dimensional quotients of rational Cherednik algebras, still admitting a triangular decomposition. In light of the above theorem, we see that the category of graded modules for these algebras is again a highest weight category. We expect this highest weight structure will play an important role in a conceptual understanding of the limit $t \rightarrow 0$. Indeed, Bonnafé and Rouquier \cite{BonnafeRouquier-New} recently transferred the highest weight category structure we introduce here to the non-restricted setting, and were able to relate this to category $\mathcal{O}$ at $t = 1$.

The fact that $\mc{G}(A)$ is highest weight has several implications for the representation theory of $A$. We being to explore these implications here, continuing in a sequel \cite{hwtpaper2}. 

\subsection*{Tilting objects} An extremely important role in the theory of highest weight categories is played by \textit{tilting objects}, i.e., objects having both a standard and a costandard filtration. In the context of quasi-hereditary algebras, i.e., highest weight categories with \textit{finitely} many simple objects, Ringel \cite{Ringel-Filtrations} showed that for each $\lambda$ there is an indecomposable tilting object $T(\lambda)$, uniquely characterized by the property that it has highest weight $\lambda$, an injection $\Delta(\lambda) \hookrightarrow T(\lambda)$, and a surjection $T(\lambda) \twoheadrightarrow \nabla(\lambda)$. The $T(\lambda)$ are precisely the indecomposable tilting objects in the (Krull--Schmidt) category of tilting objects.

Once again, we cannot apply this result directly to our highest weight category $\mathcal{G}(A)$ since it does not have finitely many simple objects. None the less, we show that (see Corollary \ref{good_tilting_theory}):

\begin{thm}\label{thm:introtilting}
If $T$ is semisimple and $A$ is self-injective, then $\mathcal{G}(A)$ has tilting objects $T(\lambda)$ in the sense of Ringel.
\end{thm}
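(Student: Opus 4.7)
The plan is to use self-injectivity to show that every indecomposable projective in $\mc{G}(A)$ is a tilting object in Ringel's sense, and then to match these with the required labels $\lambda$ via the Nakayama permutation.

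First, since $A$ is self-injective, the indecomposable projectives in $\mc{G}(A)$ coincide with the indecomposable injectives: there is a Nakayama permutation $\nu$ of $\Irr \mc{G}(A)$ with $P(\lambda) \cong I(\nu(\lambda))$. By the main highest weight theorem together with the Holmes--Nakano result (c) recalled in the introduction, every $P(\lambda)$ has a $\Delta$-filtration. Dually, the standard characterization that $M \in \mc{F}(\nabla)$ iff $\mathrm{Ext}^{1}(\Delta(\mu), M) = 0$ for all $\mu$ forces every injective module, and in particular each $I(\nu(\lambda)) = P(\lambda)$, to lie in $\mc{F}(\nabla)$. Consequently each indecomposable projective $P(\lambda)$ lies in $\mc{F}(\Delta) \cap \mc{F}(\nabla)$, so it is a tilting object.

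Next, I would identify which indecomposable projective-injective plays the role of $T(\lambda)$ for a given label $\lambda$. The highest weight of an indecomposable tilting module is the label of the bottom factor of its $\Delta$-filtration (and dually of the top factor of its $\nabla$-filtration). For $P(\mu)$ this assigns a label $\mu^{\dagger} \in \Irr \mc{G}(A)$; using Brauer reciprocity $[P(\mu):\Delta(\sigma)] = [\nabla(\sigma):L(\mu)]$ (valid because $T$ is semisimple, so proper costandards coincide with costandards) together with the explicit grading coming from the triangular decomposition, I would show that $\mu \mapsto \mu^{\dagger}$ is a bijection on $\Irr \mc{G}(A)$, related to $\nu$ by a degree shift governed by the top degrees of $A^+$ and $A^-$. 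Setting $T(\lambda) := P(\mu)$ for the unique $\mu$ with $\mu^{\dagger} = \lambda$ then produces a tilting object; the Ringel characterization ($\Delta(\lambda) \hookrightarrow T(\lambda) \twoheadrightarrow \nabla(\lambda)$, with cokernel and kernel filtered by strictly lower standards and costandards respectively) follows directly from the structure of $P(\mu)$, which has $\Delta(\lambda)$ as the bottom factor of its $\Delta$-filtration by construction.

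The main obstacle is verifying that $\mu \mapsto \mu^{\dagger}$ is indeed a bijection in the infinite graded setting. I expect to establish this by a direct calculation using the Frobenius structure that self-injectivity imposes on $A$: the graded duality on $\mc{G}(A)$ interchanges standards and costandards up to the Nakayama twist, translating into an explicit formula relating $\mu$, $\nu(\mu)$, and $\mu^{\dagger}$. Uniqueness of $T(\lambda)$ with the Ringel properties is a local statement in the poset and is insensitive to the infinitude of $\Irr \mc{G}(A)$, so Ringel's original uniqueness argument applies verbatim.
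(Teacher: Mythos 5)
Your overall strategy — identify the indecomposable tilting objects with the indecomposable projectives via self-injectivity, then reparametrize by (a permutation built from) the Nakayama permutation — is the same as the paper's (Corollary \ref{self_inj_tiltings_projective} through Corollary \ref{good_tilting_theory}). The step showing each $P(\lambda)$ is tilting is fine; the paper's Theorem \ref{tilings_projinj} proves the stronger converse, but that is not needed for mere existence.

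The gap is at the step where you assign $P(\mu)$ a label as ``the bottom factor of its $\Delta$-filtration.'' You invoke the Ringel principle that an indecomposable tilting object has a well-defined highest weight, but in this infinite setting that is precisely the thing that has to be proved, not quoted. A priori a module with a $\Delta$-filtration can have several $\Delta$-factors of the same, maximal, degree, in which case it has no highest weight in the sense of Definition \ref{highest_weight_def} and your ``$\mu^\dagger$'' is not well-defined. The paper's Theorem \ref{tiltings_main_thm} devotes the bulk of its proof to exactly this: take the degree-ordered filtration of Lemma \ref{P_filtration_ordered_by_deg}; if the two bottom factors had equal degree they would be incomparable, hence split off by the Ext-vanishing of Corollary \ref{hwc_ext_properties}, contradicting the simplicity of the socle of $P(\lambda)$. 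That is where self-injectivity does the real work, not in the bijectivity step. Your proposed route for the bijectivity is also not available as stated: there is no internal ``graded duality on $\mathcal{G}(A)$ interchanging standards and costandards'' for a merely self-injective $A$; that requires a triangular anti-involution (Section \ref{duality_from_anti}), which is not among the hypotheses. And $h$ is not ``$\nu$ up to a degree shift'' but rather $\nu$ composed with the socle-of-$\Delta$ permutation, which is genuinely independent data — Lemma \ref{lem:permuhyper} makes this explicit for hyperalgebras. Once $P(\lambda)$ is known to be a highest weight object, bijectivity is cheap: $\Soc\Delta(\lambda^h)\simeq L(\nu(\lambda))$ gives injectivity of $h$, and since $h$ commutes with the shift and $\Irr_0\mathcal{G}(T)$ is finite, injectivity already yields surjectivity.
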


It is remarkable that self-injectivity, which does not allow any highest weight structure to exist on the category of finite-dimensional modules, is exactly what is needed to get the ``correct'' tilting theory for the graded category $\mathcal{G}(A)$. In Example \ref{example_no_tilting} we show that without self-injectivity, it can happen that $\mathcal{G}(A)$ does not possess \textit{any} tilting objects at all. 

Atypically, the tilting objects of $\mathcal{G}(A)$ have a very concise characterization: they are precisely the projective-injective objects, see Theorem \ref{tilings_projinj}. Hence, if $A$ is self-injective, we immediately deduce that the indecomposable tilting objects are the projective covers $P(\lambda)$. But it is by no means true that $T(\lambda)$ equals $P(\lambda)$! Rather, there is an extremely interesting permutation appearing on the set of simple graded modules. Namely,
\begin{equation}
T(\lambda^h) \simeq P(\lambda),
\end{equation}
where $h$ is the permutation on $\Irr \mathcal{G}(T)$ defined by $h \circ \nu = \dagger$ with $\dagger$ being given by $\Soc \Delta(\lambda) = L(\lambda^\dagger)$, and $\nu$ the Nakayama permutation coming from the self-injectivity of $A$. See Section \ref{more_on_tilting} for details. In the case of hyperalgebras, we explicitly determine the permutation $h$; see Lemma \ref{lem:permuhyper}.

Once again, Theorem \ref{thm:introtilting} applies to \textit{all} examples mentioned at the beginning, see Section \ref{example_section}. We note that in these examples the theorem, together with BGG reciprocity explained below, implies that the problem of computing the character of tilting modules in $\mc{G}(A)$ is equivalent to computing the multiplicities $[\Delta(\lambda) : L(\mu)]$. Though tilting theory is often studied in the context of the examples 1 to 3, this is usually as rational $G$-modules, resp. $U_{\zeta}$-modules, with both a good filtration and a Weyl filtration \cite{DonkinTilting}, resp. \cite{AndersenTilting}. The restriction of these tilting modules to the hyperalgebra $\hypalg_r(\mf{g})$, resp. to the small quantum group $\mathbf{u}_{\zeta}(\mf{g})$, are not in general tilting in our sense, see Proposition \ref{prop:tiltingrestilting} for a precise statement. With the exception of \cite{AndersenKaneda}, we are not aware of any work that systematically studies tilting modules defined directly, as above, for the algebra $A$.\\

\subsection*{BGG-reciprocity} If we once again compare properties (1)--(4) listed above for an algebra with triangular decomposition to BGG category $\mc{O}$, then one also notices that Brauer reciprocity (4) has the defect of involving both standard and costandard modules. We would prefer to have an actual \textit{BGG reciprocity} 
$$
\lbrack P(\lambda): \Delta(\mu) \rbrack = \lbrack \Delta(\mu):L(\lambda) \rbrack
$$
since this has several desired implications. For example, it implies that the blocks of the algebra can be obtained from knowing the constituents of standard modules. A general algebra with triangular decomposition will not satisfy BGG reciprocity, however. We relate BGG reciprocity to a symmetry between the negative and positive \textit{Borel subalgebras} of $A$, see Proposition \ref{prop:BGGbimodule}:

\begin{thm}
	Suppose that $T$ is semisimple. Let $B^\pm$ be the subalgebra of $A$ generated by $A^\pm$ and $T$. If $B^- \simeq (B^+)^\circledast$ as graded $T$-bimodules, then $A$ satisfies BGG reciprocity. 
\end{thm}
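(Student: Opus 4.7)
The plan is to combine the already-established Brauer reciprocity $[P(\lambda):\Delta(\mu)] = [\overline{\nabla}(\mu):L(\lambda)]$ (property (d) recalled in the introduction) with the equality $[\overline{\nabla}(\mu):L(\lambda)] = [\Delta(\mu):L(\lambda)]$, which I will derive from the hypothesis. To reduce the latter to a character computation, the key observation is that when $T$ is semisimple, restriction to $T$ induces an \emph{injective} map on Grothendieck groups $K_0(\mc{G}(A))\hookrightarrow K_0(\mc{G}(T))$: each simple $L(\lambda)$ has a unique top graded $T$-composition factor $\lambda$ concentrated in degree $\deg\lambda$ with the remaining composition factors in strictly smaller degrees, so the transition matrix from $\{[L(\lambda)]\}$ to the classes of graded simple $T$-modules is upper unitriangular with respect to the grading. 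Thus it suffices to show that $\Delta(\mu)$ and $\overline{\nabla}(\mu)$ have the same graded character as $T$-modules.

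I would then compute these characters explicitly using the triangular decomposition. The decomposition $A\simeq A^-\otimes B^+$ of graded right $B^+$-modules gives
\[
\Delta(\mu) \;=\; A\otimes_{B^+}\mu \;\simeq\; A^-\otimes_T \mu
\]
as graded left $T$-modules, with character $[A^-]\cdot[\mu]$. Dually, $A\simeq B^-\otimes A^+$ as graded left $B^-$-modules, and since $\mu$ is projective over the semisimple algebra $T$,
\[
\overline{\nabla}(\mu) \;\simeq\; \mathrm{Hom}^{\mathrm{gr}}_{B^-}(A,\mu) \;\simeq\; \mathrm{Hom}^{\mathrm{gr}}_T(A^+,\mu) \;\simeq\; (A^+)^{\circledast}\otimes_T \mu,
\]
with character $[(A^+)^\circledast]\cdot[\mu]$.

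Finally, I would extract from the hypothesis $B^-\simeq (B^+)^{\circledast}$ of graded $T$-bimodules the required equality of characters. The strictly negatively graded part of $B^-$ is $A^-$, and the strictly positively graded part of $B^+$ dualizes to the strictly negatively graded part of $(B^+)^{\circledast}$, which is $(A^+)^{\circledast}$; the given isomorphism therefore restricts to $A^-\simeq (A^+)^{\circledast}$ as graded $T$-bimodules (the degree-zero piece matches automatically through $T\simeq T^{\circledast}$, valid because any semisimple algebra is symmetric). Passing to the Grothendieck group of graded $T$-bimodules gives $[A^-]=[(A^+)^{\circledast}]$, so the two characters above coincide and the argument is complete. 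The main technical obstacle is the second paragraph: one must pin down the identification of $\overline{\nabla}(\mu)$ with the graded coinduced module and carefully track the left/right $T$-bimodule structures through the adjunction so that the character formula really matches the combinatorial datum supplied by the bimodule hypothesis.
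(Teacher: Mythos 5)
Your strategy is the same as the paper's: show that $\Delta(\mu)$ and $\overline{\nabla}(\mu)$ have the same graded $T$-character, use the injectivity of the restriction map $\chi \colon \K_0(\mathcal{G}(A)) \to \K_0(\mathcal{G}(T))$ (Proposition \ref{prop:injectiveK}, Corollary \ref{cor:chi_BGG}) to conclude the BGG property, and then combine with Brauer reciprocity (Proposition \ref{brauer_reciprocity}). That is exactly the route taken in Proposition \ref{prop:BGGbimodule}. The one genuine imprecision in your writeup is the systematic conflation of $A^{\pm}$ with $B^{\pm}$ (and with $J^{\pm}$). The expression $A^-\otimes_T\mu$ and the class ``$[A^-]$'' are not meaningful because $A^-$ is not a right $T$-module, let alone a $T$-bimodule; the correct identifications are $\Delta(\mu)_i \simeq B^-_i \otimes_T \mu$ and $\overline{\nabla}(\mu)_i \simeq (\mu^{\circledast}\otimes_T B^+_{-i})^{\circledast}$ (Lemmas \ref{basic_delta_lemma_2}, \ref{basic_nabla_lemma_2}), and the reduction of the hypothesis should be to $J^- \simeq (J^+)^{\circledast}$, using $B^\pm = T\oplus J^\pm$ as $T$-bimodules and $T\simeq T^{\circledast}$, \emph{not} to ``$A^-\simeq (A^+)^{\circledast}$.'' You flag this yourself as the main technical obstacle, so it is a known soft spot rather than an unnoticed gap; the paper closes it by decomposing each graded piece of $B^\pm$ into simple $T$-bimodules $\mu\otimes_K\lambda^{\circledast}$ and matching multiplicities explicitly, which is the clean way to make the character comparison rigorous. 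Your use of the coinduction description $\overline{\nabla}(\mu)\simeq \Hom^{\mathrm{gr}}_{B^-}(A,\mu)$ is a mild variant of the paper's description $\overline{\nabla}(\mu)=\overline{\Delta}(\mu^*)^*$; both work, but the $T$-module structure on $\Hom_{B^-}(A,\mu)$ under the isomorphism with $\Hom_K(A^+,\mu)$ is governed by the right $T$-action on $A^+\subset B^+$ (i.e.\ the bimodule structure of $B^+$), which again must be tracked rather than suppressed.
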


Here, $(-)^\circledast$ is the standard duality between finite-dimensional left and right modules induced by $\Hom_K(-,K)$, see Section \ref{notations}. There are two important aspects to this theorem. First, it provides an explicit condition for BGG reciprocity to hold, which can be easily checked in examples. Indeed, we show in Section \ref{example_section} that this holds for \textit{all} examples listed above, so they all satisfy BGG reciprocity. Secondly, we do not need any form of duality coming from an involution on the algebra (such dualities are discussed in Section \ref{duality_from_anti}). Indeed, restricted rational Cherednik algebras for a general complex reflection group are not know to admit any such duality—but the above theorem implies that they satisfy BGG reciprocity none the less.

The proof of the above theorem relies on a general property of the algebra $A$, namely that the \textit{graded character map} $\chi \from \K_0(\mathcal{G}(A)) \to \K_0(\mathcal{G}(T))$ between the graded Grothendieck groups of $A$ and $T$ induced by restriction is \textit{injective}; see Proposition \ref{prop:injectiveK}. This has several strong implications for the representation theory of $A$; see for instance Corollary \ref{dec_matrix_relation} and \cite{hwtpaper2}.

\subsection*{Abstract KL-theory} As a further application of the highest weight structure developed here, we consider briefly \textit{abstract Kazhdan--Lusztig theories} for $\mathcal{G}(A)$, in the sense of Cline--Parshall-Scott \cite{CPSKL}; see Section \ref{sec:kl_theory}. One says that a highest weight category admits an abstract Kazhdan--Lusztig theory if certain Ext-groups vanish. This definition is motivated by Lusztig's conjectures, and has several important consequences for the representation theory of the corresponding quasi-hereditary algebra. In particular, abstract Kazhdan--Lusztig polynomials can be defined, providing subtle invariants of the category. Naturally, we would like an abstract Kazhdan--Lusztig theory to exist for the highest weight category $\mathcal{G}(A)$. In general, however, it seems that there is no easy way to decide when $\mathcal{G}(A)$ admits such a theory. In Proposition \ref{prop:RRCAsingleblock} we give the answer for restricted rational Cherednik algebras (at generic $\bc$) for wreath products. It would be very interesting to know the answer (and in those cases where the answer is yes, the corresponding Kazhdan--Lusztig polynomials) in the other key examples. \\

\subsection*{Summary of \cite{hwtpaper2}} This paper also lays the foundation on which the sequel \cite{hwtpaper2} builds. In \textit{loc. cit.}, the tilting object $A$ of $\mathcal{G}(A)$, for a self-injective algebra $A$ with triangular decomposition, is the protagonist. In \textit{loc. cit.}, we prove three general results, which again all apply to the examples mentioned at the beginning:
\begin{enumerate}
\item The degree zero subalgebra $A_0$ of $A$ captures all important information about the graded representation theory of $A$.
\item $A_0$ is a standardly based algebra, in the sense of Du-Rui \cite{DuRui}. In the presence of a triangular anti-involution, see Section \ref{duality_from_anti}, it is actually cellular in the sense of Graham--Lehrer \cite{Graham-Lehrer-Cellular}. This implies the existence of cell modules and cells. It is an extremely interesting problem to determine these cells in the key examples.
\item We show that a certain subquotient of $\mathcal{G}(A)$ provides a \textit{highest weight cover} of $A_0$, in the sense of Rouquier \cite{RouquierQSchur}. This provides us with a quasi-hereditary algebra attached to $A$ which essentially contains all the information about the graded representation theory of $A$. Again, we believe it will be extremely interesting to determine this quasi-hereditary algebra in the examples.
\end{enumerate}

\subsection*{Outline}

In Section \ref{notations} we fix notation, in particular with regards to standard duality, and recall some facts about the graded representation theory of a finite-dimensional graded algebra. In Section \ref{sec:triangulardecomp} we introduce triangular decompositions and review some basic facts regarding their representation theory. This is mostly due to Holmes and Nakano \cite{HN}, but contains certain new results such as the injectivity of the graded character map. In Section \ref{sec:hwc} we show that $\mathcal{G}(A)$ is a highest weight category. Section \ref{more_on_tilting} is devoted to tilting theory. In Section \ref{duality_from_anti} we consider triangular anti-involutions and the induced duality on $\mathcal{G}(A)$. In Section \ref{sec:kl_theory} we discuss abstract Kazhdan--Lusztig theory for $\mathcal{G}(A)$. Up until this point the article is general, and essentially free of examples (with the exception of certain toy examples provided to illustrate pathologies). Section \ref{example_section} can be considered the second part of the article; here we address the examples mentioned at the beginning of the introduction. We show that they satisfy all properties needed to apply the results proved in the first part of the article.

\subsection*{Acknowledgements}

The first author was partially supported by EPSRC grant EP/N005058/1. The second author was partially supported by the DFG SPP 1489, by a Research Support Fund from the Edinburgh Mathematical Society, and by the Australian Research Council Discovery Projects grant no. DP160103897. We would especially like to thank S. Koenig for patiently answering several questions. We would also like to thank I. Losev and R. Rouquier for fruitful discussions. Moreover, we thank G. Malle, A. Henderson and O. Yacobi for comments on a preliminary version of this article. We would like to thank the referee of Adv. Math. for some helpful comments.

\tableofcontents

\section{Notation} \label{notations}

 Unless otherwise stated, all modules are \textit{left} modules and \textit{graded} always means $\bbZ$-graded. For a graded vector space $M$ we denote by $M_i$ the homogeneous component of degree $i$. We denote by $M \lbrack n \rbrack$ the \textit{right} shift of $M$ by $n \in \bbZ$, i.e., $M[n]_i = M_{i-n}$. So, if $M$ is concentrated in degree zero, then $M \lbrack n \rbrack$ is concentrated in degree $n$. The \word{graded dimension} of $M$ is defined as
 \begin{equation}
\udim M \dopgleich \sum_{i \in \bbZ} (\dim M_i) t^i \;.
\end{equation}
The \word{support} of~$M$ is defined to be $\Supp M \dopgleich \lbrace i \in \bbZ \mid M_i \neq 0 \rbrace$.
 
 \subsection{The category of graded modules} \label{graded_mod_cat_notations}
 Let $A$ be a finite-dimensional graded algebra over a field~$K$. We denote by $\mathcal{M}(A)$ the category of finitely generated $A$-modules and by $\mathcal{G}(A)$ the category of graded finitely generated $A$-modules with morphisms preserving the grading. We use the symbol $\mathcal{C}$ to denote either of the two categories, i.e., $\mathcal{C} \in \lbrace \mathcal{M},\mathcal{G} \rbrace$. The category $\mc{C}(A)$ is easily seen to be $K$-linear, essentially small, abelian, of finite length, Hom-finite (hence Krull–Schmidt by  \cite{Krause-KS}), and having enough projectives and injectives (see \cite{Gordon-Green-Rep-theory-of-graded-artin-82} for the graded case). It is clear that taking projective covers and injective hulls commutes with shifting. Due to $\mc{C}(A)$ being essentially small, the collection $\Irr \mc{C}(A)$ of isomorphism classes of simple objects of $\mc{C}(A)$ forms a set, and since $\mc{C}(A)$ is of finite length, the Grothendieck group $\K_0(\mathcal{C}(A))$ is the free abelian group with basis $\Irr \mc{C}(A)$. The shift functor $\lbrack 1 \rbrack$ makes $\K_0(\mathcal{G}(A))$ into a module over the Laurent polynomial ring $\bbZ \lbrack t,t^{-1} \rbrack$ such that multiplication by $t$ corresponds to the shift $\lbrack 1 \rbrack$. We denote by $\lbrack M:S \rbrack$ the multiplicity of a simple object $S$ in an object $M$ of $\mc{C}(A)$. Sometimes, we write $\lbrack M:S \rbrack_{\mc{C}(A)}$ to clarify in which category multiplicities are considered. In the graded setting we denote by
\begin{equation}
\lbrack M:S \rbrack^{\mrm{gr}} \dopgleich \sum_{t \in \bbZ} \lbrack M:S \lbrack i \rbrack \rbrack  t^i \in \bbZ \lbrack t,t^{-1} \rbrack
\end{equation}
the \word{graded multiplicity} of $S$ in $M$.
 
Forgetting the grading yields a $K$-linear functor $F:\mathcal{G}(A) \rarr \mathcal{M}(A)$ which is faithful and exact. The modules in the essential image of $F$ are called \word{gradable}.
  
 \begin{lem}[Gordon–Green \cite{GG-Graded-Artin, Gordon-Green-Rep-theory-of-graded-artin-82}] \label{forget_functor} \hfill 
\begin{enum_thm}
\item $F$ preserves and reflects: indecomposables, simples, projectives, injectives.
\item $F$ commutes with: radicals, socles, projective covers, injective hulls.
\item Simples, projectives, and injectives are gradable.
\item If $M \in \mc{G}(A)$ is indecomposable, then $F^{-1}(F(M))$ consists up to isomorphism only of the shifts of $M$.
\end{enum_thm}
\end{lem}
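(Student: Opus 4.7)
The proof rests on one basic technique: for any $M, N \in \mathcal{G}(A)$, the homomorphism space in the ungraded category decomposes by degree as
\begin{equation*}
\operatorname{Hom}_A(F(M), F(N)) = \bigoplus_{n \in \mathbb{Z}} \operatorname{Hom}_{\mathcal{G}(A)}(M, N[n]), \qquad (\ast)
\end{equation*}
obtained by projecting a map onto its homogeneous components (only finitely many nonzero, since $M, N$ are concentrated in finitely many degrees). Consequently $\operatorname{End}_A(F(M))$ is a finite-dimensional $\mathbb{Z}$-graded algebra with degree-zero part $\operatorname{End}_{\mathcal{G}(A)}(M)$. All parts of the lemma flow from $(\ast)$ together with two standard facts about finite-dimensional $\mathbb{Z}$-graded $K$-algebras: (i) the Jacobson radical is a graded ideal, and (ii) every idempotent is conjugate to a homogeneous, hence necessarily degree-zero, idempotent.

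For the preservation claims in part (a) I would argue as follows. Indecomposables: if $M$ is indecomposable in $\mathcal{G}(A)$ then $\operatorname{End}_{\mathcal{G}(A)}(M)$ is local; any idempotent in $\operatorname{End}_A(F(M))$ is conjugate to a degree-zero one by (ii), which must be trivial since $\operatorname{End}_{\mathcal{G}(A)}(M)$ is local. Simples: if $M$ is graded simple, then $J(A) M$ is a graded submodule strictly smaller than $M$ by Nakayama, hence zero; so $M$ is a module over the graded semisimple quotient $A/J(A)$, for which one verifies directly that graded simple implies simple. Projectives: graded-free modules have underlying free modules; conversely, given a graded surjection $\pi \colon N \twoheadrightarrow M$ with $F(M)$ projective and an ungraded splitting $s \colon F(M) \to F(N)$, decomposing $s = \sum_n s_n$ via $(\ast)$ and comparing the degree-zero parts of $\pi \circ s = \mathrm{id}_M$ yields $\pi \circ s_0 = \mathrm{id}_M$, so $s_0$ is a graded splitting. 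Injectives go dually. The reflections in (a) are immediate, since a graded decomposition is also an ungraded one and a graded submodule is a fortiori an ungraded one.

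Parts (b) and (c) follow quickly. By (i), the formulas $\rad M = J(A) M$ and $\operatorname{soc} M = \{m \in M : J(A) m = 0\}$ exhibit radical and socle as graded submodules which coincide with their graded analogues. For (c), the regular module $A$ has a Krull--Schmidt decomposition in $\mathcal{G}(A)$ whose summands are graded PIMs; by (a), their $F$-images are the ungraded PIMs, so every PIM is gradable. Simples arise as tops of PIMs and are therefore gradable, and injectives are handled dually via the graded $A$-bimodule $\operatorname{Hom}_K(A, K)$. Projective covers and injective hulls then commute with $F$ by lifting minimal presentations of simples to the graded setting.

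For part (d), suppose $M$ is indecomposable in $\mathcal{G}(A)$ and $F(N) \cong F(M)$. Fix an isomorphism $\phi \colon F(N) \to F(M)$ with inverse $\psi$, and decompose $\phi = \sum_n \phi_n$ and $\psi = \sum_m \psi_m$ via $(\ast)$. The degree-zero component of $\psi \phi = \mathrm{id}_N$ reads $\sum_n \psi_{-n} \circ \phi_n = \mathrm{id}_N$; by (a), $N$ is also indecomposable, so $\operatorname{End}_{\mathcal{G}(A)}(N)$ is local, and some summand $\psi_{-n_0} \circ \phi_{n_0}$ is invertible. This makes $\phi_{n_0} \colon N \to M[n_0]$ a split monomorphism between indecomposables in a Krull--Schmidt category, hence an isomorphism. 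The crux of the proof is the preservation half of (a): that graded indecomposables remain indecomposable (which requires the homogeneous-idempotent lifting of fact (ii)) and that graded simples remain simple. These are precisely the graded-algebra inputs supplied by the cited work of Gordon and Green.
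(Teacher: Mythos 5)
The paper does not prove this lemma; it is stated as a citation to Gordon--Green, so there is no in-paper proof to compare against. Your reconstruction is a correct and faithful version of the standard argument: the graded Hom-decomposition $(\ast)$ (valid here because all modules are finite-dimensional), the fact that $J(A)$ is a graded ideal, and the fact that idempotents in a finite-dimensional graded algebra are conjugate to degree-zero ones are exactly the three engines that drive all four statements, and you deploy them in the right way. Parts (a), (c), and (d) are essentially complete. Two small remarks. First, in (a) the assertion ``one verifies directly that graded simple implies simple'' is doing real work; the quickest way to close it is to observe that $F(M)$ is then a semisimple module which is indecomposable by the preceding point, hence simple — you may as well say that rather than leave it to the reader. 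Second, fact (ii) (conjugating an arbitrary idempotent of $\operatorname{End}_A(F(M))$ to a homogeneous one) is genuinely the crux and is a theorem of Gordon--Green / N\u{a}st\u{a}sescu--Van Oystaeyen rather than a routine observation; you are right to isolate it, but it deserves to be flagged as the one non-elementary input, since everything else in your argument (including the nilpotence of positive- and negative-degree endomorphisms via support considerations) is essentially bookkeeping on top of it.
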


The forget functor thus induces a surjective map $\Irr \mathcal{G}(A) \twoheadrightarrow \Irr \mathcal{M}(A)$ with fibers just consisting of shifts of an arbitrary fixed object in the fiber. Hence, if $\Irr_0 \mathcal{G}(A)$ is the image of a section of the above map (so, a choice of grading on the ungraded simple modules), then $\Irr_0 \mathcal{G}(A)$ is in bijection with $\Irr \mathcal{M}(A)$, and the map $\Irr_0 \mathcal{G}(A) \times \bbZ \rarr \Irr \mathcal{G}(A)$, $(S,n) \mapsto  S[n]$, is a bijection. Moreover, $\K_0(\mathcal{G}(A))$ is a free $\bbZ \lbrack t,t^{-1} \rbrack$-module with basis $\Irr_0 \mathcal{G}(A)$. The representation of the class of $M \in \mc{G}(A)$ in $\K_0(\mc{G}(A))$ is thus 
\begin{equation}
\lbrack M \rbrack = \sum_{S \in \Irr_0 \mc{G}(A)} \lbrack M:S \rbrack^{\mrm{gr}} \lbrack S \rbrack 
\end{equation}
and the image of $[M]$ in $\K_0(\mc{M}(A))$ bis given by evaluation at $t=1$.

\subsection{Standard duality}\label{sec:standarddual}
We will adopt an important convention about standard duality which allows us to streamline the presentation later. First, if $M$ is a graded vector space, we write $M^\op$ for the same vector space, but with grading reversed, i.e., 
\begin{equation}
M^\op_i \dopgleich M_{-i} \;.
\end{equation}
We thus have $\Supp M^\op = - \Supp M$. With the reversed grading the opposite ring $A^\op$ of $A$ is again graded, see \cite[1.2.4]{Methods-of-graded-rings}. If $M$ is a (graded) \textit{right} $A$-module, then $M^\op$ is naturally a (graded) \textit{left} $A^\op$-module and vice versa. The assignment $M \mapsto M^\op$ with the identity on morphisms thus yields a natural identification between $\mathcal{C}(A^\op)$ and the category of finitely generated (graded) \textit{right} $A$-modules. For a $K$-vector space $M$ we denote by $M^* \dopgleich \Hom_K(M,K)$ its \word{dual}. If $M \in {\mathcal{C}}(A)$, then $M^*$ is naturally an object in $\mathcal{C}(A^{\op})$ with grading defined by 
\begin{equation}
(M^*)_i \dopgleich \lbrace f \in M^* \mid f(M_j) = 0 \tn{ for all } j \neq i \rbrace \simeq M_i^*
\end{equation}
and $A^{\op}$-action on $M^*$ defined by $(a^\op f)(m) \dopgleich f(am)$, for $f \in M^*$, $a^\op \in A^\op$, and $m \in M$. With $\Hom_K(-,K)$ on morphisms, this defines a contravariant functor
\begin{equation}
(-)^*:{ \mathcal{C}}(A) \rarr \mathcal{C}(A^{\op}) \;,
\end{equation}
called \word{standard duality}. Since $A$ is finite-dimensional, this functor is indeed a duality, i.e., $(-)^* \circ (-)^* \simeq \id_{\mathcal{C}(A)}$. It induces a bijection $\Irr \mathcal{C}(A) \simeq \Irr \mathcal{C}(A^{\op})$. We have $M^*\lbrack n \rbrack = M \lbrack n \rbrack^*$ and $\Supp M^* = \Supp M$. Occasionally, we still wish to consider $M^*$ as a \textit{right} $A$-module instead of a \textit{left} $A^\op$-module. To this end, we write 
\begin{equation}
(-)^\circledast \dopgleich (-)^\op \circ (-)^* \;.
\end{equation}
Standard duality commutes with the forget functor $F$, see \cite[Proposition 2.5]{GG-Graded-Artin}. If $M \in \mathcal{C}(A)$ and $U \leq M$ is a subobject, we write $U^\perp \dopgleich \lbrace f \in M^* \mid f(U) = 0 \rbrace \leq M^*$. This is a subobject of $M^*$ and the operation $\perp$ yields an inclusion reversing bijection between the subobjects of $M$ and $M^*$.  We have $\Rad(M)^\perp = \Soc(M^*)$ and $\Soc(M)^\perp = \Rad(M^*)$. Here $\Rad (M)$ is the radical of $M$ and $\Soc (M)$ is its socle. This implies that $\Hd(M)^* \simeq \Soc(M^*)$, where $\Hd(M)$ is the head of $M$.  

\section{Triangular decompositions}\label{sec:triangulardecomp}

In this section we review the notion of triangular decompositions of finite-dimensional graded algebras as introduced by Holmes and Nakano \cite{HN}. This includes the construction of standard modules and the corresponding parametrization of simple modules. We consider  some additional properties of algebras with triangular decomposition, like ambidexterity, the socle of costandard modules, splitting, and semisimplicity. In Section \ref{sec_dec_matrices} we prove the injectivity of the graded character map mentioned in the introduction.\\

\begin{tcolorbox}
Throughout, $A$ is a finite-dimensional graded algebra over a field $K$.
\end{tcolorbox}

\begin{defn}[Holmes–Nakano]\label{defn:triangle}
A \word{triangular decomposition} of $A$ is a triple $\mathcal{T} = (A^-,T,A^+)$ of graded subalgebras of $A$ such that:
\begin{enum_thm}
\item the multiplication map $A^- \otimes_K T \otimes_K A^+ \rarr A$ is an isomorphism of vector spaces, 
\item $T$ is concentrated in degree zero, and $\Supp A^+ \subset \Z_{\ge 0}$, $\Supp A^- \subset \Z_{\le 0}$,
\item \label{defn:connected} $A_0^- = K = A_0^+$,
\item $A^+ T = T A^+$ and $A^- T = T A^-$ as subspaces of $A$,
\item $T$ is a split $K$-algebra, i.e., $\End_T(\lambda) = K$ for all $\lambda \in \Irr \mathcal{M}(T)$.
\end{enum_thm}
\end{defn}

\begin{ex} \label{first_examples_of_triang_dec}
As a simple example consider the polynomial ring $K \lbrack x,y \rbrack$ with $\deg(x)=-1$ and $\deg(y)=1$. Then $K \lbrack x,y \rbrack/(x^2,y^2) = K \lbrack x \rbrack/(x^2) \otimes_K K \otimes_K K \lbrack y \rbrack/(y^2)$ is a triangular decomposition. On the other hand, the three-dimensional algebra $K \lbrack x,y \rbrack/(x^2,yx,y^2)$ does \textit{not} have a triangular decomposition: the only non-trivial graded subalgebras are $K \lbrack x \rbrack/(x^2)$ and $K \lbrack y \rbrack/(y^2)$ but these do not yield a triangular decomposition for dimension reasons. The reader may look at Section \ref{sec:toy} for several further simple examples.
\end{ex}

Holmes and Nakano just considered algebraically closed fields $K$ but it is useful for applications to instead just assume that $T$ is a split $K$-algebra. We show in Proposition \ref{splitting} that this implies that $A$ is a split $K$-algebra.

Our convention to put the left part of the decomposition into negative degree is adapted to the highest weight theory we are going to develop later. One may of course also put the left part into positive degree and all results are still valid with the appropriate modifications. However, as the following example shows, interchanging the left and right parts of a decomposition does not necessarily give a decomposition such that the multiplication map is an isomorphism.

\begin{ex} \label{non_ambidext}
Let $A = K \langle x,y \rangle / \langle x^2, yx, y^2 \rangle$ with $\deg (x) = -1$ and $\deg(y) = 1$. Here, $K \langle x,y \rangle$ is the free non-commutative algebra on two generators. Then $A^- = K[x]/(x^2)$, $A^+ = K[y] / (y^2)$, and $T = K$ define a triangular decomposition $(A^-,T,A^+)$ of $A$. However, the multiplication map $A^+ \otimes_K T \otimes_K A^- \rarr A$ is not injective since $y \otimes 1 \otimes x$ is sent to zero.
\end{ex}

We thus make the following definition.

\begin{defn} \label{defn_ambidextrous}
A triangular decomposition $\mc{T} = (A^-,T,A^+)$ is \textit{ambidextrous} if $\mc{T}^\dagger \dopgleich (A^+,T,A^-)$ is also a multiplicative decomposition of $A$.
\end{defn}

We note that all of the examples from the introduction are ambidextrous and this property will play an important role again in \cite{hwtpaper2}. Regardless, a triangular decomposition of $A$ always gives, after interchanging the left and right parts, a triangular decomposition of the \textit{opposite} algebra: 

\begin{lem}\label{lem:opptriangle}
If $\mc{T}=(A^-,T,A^+)$ is a triangular decomposition of $A$, then the triple
\begin{equation}
\mathcal{T}^\op \dopgleich ((A^+)^{\op}, T^{\op}, (A^-)^{\op})
\end{equation}
is a triangular decomposition of $A^{\op}$. \qed
\end{lem}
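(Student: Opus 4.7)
The plan is to verify each of the five axioms of Definition \ref{defn:triangle} for the triple $\mathcal{T}^\op = ((A^+)^\op, T^\op, (A^-)^\op)$ inside the graded algebra $A^\op$. The key organizing observation is that there are two reversals at play: the multiplication in $A^\op$ reverses the order of factors, while the grading convention $M^\op_i = M_{-i}$ reverses the sign of degrees. These two reversals dovetail so that swapping the roles of the left and right parts exactly compensates for the reversed multiplication, and negating degrees exactly compensates for the swap of supports $\Z_{\ge 0} \leftrightarrow \Z_{\le 0}$.

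First I would handle axiom (i): the $A^\op$-multiplication map for $\mathcal{T}^\op$ sends a simple tensor $a^+ \otimes t \otimes a^-$ to $a^+ \cdot_{\op} t \cdot_{\op} a^- = a^- t a^+$ computed in the original $A$. This is exactly the multiplication map $A^- \otimes T \otimes A^+ \to A$ of the original decomposition $\mathcal{T}$, which is an isomorphism by hypothesis, so the new map is an isomorphism too. It is worth noting that Example \ref{non_ambidext} shows this trick genuinely relies on passing to the opposite algebra: the naive map $A^+ \otimes T \otimes A^- \to A$ need not be an isomorphism in $A$, but the factor order flips back once we compute in $A^\op$. Axioms (ii) and (iii) then follow immediately from $(M^\op)_i = M_{-i}$, which flips the supports of $A^\pm$ to the sides required by the definition while preserving their degree-zero parts, each equal to $K$.

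For axiom (iv), I would unwind that $(A^+)^\op \cdot_{\op} T^\op = T A^+$ and $T^\op \cdot_{\op} (A^+)^\op = A^+ T$ as subspaces of $A$, so that the required commutation in $A^\op$ is precisely the equality $A^+ T = T A^+$ already available from $\mathcal{T}$, and likewise for $(A^-)^\op$. For axiom (v), splitness of $T^\op$ reduces to the standard facts that the Jacobson radical is preserved under taking opposite and that $M_n(K)^\op \cong M_n(K)$ via transposition, so $T^\op/J(T^\op)$ is again a product of matrix algebras over $K$. The lemma is essentially a bookkeeping exercise; there is no real obstacle, and the only genuine care needed is to avoid conflating the two reversals of multiplication and degree.
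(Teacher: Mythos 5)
Your verification is correct and is exactly the routine check the paper omits (the lemma is stated with an immediate \qed). You correctly track both reversals—the order of multiplication in $A^\op$ and the sign of degrees under $M^\op_i = M_{-i}$—and the key point, that $a^+ \cdot_\op t \cdot_\op a^- = a^- t a^+$ reduces axiom (a) for $\mathcal{T}^\op$ to axiom (a) for $\mathcal{T}$, is right; the remaining axioms follow just as you say.
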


This simple observation will be used frequently in the sequel. We call $\mathcal{T}^\op$ the \word{opposite} of $\mathcal{T}$ and, without further mention, $A^\op$ will always be equipped with this triangular decomposition. \\

\begin{tcolorbox}
We fix a triangular decomposition $\mc{T} = (A^-,T,A^+)$ of $A$. 
\end{tcolorbox}

\subsection{$T$-modules} \label{section_t_modules} 
The assumption $T = T_0$ implies that the graded module category of $T$ has a rather simple structure:  every simple object $\lambda$ of $\mathcal{G}(T)$ is concentrated in a single degree $\deg \lambda \in \bbZ$. This yields a function
\begin{equation} \label{degree_function}
\deg: \Irr \mathcal{G}(T) \rarr \bbZ \;.
\end{equation}
For $d \in \bbZ$, let $\mathcal{G}_d(T)$ be the full subcategory of $\mathcal{G}(T)$ consisting of modules concentrated in degree $d$. We have a canonical equivalence $\mathcal{G}_d(T) \simeq \mathcal{M}(T)$ of abelian categories. Moreover, for each $d \in \bbZ$ the homogeneous component $M_{d}$ of $M \in \mathcal{G}(T)$ is an object of $\mathcal{G}_d(T)$ and the decomposition of $M$ into its homogeneous components yields a canonical isomorphism 
\begin{equation} \label{graded_t_mod_cat_decomp}
\mathcal{G}(T) \simeq \bigoplus_{d \in \bbZ} \mathcal{G}_d(T) \simeq \bigoplus_{d \in \bbZ} \mathcal{M}(T) \;.
\end{equation}
In particular, we have bijections 
\begin{equation}
\Irr \mathcal{G}(T) \leftrightarrow \Irr \mathcal{G}_0(T) \times \bbZ \leftrightarrow \Irr \mathcal{M}(T) \times \bbZ \;.
\end{equation}
If $M \in \mathcal{G}_d(T)$, then $M^*$ is also concentrated in degree $d$, so $M^* \in \mathcal{G}_d(T^\op)$. The degree function is thus invariant under duality. 

\subsection{Borel subalgebras}

Let $B^{\pm} \dopgleich A^{\pm} T = TA^{\pm}$. This is a graded subalgebra of $A$, with the same support as $A^{\pm}$. We call it the (\textit{negative}, respectively \textit{positive}) \word{Borel subalgebra} of $A$. The action of $T$ by multiplication makes $B^\pm$ a graded $(T,T)$-bimodule. In the same fashion, it is a graded $(A^{\pm},T)$-bimodule and a graded $(T,A^{\pm})$-bimodule. 

\begin{lem} \label{borel_tensor_dec}
The multiplication maps
\begin{equation} \label{borel_tensor_dec_iso1}
A^{\pm} \otimes_K T \overset{\sim}{\longrightarrow} A^{\pm} T = B^{\pm}
\end{equation}
and
\begin{equation} \label{borel_tensor_dec_iso2}
T \otimes_K A^{\pm} \overset{\sim}{\longrightarrow} T A^{\pm} = B^{\pm}
\end{equation}
are isomorphisms of graded $(A^{\pm},T)$-bimodules and graded $(T,A^{\pm})$-bimodules, respectively. In particular, $B^{\pm}$ is both a free left $A^{\pm}$-module, and a free right $A^{\pm}$-module, and each homogeneous component of $B^{\pm}$ (and thus $B^{\pm}$ itself) is both a free left $T$-module and a free right $T$-module. Moreover, $A$ is a free as a: left $A^-$-module, left $B^-$-module, right $A^+$-module, and right $B^+$-module.
\end{lem}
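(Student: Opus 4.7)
The plan is to first establish the four tensor isomorphisms in \eqref{borel_tensor_dec_iso1} and \eqref{borel_tensor_dec_iso2}, from which every remaining assertion follows by bookkeeping. For each sign this splits into two cases depending on which side $T$ is tensored, and only one of the two can be read off the triangular decomposition directly.

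To handle the ``easy'' direction, I would consider $f\colon A^-\otimes_K T\to B^-$ and $g\colon T\otimes_K A^+\to B^+$; both are surjective by the identity $A^\pm T=TA^\pm$. For injectivity of $g$, I would factor the triangular isomorphism $\mu\colon A^-\otimes_K T\otimes_K A^+\rightsim A$ as
$$A^-\otimes_K T\otimes_K A^+\xrightarrow{\mathrm{id}_{A^-}\otimes g}A^-\otimes_K B^+\xrightarrow{\mathrm{mult}}A.$$
Since $K$ is a field, tensoring is exact and $\ker(\mathrm{id}_{A^-}\otimes g)=A^-\otimes_K\ker g$. As $\mu$ is injective this vanishes, and because $A^-_0=K\neq 0$ we conclude $\ker g=0$. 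Injectivity of $f$ follows by the symmetric factorisation $\mu=\mathrm{mult}\circ(f\otimes\mathrm{id}_{A^+})$. For the remaining two maps $A^+\otimes_K T\to B^+$ and $T\otimes_K A^-\to B^-$ the triangular iso cannot be factored directly (Example~\ref{non_ambidext} shows the decomposition need not be reshufflable). However, because $T$ is concentrated in degree zero, $A^\pm\otimes_K T$ and $T\otimes_K A^\pm$ have identical graded dimensions, so the already-established isomorphism on the flipped side together with surjectivity forces these maps to be isomorphisms degree-by-degree. That the four maps intertwine the claimed bimodule actions is a direct associativity check.

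The remaining parts follow formally. For (2), $B^\pm\cong A^\pm\otimes_K T$ as left $A^\pm$-modules and any $K$-basis of $T$ is then an $A^\pm$-basis of $B^\pm$; symmetrically on the right. For (3), restricting $T\otimes_K A^\pm\cong B^\pm$ to degree $n$ gives $B^\pm_n\cong T\otimes_K A^\pm_n$, a free left $T$-module of rank $\dim_K A^\pm_n$, and similarly on the right; freeness of $B^\pm$ itself follows since the $T$-action preserves degree. For (4), I would factor $\mu$ through the iso $A^-\otimes_K T\rightsim B^-$ established in (1) to obtain an isomorphism $B^-\otimes_K A^+\rightsim A$ of left $B^-$-modules, and symmetrically through $T\otimes_K A^+\rightsim B^+$ to obtain $A^-\otimes_K B^+\rightsim A$ of right $B^+$-modules; freeness as a left $A^-$- and right $A^+$-module is just a rebracketing of $\mu$ itself. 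The main obstacle in the whole argument is the second pair of isomorphisms in (1), since ambidexterity of $\mc{T}$ is unavailable; it is bypassed by a finite-dimensional dimension count once the easy direction is in hand.
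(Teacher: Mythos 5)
Your proof is correct and follows essentially the same strategy as the paper's: surjectivity is immediate, one map per sign is read off directly from the triangular isomorphism of Definition~\ref{defn:triangle}, the other follows by a dimension count, and the freeness statements are bookkeeping from the resulting tensor decompositions. The only cosmetic difference is that you establish the "easy" injectivity by factoring $\mu$ and invoking exactness/faithfulness of $\otimes_K$, where the paper just restricts the isomorphism $\mu$ to the subspace $A^-\otimes T\otimes K$; both are instances of the same observation.
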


\begin{proof}
We just consider the ``negative case'', the other case is similar. Both maps are clearly bimodule morphisms and surjective by definition of $B^-$. Moreover, the first map is injective by Definition \ref{defn:triangle}, and thus an isomorphism. Because the $K$-vector space dimensions of the domain and codomain of the second map are equal, it also has to be injective, thus an isomorphism. The multiplication map $B^- \otimes_K A^+ \rarr A$ is now an isomorphism of $(B^-,A^+)$-bimodules, hence $A$ is a free left $B^-$-module and a free right $A^+$-module.  
\end{proof}

The composition of the two maps (\ref{borel_tensor_dec_iso1}) and (\ref{borel_tensor_dec_iso2}) yields a $K$-vector space isomorphism
\begin{equation}\label{eq:iso3}
R^{\pm}: T \otimes_K A^{\pm} \overset{\sim}{\longrightarrow} A^{\pm} \otimes_K T \;.
\end{equation}
For $a' \in A^{\pm}$ and $t \in T$ we can write $R^{\pm}(t \otimes a') = \sum_i a_i' \otimes t_i$ for certain $a_i' \in A^{\pm}$ and $t_i \in T$. Using the isomorphisms in Lemma \ref{borel_tensor_dec} we see that for the multiplication in $B^{\pm}$ we have
\[
(a \otimes t)(a' \otimes t') = (at)(a't') = a(ta')t' = a R^\pm(t \otimes a')t' = \sum_i aa_i' \otimes t_it' \;,
\]
in other words we have
\begin{equation}
m_{B^{\pm}} = (m_{A^{\pm}} \otimes m_T) \circ (\id_{A^{\pm}} \otimes R^{\pm} \otimes \id_T) \;,
\end{equation}
where $m_-$ denotes the ring multiplication map of the respective rings. In the notation of \cite[\S 2]{Factorization-Problem-Smash-Product}, this means that
\begin{equation}
B^{\pm} = A^{\pm} \#_{R^{\pm}} T \;,
\end{equation}
i.e., the algebra $B^{\pm}$ is the \word{smash product} of $A^{\pm}$ and $T$ with respect to the \word{braiding} $R^{\pm}$. The degree zero component of $B^{\pm}$ is equal to $T$. Since $B^{\pm}$ is finite-dimensional, its augmentation ideal 
\begin{equation}
J^{\pm} \dopgleich \sum_{\pm i >0} B^{\pm}_i
\end{equation}
is nilpotent. We thus have a natural surjective graded algebra and $(T,T)$-bimodule morphism 
\begin{equation}
B^{\pm} \twoheadrightarrow T
\end{equation}
 with nilpotent kernel $J^{\pm}$. Clearly, as a $(T,T)$-bimodule morphism, this map has a section and therefore we have
 \begin{equation} \label{borel_decomposition}
B^{\pm} = T \oplus J^{\pm}
\end{equation}
as $(T,T)$-bimodules.

\begin{rem} \label{opposite_borel}
Note that for $\mathcal{T}^\op$ the negative Borel is $(B^+)^\op$ and the positive Borel is $(B^-)^\op$.
\end{rem}

\subsection{Proper standard and costandard modules}
We denote by 
\begin{equation}
\Inf_T^{B^{+}}:\mathcal{C}(T) \rarr \mathcal{C}(B^{+})
\end{equation}
the scalar restriction functor induced by the quotient morphism $B^{+} \twoheadrightarrow T$, i.e., we let $B^{+}$ act on a $T$-module via this morphism. Note that this functor induces a bijection between isomorphism classes of simple objects, since the kernel of the morphism is the nilpotent ideal $J^+$, and thus contained in the Jacobson radical of $B^+$. We can now define the functor
\begin{equation} \label{verma_def}
\ol{\Delta} (-) \dopgleich A \otimes_{B^+} \Inf_{T}^{B^+} - : {\mathcal{C}(T)} \rarr {\mathcal{C}}(A).
\end{equation}
We call $\ol{\Delta}(M)$ the \word{proper standard module} associated to $M$. 

\begin{lem} \label{basic_delta_lemma}
For $M \in \mathcal{C}(T)$ we have:
\begin{enum_thm}
\item $\ol{\Delta}(M) \simeq B^- \otimes_T M $ in $\mathcal{C}(B^-)$.
\item $\ol{\Delta}(M) \simeq A^- \otimes_K M$ in $\mathcal{C}(A^-)$.
\item $\ol{\Delta}(M) \simeq (J^- \otimes_T M) \oplus M$ in $\mathcal{C}(T)$.
%
%
\end{enum_thm}
\end{lem}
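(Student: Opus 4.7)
The plan is to deduce all three parts from the single key observation that multiplication induces an isomorphism
\begin{equation}
B^- \otimes_T B^+ \overset{\sim}{\longrightarrow} A
\end{equation}
of graded $(B^-,B^+)$-bimodules. Once this is in hand, the three parts reduce to formal tensor manipulations, using the bimodule decompositions recorded in Lemma \ref{borel_tensor_dec}.

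First I would establish the bimodule isomorphism above. The multiplication map is clearly a $(B^-,B^+)$-bimodule homomorphism, and its image is $B^-B^+ = A^-TTA^+ = A^-TA^+$, which equals $A$ by the triangular decomposition. For injectivity (equivalently, a dimension count), I would use the chain of isomorphisms
\begin{equation}
B^- \otimes_T B^+ \;\simeq\; (A^- \otimes_K T) \otimes_T (T \otimes_K A^+) \;\simeq\; A^- \otimes_K T \otimes_K A^+ \;\simeq\; A,
\end{equation}
where the first isomorphism comes from Lemma \ref{borel_tensor_dec} (applied on the right for $B^-$ and on the left for $B^+$), and the last uses condition (i) of Definition \ref{defn:triangle}.

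Part (i) is then immediate from associativity of the tensor product:
\begin{equation}
\ol{\Delta}(M) = A \otimes_{B^+} M \;\simeq\; (B^- \otimes_T B^+) \otimes_{B^+} M \;\simeq\; B^- \otimes_T M
\end{equation}
as objects of $\mathcal{C}(B^-)$, where the action of $B^+$ on $M$ factors through $B^+ \twoheadrightarrow T$ so that $B^+ \otimes_{B^+} M \simeq M$ canonically as $T$-modules. For part (ii), I would apply Lemma \ref{borel_tensor_dec} again to identify $B^- \simeq A^- \otimes_K T$ as a right $T$-module (and a left $A^-$-module), yielding
\begin{equation}
B^- \otimes_T M \;\simeq\; (A^- \otimes_K T) \otimes_T M \;\simeq\; A^- \otimes_K M
\end{equation}
in $\mathcal{C}(A^-)$. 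Finally, part (iii) follows by tensoring the $(T,T)$-bimodule splitting $B^- = T \oplus J^-$ from \eqref{borel_decomposition} with $M$ over $T$:
\begin{equation}
\ol{\Delta}(M) \;\simeq\; B^- \otimes_T M \;\simeq\; (T \otimes_T M) \oplus (J^- \otimes_T M) \;\simeq\; M \oplus (J^- \otimes_T M)
\end{equation}
in $\mathcal{C}(T)$.

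I do not anticipate a real obstacle: the only substantive point is verifying that the multiplication map $B^- \otimes_T B^+ \to A$ is well defined (i.e., that the tensor over $T$ makes sense, which uses $TA^+ = A^+T$ to commute the $T$-factor past) and that the bimodule structure is the expected one. Everything else is formal once Lemma \ref{borel_tensor_dec} is invoked; in particular no use of condition \ref{defn:connected} or of the splitting assumption on $T$ is required here.
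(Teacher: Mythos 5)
Your proof is correct and follows essentially the same route as the paper: the paper also reduces part (i) to the isomorphisms in Lemma \ref{borel_tensor_dec} together with a surjectivity-plus-dimension-count argument for the canonical map $B^- \otimes_T M \to \ol{\Delta}(M)$, and then deduces (ii) and (iii) formally from (\ref{borel_tensor_dec_iso1}) and (\ref{borel_decomposition}). Your only variation is to package the surjection and dimension count into the $(B^-,B^+)$-bimodule isomorphism $B^- \otimes_T B^+ \simeq A$ and invoke tensor associativity, which is a tidy reformulation of the same idea rather than a different method.
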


\begin{proof}
As $A^+$ acts trivially on $\Inf_T^{B^+} M$, the canonical morphism $B^- \otimes_T M \rarr \ol{\Delta}(M)$ is surjective. The $K$-dimensions on both sides are equal; hence this is an isomorphism in $\mathcal{C}(B^-)$. Since $B^- \simeq A^- \otimes_K T$ in $\mathcal{C}(T)$ by Lemma \ref{borel_tensor_dec} and $B^- = J^- \oplus T$ in $\mathcal{C}(T)$ by (\ref{borel_decomposition}), this implies all other assertions.
\end{proof}

\begin{lem} \label{basic_delta_lemma_2}
For $M \in \mathcal{G}_d(T)$ we have:
\begin{enum_thm}
\item $\ol{\Delta}(M)_i = B_{i-d}^- \otimes_T M$ in $\mathcal{C}(T)$. In particular, $\ol{\Delta}(M)_d \simeq M$.
\item \label{basic_delta_lemma_2:supp_contained} $\Supp \ol{\Delta}(M) = d+ \Supp A^- \subset d - \bbN$.
\item $\ol{\Delta}(M)_d \simeq M$ in $\mathcal{C}(B^+)$.
\end{enum_thm}
\end{lem}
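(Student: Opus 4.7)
The plan is to deduce everything from the preceding Lemma \ref{basic_delta_lemma}, in particular the isomorphism $\ol{\Delta}(M) \simeq B^- \otimes_T M$ in $\mathcal{C}(B^-)$, combined with the structural decomposition $B^- \simeq A^- \otimes_K T$ from Lemma \ref{borel_tensor_dec} and the assumption $A_0^- = K$ from Definition \ref{defn:triangle}\ref{defn:connected}.

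For part (i), I use that the isomorphism $\ol{\Delta}(M) \simeq B^- \otimes_T M$ is one of graded objects. Since $M$ is concentrated in degree $d$, the degree-$i$ component of the tensor product $B^- \otimes_T M$ is spanned by elementary tensors $b \otimes m$ with $\deg b + \deg m = i$, i.e., $\deg b = i - d$. Hence $\ol{\Delta}(M)_i = B^-_{i-d} \otimes_T M$ as $T$-modules. Taking $i = d$ and using $B^-_0 = T$ gives $\ol{\Delta}(M)_d \simeq T \otimes_T M \simeq M$.

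For part (ii), I combine (i) with Lemma \ref{borel_tensor_dec}: as a right $T$-module, $B^- \simeq A^- \otimes_K T$ is free, so $B^-_{j} \otimes_T M \neq 0$ precisely when $A^-_{j} \neq 0$ (using that $M \neq 0$). Therefore $\Supp \ol{\Delta}(M) = d + \Supp A^-$. The inclusion $\Supp A^- \subset \Z_{\leq 0}$ from Definition \ref{defn:triangle} then gives $\Supp \ol{\Delta}(M) \subset d - \bbN$.

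For part (iii), the main point is to upgrade the $T$-module isomorphism of (i) to a $B^+$-module isomorphism. Since $\ol{\Delta}(M)$ is an $A$-module, $B^+$ acts on it, and the $T$-action on $\ol{\Delta}(M)_d$ is the restriction of the $B^+$-action along the inclusion $T \hookrightarrow B^+$. It suffices to show that the augmentation ideal $J^+ = \bigoplus_{i > 0} B^+_i$ annihilates $\ol{\Delta}(M)_d$, since then $B^+$ acts via the quotient $B^+/J^+ = T$. But $J^+$ has support in $\Z_{>0}$, so multiplication by $J^+$ sends $\ol{\Delta}(M)_d$ into $\bigoplus_{i>d} \ol{\Delta}(M)_i$, which is zero by part (ii). Hence the $B^+$-action on $\ol{\Delta}(M)_d$ factors through $T$, and the isomorphism from (i) is automatically $B^+$-equivariant, i.e., an isomorphism in $\mathcal{C}(B^+)$ between $\ol{\Delta}(M)_d$ and $\Inf_T^{B^+} M$.

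There is really no main obstacle here; all three assertions are formal consequences of the graded tensor product formula, the freeness statements of Lemma \ref{borel_tensor_dec}, and the positivity $\Supp A^+ \subset \Z_{\geq 0}$ which forces $J^+$ to shift degrees strictly upward.
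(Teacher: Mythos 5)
Your proof is correct and takes essentially the same route as the paper: parts (i) and (ii) are read off from the graded isomorphism $\ol{\Delta}(M)\simeq B^-\otimes_T M$ and the decomposition $B^-\simeq A^-\otimes_K T$ of Lemma \ref{borel_tensor_dec}, and part (iii) is obtained, exactly as in the paper, by observing that the positively-graded ideal $J^+$ must annihilate the top-degree component because of the support constraint from (ii).
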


\begin{proof}
All assertions follow directly from Lemma \ref{basic_delta_lemma}. For the last assertion we note that, a priori, we have $\ol{\Delta}(M)_d \simeq M$ in $\mathcal{G}(T)$. But $A^+_{i\neq 0}$ acts as zero on $\ol{\Delta}(M)_d$ by \ref{basic_delta_lemma_2:supp_contained}, and therefore this isomorphism is in fact an isomorphism of $B^+$-modules.
\end{proof}

Using duality we can now define an additional functor
\begin{equation}
\ol{\nabla} - \dopgleich \ol{\Delta} (-^*) ^* : \mathcal{C}(T) \rarr \mathcal{C}(A) \;.
\end{equation}
Here, the inner dual applies to $T$-modules and the outer for $A$-modules. We call $\ol{\nabla}(M)$ the \word{proper costandard module} associated to $M$.

Recall that we have the triangular decomposition $\mathcal{T}^\op$ of $A^\op$ whose positive Borel is $(B^-)^\op$. For this triangular decomposition we thus also have a proper standard functor, given by 
\begin{equation}
\ol{\Delta}:A^\op \otimes_{(B^-)^\op} \Inf_{T^\op}^{(B^-)^\op}: \mathcal{C}(T^\op) \rarr \mathcal{C}(A^\op) \;,
\end{equation}
and therefore, for $M \in \mathcal{G}(T)$, we have
\begin{equation}
\ol{\nabla}(M^*) = (A^\op \otimes_{(B^-)^\op} \Inf_{T^\op}^{(B^-)^\op} M^*)^* \;.
\end{equation}
We can avoid additional notation like $\ol{\Delta}^\op$ for this functor since the argument (a $T$-module or a $T^\op$-module) makes it clear which one is meant.  Note again that we also have a proper costandard functor $\ol{\nabla}:\mathcal{C}(T^\op) \rarr \mathcal{C}(A^\op)$ associated to $\mathcal{T}^\op$. Applying Lemma \ref{basic_delta_lemma_2} to $\mathcal{T}^\op$ and dualizing we get: 

\begin{lem} \label{basic_nabla_lemma_2}
Let $M \in \mathcal{G}_d(T)$. Then:
\begin{enum_thm}
\item $\ol{\nabla}(M)_i = \left( (B^+)^\op_{i-d} \otimes_{T^\op} M^* \right)^* = (M^\circledast \otimes_T B_{-i+d}^+)^\circledast $ in $\mathcal{C}(T)$. In particular, $\ol{\nabla}(M)_d \simeq M$.
\item $\Supp \ol{\nabla}(M) = d + \Supp (A^+)^\op = d - \Supp A^+  \subset d - \bbN$.
\item $\ol{\nabla}(M)_d \simeq M_d$ in $\mathcal{C}(B^-)$. 
\end{enum_thm}
\end{lem}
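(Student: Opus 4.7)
The plan is to deduce the lemma directly from Lemma \ref{basic_delta_lemma_2} applied to the opposite triangular decomposition $\mathcal{T}^{\op}$ of $A^{\op}$, followed by standard duality. The setup is as follows: by Section \ref{section_t_modules} standard duality preserves the degree function on simple $T$-modules, so $M^* \in \mathcal{G}_d(T^{\op})$; and by Remark \ref{opposite_borel} the negative Borel subalgebra of $\mathcal{T}^{\op}$ is $(B^+)^{\op}$. By definition, $\ol{\nabla}(M) = \ol{\Delta}(M^*)^*$.

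For part (1), I apply Lemma \ref{basic_delta_lemma_2}(1) to the opposite decomposition and to $M^* \in \mathcal{G}_d(T^{\op})$, obtaining
\[
\ol{\Delta}(M^*)_i \simeq (B^+)^{\op}_{i-d} \otimes_{T^{\op}} M^*
\]
in $\mathcal{C}(T^{\op})$. Dualizing both sides and using that $(-)^*$ commutes with passage to the homogeneous component of degree $i$ (Section \ref{sec:standarddual}) yields the first equality $\ol{\nabla}(M)_i \simeq ((B^+)^{\op}_{i-d} \otimes_{T^{\op}} M^*)^*$. The second equality in (1) is then a translation: rewriting a tensor product over $T^{\op}$ as a tensor product over $T$ with the left/right roles swapped turns $(B^+)^{\op}_{i-d} \otimes_{T^{\op}} M^*$ into $M^{\circledast} \otimes_T B^+_{-i+d}$ after identifying $(B^+)^{\op}_{i-d} = B^+_{d-i}$ by the definition of opposite grading. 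Specializing to $i = d$ collapses the tensor product to $M^*$ (using $T_0 \otimes_{T^{\op}} M^* = M^*$), which dualizes to $\ol{\nabla}(M)_d \simeq M$.

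For part (2), Lemma \ref{basic_delta_lemma_2}(2) applied to $\mathcal{T}^{\op}$ and $M^*$ gives $\Supp \ol{\Delta}(M^*) = d + \Supp(A^+)^{\op}$. Since $\Supp N^* = \Supp N$ for any graded module $N$, this transfers to $\Supp \ol{\nabla}(M)$. The identity $\Supp(A^+)^{\op} = -\Supp A^+$ is immediate from the definition of opposite grading, and the containment in $d - \mathbb{N}$ follows from Definition \ref{defn:triangle}(ii), which places $A^+$ in non-negative degree. For part (3), Lemma \ref{basic_delta_lemma_2}(3) applied to $\mathcal{T}^{\op}$ gives $\ol{\Delta}(M^*)_d \simeq M^*$ in $\mathcal{C}((B^-)^{\op})$, i.e., as right $B^-$-modules; dualizing produces the desired $B^-$-module isomorphism $\ol{\nabla}(M)_d \simeq M_d$.

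The main obstacle is purely notational: one must carefully track which duality convention ($(-)^*$ versus $(-)^{\circledast}$) and which grading convention (direct versus opposite) is in force at each step, and verify that $(-)^{\op}$ transports tensor products compatibly (a left $T^{\op}$-module is the same as a right $T$-module, and $(N \otimes_{T^{\op}} P)^{\op} \simeq P^{\op} \otimes_T N^{\op}$). Beyond this bookkeeping there is no substantive mathematical content: the proof is a formal corollary of Lemma \ref{basic_delta_lemma_2} applied to $\mathcal{T}^{\op}$.
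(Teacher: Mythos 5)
Your proof is correct and follows exactly the route the paper uses: the paper records this lemma with the one-line remark ``Applying Lemma \ref{basic_delta_lemma_2} to $\mathcal{T}^\op$ and dualizing we get,'' and you have simply supplied the bookkeeping that remark suppresses. The only microscopic slip is writing ``$T_0 \otimes_{T^\op} M^*$'' where you mean $(B^+)^\op_0 \otimes_{T^\op} M^* = T \otimes_{T^\op} M^*$, but the intent is clear and the argument is sound.
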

%

 It follows directly from the definitions that proper standard and proper costandard modules are compatible with the degree shift in the sense that
\begin{equation} \label{standard_compat_with_shift}
\ol{\Delta}(M\lbrack n \rbrack) = \ol{\Delta}(M) \lbrack n \rbrack \quad \tn{and} \quad \ol{\nabla}(M \lbrack n \rbrack) = \ol{\nabla}(M) \lbrack n \rbrack
\end{equation}
for any $M \in \mathcal{G}(T)$ and $n \in \bbZ$. Moreover, they are also compatible with forgetting the grading.

\begin{lem} \label{delta_exact}
The functors $\ol{\Delta}$ and $\ol{\nabla}$ are exact.
\end{lem}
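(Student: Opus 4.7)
The proof is essentially a bookkeeping exercise about compositions of exact functors, so I would simply unpack the definitions and invoke the structural results already established in the excerpt.

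The plan is to prove exactness of $\ol{\Delta}$ first, and then deduce exactness of $\ol{\nabla}$ via standard duality. By definition, $\ol{\Delta} = (A \otimes_{B^+} -) \circ \Inf_T^{B^+}$. The functor $\Inf_T^{B^+} \colon \mathcal{C}(T) \to \mathcal{C}(B^+)$ is scalar restriction along the graded algebra surjection $B^+ \twoheadrightarrow T$, and restriction of scalars is always exact. So it only remains to show that $A \otimes_{B^+} - \colon \mathcal{C}(B^+) \to \mathcal{C}(A)$ is exact, for which it suffices that $A$ be flat (indeed free) as a right $B^+$-module. This is precisely the content of Lemma \ref{borel_tensor_dec}: the multiplication map gives an isomorphism $A^- \otimes_K B^+ \overset{\sim}{\to} A$ of right $B^+$-modules (obtained by combining the triangular decomposition with \eqref{borel_tensor_dec_iso2}), exhibiting $A$ as a free right $B^+$-module with basis any $K$-basis of $A^-$. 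Hence $\ol{\Delta}$ is a composition of exact functors, and therefore exact.

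For $\ol{\nabla}(-) = \ol{\Delta}((-)^*)^*$, the key point is that standard duality $(-)^* = \Hom_K(-,K)$ is exact, because we work with finite-dimensional modules over a field and $K$ is injective as a $K$-module. Applied first on $\mathcal{C}(T)$ and then on $\mathcal{C}(A^\op)$ (using the opposite triangular decomposition $\mc{T}^\op$ from Lemma \ref{lem:opptriangle}, so that the ``$\ol{\Delta}$'' appearing inside makes sense), we realise $\ol{\nabla}$ as a composition of three exact functors, hence exact.

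There is no real obstacle here: the only item to verify with care is that $A$ is free as a right $B^+$-module, which is covered by Lemma \ref{borel_tensor_dec} (and, symmetrically, by applying that lemma to $\mc{T}^\op$ via Lemma \ref{lem:opptriangle}). Everything else is formal, and the argument works verbatim in both $\mathcal{C} = \mathcal{M}$ and $\mathcal{C} = \mathcal{G}$, since all maps involved respect the grading.
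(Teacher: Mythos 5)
Your proof is correct and follows essentially the same route as the paper's: exactness of inflation, freeness of $A$ as a right $B^+$-module via Lemma \ref{borel_tensor_dec} to get exactness of $A \otimes_{B^+} -$, and then duality plus $\mc{T}^\op$ for $\ol{\nabla}$. The paper's proof is a terser version of exactly this argument.
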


\begin{proof}
Scalar restriction, and thus inflation, is an exact functor. Moreover, since $A$ is a free right $B^+$-module by Lemma \ref{borel_tensor_dec}, the functor $A \otimes_{B^+} -$ is exact. Hence, $\ol{\Delta}$ is a composition of exact functors, thus exact. Using the same result for $\mathcal{T}^\op$ and using the fact that dualizing is exact, it follows that $\ol{\nabla}$ is also exact.
\end{proof}

\subsection{Simple modules} \label{sect_simple_modules}

The classification of simple $A$-modules given in Theorem \ref{heads} below is due to Holmes and Nakano \cite{HN}. We will need an additional result regarding the socle of the costandard module, and to prove it, it is easiest to repeat some of the arguments given in \cite{HN}. The following lemma is elementary.

\begin{lem} \label{socle_head_lemma}
Let $S \in \Irr {\mathcal{C}}(A)$ and $\lambda \in \Irr \mathcal{C}(T)$. If $\Inf_{T}^{B^+}  \lambda$ is a constituent of $\Soc(\Res_{B^+}^A S)$, then $S$ is a constituent of $\Hd(\ol{\Delta}( \lambda))$.
\end{lem}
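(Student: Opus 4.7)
The plan is to use the tensor-hom adjunction for the induction $A \otimes_{B^+} -$ and restriction $\Res^A_{B^+}$, which is precisely the adjunction that makes $\overline{\Delta}$ the left adjoint of $\Res^A_{B^+} \circ \Inf_T^{B^+}$ on the relevant categories (bearing in mind that $\lambda$ is a simple $T$-module, inflated via the quotient $B^+ \twoheadrightarrow T$ of Lemma \ref{borel_tensor_dec}).

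First I would translate the hypothesis that $\Inf_T^{B^+} \lambda$ is a constituent of $\Soc(\Res^A_{B^+} S)$ into the existence of a nonzero $B^+$-module homomorphism $\Inf_T^{B^+} \lambda \hookrightarrow \Res^A_{B^+} S$. (Since $\lambda$ is simple as a $T$-module, its inflation is simple as a $B^+$-module, so appearing as a constituent of the socle is the same as embedding into it.) Applying the adjunction
\[
\Hom_A(A \otimes_{B^+} \Inf_T^{B^+} \lambda,\, S) \;\simeq\; \Hom_{B^+}(\Inf_T^{B^+} \lambda,\, \Res^A_{B^+} S),
\]
this nonzero morphism corresponds to a nonzero $A$-module homomorphism $\overline{\Delta}(\lambda) \to S$.

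Next, since $S$ is simple, any nonzero map $\overline{\Delta}(\lambda) \to S$ is surjective. In particular $S$ is a quotient of $\overline{\Delta}(\lambda)$, and being simple it must appear as a constituent of the head $\Hd(\overline{\Delta}(\lambda))$, which completes the proof.

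I do not expect any real obstacle: the only mild bookkeeping point is to make sure the adjunction is used in the right category $\mathcal{C} \in \{\mathcal{M},\mathcal{G}\}$, which is fine because $A$ is free (hence projective) as a right $B^+$-module by Lemma \ref{borel_tensor_dec}, so the adjunction respects the grading when $\mathcal{C} = \mathcal{G}$. Thus the argument works uniformly in the graded and ungraded settings.
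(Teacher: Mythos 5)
Your proof is correct and follows essentially the same route as the paper: both turn the socle constituent into an embedding of $\Inf_T^{B^+}\lambda$ into $S$, apply the $(A\otimes_{B^+}-,\Res^A_{B^+})$ adjunction to get a nonzero map $\ol{\Delta}(\lambda)\to S$, and conclude from the simplicity of $S$ that this map is surjective, so $S$ is a constituent of the head.
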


\begin{proof}
The module $\Inf_{T}^{B^+} \lambda \in {\mathcal{C}}(B^+)$ is simple. As $\Soc(\Res_{B^+}^A S)$ is semisimple, its constituent $\Inf_{T}^{B^+} \lambda$ is already a direct summand. Hence, there is a non-zero morphism $\varphi: \Inf_{T}^{B^+} \lambda \hookrightarrow \Soc(\Res_{B^+}^A S) \subs S$ in $\mathcal{C}(B^+)$. The morphism $\varphi$ corresponds, under adjunction, to a morphism $\wt{\varphi}: \ol{\Delta} (\lambda )= A \otimes_{B^+} \Inf_{T}^{B^+} \lambda \rarr S$ in $\mathcal{C}(A)$. Explicitly, $\wt{\varphi}(a \otimes v) \dopgleich a\varphi(v)$ for $a \in A$ and $v \in \lambda$. This morphism is again non-zero and, since $S$ is simple, it is surjective so that $\Ker(\wt{\varphi})$ is a maximal subobject of $\ol{\Delta}( \lambda)$ in $\mathcal{C}(A)$. This shows that $S$ is a constituent of $\Hd(\ol{\Delta}( \lambda))$. 
\end{proof}

%


\begin{thm}[Holmes--Nakano] \label{heads}
The following holds:
\begin{enum_thm}
\item For any $\lambda \in \Irr \mathcal{C}(T)$ the head of $\ol{\Delta}(\lambda)$ is isomorphic to the socle of $\ol{\nabla}( \lambda )$, and this is a simple object in $\mathcal{C}(A)$, denoted $L( \lambda )$.
\item  The map $L:\Irr \mathcal{C}(T) \rarr \Irr \mathcal{C}(A)$, $\lambda \mapsto L (\lambda)$, is a bijection. 
\item \label{heads:forget} $L$ is compatible with forgetting the grading.
\item For any $\lambda \in \Irr \mathcal{C}(T)$, both $\ol{\Delta}(\lambda)$ and $\ol{\nabla}(\lambda)$ are indecomposable. 
\end{enum_thm}
\end{thm}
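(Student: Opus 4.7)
The plan is to proceed in three main stages, treating the graded case first and then transferring to the ungraded case via Lemma~\ref{forget_functor}; throughout, set $d = \deg \lambda$. First I would establish that $\ol{\Delta}(\lambda)$ has a simple head in $\mathcal{G}(A)$. By Lemma~\ref{basic_delta_lemma_2}, $\ol{\Delta}(\lambda)_d \cong \lambda$ is the top graded component, and since $\lambda$ generates $\ol{\Delta}(\lambda)$ as an $A$-module, any graded submodule $N$ with $N_d \neq 0$ must satisfy $N_d = \lambda$ (by simplicity of $\lambda$) and hence equal $\ol{\Delta}(\lambda)$. Every proper graded submodule therefore lies in $\bigoplus_{i<d}\ol{\Delta}(\lambda)_i$, so the sum of all such submodules is itself proper, yielding a unique maximal graded submodule and a simple head $L(\lambda)$. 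Applying the same argument to $\mathcal{T}^\op$ via Lemma~\ref{lem:opptriangle} shows that $\ol{\Delta}^\op(\lambda^*)$ has a simple head in $\mathcal{C}(A^\op)$ with top component $\lambda^*$; dualizing, $\Soc(\ol{\nabla}(\lambda)) = \Hd(\ol{\Delta}^\op(\lambda^*))^*$ is simple with top component $\Soc(\ol{\nabla}(\lambda))_d \cong \lambda$.

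The crux is to identify $\Soc(\ol{\nabla}(\lambda))$ with $L(\lambda)$. Setting $S = \Soc(\ol{\nabla}(\lambda))$, the support of $S$ is contained in $(-\infty, d]$ and $S_d \cong \lambda$ is nonzero, so $d$ is also the top degree of $S$. Because $J^+ = B^+_{>0}$ strictly raises degree, it must annihilate $S_d$, and so $S_d$ is a $B^+$-submodule of $\Res^A_{B^+} S$ isomorphic to $\Inf_T^{B^+} \lambda$; this is a simple $B^+$-module, hence a constituent of $\Soc(\Res^A_{B^+} S)$. Lemma~\ref{socle_head_lemma} then forces $S$ to be a composition factor of $L(\lambda)$, and both being simple we get $S \cong L(\lambda)$. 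This establishes (i), and (iv) follows immediately: a module with a simple head (respectively, simple socle) is indecomposable by the standard Nakayama argument applied to any putative direct summand.

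For the bijection in (ii), injectivity in $\mathcal{G}(A)$ is immediate since $L(\lambda)$ determines its top degree $d$ and $L(\lambda)_d \cong \lambda$ recovers $\lambda$. For surjectivity, given $S \in \Irr \mathcal{G}(A)$ with top degree $d$, the same degree argument shows that $S_d$ is a nonzero $B^+$-module on which $J^+$ acts as zero; picking a simple $T$-submodule $\lambda \subseteq S_d$ produces an inclusion $\Inf_T^{B^+} \lambda \hookrightarrow \Res^A_{B^+} S$, corresponding under the tensor--hom adjunction to a nonzero (hence surjective) map $\ol{\Delta}(\lambda) \twoheadrightarrow S$, so $S \cong L(\lambda)$. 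The ungraded versions of (i) and (ii), together with the compatibility statement (iii), then follow from Lemma~\ref{forget_functor}: the forget functor $F$ preserves and reflects simples, commutes with heads and socles, and clearly satisfies $F \circ \ol{\Delta} \cong \ol{\Delta} \circ F$ and its analogue for $\ol{\nabla}$.

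The principal technical hurdle is the identification step: matching the simple socle of $\ol{\nabla}(\lambda)$ with the simple head of $\ol{\Delta}(\lambda)$ requires carefully tracking the top graded components under standard duality and then invoking the Frobenius reciprocity encoded in Lemma~\ref{socle_head_lemma}. The upshot is that any simple $A$-submodule of $\ol{\nabla}(\lambda)$ whose top graded component contains a copy of $\lambda$ must already appear as the unique simple quotient of $\ol{\Delta}(\lambda)$, which is what makes the passage between the $\Delta$-world and the $\nabla$-world rigid.
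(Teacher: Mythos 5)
Your proof is correct, but it takes a genuinely different route to the key step than the paper's own argument. Where the paper shows that $\ol{\Delta}(\lambda)$ has a simple head by computing $\Hom_{\mathcal{C}(B^-)}(\Res_{B^-}^A\ol{\Delta}(\lambda), \Inf_T^{B^-}\mu) \simeq \Hom_{\mathcal{C}(T)}(\lambda,\mu)$ via adjunction (using that $\ol{\Delta}(\lambda)$ restricts to $B^-\otimes_T\lambda$, a free $B^-$-module), and concludes that the $B^-$-restriction already has a simple head, you instead use a pure grading argument: the top degree component of $\ol{\Delta}(\lambda)$ is $\lambda$, it generates, hence any proper graded submodule avoids degree $d$ and the sum of all of them is proper. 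This is more elementary but works only in $\mathcal{G}$, requiring a separate transfer to $\mathcal{M}$ at the end via Lemma~\ref{forget_functor} -- whereas the paper's $B^-$-adjunction argument runs uniformly in $\mathcal{C}\in\{\mathcal{M},\mathcal{G}\}$. Both approaches then use Lemma~\ref{socle_head_lemma} to match $\Hd\ol{\Delta}(\lambda)$ with $\Soc\ol{\nabla}(\lambda)$: the paper passes through the observation that $\Inf_{T^\op}^{(B^+)^\op}\lambda^*$ occurs in $\Res_{(B^+)^\op}^{A^\op}\Hd\ol{\Delta}(\lambda^*)$ and dualizes; you more directly observe that the top component $S_d$ of $S=\Soc\ol{\nabla}(\lambda)$ is a simple $B^+$-submodule isomorphic to $\Inf_T^{B^+}\lambda$ because $J^+$ raises degree. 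For injectivity of $L$, the paper again reads off $\lambda$ from the head of $\Res_{B^-}^AL(\lambda)$; you read it off from the top degree component $L(\lambda)_d\cong\lambda$, which is the cleaner route once you are working in $\mathcal{G}$.

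One small point to tighten: you assert without justification that ``$\lambda$ generates $\ol{\Delta}(\lambda)$ as an $A$-module.'' This is the content of (the grading-independent part of) Lemma~\ref{degree_zero_component}, which appears in the paper only after Theorem~\ref{heads}, so you should either cite it with a note that it does not depend on the theorem, or reprove it in one line from Lemma~\ref{basic_delta_lemma}(i): since $\ol{\Delta}(\lambda)\simeq B^-\otimes_T\lambda$ as a $B^-$-module and any nonzero $v\in\lambda$ satisfies $Tv=\lambda$, the element $1\otimes v$ already generates over $B^-$. Without this, the chain ``$N_d\neq 0 \Rightarrow N_d=\lambda \Rightarrow N=\ol{\Delta}(\lambda)$'' does not close.
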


\begin{proof}
We set $E \dopgleich \Res_{B^-}^A \ol{\Delta}(\lambda)$ and for $\mu \in \Irr \mathcal{C}(T)$ we set $\hat{\mu} \dopgleich \Inf_T^{B^-} \mu$. Recall from Lemma \ref{basic_delta_lemma} that $E \simeq B^- \otimes_T \lambda$. Hence, if $\mu \in \Irr \mathcal{C}(T)$, then by adjunction we have
\[
\Hom_{\mathcal{C}(B^-)}(E, \hat{\mu}) \simeq \Hom_{\mathcal{C}(B^-)}(B^- \otimes_T \lambda, \hat{\mu}) \simeq \Hom_{\mathcal{C}(T)}(\lambda, \mu) \;.
\]
The $K$-dimension of this homomorphism space is always zero unless $\lambda \simeq \mu$, in which case it is equal to one, since $T$ is split over $K$. This already implies that 
\begin{equation}\label{heads_res_equ}
\Hd(\Res_{B^-}^A \ol{\Delta}(\lambda)) \simeq \hat{\lambda} \;.
\end{equation}
In fact, if $M$ is a maximal subobject of $E$ in $\mathcal{C}(B^-)$, then we get a non-trivial morphism $E \rarr E/M$. As $E/M$ is simple, we must have $E/M \simeq \hat{\lambda}$ by the above. Hence, the quotient of $E$ by any maximal subobject is isomorphic to $\hat{\lambda}$. This implies that $\Hd(E) \simeq \hat{\lambda}^{\oplus n}$. Since $\Hom_{\mathcal{C}(B^-)}(E,\hat{\lambda}) = \Hom_{\mathcal{C}(B^-)}(\hat{\lambda}^{\oplus n},\hat{\lambda}) = K^n$, we conclude that $n = 1$.

To show that $\ol{\Delta}(\lambda)$ itself has a simple head, let $\Sigma$ be the sum of all proper submodules of $\ol{\Delta}(\lambda)$. Then $\Res_{B^-}^A \Sigma \leq E$ is a sum of proper $B^-$-submodules. This must be a proper submodule since $E$ has a simple head. Hence, $\Sigma$ is a proper submodule and $\ol{\Delta}(\lambda)$ has a simple head. Using the same result for $\mathcal{T}^\op$, it follows that $\ol{\Delta}(\lambda^*)$ has a simple head; thus $\ol{\nabla}(\lambda) = \ol{\Delta}(\lambda^*)^*$ has simple socle. 

We want to show that $\Hd(\ol{\Delta}(\lambda)) \simeq \Soc(\ol{\nabla}(\lambda))$. From (\ref{heads_res_equ}), applied to $\mathcal{T}^\op$, we know that 
\[
\Inf_{T^\op}^{(B^+)^\op}\lambda^* \simeq \Hd(\Res_{(B^+)^\op}^{A^\op} \ol{\Delta}(\lambda^*)) \;.
\]
Since $\Hd(\Res_{(B^+)^\op}^{A^\op} \ol{\Delta}(\lambda^*))$ is a quotient of $\Res_{(B^+)^\op}^{A^\op} \Hd(\ol{\Delta}(\lambda^*))$, it follows that $\Inf_{T^\op}^{(B^+)^\op}\lambda^*$ is a constituent of $\Res_{(B^+)^\op}^{A^\op} \Hd(\ol{\Delta}(\lambda^*))$. Applying duality, this shows that $\Inf_T^{B^+} \lambda$ is a constituent of $\Res_{B^+}^A \Soc (\ol{\nabla}(\lambda))$. Now, we can use Lemma \ref{socle_head_lemma} to deduce that $\Soc(\ol{\nabla}(\lambda))$ is a constituent of $\Hd(\ol{\Delta}(\lambda))$ and so they are isomorphic. 

By Lemma \ref{socle_head_lemma} every simple $A$-module is equal to $L(\lambda)$ for some $\lambda \in \Irr \mathcal{C}(T)$. It just remains to show that $L(\lambda)$ is not isomorphic to $L(\mu)$ whenever $\lambda$ and $\mu$ are not isomorphic. So, assume that $L(\lambda) \simeq L(\mu)$. We know that $\Hd(\Res_{B^-}^A \ol{\Delta}(\lambda)) \simeq \Inf_T^{B^-} \lambda$. It follows that the head of $\Res_{B^-}^A \Hd(\ol{\Delta}(\lambda))$ is also isomorphic to $\Inf_T^{B^-} \lambda$. Similarly, the head of $\Res_{B^-}^A \Hd(\ol{\Delta}(\mu))$ is isomorphic to $\Inf_T^{B^-} \mu$. Since $L(\lambda) \simeq L(\mu)$, we have  $\Res_{B^-}^A \Hd(\ol{\Delta}(\lambda)) \simeq \Res_{B^-}^A \Hd(\ol{\Delta}(\mu))$, hence these modules have isomorphic heads which means that $\Inf_T^{B^-} \lambda \simeq \Inf_T^{B^-} \mu$ and therefore $\lambda \simeq \mu$.

The indecomposability of $\ol{\Delta}(\lambda)$ and $\ol{\nabla}(\lambda)$ is obvious since they have simple head, respectively socle.
\end{proof}

By (\ref{standard_compat_with_shift}), we have $\ol{\Delta}(\lambda \lbrack d \rbrack) = \ol{\Delta}(\lambda) \lbrack d \rbrack$ for any $d \in \bbZ$, hence 
\begin{equation}
L(\lambda \lbrack d \rbrack) \simeq L(\lambda) \lbrack d \rbrack \;.
\end{equation}
Setting 
\begin{equation}
\Irr_d \mathcal{G}(A) \dopgleich \lbrace L(\lambda) \mid \lambda \in \Irr \mathcal{G}_d(T) \rbrace \;,
\end{equation}
where $\mathcal{G}_d(T)$ is as introduced in Section \ref{section_t_modules}, it follows that 
\begin{equation}
\Irr_d \mathcal{G}(A) = \lbrace L(\lambda)[d] \mid \lambda \in \Irr \mathcal{G}_0(T) \rbrace \simeq \Irr \mathcal{M}(T) \simeq \Irr \mc{M}(A)
\end{equation}
and that
\begin{equation}
\Irr \mathcal{G}(A) \leftrightarrow \Irr \mathcal{G}_0(A) \times \bbZ \leftrightarrow \Irr \mathcal{M}(T) \times \bbZ \;.
\end{equation}
We can now extend the degree function defined in (\ref{degree_function}) to a function  
\begin{equation} \label{degree_function_on_A}
\deg \from \Irr \mathcal{G}(A) \to \bbZ
\end{equation}
via $\deg L(\lambda) \dopgleich \deg \lambda$. We note that $L(\lambda)$ is not necessarily concentrated in a single degree. 

\begin{lem} \label{L_dual}
For any $\lambda \in \Irr \mathcal{C}(T)$ we have $L(\lambda^*) \simeq L(\lambda)^*$.
\end{lem}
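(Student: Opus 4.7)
The plan is to prove this by applying standard duality to the socle characterisation of $L(\lambda)$ and then using the definition of $\ol{\nabla}$ to re-express the dual of a proper costandard as a proper standard for the opposite triangular decomposition $\mathcal{T}^{\op}$.

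First I would recall from Theorem \ref{heads} that $L(\lambda) = \Soc(\ol{\nabla}(\lambda))$. Applying standard duality and using the identity $\Hd(M^*) \simeq \Soc(M)^*$ recorded in Section \ref{sec:standarddual} (which follows from $\Rad(M)^\perp = \Soc(M^*)$ after taking duals), I get
\begin{equation*}
L(\lambda)^* \;\simeq\; \Soc(\ol{\nabla}(\lambda))^* \;\simeq\; \Hd\bigl(\ol{\nabla}(\lambda)^*\bigr).
\end{equation*}
Next, unravelling the definition $\ol{\nabla}(M) \dopgleich \ol{\Delta}(M^*)^*$, where the inner $\ol{\Delta}$ denotes the proper standard functor for $\mathcal{T}^{\op}$, and using the involutivity $(-)^{**} \simeq \id$ of standard duality, I obtain
\begin{equation*}
\ol{\nabla}(\lambda)^* \;\simeq\; \ol{\Delta}(\lambda^*),
\end{equation*}
where $\ol{\Delta}(\lambda^*) \in \mathcal{C}(A^{\op})$ is the proper standard module associated to the simple $T^{\op}$-module $\lambda^*$ with respect to $\mathcal{T}^{\op}$. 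Finally, applying Theorem \ref{heads} to the triangular decomposition $\mathcal{T}^{\op}$ of $A^{\op}$ (Lemma \ref{lem:opptriangle}), the head of $\ol{\Delta}(\lambda^*)$ is precisely the simple $A^{\op}$-module $L(\lambda^*)$, yielding $L(\lambda)^* \simeq L(\lambda^*)$ as required.

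There is no real obstacle here beyond bookkeeping: the content of the lemma is essentially that the bijection $L$ of Theorem \ref{heads} intertwines the duality $(-)^*$ on $\Irr \mathcal{C}(T)$ with that on $\Irr \mathcal{C}(A)$. The only subtlety is keeping track of which side of the algebra each functor lives on, and being careful that the inner $(-)^*$ in the definition of $\ol{\nabla}$ is the duality for $T$-modules whereas the outer $(-)^*$ is the duality for $A$-modules; both are instances of the same standard duality but applied to different algebras.
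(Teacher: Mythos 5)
Your proof is correct and takes essentially the same approach as the paper's. The only cosmetic difference is that you dualise the socle characterisation $L(\lambda) \simeq \Soc\ol{\nabla}(\lambda)$ and use $\ol{\nabla}(\lambda)^* \simeq \ol{\Delta}(\lambda^*)$, whereas the paper dualises the head characterisation $L(\lambda) \simeq \Hd\ol{\Delta}(\lambda)$ and uses $\ol{\Delta}(\lambda)^* \simeq \ol{\nabla}(\lambda^*)$; these are mirror images of the same chain of isomorphisms.
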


\begin{proof}
This follows from  $L(\lambda)^* \simeq (\Hd\ol{\Delta}(\lambda))^* \simeq \Soc \ol{\Delta}(\lambda)^* \simeq \Soc \ol{\nabla}(\lambda^*) \simeq L(\lambda^*)$.
\end{proof}





\subsection{Splitting and semisimplicity}

The following proposition was shown by Bonnafé and Rouquier \cite[Proposition 9.2.5]{BonnafeRouquier} for restricted rational Cherednik algebras. The argument also works, word for word, in our general setting. We repeat the proof here for convenience.

\begin{prop}[Bonnafé--Rouquier] \label{splitting}
If $\lambda \in \Irr \mathcal{C}(T)$, then $\End_{\mathcal{C}(A)}(L(\lambda)) = K$. In particular, each $L(\lambda)$ is absolutely simple and $A$ is a split $K$-algebra.
\end{prop}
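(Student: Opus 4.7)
My plan is to pin down the top graded piece of $L(\lambda)$, show it is a copy of $\lambda$, use that it generates $L(\lambda)$ as an $A$-module, and finally pass from the graded to the ungraded case.

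\textbf{Step 1 (the top degree of $L(\lambda)$).} I first dispatch the graded case $\mathcal{C} = \mathcal{G}$. Set $d \dopgleich \deg \lambda$. By Lemma \ref{basic_delta_lemma_2}, the module $\overline{\Delta}(\lambda)$ is supported in degrees $\leq d$ with $\overline{\Delta}(\lambda)_d \simeq \lambda$ in $\mathcal{G}_d(T)$, and by Lemma \ref{basic_delta_lemma}(ii) it is generated over $A^-$ (hence over $A$) by this degree-$d$ component. The surjection $\pi \colon \overline{\Delta}(\lambda) \twoheadrightarrow L(\lambda)$ induces a $T$-linear surjection $\pi_d \colon \lambda \to L(\lambda)_d$. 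Since $\lambda$ is simple in $\mathcal{G}_d(T)$, the map $\pi_d$ is either zero or an isomorphism. The first option is impossible: if $\lambda \subseteq \ker \pi$, then since $\lambda$ generates $\overline{\Delta}(\lambda)$ over $A$ we would have $L(\lambda) = 0$, contradicting simplicity. Hence $L(\lambda)_d \simeq \lambda$, and $L(\lambda)$ is generated as an $A$-module by $L(\lambda)_d$.

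\textbf{Step 2 (the endomorphism argument).} Any $f \in \End_{\mathcal{G}(A)}(L(\lambda))$ is homogeneous of degree zero and therefore restricts to an endomorphism
\[
f_d \in \End_{\mathcal{G}_d(T)}(L(\lambda)_d) \simeq \End_T(\lambda) = K,
\]
where the last equality is the splitting assumption on $T$ in Definition \ref{defn:triangle}(e). Thus $f_d = c \cdot \mathrm{id}_\lambda$ for a unique $c \in K$, and the $A$-linear endomorphism $f - c \cdot \mathrm{id}_{L(\lambda)}$ vanishes on the generating set $L(\lambda)_d$ from Step 1. Therefore it vanishes on all of $L(\lambda)$, giving $\End_{\mathcal{G}(A)}(L(\lambda)) = K$.

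\textbf{Step 3 (the ungraded case and final assertions).} For $\mathcal{C} = \mathcal{M}$ I use the standard decomposition for finite-dimensional graded modules,
\[
\End_{\mathcal{M}(A)}(L(\lambda)) \;\simeq\; \bigoplus_{n \in \mathbb{Z}} \Hom_{\mathcal{G}(A)}(L(\lambda), L(\lambda)[n]).
\]
By Step 1, the support of $L(\lambda)$ has maximum equal to $d$, whereas $L(\lambda)[n]$ has maximum $d + n$; since both are simple graded $A$-modules, a nonzero morphism between them would be an isomorphism, forcing their top degrees (and in particular $n$) to agree with zero. Thus only the $n = 0$ summand survives, and equals $K$ by Step 2. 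The statement that $L(\lambda)$ is absolutely simple is then immediate from $\End_A(L(\lambda)) = K$, and splitness of $A$ follows because by Theorem \ref{heads}(b) every simple $A$-module is of the form $L(\lambda)$.

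\textbf{Main obstacle.} The only nonformal point is verifying that the top-degree map $\pi_d$ is nonzero, which rests on the fact that proper standard modules are generated over $A$ from their top degree; everything else is standard Schur-lemma-style bookkeeping and the general graded-to-ungraded decomposition of endomorphism rings.
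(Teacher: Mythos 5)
Your proof is correct, and the route it takes is genuinely different from the paper's, even though both ultimately hinge on the same structural fact (namely that $L(\lambda)_d \simeq \lambda$ is the top piece and generates $L(\lambda)$, which is Lemma~\ref{degree_zero_component}; you re-derive it rather than citing it). The paper (following Bonnaf\'e--Rouquier) reduces in the opposite direction: it first observes the tautological inclusion $\End_{\mathcal{G}(A)}(L(\lambda)) \subseteq \End_{\mathcal{M}(A)}(L(\lambda))$ and then argues in the ungraded category. There, it constructs a $K$-algebra map $m_\lambda \colon \End_T(\lambda) \to \End_A(L(\lambda))$ by functoriality of $\ol{\Delta}$ (using that endomorphisms preserve radicals, so they descend to the simple head), builds a retraction $\Psi_\lambda$ given by compressing to the degree-$d$ slot, and deduces surjectivity of $m_\lambda$ from Schur's lemma applied to the ungraded division algebra $\End_A(L(\lambda))$. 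You instead prove the graded case directly and without any appeal to Schur: a degree-zero endomorphism is forced to be scalar on the top slot $L(\lambda)_d$ by $\End_T(\lambda) = K$, and since that slot generates, the endomorphism is scalar everywhere. You then pass to the ungraded case via the standard decomposition $\End_{\mathcal{M}(A)}(L(\lambda)) \simeq \bigoplus_n \Hom_{\mathcal{G}(A)}(L(\lambda), L(\lambda)[n])$, killing all $n \neq 0$ terms because a shift moves the top degree. This gives a cleaner and more self-contained graded step, at the modest cost of the shift argument needed to descend to $\mathcal{M}(A)$; the paper's route is heavier in the ungraded step (division-ring reasoning) but trivial in the graded-to-ungraded reduction.
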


\begin{proof}
Since $\End_{\mathcal{G}(A)}(L(\lambda)) \subs \End_{\mathcal{M}(A)}(L(\lambda))$, it is enough to consider the non-graded case. Recall that $\ol{\Delta}:{\mathcal{M}(T)} \rarr {\mathcal{M}(A)}$ is a functor and therefore it induces a $K$-algebra morphism $\End_T(\lambda) \rarr \End_A(\ol{\Delta}(\lambda))$. An endomorphism of $\ol{\Delta}(\lambda)$ maps the radical to the radical (see \cite[Proposition 5.1]{CurtisReiner}), thus induces an endomorphism of $L(\lambda)$. Hence, we get a $K$-algebra morphism $m_\lambda:\End_T(\lambda) \rarr \End_A(L(\lambda))$. By assumption, $T$ splits and therefore we have $\End_T(\lambda) = K$. Hence, if we can show that $m_\lambda$ is surjective, the claim follows. Recall from Lemma \ref{degree_zero_component} that $\ol{\Delta}(\lambda)_0 \simeq L(\lambda)_0 \simeq \lambda$ in $\mathcal{M}(T)$, where we identify $\lambda \in \mathcal{G}_0(T)$. In particular, $\lambda$ is a direct summand of $L(\lambda)$ in $\mathcal{M}(T)$. Let $\pi_\lambda:L(\lambda) \twoheadrightarrow \lambda$ and $\iota_\lambda:\lambda \hookrightarrow L(\lambda)$ be the projection and inclusion of $L(\lambda)_d \simeq \lambda$, respectively. These are morphisms in $\mathcal{M}(T)$ and we get a map $\Psi_\lambda: \End_A(L(\lambda)) \rarr \End_T(\lambda)$ mapping $\varphi$ to $\pi_\lambda \circ \varphi \circ \iota_\lambda$. Note that $\Psi_\lambda$ is just the restriction of a morphism onto the degree zero component. Now, let $\varphi \in \End_A(L(\lambda))$ and set $\tilde{\varphi} \dopgleich \varphi - m_\lambda(\varphi)$. It is easy to see that $\Psi_\lambda(m_\lambda(\varphi)) = \varphi$, so $\Psi_\lambda(\tilde{\varphi}) = 0$. Since $\iota_\lambda$ is injective, this implies that $\Ker(\tilde{\varphi}) \neq 0$. As $\End_A(L(\lambda))$ is a division ring by Schur's lemma, we must have $\tilde{\varphi} = 0$, so $\varphi = m_\lambda(\tilde{\varphi})$. Hence, $m_\lambda$ is surjective.
\end{proof}

\begin{cor} \label{semisimplicity}
The following are equivalent:
\begin{enum_thm}
\item $A$ is semisimple.
\item \label{semisimplicity:criterion} $T$ is semisimple and both $\ol{\Delta}(\lambda)$ and $\ol{\nabla}(\lambda)$ are simple for all $\lambda \in \Irr \mathcal{M}(T)$.
\end{enum_thm}
\end{cor}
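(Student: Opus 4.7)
The plan is to prove both implications by a single dimension count, leveraging that $T$ is split by assumption and that $A$ is split by Proposition \ref{splitting}. The technical input is the standard fact that a finite-dimensional split $K$-algebra $B$ satisfies $\dim B \ge \sum_{S \in \Irr \mathcal{M}(B)} (\dim S)^2$, with equality if and only if $B$ is semisimple (this comes from the Wedderburn decomposition $B/\rad(B) \simeq \prod \mathrm{Mat}_{\dim S}(K)$, valid because $B$ is split).

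For (i) $\Rightarrow$ (ii), I would first observe that if $A$ is semisimple then every $A$-module is semisimple, so in particular $\ol{\Delta}(\lambda)$ is. Since $\ol{\Delta}(\lambda)$ has simple head $L(\lambda)$ by Theorem \ref{heads}, this forces $\ol{\Delta}(\lambda) = L(\lambda)$; dually $\ol{\nabla}(\lambda) = L(\lambda)$, which gives the simplicity part of (ii). Using Lemma \ref{basic_delta_lemma}(ii) for $\ol{\Delta}$ and the analogous statement for $\ol{\nabla}$ extracted from the opposite triangular decomposition $\mathcal{T}^{\op}$, I then read off
$$\dim L(\lambda) = \dim A^- \cdot \dim \lambda = \dim A^+ \cdot \dim \lambda,$$
so in particular $\dim A^- = \dim A^+$. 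By the split-algebra fact applied to $A$ together with the triangular decomposition,
$$(\dim A^-)^2 \sum_\lambda (\dim \lambda)^2 = \sum_\lambda (\dim L(\lambda))^2 = \dim A = \dim A^- \cdot \dim T \cdot \dim A^+ = (\dim A^-)^2 \dim T,$$
hence $\sum_\lambda (\dim \lambda)^2 = \dim T$. Applying the split-algebra fact in reverse, this time to $T$, forces $T$ to be semisimple.

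For (ii) $\Rightarrow$ (i) the same computation runs backwards: the hypotheses give $L(\lambda) = \ol{\Delta}(\lambda) = \ol{\nabla}(\lambda)$ for every $\lambda$, hence $\dim L(\lambda) = \dim A^- \cdot \dim \lambda = \dim A^+ \cdot \dim \lambda$ as before. Since $T$ is semisimple and split, $\sum_\lambda (\dim \lambda)^2 = \dim T$, so multiplying through yields $\sum_\lambda (\dim L(\lambda))^2 = \dim A^- \cdot \dim A^+ \cdot \dim T = \dim A$, and the split-algebra fact applied to $A$ forces $A$ to be semisimple.

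I do not expect a genuine obstacle: once one has the classification of simples from Theorem \ref{heads} and the tensor-product description of standards and costandards, the whole argument reduces to bookkeeping with the three dimensions $\dim A^-$, $\dim T$, $\dim A^+$. The one point worth flagging is that the formula $\dim \ol{\nabla}(\lambda) = \dim A^+ \cdot \dim \lambda$ must be extracted by passing through $\mathcal{T}^{\op}$ (as in Lemma \ref{basic_nabla_lemma_2}) rather than from the definition of $\ol{\nabla}$ itself; apart from this small asymmetry between the $+$ and $-$ sides, the argument is symmetric.
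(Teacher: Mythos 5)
Your proof is correct and is essentially the same dimension count the paper uses: both rely on the tensor-product formulas $\dim\ol{\Delta}(\lambda)=\dim A^-\cdot\dim\lambda$, $\dim\ol{\nabla}(\lambda)=\dim A^+\cdot\dim\lambda$, the PBW-style identity $\dim A=\dim A^-\cdot\dim T\cdot\dim A^+$, and the split-algebra fact that $\dim B=\dim\Rad(B)+\sum(\dim S)^2$. You merely spell out the steps (including the implicit observation $\dim A^-=\dim A^+$) that the paper's terse ``the claim now follows'' leaves to the reader.
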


\begin{proof}
Since $T$ splits by assumption, we have
\begin{equation}
    \dim_K T = \dim_K \Rad(T) + \sum_{\lambda \in \Irr \mathcal{M}(T)} (\dim_K \lambda)^2 \;.
\end{equation}
Moreover, we know that $A$ splits by Proposition \ref{splitting} and therefore
\begin{equation}
    \dim_KA = \dim_K \Rad(A) + \sum_{\lambda \in \Irr \mathcal{M}(T)} (\dim_K L(\lambda))^2 \;.
\end{equation}
The claim now follows using $\dim_K \ol{\Delta}(\lambda) = \dim_K \lambda \cdot \dim_K A^-$ and $\dim_K \ol{\nabla}(\lambda) = \dim_K \lambda \cdot \dim_K A^+$ for $\lambda \in \Irr \mathcal{C}(T)$, and $\dim_K A = \dim_K A^- \cdot \dim_K T \cdot \dim_K A^+$.

\end{proof}

\subsection{The top component} We can give a definite result about the structure of the \word{top component} of a simple $A$-module, i.e., for the homogeneous component of maximal degree. This will be used frequently in the paper.

\begin{lem} \label{degree_zero_component}
Let $\lambda \in \Irr \mathcal{G}_d(T)$. Then $\ol{\Delta}(\lambda)$ is generated in $\mathcal{G}(A)$ by any non-zero element of degree $d$ and $\lambda \simeq L(\lambda)_d \simeq \ol{\Delta}(\lambda)_d$ in $\mathcal{G}(T)$. 
\end{lem}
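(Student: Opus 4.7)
The plan is to read off the statement from Lemma \ref{basic_delta_lemma} and Lemma \ref{basic_delta_lemma_2}, together with the observation that the simplicity of $\lambda$ forces any nonzero top-degree vector to generate the whole proper standard module.

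First, I would unpack the structure of $\ol{\Delta}(\lambda)$ in its top degree. By Lemma \ref{basic_delta_lemma_2}(a), $\ol{\Delta}(\lambda)_d \simeq \lambda$ in $\mathcal{C}(T)$, and by Lemma \ref{basic_delta_lemma_2}\ref{basic_delta_lemma_2:supp_contained}, $\Supp \ol{\Delta}(\lambda)\subseteq d-\bbN$, so degree $d$ is the top component. Next, by Lemma \ref{basic_delta_lemma}(b), $\ol{\Delta}(\lambda)\simeq A^-\otimes_K\lambda$ in $\mathcal{C}(A^-)$, where $\lambda$ sits in the image via $v\mapsto 1\otimes v$. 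Therefore $\ol{\Delta}(\lambda)= A^-\cdot \ol{\Delta}(\lambda)_d$.

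For generation, let $0\neq v\in\ol{\Delta}(\lambda)_d$. Under the identification $\ol{\Delta}(\lambda)_d\simeq\lambda$, the element $v$ is a nonzero vector of the simple $T$-module $\lambda$, so $Tv=\lambda$. Since $T\subseteq A$, we get $A\cdot v\supseteq T\cdot v=\lambda$, and then $A\cdot v\supseteq A^-\cdot \lambda=\ol{\Delta}(\lambda)$ by the preceding paragraph. Thus $A\cdot v=\ol{\Delta}(\lambda)$, proving the generation claim.

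Finally, for $L(\lambda)_d\simeq\lambda$, I would use the canonical surjection $\pi\from\ol{\Delta}(\lambda)\twoheadrightarrow L(\lambda)$ coming from Theorem \ref{heads}. Since $\pi$ is a graded $A$-module map and $T$ lives in degree zero, restricting to degree $d$ yields a surjective $T$-module map $\pi_d\from \lambda\simeq\ol{\Delta}(\lambda)_d \twoheadrightarrow L(\lambda)_d$. Simplicity of $\lambda$ leaves only $L(\lambda)_d=0$ or $L(\lambda)_d\simeq\lambda$. The first alternative would force $\lambda\subseteq\Ker\pi=\Rad\ol{\Delta}(\lambda)$, but by the generation statement any nonzero $v\in\lambda$ generates $\ol{\Delta}(\lambda)$, contradicting that $\Rad\ol{\Delta}(\lambda)$ is a proper submodule. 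Hence $L(\lambda)_d\simeq\lambda$, completing the chain $\lambda\simeq L(\lambda)_d\simeq\ol{\Delta}(\lambda)_d$.

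The only mildly subtle point is keeping track of which algebra we are using to generate: we need that the $T$-action inside $A$ genuinely agrees with the $T$-action on $\ol{\Delta}(\lambda)_d$ coming from the isomorphism in Lemma \ref{basic_delta_lemma_2}(a), but this is built into the construction $A\otimes_{B^+}\Inf^{B^+}_T\lambda$, so no real obstacle arises.
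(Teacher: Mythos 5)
Your proposal is correct and follows essentially the same route as the paper: both deduce $\ol{\Delta}(\lambda)_d\simeq\lambda$ and $\Supp\ol{\Delta}(\lambda)\subseteq d-\bbN$ from Lemma \ref{basic_delta_lemma_2}, use simplicity to get $Tv=\lambda$, invoke the $B^-$- (equivalently $A^-$-) module structure from Lemma \ref{basic_delta_lemma} to conclude generation, and then deduce $\ol{\Delta}(\lambda)_d\cap\Rad\ol{\Delta}(\lambda)=0$ so that the quotient map is injective in degree $d$. The only cosmetic difference is that you pass through $A^-\otimes_K\lambda$ rather than the paper's explicit element manipulation inside $B^-\otimes_T\lambda$, and you phrase the last step as a Schur dichotomy rather than directly stating the trivial intersection with the radical.
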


\begin{proof}
In Lemma \ref{basic_delta_lemma_2} we have already seen that $\ol{\Delta}(\lambda)_d \simeq \lambda$ as $T$-modules. From Lemma \ref{basic_delta_lemma} we know that $\ol{\Delta}(\lambda)$ is isomorphic to $B^- \otimes_T \lambda$ as a left $T$-module.  Let $0 \neq v \in \lambda$. Since $\lambda$ is simple, we have $Tv = \lambda$. If now $b \otimes w \in \ol{\Delta}(\lambda)$ with $b \in B^-$ and $w \in \lambda$, then there is $t \in T$ with $tv = w$ and therefore $bt(1 \otimes v) = bt \otimes v = b \otimes tv = b \otimes w$. Hence, $\ol{\Delta}(\lambda)$ is generated by $1 \otimes v$, which is an element of degree $d$. This implies that $\ol{\Delta}(\lambda)_d \cap \Rad \ol{\Delta}(\lambda) = 0$. Therefore the quotient morphism $\ol{\Delta}(\lambda) \twoheadrightarrow L(\lambda)$ induces an isomorphism $\ol{\Delta}(\lambda)_d \simeq L(\lambda)_d$ in $\mathcal{G}(T)$.
\end{proof}

Together with Lemma \ref{basic_delta_lemma_2}\ref{basic_delta_lemma_2:supp_contained} it thus follows that 
\begin{equation}
\udim L(\lambda) = (\dim \lambda) t^d  + \tn{ terms of lower degree}
\end{equation}
for $\lambda \in \Irr \mc{G}_d(T)$. In particular, $\deg L(\lambda)$ is the largest integer in $\Supp L(\lambda)$. 

\begin{cor} \label{nabla_d_component}
Let $\lambda \in \Irr \mathcal{G}_d(T)$. Then every non-zero proper submodule of $\ol{\nabla}(\lambda)$ contains $\ol{\nabla}(\lambda)_d$. In particular, $\Soc \ol{\nabla}(\lambda) \simeq L(\lambda)$ is generated by $\ol{\nabla}(\lambda)_d$.
\end{cor}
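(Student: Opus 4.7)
The plan is to deduce this from Lemma \ref{degree_zero_component} applied to the opposite triangular decomposition $\mathcal{T}^\op$, together with graded standard duality. The key identification is $\ol{\nabla}(\lambda) \simeq \ol{\Delta}(\lambda^*)^*$ with $\lambda^* \in \Irr \mathcal{G}_d(T^\op)$, since standard duality preserves the degree (Section \ref{section_t_modules}).

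First I would apply Lemma \ref{degree_zero_component} to the triangular decomposition $\mathcal{T}^\op$ of $A^\op$ (Lemma \ref{lem:opptriangle}) with the simple $T^\op$-module $\lambda^* \in \Irr \mathcal{G}_d(T^\op)$. This yields that every non-zero homogeneous element of degree $d$ in $\ol{\Delta}(\lambda^*)$ generates the whole module, so every proper graded submodule $V \leq \ol{\Delta}(\lambda^*)$ must satisfy $V_d = 0$.

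Now let $U \leq \ol{\nabla}(\lambda)$ be a non-zero graded submodule and set $V \dopgleich U^\perp \leq \ol{\nabla}(\lambda)^* \simeq \ol{\Delta}(\lambda^*)$. Since $\perp$ is inclusion-reversing and $U \neq 0$, $V$ is a proper submodule. Using the graded identification $(\ol{\nabla}(\lambda)^*)_i \simeq \ol{\nabla}(\lambda)_i^*$ recalled in Section \ref{sec:standarddual}, one has $\dim_K V_i = \dim_K \ol{\nabla}(\lambda)_i - \dim_K U_i$. Hence $V_d = 0$ translates to $U_d = \ol{\nabla}(\lambda)_d$; that is, $U$ contains $\ol{\nabla}(\lambda)_d$. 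This gives the first assertion (which in fact holds for every non-zero submodule, proper or not).

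For the ``in particular'' part, $\Soc \ol{\nabla}(\lambda) \simeq L(\lambda)$ by Theorem \ref{heads}, and as a non-zero submodule it contains $\ol{\nabla}(\lambda)_d$ by the previous step; simplicity of $L(\lambda)$ as an $A$-module then forces $\Soc \ol{\nabla}(\lambda) = A \cdot \ol{\nabla}(\lambda)_d$. There is essentially no obstacle here: the argument is a routine translation through standard duality, and the only bookkeeping concerns matching the degree conventions when passing between $\mathcal{T}$ and $\mathcal{T}^\op$, both of which have already been set up in the preceding sections.
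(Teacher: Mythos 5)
Your argument is correct and is essentially the paper's proof, reorganized slightly: both reduce to the same input from Lemma \ref{degree_zero_component} applied to $\mathcal{T}^\op$ (namely that degree-$d$ elements generate $\ol{\Delta}(\lambda^*)$, hence any proper graded submodule vanishes in degree $d$), and both transport this across the isomorphism $\ol{\nabla}(\lambda)^* \simeq \ol{\Delta}(\lambda^*)$. Where the paper passes to the quotient $\ol{\nabla}(\lambda)/V$, dualizes it, and then argues degree by degree, you use the inclusion-reversing bijection $U \mapsto U^\perp$ directly with the identity $\dim (U^\perp)_i = \dim \ol{\nabla}(\lambda)_i - \dim U_i$, which is a cleaner bookkeeping of the same step.
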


\begin{proof}
Let $0 \neq V < \ol{\nabla}(\lambda)$ and let $W \dopgleich \ol{\nabla}(\lambda)/V$. The quotient morphism $\ol{\nabla}(\lambda) \twoheadrightarrow W$ is a surjective but not injective morphism and so its dual $W^* \hookrightarrow \ol{\nabla}(\lambda)^*$ is an injective but not surjective morphism. The image $W'$ of this morphism is thus a proper submodule of $\ol{\nabla}(\lambda)^*$ and therefore contained in $\Rad \ol{\nabla}(\lambda)^*$. Note that
 \begin{equation}
 \ol{\nabla}(\lambda)^* = \left(\ol{\Delta}(\lambda^*)^*\right)^* \simeq \ol{\Delta}(\lambda^*) \;.
 \end{equation}
 Lemma \ref{basic_delta_lemma_2} applied to $\mathcal{T}^\op$ shows that $\Supp \ol{\Delta}(\lambda^*) = d + (\Supp A^+)^\op = d - \Supp A^+ \subset d - \bbN$. We know from Lemma \ref{degree_zero_component} applied to $\mathcal{T}^\op$ that $\Rad \ol{\Delta}(\lambda^*) \cap \ol{\Delta}(\lambda^*)_{d} = 0$ and so we see that $W'$ must be contained in $\ol{\Delta}(\lambda^*)_{d-\bbN_{>0}}$. Upon dualizing, this shows that the quotient morphism $\ol{\nabla}(\lambda) \twoheadrightarrow W$ is still surjective on $\ol{\nabla}(\lambda)_{d-\bbN_{>0}}$. But this means that
 \[
 \left( \ol{\nabla}(\lambda)_{d-\bbN_{>0}} + V \right)/V = \ol{\nabla}(\lambda)/V
 \]
 and therefore $\ol{\nabla}(\lambda)_{d-\bbN_{>0}} + V = \ol{\nabla}(\lambda)$. Since $V$ is graded and $d \in \Supp \ol{\nabla}(\lambda)$, this is only possible if $\ol{\nabla}(\lambda)_d \subs V$. This proves the first claim. Now, the statement applies in particular to $\Soc \ol{\nabla}(\lambda)$ so that $\ol{\nabla}(\lambda)_d \subs \Soc\ol{\nabla}(\lambda)$. As $\Soc \ol{\nabla}(\lambda)$ is the unique minimal submodule of $\ol{\nabla}(\lambda)$, we conclude that $\ol{\nabla}(\lambda)_d$ generates $\Soc \ol{\nabla}(\lambda)$.
\end{proof}

%

%
%
%
%
%

\subsection{Grothendieck groups} \label{sec_dec_matrices}
Recall that the shift operation endows the graded Gro\-then\-dieck groups $\K_0(\mc{G}(T))$ and $\K_0(\mc{G}(A))$ with a $\bbZ \lbrack t,t^{-1} \rbrack$-module structure such that $t$ acts by the shift $\lbrack 1 \rbrack$. We can view any graded $A$-module as a graded $T$-module, and thus obtain a $\bbZ \lbrack t,t^{-1}\rbrack$-module morphism 
\begin{equation}
\chi \from \K_0(\mathcal{G}(A)) \to \K_0(\mathcal{G}(T)) \;.
\end{equation}
The Grothendieck groups $\K_0(\mc{G}(T))$ and $\K_0(\mc{G}(A))$ are free $\bbZ \lbrack t,t^{-1} \rbrack$-modules with basis $\Irr \mc{G}_0(T)$ and $\Irr_0 \mc{G}(A)$, respectively. Let $\mathbf{C}_L$ be the matrix of the morphism $\chi$ in these bases, i.e., 
\begin{equation}
\mathbf{C}_L \dopgleich  ( \lbrack L(\lambda) : \mu \rbrack^{\mrm{gr}} )_{\lambda,\mu \in \Irr \mc{G}_0(T)} \in \Mat_{\Irr \mc{G}_0(T)}(\bbZ\lbrack t,t^{-1} \rbrack) \;.
\end{equation}
and let $\mathbf{D}_{\ol{\Delta}}$ be the decomposition matrix of the proper standard modules as graded $A$-modules, i.e.,
\begin{equation}
\mathbf{D}_{\ol{\Delta}} \dopgleich ( \lbrack \ol{\Delta}(\lambda) : L(\mu) \rbrack^{\mrm{gr}} )_{\lambda,\mu \in \Irr \mc{G}_0(T)} \in \Mat_{\Irr \mc{G}_0(T)}(\bbZ\lbrack t,t^{-1} \rbrack) \;.
\end{equation}
Evaluating at $t=1$ yields the ungraded decomposition matrices which we denote by $F\mathbf{C}_L$ and $F\mathbf{D}_{\ol{\Delta}}$, respectively. We will show that determining $\mathbf{C}_L$ is essentially equivalent to determining $\mathbf{D}_{\ol{\Delta}}$. Let
\begin{equation} \label{matrix_A}
\mathbf{C}_{\ol{\Delta}} \dopgleich (\lbrack \ol{\Delta}(\lambda):\mu \rbrack^{\mrm{gr}})_{\lambda,\mu \in \Irr \mc{G}_0(T)} \in \Mat_{\Irr \mc{G}_0(T)}(\bbZ\lbrack t,t^{-1} \rbrack)
\end{equation}
be the graded decomposition matrix of the proper standard modules as $T$-modules. By definition, it is clear that
\begin{equation}
\mathbf{C}_{\ol{\Delta}} = \mathbf{D}_{\ol{\Delta}}  \mathbf{C}_L  \;.
\end{equation}
Evaluating at $t=1$ yields the analogous relation 
\begin{equation}
F\mathbf{C}_{\ol{\Delta}} = (F\mathbf{D}_{\ol{\Delta}}) (F \mathbf{C}_L)
\end{equation}
for the ungraded decomposition matrices.

\begin{prop}\label{prop:injectiveK}
The morphism $\chi \from \K_0(\mc{G}(A)) \to \K_0(\mc{G}(T))$ is injective.
\end{prop}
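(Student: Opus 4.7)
My plan is to prove injectivity by identifying the matrix $\mathbf{C}_L$ of $\chi$ in the natural $\bbZ[t,t^{-1}]$-bases as \emph{unitriangular modulo $t^{-1}$}, after which injectivity will follow from an elementary leading-coefficient argument. Since $\K_0(\mc{G}(A))$ is free over $\bbZ[t,t^{-1}]$ with basis $\{[L(\lambda)] \mid \lambda \in \Irr \mc{G}_0(T)\}$ and $\K_0(\mc{G}(T))$ is free with basis $\Irr \mc{G}_0(T)$, the map $\chi$ is represented by the finite square matrix $\mathbf{C}_L$, and it suffices to prove that multiplication by $\mathbf{C}_L$ is injective on $\bbZ[t,t^{-1}]^{\Irr \mc{G}_0(T)}$.

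The first step is to read off the structure of $\mathbf{C}_L$. For $\lambda \in \Irr \mc{G}_0(T)$, Lemma \ref{degree_zero_component} gives $L(\lambda)_0 \simeq \lambda$ in $\mc{M}(T)$, and since $L(\lambda)$ is a quotient of $\ol{\Delta}(\lambda)$, Lemma \ref{basic_delta_lemma_2}\ref{basic_delta_lemma_2:supp_contained} forces $\Supp L(\lambda) \subset \bbZ_{\le 0}$. Consequently, for any $\mu \in \Irr \mc{G}_0(T)$,
\[
[L(\lambda):\mu]^{\mathrm{gr}} = \delta_{\lambda\mu} + \sum_{i<0} [L(\lambda)_i : \mu]_{\mc{M}(T)}\, t^i \in \bbZ[t^{-1}],
\]
so every entry of $\mathbf{C}_L$ lies in $\bbZ[t^{-1}]$, and $\mathbf{C}_L \equiv \id \pmod{t^{-1}}$.

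To finish, I would argue by contradiction: suppose $\xi = \sum_\lambda f_\lambda(t)[L(\lambda)]$ is a nonzero element of $\ker \chi$ with $f_\lambda \in \bbZ[t,t^{-1}]$, and let $M$ be the largest power of $t$ occurring in any $f_\lambda$. Expanding the $\mu$-th coordinate of $\chi(\xi) = 0$ gives
\[
f_\mu(t) + \sum_\lambda f_\lambda(t)\, g_{\lambda\mu}(t) = 0, \qquad g_{\lambda\mu} \in t^{-1}\bbZ[t^{-1}].
\]
Since each $g_{\lambda\mu}$ has only strictly negative powers of $t$, the product $f_\lambda g_{\lambda\mu}$ has degree strictly less than $\deg f_\lambda \le M$, so it contributes nothing to the coefficient of $t^M$. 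Hence the coefficient of $t^M$ in the displayed sum equals the coefficient of $t^M$ in $f_\mu$, and this must vanish for every $\mu$. This contradicts the choice of $M$.

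I do not expect any serious obstacle here: all the substantive content is concentrated in the structural lemma \ref{degree_zero_component} (together with the support bound from Lemma \ref{basic_delta_lemma_2}), which pins down the ``leading term'' $t^0$ of $[L(\lambda):\mu]^{\mathrm{gr}}$. Given that, the remaining triangular-matrix manipulation is elementary and works precisely because $\Irr \mc{G}_0(T)$ is \emph{finite} (as $T$ is finite-dimensional), so there are no convergence or well-definedness issues in the leading-coefficient argument over $\bbZ[t,t^{-1}]$.
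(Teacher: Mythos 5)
Your proof is correct and follows essentially the same strategy as the paper's: exploit $L(\lambda)_0 \simeq \lambda$ and $\Supp L(\lambda) \subset \bbZ_{\le 0}$ to see that $\mathbf{C}_L$ is the identity plus terms in strictly negative $t$-degree, then conclude injectivity by a leading-coefficient argument. The paper packages the same idea slightly differently (it substitutes $u = t^{-1}$, restricts to the $\bbZ[u]$-submodule spanned by $\Irr_0\mathcal{G}(A)$, argues by looking at trailing $u$-degree, and then invokes exactness of localization to get back to $\bbZ[t,t^{-1}]$), but your direct extraction of the top $t$-coefficient is a cleaner way of making the same triangularity argument and avoids the localization step.
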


\begin{proof}
Let $u \dopgleich t^{-1} $. Let $U$ be the $\bbZ \lbrack u \rbrack$-submodule of $\K_0(\mathcal{G}(A))$ generated by $\Irr_0 \mathcal{G}(A)$ and let $V$ be the $\bbZ \lbrack u \rbrack$-submodule of $\K_0(\mathcal{G}(T))$ generated by $\Irr_0 \mathcal{G}(T)$. If we can show that the restriction of $\chi$ to $U$ is injective, then, as $\chi$ is the localization of $\chi|_U$ in the multiplicative set $\lbrace u^n \mid n \in \bbN \rbrace$, $\chi$ is also injective by exactness of localization. An arbitrary non-zero element $x$ of $U$ is of the form $\sum_{\lambda \in \Lambda} f_\lambda \lbrack L(\lambda) \rbrack$ for some subset $\Lambda \subseteq \Irr_0 \mathcal{G}(T)$ and some non-zero $f_\lambda \in \bbZ \lbrack u \rbrack$. We then have $\chi(x) = \sum_{\lambda \in \Lambda} f_\lambda \chi([L(\lambda)])$. Now, recall from Lemma \ref{basic_delta_lemma_2}\ref{basic_delta_lemma_2:supp_contained} and Lemma \ref{degree_zero_component} that $\chi([L(\lambda)]) = \lbrack \lambda \rbrack + x_\lambda$, where $x_\lambda \in u V$. Hence, if $\chi(x) = 0$, then
\begin{equation} \label{graded_scalar_restriction_injective_equ}
\sum_{\lambda \in \Lambda} f_\lambda \lbrack \lambda \rbrack + f_\lambda x_\lambda = 0 \;.
\end{equation}
Let $b_\lambda$ be the trailing degree of $f_\lambda$, i.e., the minimum of the exponents of the indeterminate $u$ among the non-zero monomials in $f_\lambda$. Since $0 \neq f_\lambda \in \bbZ \lbrack u \rbrack$ by assumption, we have $b_\lambda \in \bbN$. Let $b$ be the minimum of all the $b_\lambda$. Then $f_\lambda x_\lambda \in u^{b+1} V$. Note that $V$ is a free $\bbZ \lbrack u \rbrack$-module with basis $\Irr_0 \mathcal{G}(T)$ and so the quotient $V/u^{b+1} V$ is a free $\bbZ \lbrack u \rbrack/(u^{b+1})$-module with basis indexed by the same set $\Irr_0 \mathcal{G}(T)$. Let $\ol{f}_\lambda$ be the image of $f_\lambda$ in $\bbZ \lbrack u \rbrack/(u^{b+1})$ and let $\ol{\lbrack \lambda \rbrack}$ be the image of $\lbrack \lambda \rbrack \in V$ in $V/u^{b+1} V$. Then equation (\ref{graded_scalar_restriction_injective_equ}) implies that $\sum_{\lambda \in \Lambda} \ol{f}_\lambda \ol{\lbrack \lambda \rbrack} = 0$ and hence $\ol{f}_\lambda = 0$ for all $\lambda$. But there is some $\mu \in \Lambda$ with $b_\mu = b$, implying that $\ol{f}_\mu \neq 0$; this is a contradiction. Hence, $\chi$ is injective on $U$.
\end{proof}

\begin{cor} \label{dec_matrix_relation}
The matrix $\mathbf{C}_L$ is invertible over $\bbQ (t)$, so 
\begin{equation} \label{dec_matrix_relation:equ}
\mathbf{D}_{\ol{\Delta}} =  \mathbf{C}_{\ol{\Delta}} \mathbf{C}_L^{-1} \;.
\end{equation}
\end{cor}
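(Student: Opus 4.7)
The plan is to deduce invertibility of $\mathbf{C}_L$ over $\bbQ(t)$ directly from the injectivity of the graded character map $\chi$ (Proposition \ref{prop:injectiveK}), and then read off (\ref{dec_matrix_relation:equ}) from the factorisation $\mathbf{C}_{\ol{\Delta}} = \mathbf{D}_{\ol{\Delta}} \mathbf{C}_L$ established just above the statement.

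The first point to pin down is that $\mathbf{C}_L$ is a \emph{square} matrix of finite size. By Theorem \ref{heads}, the assignment $\lambda \mapsto L(\lambda)$ restricts to a bijection $\Irr \mc{G}_0(T) \rightsim \Irr_0 \mc{G}(A)$, and both sets are finite because $T$ is a finite-dimensional $K$-algebra with only finitely many isomorphism classes of simple modules. Hence $\K_0(\mc{G}(T))$ and $\K_0(\mc{G}(A))$ are free $\bbZ \lbrack t,t^{-1} \rbrack$-modules of the \emph{same} finite rank $n = |\Irr \mc{G}_0(T)|$, and the map $\chi$ is represented in the chosen bases by the $n\times n$ matrix $\mathbf{C}_L$.

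Next I invoke injectivity. Since $\bbZ \lbrack t,t^{-1} \rbrack$ is an integral domain with fraction field $\bbQ(t)$, and since $\chi$ is injective by Proposition \ref{prop:injectiveK}, the induced $\bbQ(t)$-linear map $\chi \otimes_{\bbZ \lbrack t,t^{-1} \rbrack} \bbQ(t)$ between $n$-dimensional $\bbQ(t)$-vector spaces is still injective: any element of its kernel can be cleared of denominators to yield a non-zero element of $\ker \chi$. An injective linear endomorphism of a finite-dimensional vector space is an isomorphism, so $\mathbf{C}_L$ has non-zero determinant in $\bbZ \lbrack t,t^{-1} \rbrack$ and is invertible over $\bbQ(t)$.

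Finally, the identity $\mathbf{C}_{\ol{\Delta}} = \mathbf{D}_{\ol{\Delta}} \mathbf{C}_L$ recorded just before the corollary is a straightforward consequence of transitivity of decomposition multiplicities; multiplying on the right by $\mathbf{C}_L^{-1}$ in $\Mat_{\Irr \mc{G}_0(T)}(\bbQ(t))$ yields (\ref{dec_matrix_relation:equ}). There is no real obstacle in this argument; the only non-formal ingredient is Proposition \ref{prop:injectiveK}, whose role is precisely to promote the factorisation $\mathbf{C}_{\ol{\Delta}} = \mathbf{D}_{\ol{\Delta}} \mathbf{C}_L$ into a genuine formula for $\mathbf{D}_{\ol{\Delta}}$.
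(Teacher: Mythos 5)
Your argument is correct and follows the same route the paper takes: represent $\chi$ by the square matrix $\mathbf{C}_L$ over $\bbZ\lbrack t,t^{-1}\rbrack$, use Proposition \ref{prop:injectiveK} to get injectivity, extend scalars to the fraction field $\bbQ(t)$ to obtain an isomorphism, and then rearrange the factorisation $\mathbf{C}_{\ol{\Delta}} = \mathbf{D}_{\ol{\Delta}} \mathbf{C}_L$. The only difference is that you spell out two points the paper leaves implicit — that the two Grothendieck groups have the same finite rank (via Theorem \ref{heads} and the finiteness of $\Irr \mc{M}(T)$), and that localisation at the multiplicative set of non-zero elements preserves injectivity — both of which are correct and worth making explicit.
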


\begin{proof}
The matrix $\mathbf{C}_L$ is the matrix of the $\bbZ \lbrack t,t^{-1} \rbrack$-module morphism $\chi$ between two free modules of the same rank. By Proposition \ref{prop:injectiveK} it is injective, hence, after extending to $\bbQ(t)$, it is an isomorphism, so $\mathbf{C}_L$ is invertible. 
\end{proof}

\begin{ex}

We note that the proper standard modules $\ol{\Delta}(\lambda)$, $\lambda \in \Irr \mathcal{C}(T)$, do not necessarily form a basis of $\mrm{K}_0(\mathcal{C}(A))$. As an example consider the $K$-algebra $A = K\lbrack x,y \rbrack/(x^2,y^2)$ with triangular decomposition $K\lbrack x \rbrack/(x^2) \otimes_K K \otimes_K K \lbrack y \rbrack/(y^2)$ and $\mrm{deg}(x) =-1$, $\mrm{deg}(y)=1$. There is only one simple $K$-module, namely the trivial one, which we denote by $1$ and which we consider as graded in degree zero. Let $\ol{\Delta} \dopgleich \ol{\Delta}(1)$ and let $L=L(1)$. It is not hard to see that $\lbrack \ol{\Delta} \rbrack = \lbrack L \rbrack + t \lbrack L \rbrack$ in $\mrm{K}_0(\mathcal{G}(A))$. It is thus clear that $\lbrack \ol{\Delta} \rbrack$ cannot be a basis of $\mrm{K}_0(\mathcal{C}(A))$. 
\end{ex}

To determine $\mathbf{C}_{\ol{\Delta}}$, recall from Lemma \ref{basic_delta_lemma} that $\ol{\Delta}(\lambda) = (J^- \otimes_T \lambda) \oplus \lambda$ in $\mc{C}(T)$, so this boils down to determining the graded $T$-module structure of $J^-$ and understanding the decomposition of tensor products of simple $T$-modules. 

%



\subsection{Rigid modules}

We want to mention a special case where we have, for specific $\lambda$, a complete understanding of $L(\lambda)$. The \word{rigid quotient} of $A$ is the quotient algebra $\check{A} \dopgleich A / I$, where $I$ is the two-sided ideal of $A$ generated by $A^-_{<0}$ and $A^+_{>0}$. Since $A$ splits by Proposition \ref{splitting}, $\check{A}$ is also split. Note that by the triangular decomposition of $A$, we have a surjection $T \twoheadrightarrow \check{A}$ and $\check{A} = T/(T \cap I)$. The simple $\check{A}$-modules are precisely the simple $A$-modules $L(\lambda)$ with trivial action of $A^-_{<0}$ and $A^+_{>0}$. In this case, we say that $\lambda \in \Irr \mc{M}(T)$ is \word{rigid}.

\begin{ex}
In our standard example $A = K\lbrack x,y\rbrack/(x^2,y^2)$ it is clear that $I$ is generated by $x$ and $y$, so $\check{A} = K$, and therefore the unique simple $T$-module is rigid.
\end{ex}

\begin{lem}
$\lambda \in \Irr \mc{C}(T)$ is rigid if and only if $L(\lambda) \simeq \lambda$ in $\mc{C}(T)$.
\end{lem}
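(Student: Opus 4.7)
The plan is to prove each direction by translating between rigidity, which is an annihilation condition on $A^-_{<0}$ and $A^+_{>0}$, and concentration of $L(\lambda)$ in a single graded component, which is a support condition; the bridge between the two is the top-component identification $L(\lambda)_d \simeq \lambda$ for $d \dopgleich \deg\lambda$ coming from Lemma \ref{degree_zero_component}.

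For the forward direction, if $\lambda$ is rigid, then by definition $A^-_{<0}$ and $A^+_{>0}$ act as zero on $L(\lambda)$, so $L(\lambda)$ descends to a simple $\check{A}$-module. The triangular decomposition together with $A^\pm T = T A^\pm$ forces the composite $T \hookrightarrow A \twoheadrightarrow \check{A}$ to be surjective, so $L(\lambda)$ is in particular a simple $T$-module. In the graded setting every simple object of $\mc{G}(T)$ is concentrated in a single degree (Section \ref{section_t_modules}), and Lemma \ref{degree_zero_component} identifies the degree $d$ component of $L(\lambda)$ with $\lambda$; the simple graded $T$-module $L(\lambda)$ must therefore equal $\lambda$. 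The ungraded case is then immediate by applying the forget functor $F$ and invoking Theorem \ref{heads}\ref{heads:forget}.

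For the converse, I would first reduce to the graded case by showing that $L(\lambda) \simeq \lambda$ in $\mc{C}(T)$ forces the graded $L(\lambda)$ to be concentrated in the single degree $d$: in the graded case this is tautological, and in the ungraded case it follows from comparing $\dim L(\lambda) = \dim \lambda$ with $\dim L(\lambda)_d = \dim \lambda$ provided by Lemma \ref{degree_zero_component}. Once $\Supp L(\lambda) = \{d\}$, both annihilation conditions reduce to pure degree bookkeeping: a homogeneous element of $A^+_{>0}$ strictly raises degree and one of $A^-_{<0}$ strictly lowers it, so each maps $L(\lambda)_d$ outside the support and hence to zero. Therefore $\lambda$ is rigid.

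The only mildly delicate point is the ungraded-to-graded reduction in the converse direction, but this is routine given the gradability of simple modules (Lemma \ref{forget_functor}) combined with the dimension input from Lemma \ref{degree_zero_component}; otherwise both implications are essentially formal consequences of the degree structure already established.
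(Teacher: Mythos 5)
Your proof is correct, and the core mechanism — Lemma~\ref{degree_zero_component} supplying the identification $L(\lambda)_d \simeq \lambda$, followed by degree bookkeeping to extract rigidity or single-degree concentration — is the same as the paper's. The converse direction (given $L(\lambda) \simeq \lambda$, deduce rigidity) matches the paper exactly. Your forward direction takes a slightly different route: you pass through the rigid quotient $\check{A}$ and the surjection $T \twoheadrightarrow \check{A}$ to conclude that $L(\lambda)$ is simple as a graded $T$-module, hence concentrated in a single degree. The paper instead observes directly that once $A^-_{<0}$ and $A^+_{>0}$ annihilate $L(\lambda)$ and $T$ preserves degree, each homogeneous component $L(\lambda)_i$ is already an $A$-submodule, so simplicity of $L(\lambda)$ over $A$ forces one component. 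Your version invokes a touch more machinery (the structure of $\check{A}$ and $T$-simplicity), while the paper's is more self-contained; both hinge on the same grading observation, and neither buys anything the other misses. Your explicit treatment of the ungraded-to-graded reduction by dimension count is also fine, and makes precise what the paper dismisses as ``without loss of generality.''
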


\begin{proof}
Without loss of generality we can work in the graded setting. If $L(\lambda) \simeq \lambda$, then $L(\lambda)$ is concentrated in a single degree, so $A^-_{<0}$ and $A^+_{>0}$ have to act trivially, i.e., $\lambda$ is rigid. Conversely, assume that $\lambda \in \Irr \mc{G}_d(T)$ is rigid, so $A^-_{<0}$ and $A^+_{>0}$ act trivially on $L(\lambda)$. From Lemma \ref{degree_zero_component} we know that $L(\lambda)_d \simeq \lambda$ as $T$-modules. Since $A^-_{<0}$ and $A^+_{>0}$ act trivially on $L(\lambda)$ and $T$ is concentrated in degree zero, every homogeneous component $L(\lambda)_i$ is an $A$-submodule of $L(\lambda)$. Hence, $L(\lambda)$ can have only one non-zero component, so $L(\lambda) = L(\lambda)_d \simeq \lambda$. 
\end{proof}

Rigid modules for restricted rational Cherednik algebras played an important role in an earlier paper \cite{BT-Cuspidal} by the authors. In this paper we classified the rigid modules for restricted rational Cherednik algebras for all but a few exceptional Coxeter groups. It is an open problem to determine the rigid modules for the other examples mentioned in the introduction.

\section{Highest weight theory} \label{sec:hwc}

In this section we show, without imposing any further assumptions on $A$, that the graded module category $\mc{G}(A)$ is a \word{standardly stratified category} in the sense of Losev–Webster \cite[\S2]{UniqueLosevWebester}. The layers of the stratification are the categories $\mathcal{G}_d(T) \simeq \mathcal{M}(T)$ defined in \S\ref{section_t_modules}. This implies that the graded representation theory of $A$ has a rich combinatorial structure. In the case where $T$ is semisimple, which is true in all the examples mentioned in the introduction, this standardly stratified structure is a highest weight structure, in the sense of Cline–Parshall–Scott \cite{CPS}. 

\subsection{Standardly stratified categories} \label{sec_standardly_strat}
Let us first recall the notion of a standardly stratified category. Our definition is based on the one given in \cite[\S2]{UniqueLosevWebester}, but we weaken some of the assumptions. The usual results one derives from a standardly stratified category still hold with these weaker assumptions. 




\begin{defn}[Losev–Webster]
Let $K$ be a field and let $\mathcal{C}$ be a $K$-linear finite length abelian category, with enough projectives, such that each simple object is absolutely simple. Let $\Lambda$ be a set indexing the isomorphism classes of simple objects in $\mathcal{C}$, with $L(\lambda)$ being the simple object corresponding to $\lambda \in \Lambda$. The projective cover of $L(\lambda)$ is denoted $P(\lambda)$. Let $\Xi$ be an interval finite  poset and let $d:\Lambda \rarr \Xi$ be a map with finite fibers. Then $\Lambda$ is equipped with a partial order $\leq$, defined by 
\begin{equation} \label{ss_partial_order}
\lambda < \mu \tn{ if and only if } d(\lambda) < d(\mu) \;.
\end{equation}
For $\xi \in \Xi$, let $\mathcal{C}_{\leq \xi}$, resp. $\mathcal{C}_{<\xi}$, be the Serre subcategory spanned by the $L(\lambda)$ with $d(\lambda) \leq \xi$, resp. $d(\lambda) < \xi$. Let $\mathcal{C}_\xi \dopgleich \mathcal{C}_{\leq \xi}/\mathcal{C}_{<\xi}$ be the quotient category, called a \word{layer} of $\mathcal{C}$, and let $\pi_\xi:\mathcal{C}_{\leq \xi} \rarr \mathcal{C}_\xi$ be the quotient functor. For $\lambda \in d^{-1}(\xi)$, let $L_\xi(\lambda)$ be the simple object of $\mathcal{C}_\xi$ corresponding to $\lambda$ and let $P_\xi(\lambda)$ be the projective cover of $L_\xi(\lambda)$ in $\mathcal{C}_\xi$. Suppose now that, for each $\xi \in \Xi$, the quotient functor $\pi_\xi$ admits an exact left adjoint functor $\Delta_\xi$, called the \word{standardization functor}. Then, for each $\lambda \in \Lambda$, we set  
\begin{equation}
\Delta(\lambda) \dopgleich \Delta_{d(\lambda)}(P_{d(\lambda)}(\lambda))
\end{equation}
and 
\begin{equation}
\ol{\Delta}(\lambda) \dopgleich \Delta_{d(\lambda)}(L_{d(\lambda)}(\lambda)) \;.
\end{equation}
These objects are called the \word{standard}, resp. \word{proper standard}, objects in $\mathcal{C}$.  The category $\mathcal{C}$, together with the additional data described above, is said to be \word{standardly stratified} if for each $\lambda \in \Lambda$ there is an epimorphism $P(\lambda) \twoheadrightarrow \Delta(\lambda)$ whose kernel admits a filtration by standard objects $\Delta(\mu)$ with $\mu > \lambda$.
\end{defn}

\begin{rem}
Assume that $\Delta(\lambda) = \ol{\Delta}(\lambda)$ for all $\lambda \in \Lambda$. In this case, each $L_\xi(\lambda)$ is projective and hence all layers are semisimple. Then the standardly stratified structure, as defined above, is actually a \word{highest weight structure}, as defined by Cline–Parshall–Scott \cite{CPS}. The simple objects are labeled by the poset $(\Lambda, \leq)$ as in (\ref{ss_partial_order}), and standard objects are $\Delta(\lambda)$.
\end{rem}



\subsection{Standard and costandard objects} \label{standard_and_costandard}

We now describe the standard and costandard objects in the category $\mc{G}(A)$. For $\lambda \in \Irr \mathcal{C}(T)$ we denote by $P(\lambda) \dopgleich P(L(\lambda))$ and $I(\lambda) \dopgleich I(L(\lambda))$ the projective cover, resp. the injective hull, of $L(\lambda)$ in $\mathcal{C}(A)$. Recall from \S\ref{notations} that $F(P(\lambda)) \simeq P(F(\lambda))$ and $F(I(\lambda)) \simeq I(F(\lambda))$ for all $\lambda \in \Irr \mathcal{G}(T)$, where $F$ denotes the respective functor forgetting the grading. Also note that $P(\lambda[n]) \simeq P(\lambda)[n]$ and that $I(\lambda[n]) \simeq I(\lambda)[n]$. Furthermore, note that if $\lambda \in \Irr \mathcal{C}(T)$, then since $P(\lambda^*)$ is the projective cover of $L(\lambda^*)$ in $\mathcal{C}(A^{\op})$, the dual $P(\lambda^*)^*$ is the injective hull of $L(\lambda^*)^* \simeq L(\lambda)$ in $\mathcal{C}(A)$, i.e., 
\begin{equation} \label{proj_cov_inj_hull_dual}
P(\lambda^*)^* = I(\lambda) \;.
\end{equation}
For the projective cover, resp. the injective hull, of $\lambda$ in $\mathcal{C}(T)$ we specifically write $P_T(\lambda)$, resp. $I_T(\lambda)$. They behave under dualizing just as in equation (\ref{proj_cov_inj_hull_dual}). For $\lambda \in \Irr \mathcal{C}(T)$ we define the associated \word{standard object} and \word{costandard object} as
\begin{equation}
\Delta(\lambda) \dopgleich \ol{\Delta}(P_T(\lambda)) \quad \tn{and} \quad \nabla(\lambda) \dopgleich \ol{\nabla}(I_T(\lambda)) \;,
\end{equation}
respectively. By definition we have
\begin{equation} \label{standard_dual}
\Delta(\lambda^*)^* = \ol{\Delta}(P_T(\lambda^*))^* \simeq \ol{\Delta}(I_T(\lambda)^*)^* = \ol{\nabla}(I_T(\lambda)) = \nabla(\lambda) \;.
\end{equation}
Since $\ol{\Delta}$ is exact, the epimorphism $P_T(\lambda) \twoheadrightarrow \lambda$ induces an epimorphism $\Delta(\lambda) \twoheadrightarrow \ol{\Delta}(\lambda)$. Dualizing shows that we have an embedding $\ol{\nabla}(\lambda) \hookrightarrow \nabla(\lambda)$. Clearly, if $T$ is semisimple, then $\Delta(\lambda) = \ol{\Delta}(\lambda)$ and $\nabla(\lambda) = \ol{\nabla}(\lambda)$. We will restrict to this situation soon, but first we study the general setting.\\

 
We say that $M \in \mathcal{C}(A)$ is \word{standardly filtered} if there is a filtration 
\begin{equation} \label{standardly_filtered_equation}
M = M^0 \supset M^1 \supset \ldots \supset M^{s-1} \supset M^s = 0
\end{equation}
in $\mathcal{C}(A)$ such that for all $0 \leq i < s$ we have $M^i/M^{i+1} \simeq \Delta(\lambda_i)$ for some $\lambda_i \in \Irr \mathcal{C}(T)$. We write $\mathcal{C}^\Delta(A)$ for the full subcategory of $\mathcal{C}(A)$ consisting of standardly filtered objects. The following lemma summarizes several facts proven in \cite[\S4]{HN}. It is important to note that these statements hold because $A \in \mathcal{C}(B^-)$ is free by Lemma \ref{borel_tensor_dec}.

\begin{lem}[Holmes--Nakano] \label{holmes-nakano-facts}
 The following holds:
\begin{enum_thm}
\item Let $\lambda,\mu \in \Irr {\mathcal{C}(T)}$. Then $\Res_{B^-}^A \Delta(\lambda)$ is the projective cover of $\Inf_{T}^{B^-} \lambda$ in $\mathcal{C}(B^-)$.
\item \label{holmes-nakano-facts:ext} For any $\lambda,\mu \in \Irr {\mathcal{C}(T)}$ we have
\[
\Ext^n_{\mathcal{C}(A)}(\Delta(\lambda),\ol{\nabla}(\mu)) = \left\lbrace \begin{array}{ll} K & \tn{if } \lambda = \mu \tn{ and } n=0, \\ 0 & \tn{else.} \end{array} \right.
\]
\item \label{holmes-nakano-facts:standardly_filtered_res_proj} If $M \in {\mathcal{C}^{\Delta}}(A)$, then $\Res_{B^-}^A M$ is projective in $\mathcal{C}(B^-)$.
\item \label{holmes-nakano-facts:proj_standardly_filtered} If $M \in {\mathcal{G}}(A)$ such that $\Res_{B^-}^A M$ is projective in $\mathcal{G}(B^-)$, then $M \in {\mathcal{G}^{\Delta}}(A)$. 
\end{enum_thm}
\end{lem}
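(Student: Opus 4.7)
The plan is to tackle the four parts in order, relying on Frobenius-type adjunctions from the triangular structure, with (d) being the most delicate.

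For (a), Lemma \ref{basic_delta_lemma}(a) identifies $\Res_{B^-}^A \Delta(\lambda) \simeq B^- \otimes_T P_T(\lambda)$ in $\mc{C}(B^-)$. Since $B^-$ is free as a right $T$-module by Lemma \ref{borel_tensor_dec}, scalar extension $B^- \otimes_T -$ preserves projectives, so this is projective in $\mc{C}(B^-)$. Tensor-Hom adjunction combined with the splitness of $T$ gives
\[
\Hom_{\mc{C}(B^-)}(B^- \otimes_T P_T(\lambda), \Inf_T^{B^-}\mu) \simeq \Hom_{\mc{C}(T)}(P_T(\lambda), \mu) = K \cdot \delta_{\lambda, \mu},
\]
identifying the head as $\Inf_T^{B^-}\lambda$ and hence this module as the desired projective cover.

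For (b), my plan is to chain two Frobenius adjunctions. By Lemma \ref{borel_tensor_dec} and the triangular decomposition, $A \simeq A^- \otimes_K B^+$ as right $B^+$-modules, so $A$ is free as a right $B^+$-module; hence $A \otimes_{B^+} -$ is exact and preserves projectives, giving
\[
\Ext^n_{\mc{C}(A)}(\Delta(\lambda), \ol{\nabla}(\mu)) \simeq \Ext^n_{\mc{C}(B^+)}(\Inf_T^{B^+} P_T(\lambda), \Res_{B^+}^A \ol{\nabla}(\mu)).
\]
Applying Lemma \ref{basic_delta_lemma}(a) to $\mc{T}^{\op}$ (whose negative Borel is $(B^+)^{\op}$) presents $\ol{\Delta}_{A^{\op}}(\mu^*) \simeq (B^+)^{\op} \otimes_{T^{\op}} \mu^*$, and dualizing via tensor-Hom identifies $\Res_{B^+}^A \ol{\nabla}(\mu) = \ol{\Delta}_{A^{\op}}(\mu^*)^*$ with the coinduction $\Hom_T(B^+, \mu)$ as a $B^+$-module. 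Since $B^+$ is free over $T$, coinduction (as right adjoint to the exact functor $\Res_T^{B^+}$) preserves injectives, producing
\[
\Ext^n_{\mc{C}(B^+)}(\Inf_T^{B^+} P_T(\lambda), \Hom_T(B^+, \mu)) \simeq \Ext^n_{\mc{C}(T)}(P_T(\lambda), \mu),
\]
which vanishes for $n \geq 1$ and equals $K \cdot \delta_{\lambda, \mu}$ for $n=0$ by the projectivity of $P_T(\lambda)$.

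Part (c) follows by induction on the length of a standard filtration. The short exact sequence $0 \to M^1 \to M \to \Delta(\lambda_0) \to 0$ restricts to a sequence in $\mc{C}(B^-)$ whose quotient is projective by (a), hence splits; combined with the induction hypothesis applied to $M^1$, this displays $\Res_{B^-}^A M$ as a direct sum of projectives.

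Part (d) is the main obstacle, and I would prove it by induction on $\dim_K M$. The top-graded component $M_d$ is annihilated by $J^+$ for degree reasons, so is a $B^+$-module of the form $\Inf_T^{B^+} M_d$. By hypothesis together with (a), the indecomposable summands of $\Res_{B^-}^A M$ have the form $B^- \otimes_T P_T(\mu)[n]$, whose top-graded pieces are projective $T$-modules; hence $M_d$ is projective in $\mc{C}(T)$. Extracting an indecomposable summand $P_T(\mu) \subseteq M_d$, the adjunction produces a morphism $\psi \colon \Delta(\mu) \to M$ realizing this summand on top. The delicate point is verifying that $\psi$ is injective and that $M/\psi(\Delta(\mu))$ still has projective restriction to $B^-$; this is achieved by arranging $\Res_{B^-}^A \psi$ to coincide with the split inclusion of the corresponding summand in the Krull--Schmidt decomposition of $\Res_{B^-}^A M$. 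The inductive hypothesis then supplies a standard filtration of the cokernel, which lifts to one of $M$; this Krull--Schmidt bookkeeping is the heart of the argument in \cite[\S 4]{HN}.
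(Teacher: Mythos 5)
The paper itself gives no proof of this lemma, deferring to Holmes--Nakano \cite[\S4]{HN} with only the remark that everything rests on $A$ being free over $B^-$. Your reconstruction is essentially the argument that reference supplies and is correct: the Frobenius/Shapiro adjunctions through the free ring extensions $T \subset B^{\pm} \subset A$ give (a) and (b) (note that $B^-\otimes_T -$ preserves projectives simply because it is left adjoint to the exact restriction; freeness of $B^-$ over $T$ is what makes it exact), and (c) follows by induction along a standard filtration exactly as you say. The peel-off strategy for (d) is also the right one, but the claim that $\Res_{B^-}^A\psi$ can be ``arranged'' to coincide with a split inclusion of a Krull--Schmidt summand is asserted rather than verified. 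The actual justification is that a morphism between projective $B^-$-modules is a split monomorphism as soon as it is injective on heads; here $\Hd(B^-\otimes_T P_T(\mu)[d])\simeq\Inf_T^{B^-}\mu[d]$ is concentrated in degree $d$, and in that degree $\Hd(\Res_{B^-}^A\psi)$ is $\Hd_T$ applied to your chosen split $T$-inclusion $P_T(\mu)[d]\hookrightarrow M_d$ (since $\Rad(B^-)M$ meets $M_d$ only in $\Rad(T)M_d$), hence injective. You flag this step yourself, so this is a note for completeness rather than a genuine objection.
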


\begin{cor} \label{projectives_standard_filt}
All projective objects in $\mathcal{C}(A)$ admit a standard filtration.
\end{cor}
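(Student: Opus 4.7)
The plan is to reduce both cases $\mathcal{C}=\mathcal{G}$ and $\mathcal{C}=\mathcal{M}$ to Lemma \ref{holmes-nakano-facts}\ref{holmes-nakano-facts:proj_standardly_filtered}, which says that a graded $A$-module is standardly filtered as soon as its restriction to $B^-$ is projective. So it suffices to show that projectives in $\mathcal{G}(A)$ become projective after restriction to $B^-$, and then transfer the conclusion back to the ungraded setting using the fact (Lemma \ref{forget_functor}) that projectives are gradable and that the forget functor is exact and faithful.

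First I would treat the graded case. Every projective $P \in \mathcal{G}(A)$ is, up to shifts, a direct summand of some finite direct sum of copies of the regular module $A$. By Lemma \ref{borel_tensor_dec}, $A$ is free as a left $B^-$-module, so $\Res_{B^-}^A A$ is projective in $\mathcal{G}(B^-)$, and hence so is $\Res_{B^-}^A P$ since projectivity passes to direct summands. Now Lemma \ref{holmes-nakano-facts}\ref{holmes-nakano-facts:proj_standardly_filtered} applies and yields a standard filtration of $P$ in $\mathcal{G}(A)$.

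For the ungraded case, let $P$ be projective in $\mathcal{M}(A)$. By Lemma \ref{forget_functor}, $P$ is gradable: there exists $\widetilde{P} \in \mathcal{G}(A)$ with $F(\widetilde{P}) \simeq P$, and $\widetilde{P}$ is moreover projective in $\mathcal{G}(A)$. By the graded case, $\widetilde{P}$ admits a filtration by standard objects $\Delta(\lambda_i)$ in $\mathcal{G}(A)$. Since the forget functor $F$ is exact and since, by definition of the standard objects via the exact functor $\ol{\Delta}$, $F$ sends each $\Delta(\lambda_i)$ to a standard object in $\mathcal{M}(A)$, applying $F$ term by term produces a filtration of $P$ in $\mathcal{M}(A)$ whose successive quotients are standard, as required.

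There is no real obstacle here: everything follows by combining freeness of $A$ over $B^-$ with the results already recorded in Lemma \ref{holmes-nakano-facts} and Lemma \ref{forget_functor}. The only small care needed is in the ungraded step, to make sure that standard objects in $\mathcal{M}(A)$ are indeed obtained from standard objects in $\mathcal{G}(A)$ by forgetting the grading, which is immediate from the compatibility of $\ol{\Delta}$ (and therefore $\Delta$) with $F$.
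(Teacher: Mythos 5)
Your proof is correct and follows essentially the same route as the paper: invoke Lemma~\ref{holmes-nakano-facts}\ref{holmes-nakano-facts:proj_standardly_filtered} for the graded case (you helpfully spell out the small observation, namely that $A$ is free over $B^-$ by Lemma~\ref{borel_tensor_dec} so projectives in $\mathcal{G}(A)$ restrict to projectives in $\mathcal{G}(B^-)$, which the paper leaves implicit), and reduce the ungraded case to the graded one via gradability of projectives and compatibility of $\ol{\Delta}$ with the forget functor.
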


\begin{proof}
In the graded case, the first assertion follows directly from Lemma \ref{holmes-nakano-facts}\ref{holmes-nakano-facts:proj_standardly_filtered}. In the ungraded case we can use  \cite[Corollary 3.4]{GG-Graded-Artin} which shows that every projective object in $\mathcal{M}(A)$ is gradable, i.e., for any projective $P \in \mathcal{M}(A)$ there is a projective  object $\tilde{P} \in \mathcal{G}(A)$ with $F(\tilde{P}) = P$, where $F$ is the  functor forgetting the grading. Since $\tilde{P}$ is standardly filtered, so too is $P$. 
\end{proof}

From the Ext-vanishing property in Lemma \ref{holmes-nakano-facts}\ref{holmes-nakano-facts:ext} one deduces easily by induction that  
\begin{equation}
\lbrack M:\Delta(\lambda) \rbrack \dopgleich \# \lbrace i \mid M_i/M_{i-1} \simeq \Delta(\lambda) \rbrace = \dim_K \Hom_{\mathcal{C}(A)} (M, \ol{\nabla}(\lambda))
\end{equation}
for $\lambda \in \Irr \mathcal{C}(T)$. Hence, this number is independent of the chosen filtration. In \cite[Theorem 4.5]{HN} it is proven that \word{Brauer reciprocity} holds in $\mathcal{C}(A)$, i.e.:

\begin{prop}[Holmes–Nakano] \label{brauer_reciprocity}
The relation
\begin{equation}\label{eq:BGG}
\left\lbrack P(\lambda) : \Delta(\mu) \right\rbrack = \left\lbrack \ol{\nabla}(\mu) : L(\lambda) \right\rbrack
\end{equation}
holds for any $\lambda \in \Irr \mathcal{C}(T)$. 
 \end{prop}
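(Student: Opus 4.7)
The plan is to combine two standard ingredients, both of which are already available in the text. First, from the $\Ext$-vanishing in Lemma \ref{holmes-nakano-facts}\ref{holmes-nakano-facts:ext} one gets, by induction along a standard filtration, the ``Hom counts multiplicities'' formula
\[
[M : \Delta(\mu)] \;=\; \dim_K \Hom_{\mathcal{C}(A)}(M,\bar{\nabla}(\mu))
\]
for every $M \in \mathcal{C}^{\Delta}(A)$ and every $\mu \in \Irr \mathcal{C}(T)$. This is precisely the identity displayed just before the proposition, so I would simply invoke it. By Corollary \ref{projectives_standard_filt}, the projective cover $P(\lambda)$ lies in $\mathcal{C}^{\Delta}(A)$, so the formula applies to $M = P(\lambda)$ and yields
\[
[P(\lambda) : \Delta(\mu)] \;=\; \dim_K \Hom_{\mathcal{C}(A)}(P(\lambda),\bar{\nabla}(\mu)).
\]

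Second, I would argue that the right-hand side computes exactly $[\bar{\nabla}(\mu):L(\lambda)]$. Since $P(\lambda)$ is projective, $\Hom_{\mathcal{C}(A)}(P(\lambda),-)$ is exact, so it is additive on short exact sequences and may be evaluated termwise along any composition series of $\bar{\nabla}(\mu)$. For a simple $L(\nu)$, one has
\[
\dim_K \Hom_{\mathcal{C}(A)}(P(\lambda),L(\nu)) \;=\; \dim_K \Hom_{\mathcal{C}(A)}(L(\lambda),L(\nu)),
\]
because $P(\lambda)$ covers $L(\lambda)$. By Proposition \ref{splitting} the simple $L(\lambda)$ is absolutely simple, so by Schur's lemma this dimension is $1$ if $\nu = \lambda$ and $0$ otherwise. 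Summing contributions along the composition series of $\bar{\nabla}(\mu)$ therefore gives
\[
\dim_K \Hom_{\mathcal{C}(A)}(P(\lambda),\bar{\nabla}(\mu)) \;=\; [\bar{\nabla}(\mu):L(\lambda)].
\]
Combining the two displayed equalities completes the proof.

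I do not expect any serious obstacle: the heavy lifting has already been done, namely the $\Ext^1$-vanishing between standards and proper costandards (which in turn rests on the freeness of $A$ as a right $B^+$-module from Lemma \ref{borel_tensor_dec} and the compatibility of scalar restriction/inflation along $B^+ \twoheadrightarrow T$), together with the splitting $\End_{\mathcal{C}(A)}(L(\lambda)) = K$ from Proposition \ref{splitting}. The argument is identical in the graded and ungraded settings $\mathcal{C} \in \{\mathcal{G},\mathcal{M}\}$, so no separate treatment is required, and compatibility of standardization and projective covers with the forget functor (Lemma \ref{forget_functor} and Corollary \ref{projectives_standard_filt}) ensures the graded version recovers the ungraded one by evaluation.
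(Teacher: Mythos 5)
Your argument is correct and is the standard proof of BGG/Brauer reciprocity in this setting. The paper itself does not reproduce a proof but simply cites Holmes--Nakano \cite[Theorem 4.5]{HN}; the two ingredients you assemble (the Hom-counting formula for standard multiplicities, valid on $\mathcal{C}^{\Delta}(A)$ by the Ext-vanishing of Lemma \ref{holmes-nakano-facts}\ref{holmes-nakano-facts:ext}, together with exactness of $\Hom_{\mathcal{C}(A)}(P(\lambda),-)$ and the splitting $\End_{\mathcal{C}(A)}(L(\lambda))=K$ from Proposition \ref{splitting}) are exactly the ones Holmes--Nakano use, so your write-up is a faithful reconstruction of the cited proof rather than an alternative route.
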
 
 
In a similar fashion we say that $M \in \mathcal{C}(A)$ is \word{costandardly filtered} if there is a filtration 
\begin{equation}
0 = M_0 \subset M_1 \subset \ldots \subset M_{s-1} \subset M_s = M
\end{equation}
in $\mathcal{C}(A)$ such that for all $0 < i \leq s$ we have $M_i/M_{i-1} \simeq \nabla(\lambda_{i-1})$ for some $\lambda_i \in \Irr \mathcal{C}(T)$. We write $\mathcal{C}^\nabla(A)$ for the full subcategory of $\mathcal{C}(A)$ consisting of costandardly filtered objects. Applying Lemma \ref{holmes-nakano-facts} to $\mathcal{T}^\op$ and dualizing shows that $\mathcal{C}^\nabla(A)$ contains all injective objects of $\mathcal{C}(A)$. For the multiplicity of $\nabla(\lambda)$ in a filtration of $M \in \mathcal{C}^\nabla(A)$ we obtain 
\begin{equation}
\lbrack M:\nabla(\lambda) \rbrack = \dim_K \Hom_{\mathcal{C}(A)}(\ol{\Delta}(\lambda),M)
\end{equation}
and we have the dual Brauer reciprocity
 \begin{equation} \label{eq:dual_BGG}
\lbrack I(\lambda) : \nabla(\mu) \rbrack = \lbrack \ol{\Delta}(\mu):L(\lambda) \rbrack \;.
\end{equation}

%

\subsection{Standardly stratified structure}\label{sec:SSStructure}

We define a partial order $\leq$ on $\Irr \mathcal{G}(T)$ by
\begin{equation}
\lambda < \mu \Longleftrightarrow \deg \lambda < \deg \mu \;.
\end{equation}
This order is obviously interval-finite, but notice that there are non-comparable elements in general, namely those having the same degree. Note that duality $(-)^*$ yields an isomorphism of posets $\Irr \mathcal{G}(T) \simeq \Irr \mathcal{G}(T^\op)$. For $d \in \bbZ$ let $\mathcal{G}_{\leq d}(A)$ be the full subcategory of $\mathcal{G}(A)$ consisting of objects $M$ such that $\lbrack M:L(\lambda) \rbrack \neq 0$ implies $\deg \lambda \leq d$. The full subcategory $\mathcal{G}_{<d}(A)$ is defined similarly. From Lemma \ref{basic_delta_lemma_2} we see that
\begin{equation} \label{filtered_cat_support_condition}
M \in \mathcal{G}_{\leq d}(A) \Longleftrightarrow \Supp M \subset d - \bbN \;.
\end{equation}
Both $\mathcal{G}_{\leq d}(A)$ and $\mathcal{G}_{<d}(A)$ are Serre subcategories of $\mathcal{G}(A)$. We write 
\begin{equation}
\mathcal{G}_d(A) \dopgleich \mathcal{G}_{\leq d}(A)/\mathcal{G}_{<d}(A)
\end{equation}
for the quotient and 
\begin{equation}
\pi_d: \mathcal{G}_{\leq d}(A) \rarr \mathcal{G}_d(A)
\end{equation}
for the quotient functor. The category $\mathcal{G}_d(A)$ is abelian and $\pi_d$ is an exact and essentially surjective functor. 

\begin{defn} \label{highest_weight_def}
We say that $M \in \mathcal{G}(A)$ has \word{highest weight} $\lambda$ if $\lbrack M:L(\mu) \rbrack \neq 0$ implies $\mu \leq \lambda$ and $\lbrack M:L(\lambda) \rbrack = 1$. 
\end{defn}

One can similarly say that $M$ has \word{lowest weight} $\lambda$ if $\lbrack M:L(\mu) \rbrack \neq 0$ implies $\mu \geq \lambda$ and $\lbrack M:L(\lambda) \rbrack = 1$. However, we will not require this notion in this article. We note if $M$ has highest weight $\lambda$, then $L(\lambda)$ can occur anywhere in a composition series of $M$. But if $\Hd M \simeq L(\lambda)$ then this forces $L(\lambda)$ to occur at the \textit{top} of any composition series. 

\begin{lem} \label{delta_is_highest_weight}
Both $\ol{\Delta}(\lambda)$ and $\ol{\nabla}(\lambda)$ have highest weight $\lambda$.
\end{lem}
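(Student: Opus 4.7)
The strategy is to combine two ingredients already in place: the support constraints $\Supp \ol{\Delta}(\lambda), \Supp \ol{\nabla}(\lambda) \subseteq d - \bbN$, where $d = \deg \lambda$ (Lemmas \ref{basic_delta_lemma_2} and \ref{basic_nabla_lemma_2}), together with the identifications $\ol{\Delta}(\lambda)_d \simeq \lambda \simeq \ol{\nabla}(\lambda)_d$ as $T$-modules. Pairing these with Lemma \ref{degree_zero_component}, which locates the top of any $L(\mu)$ in degree $\deg\mu$ and recovers $\mu$ there, both conclusions should fall out.

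Concretely, fix $\lambda \in \Irr \mathcal{G}_d(T)$ and let $L(\mu)$ be any composition factor of $\ol{\Delta}(\lambda)$. Since $\Supp L(\mu) \subseteq \Supp \ol{\Delta}(\lambda) \subseteq d - \bbN$ and $\deg \mu$ is the maximal element of $\Supp L(\mu)$ (by the remark following Lemma \ref{degree_zero_component}), we get $\deg \mu \leq d$, i.e., $\mu \leq \lambda$ in the poset from Section \ref{sec:SSStructure}. For the multiplicity part, restrict a graded composition series of $\ol{\Delta}(\lambda)$ to $T$ and extract the degree-$d$ component to obtain a $T$-module isomorphism
\[
\lambda \simeq \ol{\Delta}(\lambda)_d \simeq \bigoplus_{\mu \in \Irr \mathcal{G}(T)} L(\mu)_d^{\oplus [\ol{\Delta}(\lambda):L(\mu)]}.
\]
Only those $\mu$ with $\deg \mu = d$ contribute on the right: if $\deg\mu < d$ the support bound forces $L(\mu)_d = 0$, and $\mu \leq \lambda$ rules out $\deg \mu > d$. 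For such $\mu$, Lemma \ref{degree_zero_component} gives $L(\mu)_d \simeq \mu$, which is simple over $T$, and distinct elements of $\Irr \mathcal{G}_d(T)$ are pairwise non-isomorphic as $T$-modules. Matching against the simple $T$-module $\lambda$ on the left therefore forces $[\ol{\Delta}(\lambda):L(\lambda)] = 1$ and $[\ol{\Delta}(\lambda):L(\mu)] = 0$ for every other $\mu$.

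For $\ol{\nabla}(\lambda)$ I would take the dual route: by definition $\ol{\nabla}(\lambda) = \ol{\Delta}(\lambda^*)^*$, and standard duality sets up a bijection between composition factors (preserving multiplicities) via $L(\mu) \leftrightarrow L(\mu)^* \simeq L(\mu^*)$ using Lemma \ref{L_dual}. As the degree function on $\Irr \mathcal{G}(T)$ is invariant under duality, applying the statement just proved for $\ol{\Delta}$ to the opposite triangular decomposition $\mathcal{T}^\op$ at $\lambda^*$ yields the analogous bound and multiplicity for $\ol{\nabla}(\lambda)$ at once.

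I don't anticipate any real obstacle. The only delicate point is that $T$ need not be semisimple, so one cannot freely decompose arbitrary $T$-modules into simples; however Lemma \ref{degree_zero_component} ensures that every $L(\mu)_d$ appearing on the right is already simple, which is exactly what is needed for the matching step.
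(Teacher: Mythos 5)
Your proof is correct, but the multiplicity‐one step is handled by a genuinely different mechanism than the one in the paper. The paper proves the $\ol{\nabla}$ case first: it invokes $\Soc\ol{\nabla}(\lambda)\simeq L(\lambda)$ from Theorem \ref{heads} together with Lemma \ref{nabla_d_component} (the socle of $\ol{\nabla}(\lambda)$ is generated by the degree-$d$ component, and no other submodule avoids it), so that $\deg\lambda\notin\Supp\bigl(\ol{\nabla}(\lambda)/\Soc\ol{\nabla}(\lambda)\bigr)$ gives $[\ol{\nabla}(\lambda):L(\lambda)]=1$; it then gets the $\ol{\Delta}$ case by passing to $\mathcal{T}^\op$ and dualizing. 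You instead run a direct degree-$d$ counting argument on $\ol{\Delta}(\lambda)$: since $\ol{\Delta}(\lambda)_d\simeq\lambda$ is a simple $T$-module and each composition factor $L(\mu)$ contributes either $0$ or the simple $T$-module $\mu$ in degree $d$, the degree-$d$ slice of any composition series collapses to a single nonzero subquotient isomorphic to $\lambda$. This is more elementary in that it bypasses Theorem \ref{heads} and Lemma \ref{nabla_d_component} entirely, relying only on Lemmas \ref{basic_delta_lemma_2} and \ref{degree_zero_component}; the paper's version, by contrast, leans on the already-established socle structure of $\ol{\nabla}$ and is perhaps more in the spirit of highest-weight-theoretic arguments. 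One small imprecision to fix: the displayed direct sum $\lambda\simeq\bigoplus_\mu L(\mu)_d^{\oplus[\ol{\Delta}(\lambda):L(\mu)]}$ is not literally an isomorphism (without semisimplicity of $T$, a composition series only yields a filtration, not a splitting), but your own last paragraph already supplies the repair: since $\lambda$ is simple, the induced filtration of $\ol{\Delta}(\lambda)_d$ has exactly one nonzero subquotient, which must be $\lambda$, and this pins down $[\ol{\Delta}(\lambda):L(\lambda)]=1$ and $[\ol{\Delta}(\lambda):L(\mu)]=0$ for all other $\mu$ of degree $d$.
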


\begin{proof}
Let $\lambda \in \Irr \mathcal{G}(T)$. We first show that $\ol{\nabla}(\lambda)$ has highest weight $\lambda$. If $\lbrack \ol{\nabla}(\lambda):L(\mu) \rbrack \neq 0$, then $\Supp L(\mu) \subs \Supp \ol{\nabla}(\lambda)$. By Lemma \ref{basic_nabla_lemma_2} we know that $\Supp \ol{\nabla}(\lambda) \subset \deg \lambda - \bbN$ and from Lemma \ref{degree_zero_component} we know that $\deg \mu \in \Supp L(\mu)$. Hence, $\deg \mu \in \deg \lambda - \bbN$, so $\deg \mu \leq \deg \lambda$, implying that $\mu \leq \lambda$. By Theorem \ref{heads} we have $\Soc \ol{\nabla}(\lambda) \simeq L(\lambda)$ and by Lemma \ref{nabla_d_component} we have $\deg \lambda \notin \Supp \ol{\nabla}(\lambda)/\Soc \ol{\nabla}(\lambda)$, hence $\lbrack \ol{\nabla}(\lambda)/\Soc \ol{\nabla}(\lambda):L(\lambda) \rbrack = 0$, so $\lbrack \ol{\nabla}(\lambda):L(\lambda) \rbrack = 1$. This shows that $\ol{\nabla}(\lambda)$ has highest weight $\lambda$. Applying this to $\mathcal{T}^\op$ and dualizing shows that $\ol{\Delta}(\lambda)$ has highest weight $\lambda$.  
\end{proof}



Again note that if $M \in \mathcal{G}^\Delta(A)$ has highest or lowest weight $\lambda$, we cannot locate where $\Delta(\lambda)$ occurs in a standard filtration. But if $\Hd M \simeq L(\lambda)$, then $\Delta(\lambda)$ must occur at the \textit{top} of any standard filtration. Similarly if $M \in \mathcal{G}^\nabla(A)$ has highest or lowest weight $\lambda$ and $\Soc M \simeq L(\lambda)$, then $\nabla(\lambda)$ must occur at the \textit{bottom} of any costandard filtration.

\begin{cor} \label{proj_inj_lowest_weight}
$P(\lambda)$ admits a finite decreasing filtration
\begin{equation} \label{verma_coverma_properties:verma_filtration_equ}
P(\lambda) = F^{0} \supset F^1 \supset \cdots \supset F^{l} = 0
\end{equation}
with quotients $F^i/F^{i+1} \simeq \Delta(\lambda_i)$ such that $\lambda_0=\lambda$ and $\lambda_i > \lambda$ for all $i > 0$. Similarly, $I(\lambda)$ admits a finite increasing filtration 
\begin{equation}
0 = F_0 \subset F_1 \subset \cdots \subset F_m = I(\lambda)
\end{equation}
with quotients $F_i/F_{i-1} \simeq \nabla(\lambda_{i-1})$ such that $\lambda_0 =\lambda$ and $\lambda_i > \lambda$ for all $i > 0$.
\end{cor}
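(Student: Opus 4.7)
The plan is the following. By Corollary \ref{projectives_standard_filt} the module $P(\lambda)$ already admits \emph{some} standard filtration $P(\lambda) = F^0 \supset F^1 \supset \cdots \supset F^l = 0$ with $F^i/F^{i+1} \simeq \Delta(\lambda_i)$, and I will verify that in any such filtration one automatically has $\lambda_0 = \lambda$ and $\lambda_i > \lambda$ for $i > 0$. No rearrangement will therefore be needed.

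The key auxiliary observation I will need is that $\Hd \Delta(\mu) \simeq L(\mu)$ for every $\mu \in \Irr \mathcal{G}(T)$. To see this, take any nonzero morphism $\varphi \colon \Delta(\mu) \to L(\nu)$ and compose it with the embedding $L(\nu) = \Soc \ol{\nabla}(\nu) \hookrightarrow \ol{\nabla}(\nu)$ provided by Theorem \ref{heads}; this yields a nonzero map $\Delta(\mu) \to \ol{\nabla}(\nu)$, which by the Ext-vanishing in Lemma \ref{holmes-nakano-facts}\ref{holmes-nakano-facts:ext} forces $\mu = \nu$ and lives in a one-dimensional space. A nonzero such map is furnished by the natural composition $\Delta(\mu) = \ol{\Delta}(P_T(\mu)) \twoheadrightarrow \ol{\Delta}(\mu) \twoheadrightarrow L(\mu)$, so $\Hd \Delta(\mu) \simeq L(\mu)$. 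Given this, the surjection $P(\lambda) \twoheadrightarrow F^0/F^1 = \Delta(\lambda_0)$ must induce a nonzero map on heads $L(\lambda) \to L(\lambda_0)$ (otherwise it would factor through $\Rad \Delta(\lambda_0)$, contradicting surjectivity), and Schur's lemma then gives $\lambda_0 = \lambda$.

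For the remaining indices, I will apply Brauer reciprocity (Proposition \ref{brauer_reciprocity}): the multiplicity $[P(\lambda):\Delta(\mu)]$ equals $[\ol{\nabla}(\mu):L(\lambda)]$. By Lemma \ref{delta_is_highest_weight} the module $\ol{\nabla}(\mu)$ has highest weight $\mu$, so this multiplicity vanishes unless $\lambda \leq \mu$, and it equals $1$ when $\mu = \lambda$. Hence $\Delta(\lambda)$ occurs exactly once in the filtration---namely as the top quotient, by the previous paragraph---and every other $\lambda_i$ satisfies $\lambda_i > \lambda$, as required.

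For the injective hull $I(\lambda)$, the plan is to apply the first part of the statement to the triangular decomposition $\mathcal{T}^{\op}$ of $A^{\op}$ (Lemma \ref{lem:opptriangle}) with the simple $T^{\op}$-module $\lambda^*$, obtaining a decreasing standard filtration of $P_{A^{\op}}(\lambda^*)$ with quotients $\Delta_{A^{\op}}(\mu_i^*)$, where $\mu_0^* = \lambda^*$ and $\mu_i^* >_{A^{\op}} \lambda^*$ for $i > 0$. Applying standard duality converts this into an increasing filtration of $P_{A^{\op}}(\lambda^*)^* = I(\lambda)$ by (\ref{proj_cov_inj_hull_dual}), whose subquotients are the duals $\Delta_{A^{\op}}(\mu_i^*)^* \simeq \nabla(\mu_i)$ by (\ref{standard_dual}). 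Since $T$ is concentrated in degree zero, $\deg_{T^{\op}} \mu^* = \deg_T \mu$ for every $\mu$, and so the partial order on $\Irr \mathcal{G}(T^{\op})$ corresponds under $(-)^*$ to the one on $\Irr \mathcal{G}(T)$; thus $\mu_0 = \lambda$ and $\mu_i > \lambda$ in $A$'s order, giving the claimed filtration of $I(\lambda)$. The only genuinely new ingredient in the whole argument is the head computation $\Hd \Delta(\mu) \simeq L(\mu)$, and this follows immediately from the Ext-vanishing already available in Lemma \ref{holmes-nakano-facts}\ref{holmes-nakano-facts:ext}, so I do not anticipate any serious obstacle.
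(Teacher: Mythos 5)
Your proof is correct, and the overall structure (Brauer reciprocity plus the highest-weight property of $\ol{\nabla}(\mu)$ for the projective, then pass to $\mathcal{T}^{\op}$ and dualize for the injective) is exactly the paper's. The one place you diverge is in pinning down $\lambda_0 = \lambda$: the paper delegates this to the remark preceding the corollary ($\Hd M \simeq L(\lambda)$ forces $\Delta(\lambda)$ to the top of a standard filtration) without spelling it out, whereas you supply the justification by first proving the auxiliary fact $\Hd\Delta(\mu) \simeq L(\mu)$ via the Ext-vanishing $\Hom(\Delta(\mu),\ol{\nabla}(\nu)) = \delta_{\mu\nu}K$. That is a clean route, and it yields the sharper statement that $\Delta(\mu)$ has simple head even when $T$ is not semisimple (the paper only records $\Hd\ol{\Delta}(\mu)\simeq L(\mu)$ in Theorem \ref{heads}). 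Worth noting, though, that for the corollary itself the full head computation is not required: since $\Delta(\lambda_0)$ surjects onto $L(\lambda_0)$, composing with $P(\lambda)\twoheadrightarrow\Delta(\lambda_0)$ exhibits $L(\lambda_0)$ as a simple quotient of $P(\lambda)$, and the uniqueness of the simple quotient of a projective cover already gives $\lambda_0 = \lambda$. So you have proven a bit more than strictly needed, but nothing is wasted and nothing is wrong.
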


\begin{proof}
We know from Corollary \ref{projectives_standard_filt} that $P(\lambda)$ has a standard filtration. The claim about the filtration now follows directly from Lemma \ref{delta_is_highest_weight} using Brauer reciprocity, Proposition \ref{brauer_reciprocity}:
\[
\lbrack P( \lambda) : \Delta(\lambda) \rbrack = \lbrack \ol{\nabla}(\lambda) : L(\lambda) \rbrack = 1
\] 
and
\[
\lbrack P(\lambda) : \Delta(\mu) \rbrack = \lbrack \ol{\nabla}(\mu) : L(\lambda) \rbrack \Rightarrow \lambda \leq \mu \;.
\]
Applying this to $\mathcal{T}^\op$ and using duality yields the claim for the injective hull.
\end{proof}

Lemma \ref{delta_is_highest_weight} shows in particular that the functors $\ol{\Delta},\ol{\nabla}:\mathcal{G}(T) \rarr \mathcal{G}(A)$  restrict to functors
\begin{equation}
\ol{\Delta}_d,\ol{\nabla}_d: \mathcal{G}_d(T) \rarr \mathcal{G}_{\leq d}(A) \;,
\end{equation}
where $\mathcal{G}_d(T)$ is as defined in \S\ref{section_t_modules}. Below, we will show that $\mathcal{G}_d(A) \simeq \mathcal{G}_d(T)$ and that under this identification $\ol{\Delta}_d$ and $\ol{\nabla}_d$ are left, respectively right, adjoint to the quotient functor $\pi_d$. To this end, we will need the following general lemma which is dual to \cite[III.2, Proposition 5]{Gabriel:1962un}.

\begin{lem}\label{lem:stupidadjunctions}
Let $\mathcal{A},\mathcal{B}$ be abelian categories and $F : \mathcal{A} \rightarrow \mathcal{B}$ an exact functor admitting a right adjoint $G : \mathcal{B} \rightarrow \mathcal{A}$ such that the unit $1_{\mathcal{B}} \rightarrow F \circ G$ of the adjunction is an isomorphism. Then, the functor $F':\mathcal{A}/\Ker F \rarr \mathcal{B}$ induced by $F$ is an equivalence with quasi-inverse $G' \dopgleich \pi \circ G$, where $\pi : \mathcal{A} \rightarrow \mathcal{A} / \Ker F$ is the quotient functor. 
\end{lem}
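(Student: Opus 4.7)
The plan is to treat this as the standard Gabriel-quotient result, dual to the one cited. First I would note that because $F$ is exact, its kernel $\Ker F$ is automatically a Serre subcategory of $\mathcal{A}$, so the Gabriel quotient $\mathcal{A}/\Ker F$ exists and there is a unique exact functor $F' : \mathcal{A}/\Ker F \to \mathcal{B}$ with $F' \circ \pi = F$, furnished by the universal property of $\pi$. Setting $G' := \pi \circ G$ is then well-defined, and the task reduces to exhibiting natural isomorphisms $F' \circ G' \simeq 1_{\mathcal{B}}$ and $G' \circ F' \simeq 1_{\mathcal{A}/\Ker F}$.

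The first isomorphism is essentially free: $F' \circ G' = F' \circ \pi \circ G = F \circ G$, and by hypothesis this is naturally isomorphic to $1_{\mathcal{B}}$ via the (co)unit of the adjunction.

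For the second isomorphism, I would start from the unit $\eta : 1_{\mathcal{A}} \to G \circ F$. The claim is that $\pi(\eta_X)$ is an isomorphism in $\mathcal{A}/\Ker F$ for every $X \in \mathcal{A}$, which then yields the required natural isomorphism $1_{\mathcal{A}/\Ker F} \simeq \pi \circ G \circ F \circ (\text{section}) \simeq G' \circ F'$ by naturality. To prove the claim, apply $F$ to $\eta_X$: one of the triangle identities of the adjunction says that the composition
\[
F(X) \xrightarrow{\; F(\eta_X)\;} F(G(F(X))) \xrightarrow{\; \varepsilon_{F(X)}\;} F(X)
\]
is the identity. Since the counit $\varepsilon$ is an isomorphism by hypothesis, $F(\eta_X)$ must be an isomorphism as well. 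Because $F$ is exact, this forces $F(\Ker \eta_X) = 0$ and $F(\Coker \eta_X) = 0$, i.e.\ both $\Ker \eta_X$ and $\Coker \eta_X$ lie in $\Ker F$. By the defining property of the Gabriel quotient, $\pi(\eta_X)$ is therefore an isomorphism in $\mathcal{A}/\Ker F$.

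I expect the main (mild) subtlety to be the bookkeeping with the unit versus counit and invoking the triangle identity in the right direction; once $F(\eta_X)$ is shown to be an iso, the exactness of $F$ handles everything else automatically. Naturality of the resulting isomorphism is inherited from the naturality of $\eta$, so no additional work is needed there.
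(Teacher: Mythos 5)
Your argument is correct in substance but takes a genuinely different route from the paper's. The paper first establishes, via an explicit hom-set computation in the Gabriel quotient, that $G'$ is right adjoint to $F'$; this hands you the natural transformation $1_{\mathcal{A}/\Ker F} \Rightarrow G' \circ F'$ for free as the unit of that adjunction, leaving only its invertibility to be checked (by applying the conservative exact functor $F'$ and using exactness). You skip the adjunction on the quotient entirely and work directly with the unit $\eta : 1_{\mathcal{A}} \to G \circ F$ upstairs. Your observation that the triangle identity $\varepsilon_{F(X)} \circ F(\eta_X) = \mathrm{id}_{F(X)}$ forces $F(\eta_X)$ to be an isomorphism --- hence $\Ker \eta_X, \Coker \eta_X \in \Ker F$ by exactness of $F$ --- is a nice shortcut that does not appear in the paper, and it is what makes your more direct route possible.

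The price for bypassing the adjunction is paid in your last sentence, which is too quick. What you actually have is an objectwise invertible natural transformation $\pi\eta : \pi \Rightarrow (G'\circ F')\circ\pi$, and you want it to descend to a natural transformation $1_{\mathcal{A}/\Ker F} \Rightarrow G'\circ F'$. Naturality of $\eta$ gives you commuting squares only over morphisms of the form $\pi(g)$; a general morphism of $\mathcal{A}/\Ker F$ is a zig-zag built from such morphisms together with inverses of the isomorphisms $\pi(i)$ and $\pi(q)$ (for monomorphisms $i$ with cokernel in $\Ker F$ and epimorphisms $q$ with kernel in $\Ker F$), so one must still check that the squares commute for these composites. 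This is standard --- it amounts to the $2$-universal property of Gabriel localization, namely that precomposition with $\pi$ is fully faithful on functor categories --- but it is not ``no additional work.'' The paper's detour through the adjunction $F' \dashv G'$ is exactly what renders this step invisible, which is why that computation appears there. To keep your shorter route rigorous you should either invoke the localization fact explicitly or spell out the zig-zag check.
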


\begin{proof}
First we note that the fact that $F$ is exact implies that $\Ker F$ is a Serre subcategory of $\mathcal{A}$ and hence the quotient $\mathcal{A} / \Ker F$ is well-defined. Since the adjunction $1_{\mathcal{B}} \rightarrow F \circ G$ is an isomorphism, the functor $F'$ is essentially surjective. We will show that $G'$ is right adjoint to $F'$. Let $U \in \mathcal{A} / \Ker F$ and $M \in \mathcal{B}$. Choosing $V \in \mathcal{A}$ such that $U = \pi(V)$, we have a map
\begin{align*}
\Hom_{\mathcal{B}}(F'(U),M) & = \Hom_{\mathcal{B}}(F(V),M)  \\
 & = \Hom_{\mathcal{A}}(V,G(M)) \stackrel{\pi_{V,G(M)}}{\longrightarrow} \Hom_{\mathcal{A} / \Ker F}(\pi(V),\pi \circ G(M)) \\
& = \Hom_{\mathcal{A} / \Ker F}(U,G'(M)) \;. 
\end{align*}
Thus, we need to show that $\pi_{V,G(M)}$ is an isomorphism. We begin by noting that the adjunction $\Hom_{\mathcal{B}}(F(N),M) = \Hom_{\mathcal{A}}(N,G(M))$ implies that if $N \subset G(M)$ is a subobject such that $F(N) = 0$, then $N = 0$. This implies that 
$$
\Hom_{\mathcal{A} / \Ker F}(\pi(V),\pi \circ G(M)) = \underset{V'}{\mrm{colim}} \  \Hom_{\mathcal{A}}(V',G(M))
$$
where the colimit is over all $V' \subset V$ such that $V/ V' \in\Ker F$. Let $\phi$ be an element of $\Hom_{\mathcal{A}}(V,G(M))$ such that $\pi_{V,G(M)}(\phi) = 0$. Explicitly, $\pi_{V,G(M)}(\phi)  = \underset{V'}{\mrm{colim}} \ \phi |_{V'}$. Thus, if $\pi_{V,G(M)}(\phi)  =0$ then there exists $V'$ such that $\phi |_{V'} = 0$. This means that $\phi$ factors through a map $V/ V' \rightarrow G(M)$. But $V / V' \in \Ker F$ implies that $\Im \phi \subset G(M)$ also belongs to $\Ker F$. Thus, $\phi = 0$ and $\pi_{V,G(M)}$ is surjective. Similarly, if $\psi \in \Hom_{\mathcal{A} / \Ker F}(\pi(V),\pi \circ G(M))$, then by definition this is a collection of morphisms $\psi' : V' \rightarrow G(M)$ such that $\psi'' = \psi' |_{V''}$ if $V'' \subset V'$. In particular, there exists $\psi_0 : V \rightarrow G(M)$ such that $\psi' = \psi_0 |_{V'}$, i.e., $\pi_{V,G(M)}(\psi_0) = \psi$. Thus, $\pi_{V,G(M)}$ is an isomorphism and $G'$ is right adjoint to $F'$. 

Notice that the unit $1_{\mathcal{B}} \rightarrow F' \circ G' = F' \circ \pi \circ G = F \circ G$ is an isomorphism by assumption. Therefore we just need to check that the counit $\eps:G' \circ F' \rightarrow 1_{\mathcal{A}/ \Ker F}$ is an isomorphism. For $U \in \mathcal{A} / \Ker F$ consider the exact sequence
$$
0 \rightarrow \Ker(\eps_U) \rightarrow G' \circ F' (U) \overset{\eps_U}{\rightarrow} U \rightarrow \Coker(\eps_U) \rightarrow 0
$$
in $\mathcal{A}/\Ker F$. Applying the exact functor $F'$ shows that $F'(\Ker(\eps_U)) = 0$ and $F'(\Coker(\eps_U)) = 0$. But $F'$ is conservative by construction. Thus, $\Ker(\eps_U)=0$ and $\Coker(\eps_U) = 0$, implying that $\eps_U$ is an isomorphism. 
\end{proof}

\begin{lem} \label{pi_delta_adjoint}
The category $\mathcal{G}_d(A)$ is canonically equivalent to $\mathcal{G}_d(T)$. Under this identification, the functor $\ol{\Delta}_d$ is an exact left adjoint to $\pi_d$ and the functor $\ol{\nabla}_d$ is an exact right adjoint to $\pi_d$. 
\end{lem}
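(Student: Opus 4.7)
The plan is to invoke Lemma \ref{lem:stupidadjunctions} for the exact functor $F : \mathcal{G}_{\leq d}(A) \to \mathcal{G}_d(T)$ that extracts the top homogeneous component, $F(N) \dopgleich N_d$, viewed as a $T$-module concentrated in degree $d$. From (\ref{filtered_cat_support_condition}) one reads off that $\Ker F = \mathcal{G}_{<d}(A)$, so $F$ descends to a functor $\bar F : \mathcal{G}_d(A) \to \mathcal{G}_d(T)$, and under this identification the quotient functor $\pi_d$ corresponds to $F$. Note also that both $\ol{\Delta}_d$ and $\ol{\nabla}_d$ genuinely take values in $\mathcal{G}_{\leq d}(A)$ by Lemma \ref{basic_delta_lemma_2} and Lemma \ref{basic_nabla_lemma_2}, so the functors in the statement are well defined.

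To apply the lemma I would exhibit $\ol{\nabla}_d$ as a right adjoint to $F$ with invertible unit. The unit $M \to F(\ol{\nabla}(M)) = \ol{\nabla}(M)_d$ is an isomorphism by Lemma \ref{basic_nabla_lemma_2}. For the adjunction isomorphism $\Hom_A(N, \ol{\nabla}(M)) \simeq \Hom_T(N_d, M)$, I would dualise $\ol{\nabla}$ to the triangular decomposition $\mathcal{T}^\op$ and compose the tensor--Hom adjunction $A \otimes_{B^+} - \dashv \Res_{B^+}^A$ with the inflation adjunction $\Inf_T^{B^+} \dashv (-)^{J^+}$, together with their $\mathcal{T}^\op$ analogues. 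This rewrites the left-hand side as $\Hom_T(N/J^- N, M)$; since $M$ sits in a single degree $d$, only $(N/J^- N)_d$ contributes, and because $J^-$ has strictly negative degree while $N_{>d} = 0$, this degree-$d$ part is simply $N_d$. Lemma \ref{lem:stupidadjunctions} then delivers the equivalence $\bar F$ and simultaneously identifies $\ol{\nabla}_d$ with a right adjoint to $\pi_d$.

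A parallel but more direct calculation yields the left adjoint: composing the same two adjunctions gives $\Hom_A(\ol{\Delta}(M), N) \simeq \Hom_T(M, N^{J^+})$, and since $J^+$ has strictly positive degree while $N_{>d} = 0$, one has $(N^{J^+})_d = N_d$, so for $M \in \mathcal{G}_d(T)$ the right-hand side collapses to $\Hom_T(M, N_d)$. Exactness of both $\ol{\Delta}_d$ and $\ol{\nabla}_d$ is already recorded in Lemma \ref{delta_exact}. The main technical obstacle is the adjunction involving $\ol{\nabla}$, where the duality $\mathcal{T} \leftrightarrow \mathcal{T}^\op$ has to be threaded through carefully; the key observation that tames the bookkeeping is that for $N \in \mathcal{G}_{\leq d}(A)$ both the $J^-$-coinvariants and the $J^+$-invariants coincide with $N_d$ in top degree, which is exactly what makes the adjunctions compatible with restriction to a single layer.
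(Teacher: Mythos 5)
Your proposal is correct and proves the statement by the same overall mechanism (factor the degree-$d$ projection $\omega_d$ through the Gabriel quotient via Lemma \ref{lem:stupidadjunctions}), but it threads the lemma through the opposite adjoint. The paper sets up $\ol{\Delta}_d \dashv \omega_d$ with invertible unit $1 \to \omega_d \ol{\Delta}_d$, invokes (in effect, the dual form of) Lemma \ref{lem:stupidadjunctions} to obtain the equivalence, and then handles $\ol{\nabla}_d$ by passing to $\mc{T}^\op$ and dualising the whole statement. You instead establish $\omega_d \dashv \ol{\nabla}_d$ --- which actually fits the \emph{written} hypotheses of Lemma \ref{lem:stupidadjunctions}, where $F$ is required to have a \emph{right} adjoint with invertible (co)unit --- and then treat $\ol{\Delta}_d$ with a separate direct adjunction computation. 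Your explicit chain $\Hom_A(N, \ol{\nabla}(M)) \simeq \Hom_T(N/J^- N, M) \simeq \Hom_T(N_d, M)$ for $N \in \mc{G}_{\leq d}(A)$ (obtained by dualising the $\ol{\Delta}$-adjunction through $\mc{T}^\op$) is correct and makes the argument self-contained, whereas the paper is terser, supplying only the natural ``multiplication'' map $\ol{\Delta}_d\omega_d(M) \to M$.

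Two cosmetic remarks. First, what you call the ``unit $M \to F(\ol{\nabla}(M))$'' is the inverse of the counit of $\omega_d \dashv \ol{\nabla}_d$; the paper's Lemma \ref{lem:stupidadjunctions} has the same unit/counit slip, and since only invertibility is used nothing breaks. Second, Lemma \ref{lem:stupidadjunctions} literally produces $\pi_d \circ \ol{\nabla}_d$ as a right adjoint (and quasi-inverse) of the induced equivalence $\varpi_d$, not $\ol{\nabla}_d$ itself as a right adjoint of $\pi_d$; these formulations agree once $\mc{G}_d(A)$ is identified with $\mc{G}_d(T)$ via $\varpi_d$ and one unwinds $\omega_d = \varpi_d \circ \pi_d$ together with $\omega_d \dashv \ol{\nabla}_d$, but the step deserves a sentence.
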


\begin{proof}
Let $\omega_d:\mathcal{G}_{\leq d}(A) \rarr \mathcal{G}_{d}(T)$ be the projection functor assigning to $M \in \mathcal{G}_{\leq d}(A)$ the homogeneous component $M_{d}$ considered as a $T$-module and to a morphism the restriction onto this component. This is an exact functor. By Lemma \ref{basic_delta_lemma_2} we have a natural isomorphism $1_{\mathcal{G}_{d}(T)} \rightarrow \omega_d \circ \ol{\Delta}_d$. Moreover, for $M \in \mathcal{G}_{\leq d}(A)$ we have a natural morphism $\ol{\Delta}_d \circ \omega_d(M) \rarr M$ by multiplication. This yields an adjunction with $\omega_d$ right adjoint to $\ol{\Delta}_d$. The unit of this adjunction is an isomorphism. We claim that $\Ker \omega_d = \mathcal{G}_{< d}(A)$. If $L(\mu) \in \mathcal{G}_{<d}(A)$, then $d \notin \Supp L(\mu) \subset \deg \mu - \bbN$, which implies that  $\omega_d(L(\mu)) = 0$. On the other hand, if $\omega_d(M) = 0$ then by definition $d \notin \Supp M$ and hence $M \in \mathcal{G}_{<d}(A)$. Lemma \ref{lem:stupidadjunctions} now shows that $\omega_d = \varpi_d \circ \pi_d$, with $\varpi_d : \mathcal{G}_d(A) \rightarrow \mathcal{G}_d(T)$ an equivalence. The claim for $\ol{\nabla}_d$ follows as usual by dualizing this result for $\mathcal{T}^\op$.
\end{proof}

Combining Corollary \ref{proj_inj_lowest_weight} and Lemma \ref{pi_delta_adjoint} we obtain our first main theorem.

\begin{thm} \label{HWC}
The category $\mathcal{G}(A)$ is a standardly stratified category with respect to the degree function $\deg : \Irr \mathcal{G}(T) \rarr \bbZ$, with standard objects $\Delta(\lambda)$, and costandard objects $\nabla(\lambda)$. 
\end{thm}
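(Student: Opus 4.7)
The plan is to verify each requirement of the Losev--Webster definition of a standardly stratified category by collecting together results already established in the paper; no genuinely new input is needed at this stage.

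First, the basic hypotheses on $\mathcal{G}(A)$ hold: Section \ref{graded_mod_cat_notations} records that $\mathcal{G}(A)$ is a $K$-linear, essentially small, Hom-finite abelian category of finite length with enough projectives, and Proposition \ref{splitting} shows that every simple object is absolutely simple. The indexing set is $\Lambda = \Irr \mathcal{G}(T)$, identified with $\Irr \mathcal{G}(A)$ via the bijection $\lambda \mapsto L(\lambda)$ of Theorem \ref{heads}. For the stratifying data I take $\Xi = \mathbb{Z}$ (which is interval finite) and $d = \deg$ as in (\ref{degree_function_on_A}); the fibers are finite because $\Irr \mathcal{G}_d(T)$ is in bijection with $\Irr \mathcal{M}(T)$, and $T$ is finite-dimensional. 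The resulting partial order on $\Lambda$ via (\ref{ss_partial_order}) is exactly the one introduced at the beginning of Section \ref{sec:SSStructure}.

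Next I identify the standardization functor. The Serre subcategories $\mathcal{G}_{\leq d}(A)$, $\mathcal{G}_{<d}(A)$, the quotient $\mathcal{G}_d(A)$, and the quotient functor $\pi_d$ were defined in Section \ref{sec:SSStructure}. By Lemma \ref{pi_delta_adjoint} there is a canonical equivalence $\mathcal{G}_d(A) \simeq \mathcal{G}_d(T) \simeq \mathcal{M}(T)$, and $\ol{\Delta}_d : \mathcal{G}_d(T) \to \mathcal{G}_{\leq d}(A)$ is an exact left adjoint of $\pi_d$. This is the required standardization functor. Under the equivalence $\mathcal{G}_d(A) \simeq \mathcal{G}_d(T)$, the projective cover $P_d(\lambda)$ of the simple corresponding to $\lambda$ in $\mathcal{G}_d(A)$ is identified with $P_T(\lambda)$ placed in degree $d$, so the Losev--Webster standard object $\Delta_{d(\lambda)}(P_{d(\lambda)}(\lambda))$ is precisely $\ol{\Delta}(P_T(\lambda)) = \Delta(\lambda)$ as defined in Section \ref{standard_and_costandard}. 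The same lemma shows that $\ol{\nabla}_d$ is an exact right adjoint of $\pi_d$, matching the costandard objects $\nabla(\lambda) = \ol{\nabla}(I_T(\lambda))$.

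Finally, the required epimorphism $P(\lambda) \twoheadrightarrow \Delta(\lambda)$ whose kernel is filtered by standard objects $\Delta(\mu)$ with $\mu > \lambda$ is exactly the content of Corollary \ref{proj_inj_lowest_weight}. No step is genuinely difficult: the hard work was packaged into Lemma \ref{pi_delta_adjoint} (the existence and exactness of the adjoint, together with the identification of the quotient category) and into Corollary \ref{proj_inj_lowest_weight} (the existence of the standard filtration with the correct top term and the order condition on higher subquotients, which uses Brauer reciprocity and the highest weight property of $\ol{\nabla}(\lambda)$). The only point demanding brief care is to check that, under the equivalence of Lemma \ref{pi_delta_adjoint}, the projective cover $P_d(\lambda)$ of $L_d(\lambda)$ in the quotient $\mathcal{G}_d(A)$ really does correspond to the $T$-projective cover $P_T(\lambda)$; this is immediate from the fact that the equivalence is an equivalence of abelian categories and sends $L_d(\lambda)$ to $\lambda$. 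Assembling these facts gives the theorem.
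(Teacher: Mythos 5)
Your proof is correct and takes exactly the approach of the paper, which compresses the argument to the one-line observation that Lemma \ref{pi_delta_adjoint} supplies the layer equivalences $\mathcal{G}_d(A)\simeq\mathcal{M}(T)$ together with the exact adjoints $\ol{\Delta}_d,\ol{\nabla}_d$, and Corollary \ref{proj_inj_lowest_weight} supplies the filtered epimorphism $P(\lambda)\twoheadrightarrow\Delta(\lambda)$; you have simply made explicit the bookkeeping (indexing set, degree function, identification of $P_d(\lambda)$ with $P_T(\lambda)$ under the layer equivalence) that the paper leaves implicit.
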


We deduce:

\begin{cor}
Duality $(-)^*:{\mathcal{G}}(A) \rarr \mathcal{G}(A^{\op})^\circ$, with induced map on posets $(-)^*:\Irr {\mathcal{G}}(T) \rarr \Irr {\mathcal{G}}(T^{\op})$, defines an equivalence of standardly stratified categories.
\end{cor}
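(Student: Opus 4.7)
The plan is to assemble the corollary from pieces already established in the excerpt, rather than to prove anything new. Recall that by Section \ref{sec:standarddual} the standard duality $(-)^*$ is a contravariant equivalence between $\mathcal{G}(A)$ and $\mathcal{G}(A^{\op})$, with $(-)^* \circ (-)^* \simeq \id$; equivalently, it is a covariant equivalence $\mathcal{G}(A) \to \mathcal{G}(A^{\op})^\circ$. So the task reduces to verifying that it intertwines the standardly stratified data of Theorem \ref{HWC} applied to $A$ and to $A^{\op}$ (with $\mathcal{T}^\op$).

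First I would check the poset. By Section \ref{section_t_modules}, duality on $T$-simples preserves the degree, so the bijection $(-)^* \from \Irr \mathcal{G}(T) \to \Irr \mathcal{G}(T^{\op})$ is an isomorphism of posets with respect to the degree order of \S\ref{sec:SSStructure}. Next, by Lemma \ref{L_dual}, the induced bijection on $\Irr \mathcal{G}(A)$ is $L(\lambda) \mapsto L(\lambda)^* \simeq L(\lambda^*)$, so the labeling of simples by $\Irr \mathcal{G}(T)$ is intertwined with the labeling of simples in $\mathcal{G}(A^{\op})$ by $\Irr \mathcal{G}(T^{\op})$.

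Now I would swap the standard/costandard and projective/injective data. By equation (\ref{standard_dual}) we have $\Delta(\lambda)^* \simeq \nabla_{A^{\op}}(\lambda^*)$ and dually $\nabla(\lambda)^* \simeq \Delta_{A^{\op}}(\lambda^*)$, while by (\ref{proj_cov_inj_hull_dual}) we have $P(\lambda)^* \simeq I_{A^{\op}}(\lambda^*)$. Passing to $\mathcal{G}(A^{\op})^\circ$, injectives become projectives and costandard filtrations become standard filtrations (since short exact sequences reverse). Concretely, the filtration of $P(\lambda)$ provided by Corollary \ref{proj_inj_lowest_weight} dualizes to a filtration of $I_{A^{\op}}(\lambda^*)$ with successive quotients $\nabla_{A^{\op}}(\lambda_i^*)$ satisfying $\lambda_0^* = \lambda^*$ and $\lambda_i^* > \lambda^*$ for $i > 0$ (using the order-preserving property from step one), which is exactly the required standardly stratified data for the projective cover of $L(\lambda^*)$ in $\mathcal{G}(A^{\op})^\circ$.

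The only real obstacle is bookkeeping: one must be careful that the opposite category $\mathcal{G}(A^{\op})^\circ$ has as its standardly stratified structure the one obtained by reinterpreting the ingredients from Theorem \ref{HWC} applied to $A^{\op}$ (with $P,\Delta,\nabla$ becoming $I,\nabla,\Delta$ after reversing arrows). Once this dictionary is set up, the three steps above are immediate from the cited results, and no new computation is required.
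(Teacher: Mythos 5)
Your proposal is correct and follows precisely the route the paper intends (the paper offers no proof at all for this corollary, presenting it as an immediate deduction from Theorem~\ref{HWC}). You assemble it from the right ingredients: the degree-invariance of duality on $T$-simples (\S\ref{section_t_modules}) gives the poset isomorphism, Lemma~\ref{L_dual} gives the correspondence of simples, and equations~(\ref{standard_dual}) and~(\ref{proj_cov_inj_hull_dual}) plus Corollary~\ref{proj_inj_lowest_weight} give the exchange of standard/costandard and projective/injective data under dualization. One small point worth adding for completeness, since the corollary sits \emph{before} the box imposing semisimplicity on $T$: in the general standardly stratified setting the data also include the \emph{proper} standard and costandard objects, and one should note that $\ol{\Delta}(\lambda^*)^* = \ol{\nabla}(\lambda)$ holds by the very definition $\ol{\nabla}(-) = \ol{\Delta}(-^*)^*$, so these are exchanged as well; your argument for $\Delta,\nabla$ applies verbatim.
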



\begin{cor} \label{semisimple_hwc}
If $T$ is semisimple, then $\mathcal{G}(A)$ is a highest weight category.
\end{cor}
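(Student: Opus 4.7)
The plan is to deduce this directly from Theorem~\ref{HWC} together with the remark following the definition of a standardly stratified category, which identifies precisely when a standardly stratified structure upgrades to a highest weight structure: namely, when the proper standard objects coincide with the standard objects, so that each layer $\mathcal{C}_\xi$ is semisimple.

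First I would observe that when $T$ is semisimple, every object of $\mathcal{C}(T)$ is projective; in particular $P_T(\lambda) \simeq \lambda$ for each $\lambda \in \Irr \mathcal{G}(T)$. By the definition of standard objects in Section~\ref{standard_and_costandard} this yields
\begin{equation*}
\Delta(\lambda) = \ol{\Delta}(P_T(\lambda)) = \ol{\Delta}(\lambda),
\end{equation*}
and dually, using $I_T(\lambda) \simeq \lambda$, we get $\nabla(\lambda) = \ol{\nabla}(\lambda)$. Equivalently, the canonical epimorphism $\Delta(\lambda) \twoheadrightarrow \ol{\Delta}(\lambda)$ and monomorphism $\ol{\nabla}(\lambda) \hookrightarrow \nabla(\lambda)$ noted after equation~(\ref{standard_dual}) are isomorphisms.

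Next, I would invoke the identification of layers from Lemma~\ref{pi_delta_adjoint}: the layer $\mathcal{G}_d(A)$ is equivalent to $\mathcal{G}_d(T) \simeq \mathcal{M}(T)$, which is semisimple by hypothesis on $T$. Under this equivalence the indecomposable projective $P_{d}(\lambda)$ in $\mathcal{G}_d(T)$ agrees with $L_d(\lambda) \simeq \lambda$, which is exactly what is needed to conclude $\Delta(\lambda) = \ol{\Delta}(\lambda)$ from the abstract definition. Thus all the hypotheses of the remark that follows the definition of a standardly stratified category are in force.

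Combining these observations with Theorem~\ref{HWC}, the standardly stratified structure on $\mathcal{G}(A)$ with respect to the degree function $\deg \from \Irr \mathcal{G}(T) \to \mathbb{Z}$ is in fact a highest weight structure in the sense of Cline--Parshall--Scott, with simple, standard and costandard objects labelled by the poset $(\Irr \mathcal{G}(T), \le)$ from Section~\ref{sec:SSStructure}. There is no genuine obstacle here — the whole content of the corollary is that the reduction from the general standardly stratified case to the highest weight case is controlled entirely by the semisimplicity of the zero-part $T$, which is exactly what ensures $P_T(\lambda) \simeq \lambda$.
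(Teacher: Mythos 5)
Your proposal is correct and takes essentially the same route as the paper: the corollary is a direct consequence of Theorem~\ref{HWC}, the observation in Section~\ref{standard_and_costandard} that $T$ semisimple forces $P_T(\lambda) \simeq \lambda$ and hence $\Delta(\lambda) = \ol{\Delta}(\lambda)$, $\nabla(\lambda) = \ol{\nabla}(\lambda)$, and the remark after the definition of standardly stratified categories upgrading such a structure (with semisimple layers) to a highest weight structure. You have merely spelled out, via Lemma~\ref{pi_delta_adjoint}, the compatibility between the concrete definition $\Delta(\lambda) = \ol{\Delta}(P_T(\lambda))$ and the abstract one $\Delta(\lambda) = \Delta_{d(\lambda)}(P_{d(\lambda)}(\lambda))$, which the paper leaves implicit.
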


\subsection{Multi-gradings}\label{sec:multigrading}

We shortly want to address a generalization from $\mathbb{Z}$-gradings to multi-gradings. Fix a $K$-split torus $\mathbb{T} \simeq (K^{\times})^n \subset \mathrm{Aut}(A)$. Let $X = \Hom(\mathbb{T},K^{\times})$. We fix a subset $I \subset X$ such that  $I$ is a basis of $X$. In this setting, we say that $\mc{T} = (A^{-},T,A^+)$ is a \textit{triangular decomposition} if $A^-,T$ and $A^+$ are again graded subalgebras satisfying (a), (c), (d) and (e) of Definition \ref{defn:triangle}, together with 
\begin{center}
(b') $\Supp A^+ \subset \N I$, $\Supp A^- \subset -\N I$, and $T$ is concentrated in degree zero. 
\end{center}
We consider the category $\mathcal{G}_X(A)$ of $X$-graded left $A$-modules. The simple modules in this category are labeled by $\Lambda := \Irr T \times X$. Define a partial ordering on $\Lambda$ by $(\mu,v) \prec (\lambda,w)$ if and only if $v - w \in \N I$.

\begin{thm}
The pair $(\mathcal{G}_X(A), \preceq)$ is a standardly stratified category. 
\end{thm}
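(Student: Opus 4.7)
The plan is to reproduce the proof of Theorem \ref{HWC} essentially verbatim, with the only change being that the partial order on stratification weights comes from the positive cone $\N I$ in $X$ rather than from $\N \subset \Z$. The key observation is that every argument in Sections \ref{sec:triangulardecomp} and \ref{sec:hwc} is insensitive to whether degrees lie in $\Z$ or in $X$, provided (i) the support of $A^\pm$ lies in an additive submonoid of $X$ whose intersection with its negative is $\{0\}$, and (ii) the induced partial order on degrees is interval-finite. Both hold here: $\N I \cap (-\N I) = \{0\}$ because $I$ is a $\Z$-basis of $X$, and the same fact makes $X \simeq \Z^I$ and $\N I \simeq \N^I$, so that intervals in the induced order are finite products of $\Z$-intervals.

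First I would reconstruct the contents of Section \ref{sec:triangulardecomp} in the $X$-graded setting. The Borel subalgebras $B^\pm = A^\pm T$ admit the smash product decomposition (Lemma \ref{borel_tensor_dec}), their augmentation ideals are nilpotent, and the proper standard/costandard modules $\ol{\Delta}(\lambda_v), \ol{\nabla}(\lambda_v)$ for $(\lambda,v) \in \Lambda = \Irr \mc{M}(T) \times X$ are defined by the same formulas, with support contained in $v - \N I$ and top component isomorphic to $\lambda_v$ (cf.\ Lemmas \ref{basic_delta_lemma_2}, \ref{basic_nabla_lemma_2}, \ref{degree_zero_component}). The Holmes--Nakano classification $L(\lambda_v) \simeq \Hd \ol{\Delta}(\lambda_v) \simeq \Soc \ol{\nabla}(\lambda_v)$ (Theorem \ref{heads}) goes through as well, since its proof uses only the graded structure and the splitness of $T$. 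I would then transfer Section \ref{sec:hwc}: both $\ol{\Delta}(\lambda_v)$ and $\ol{\nabla}(\lambda_v)$ satisfy the highest weight property (Lemma \ref{delta_is_highest_weight}) in the $\preceq$-sense by the same support argument, and the Holmes--Nakano facts (Lemma \ref{holmes-nakano-facts}) together with Brauer reciprocity (Proposition \ref{brauer_reciprocity}) imply that $P(\lambda_v)$ admits a standard filtration with $\Delta(\lambda_v)$ on top and the remaining factors indexed by strictly $\preceq$-larger elements of $\Lambda$, as required by the Losev--Webster axiom.

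The last step is to produce the standardization functors. Exactly as in Lemma \ref{pi_delta_adjoint}, the layer $\mathcal{G}_{X, \preceq v}(A)/\mathcal{G}_{X, \prec v}(A)$ is canonically equivalent to $\mathcal{G}_v(T) \simeq \mathcal{M}(T)$, and under this identification the projection admits $\ol{\Delta}_v$ as an exact left adjoint. This is a direct application of Lemma \ref{lem:stupidadjunctions} to the truncation functor $\omega_v \colon M \mapsto M_v$, with the identification $\Ker \omega_v = \mathcal{G}_{X, \prec v}(A)$ coming from the support computation $\Supp L(\mu_w) \subset w - \N I$: $\omega_v(L(\mu_w))$ vanishes exactly when $v \notin w - \N I$. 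I do not foresee any substantive obstacle; the content of the theorem is really the observation that the arguments given earlier were already flexible enough to handle any ``positive cone'' in a free abelian group. The only mild bookkeeping concerns the fact that the order on $X$ is no longer total, but the Losev--Webster framework is set up precisely to accommodate arbitrary interval-finite posets.
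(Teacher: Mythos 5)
Your proposal is correct, and it follows the first of the two routes the paper itself names (``one can repeat the proof of Theorem~\ref{HWC} in this more general setting''). The checks you record are the right ones: $\N I \cap (-\N I) = \{0\}$ guarantees that $\Supp A^{\pm} \subset \pm\N I$ gives well-defined highest weights, $I$ being a $\Z$-basis of $X$ gives interval-finiteness, and the entire chain of lemmas from the Borel smash-product decomposition through Lemma~\ref{degree_zero_component}, Theorem~\ref{heads}, Lemma~\ref{delta_is_highest_weight}, Lemma~\ref{holmes-nakano-facts}, Proposition~\ref{brauer_reciprocity}, and Lemma~\ref{pi_delta_adjoint} is purely formal in the grading group and transports verbatim to $X$.

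The paper, after acknowledging your route, actually sketches the alternative: choosing $\rho^{\vee}$ with $\langle\alpha,\rho^{\vee}\rangle = 1$ for $\alpha\in I$ gives a ``partial forgetful'' functor $F_X\colon\mathcal{G}_X(A)\to\mathcal{G}(A)$, and since the $X$-order refines the induced $\Z$-order, the standardly stratified axioms are pulled back from Theorem~\ref{HWC}. That route is shorter on the page, but it is the more delicate one: knowing that $\langle w - v, \rho^{\vee}\rangle > 0$ for a standard subquotient $\Delta(\mu_w)$ of $\ker(P(\lambda_v)\to\Delta(\lambda_v))$ does \emph{not} by itself give $w - v \in \N I$, so one must still invoke the $X$-graded support bound $\Supp\ol{\nabla}(\mu_w)\subset w - \N I$ and Brauer reciprocity directly in $\mathcal{G}_X(A)$, exactly as you do. Your version therefore makes explicit the one place where a literal reduction to the $\Z$-graded theorem is insufficient and a multi-graded support argument is genuinely needed. (Incidentally, the paper's displayed order ``$(\mu,v)\prec(\lambda,w)$ iff $v-w\in\N I$'' should read $w-v\in\N I$ to be consistent with both the $\Z$-graded convention and the implication quoted in the paper's proof; you have silently used the corrected convention.)
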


\begin{proof}
One can repeat the proof of Theorem \ref{HWC} in this more general setting. Alternatively, one can deduce the theorem directly from Theorem \ref{HWC}. Choose $\rho^{\vee} \in \Hom(K^{\times}, \mathbb{T})$ such that $\langle \alpha , \rho^{\vee} \rangle = 1$ for all $\alpha \in I$. This defines a ``partial forgetful'' functor $F_X$ from $\mathcal{G}_X(A)$ to the category of $\Z$-graded $A$-modules. Since  $(\mu,v) \prec (\lambda,w)$ implies that $(\mu,\langle v,\rho^{\vee} \rangle) < (\lambda,\langle w,\rho^{\vee} \rangle)$, the theorem follows. 
\end{proof}

\begin{cor}\label{cor:multigrad}
If $T$ is semisimple then $(\mathcal{G}_X(A), \preceq)$ is a highest weight category 
\end{cor}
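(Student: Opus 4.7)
The plan is a direct adaptation of Corollary~\ref{semisimple_hwc} to the multi-graded setting. The preceding theorem already establishes that $(\mathcal{G}_X(A), \preceq)$ is standardly stratified, so the only remaining task is to upgrade this to a highest weight structure. By the remark following the definition in Section~\ref{sec_standardly_strat}, this amounts to verifying that each layer of the stratification is semisimple, equivalently that the standard objects coincide with the proper standard objects.

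First I would transcribe the construction of the standard and proper standard objects from Section~\ref{standard_and_costandard} into the $X$-graded setting: the proper standard $\overline{\Delta}_X(\lambda,v)$ is obtained by inflating a simple $T$-module $\lambda$ placed in $X$-degree $v$ from $T$ to $B^+$ and inducing up to $A$, while the standard $\Delta_X(\lambda,v)$ uses the projective cover $P_T(\lambda)$ in place of $\lambda$. Axioms (a), (c), (d), (e) of Definition~\ref{defn:triangle} continue to hold, and the replacement (b$'$) is exactly what is needed so that the supports of the induced modules respect the ordering $\preceq$; hence Lemma~\ref{basic_delta_lemma}, Lemma~\ref{basic_delta_lemma_2} and their consequences transfer verbatim.

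The key step is then immediate: when $T$ is semisimple, $P_T(\lambda) = \lambda$ for every simple $T$-module $\lambda$, so $\Delta_X(\lambda,v) = \overline{\Delta}_X(\lambda,v)$. Consequently every layer of the standardly stratified category $\mathcal{G}_X(A)$ is semisimple, and the stratification is in fact a highest weight structure.

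Alternatively, and almost immediately, one can invoke the partial forgetful functor $F_X$ from the proof of the preceding theorem: the highest weight property reduces to Ext-vanishing within the layers, and since $F_X$ is exact and faithful and sends the multi-graded standard and proper standard objects to their $\mathbb{Z}$-graded counterparts, the vanishing transfers from Corollary~\ref{semisimple_hwc} back up to $\mathcal{G}_X(A)$. I anticipate no genuine obstacle; the only effort either way is in checking that the bookkeeping of Section~\ref{standard_and_costandard} survives the passage from a single grading to a multi-grading, which it clearly does.
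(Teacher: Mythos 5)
Your main argument is exactly the paper's (implicit) proof: the preceding theorem gives the standardly stratified structure, and since $T$ semisimple forces $P_T(\lambda)=\lambda$ and hence $\Delta=\ol{\Delta}$, the Remark following the definition of standardly stratified category upgrades this to a highest weight structure. The alternative route via $F_X$ is a valid shortcut in spirit, but the claim that Ext-vanishing ``transfers back up'' would need the extra observation that $F_X$ admits an exact (co)adjoint, so that a non-split extension in $\mathcal{G}_X(A)$ cannot become split after coarsening the grading; in any case this is not needed, as the first argument already closes the proof.
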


\subsection{Implications for Ext-groups} We go back to the $\mathbb{Z}$-graded setting.

\begin{tcolorbox}
For the remainder of the article, unless explicitly stated otherwise, we assume that $T$ is \textit{semisimple}. Recall that in this case we have $\Delta(\lambda) = \ol{\Delta}(\lambda)$ and $\nabla(\lambda) = \ol{\nabla}(\lambda)$ for all $\lambda$. In particular, $\mathcal{G}(A)$ is a highest weight category.
\end{tcolorbox}

The highest weight structure on $\mc{G}(A)$ immediately implies several results about the Ext-groups in this category. The following properties are proven in \cite{CPS} for an arbitrary highest weight category.

\begin{cor} \label{hwc_ext_properties}
Let $\lambda,\mu \in \Irr \mathcal{G}(T)$. The following holds:
\begin{enum_thm}
\item \label{hwc_ext_properties:delta} If $\Ext_{\mathcal{G}(A)}^n(\Delta(\lambda),L(\mu)) \neq 0$ or $\Ext_{\mathcal{G}(A)}^n(\Delta(\lambda),\Delta(\mu)) \neq 0$, then $\mu \geq \lambda$ and $n$ is at most the maximal length of a chain between $\lambda$ and $\mu$. Moreover, if $n>0$, then $\mu > \lambda$.
\item \label{hwc_ext_properties:nabla} If $\Ext_{\mathcal{G}(A)}^n(L(\mu),\nabla(\lambda)) \neq 0$ or $\Ext_{\mathcal{G}(A)}^n(\nabla(\mu),\nabla(\lambda)) \neq 0$, then $\mu \geq \lambda$ and $n$ is at most the maximal length of a chain between $\lambda$ and $\mu$. Moreover, if $n>0$, then $\mu > \lambda$.
\item \label{hwc_ext_properties:zero_for_large} Let $M \in \mathcal{G}(A)$. Then $\Ext_{\mathcal{G}(A)}^n(M,\nabla(\lambda)) = 0$ and $\Ext_{\mathcal{G}(A)}^n(\Delta(\lambda),M) = 0$ for $n$ sufficiently large.
\item \label{hwc_ext_properties:simples} If $\Ext_{\mathcal{G}(A)}^1(L(\lambda),L(\mu)) \neq 0$, then $\lambda > \mu$ or $\lambda < \mu$.
\item \label{hwc_ext_properties:end_delta} $\End_{\mathcal{G}(A)}(\Delta(\lambda)) \simeq \End_{\mathcal{G}(A)}(L(\lambda))$ and $\End_{\mathcal{G}(A)}(\nabla(\lambda)) \simeq \End_{\mathcal{G}(A)}(L(\lambda))$.
\end{enum_thm}
\end{cor}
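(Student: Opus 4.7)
The five assertions all follow from the highest weight structure on $\mathcal{G}(A)$ established in Theorem \ref{HWC} (and upgraded to a highest weight category by Corollary \ref{semisimple_hwc} under the standing semisimplicity of $T$), together with two key tools already at our disposal: the standard/costandard filtrations of $P(\lambda)$ and $I(\lambda)$ from Corollary \ref{proj_inj_lowest_weight}, whose ``bottom'' factor is $\Delta(\lambda)$ (resp.\ $\nabla(\lambda)$) and all other factors involve strictly greater labels; and the Ext-orthogonality $\Ext^n_{\mathcal{G}(A)}(\Delta(\lambda), \nabla(\mu)) = \delta_{n,0}\delta_{\lambda,\mu} K$ provided by Lemma \ref{holmes-nakano-facts}\ref{holmes-nakano-facts:ext}. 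All five statements are standard facts for highest weight categories (cf.\ \cite{CPS}), so I will only sketch how they cascade from these two inputs.

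Proof of \ref{hwc_ext_properties:delta}. Proceed by induction on $n$, establishing simultaneously that a nonzero $\Ext^n(\Delta(\lambda), L(\mu))$ forces $\mu \geq \lambda$ (strictly so for $n>0$) and that $n$ is bounded by the length of the longest chain from $\lambda$ to $\mu$. The base $n=0$ is immediate from $\Hd \Delta(\lambda) \simeq L(\lambda)$, by Lemma \ref{delta_is_highest_weight}. For the inductive step, use the short exact sequence $0 \to R \to P(\lambda) \to \Delta(\lambda) \to 0$ from Corollary \ref{proj_inj_lowest_weight}, with $R$ filtered by $\Delta(\nu)$ for various $\nu > \lambda$; the long exact sequence for $\Hom(-,L(\mu))$ collapses via projectivity of $P(\lambda)$ to $\Ext^n(\Delta(\lambda),L(\mu)) \simeq \Ext^{n-1}(R,L(\mu))$ for $n\geq 2$. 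Dévissage along the standard filtration of $R$ reduces the claim to the inductive hypothesis applied to $\Delta(\nu)$ with $\nu > \lambda$; each step trades one link of the chain $[\lambda,\mu]$ for one homological degree, and interval-finiteness of the poset forces termination. The case $n=1$ is handled directly from the same long exact sequence. The statement for $\Ext^n(\Delta(\lambda),\Delta(\mu))$ follows by further dévissage along a composition series of $\Delta(\mu)$, whose factors $L(\xi)$ satisfy $\xi \leq \mu$ by Lemma \ref{delta_is_highest_weight}.

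Part \ref{hwc_ext_properties:nabla} is the formal dual of \ref{hwc_ext_properties:delta} under the standard duality $(-)^*$, which by the corollary following Theorem \ref{HWC} exchanges $\Delta$ and $\nabla$ and preserves the poset. Part \ref{hwc_ext_properties:zero_for_large} follows from \ref{hwc_ext_properties:delta} and \ref{hwc_ext_properties:nabla} via dévissage along a composition series of $M$: finite length provides a finite set of composition factors $L(\xi)$, and the chain-length bounds in \ref{hwc_ext_properties:delta}/\ref{hwc_ext_properties:nabla} then cap $n$ uniformly.

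Parts \ref{hwc_ext_properties:simples} and \ref{hwc_ext_properties:end_delta} are applications of \ref{hwc_ext_properties:delta} through the short exact sequence $0 \to \rad \Delta(\lambda) \to \Delta(\lambda) \to L(\lambda) \to 0$, whose kernel has only composition factors $L(\xi)$ with $\xi < \lambda$ (again by Lemma \ref{delta_is_highest_weight}). For \ref{hwc_ext_properties:simples}, applying $\Hom(-, L(\mu))$ shows that any nonzero element of $\Ext^1(L(\lambda),L(\mu))$ comes either from $\Hom(\rad \Delta(\lambda), L(\mu))$ (placing $L(\mu)$ in the head of $\rad \Delta(\lambda)$, so $\mu < \lambda$) or from $\Ext^1(\Delta(\lambda),L(\mu))$ (forcing $\mu > \lambda$ by the strict inequality from \ref{hwc_ext_properties:delta}); in particular $\mu \neq \lambda$. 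For \ref{hwc_ext_properties:end_delta}, apply $\Hom(\Delta(\lambda),-)$ to the same sequence instead: both $\Hom(\Delta(\lambda), \rad \Delta(\lambda))$ and $\Ext^1(\Delta(\lambda), \rad \Delta(\lambda))$ vanish by \ref{hwc_ext_properties:delta} and dévissage along composition factors $L(\xi)$ of $\rad \Delta(\lambda)$ (which have $\xi < \lambda$), whence $\End(\Delta(\lambda)) \simeq \Hom(\Delta(\lambda),L(\lambda)) \simeq \End(L(\lambda))$. The claim for $\nabla(\lambda)$ is obtained by the dual argument. The only subtlety throughout is keeping track of the chain-length bound through successive dévissages; this works precisely because the poset $(\Irr \mathcal{G}(T), \leq)$ is interval-finite.
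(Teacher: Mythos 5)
Your proof is correct, and it takes a genuinely different route from the paper. The paper's own proof for parts (a)--(d) is citation-based: it invokes \cite[Lemma 3.2(b), Lemma 3.8(b), Lemma 3.8(c)]{CPS} as black boxes for an abstract highest weight category (supplying only the small dévissage needed to pass from $n=1$ to general $n$ for $\Ext^n(\nabla(\mu),\nabla(\lambda))$), and for part (e) it simply notes the consequence of $\Ext^1_{\mathcal{G}(A)}(\Delta(\lambda),\Delta(\lambda))=0$. The paper also proves (b) first and obtains (a) by duality, whereas you do the reverse. What you do differently is reconstruct the arguments from scratch: an induction on homological degree using the short exact sequence $0 \to R \to P(\lambda) \to \Delta(\lambda) \to 0$ from Corollary \ref{proj_inj_lowest_weight}, dévissage along the standard filtration of $R$ and along composition series, and the radical sequence $0 \to \rad\Delta(\lambda) \to \Delta(\lambda) \to L(\lambda) \to 0$ for (d) and (e). This buys self-containedness and makes visible how the chain-length bound is traded one step at a time against homological degree; the paper's approach buys brevity by deferring to the standard theory. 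Both are sound. One small remark: for (e) you prove more than strictly needed ($\Ext^1(\Delta(\lambda),\rad\Delta(\lambda))=0$ is not required; the vanishing of $\Hom(\Delta(\lambda),\rad\Delta(\lambda))$ alone already gives $\End(\Delta(\lambda)) \hookrightarrow \Hom(\Delta(\lambda),L(\lambda))$, which together with the identity endomorphism forces an isomorphism), but this does no harm.
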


\begin{proof}
The statement for $\Ext_{\mathcal{G}(A)}^n(L(\mu),\nabla(\lambda)) \neq 0$ in part \ref{hwc_ext_properties:nabla} is \cite[Lemma 3.8(b)]{CPS}. The statement for $\Ext_{\mathcal{G}(A)}^n(\nabla(\mu),\nabla(\lambda)) \neq 0$ is proven for $n=1$ in \cite[Lemma 3.2(b)]{CPS} but the argument works for general $n$ using the first part for general $n$: if $\Ext_{\mathcal{G}(A)}^n(\nabla(\mu),\nabla(\lambda)) \neq 0$, there is some composition factor $L(\tau)$ of $\nabla(\mu)$ with $\Ext_{\mathcal{G}(A)}^n(L(\tau),\nabla(\lambda)) \neq 0$. Since $\tau \leq \mu$, the statement follows. Part \ref{hwc_ext_properties:delta} is dual to \ref{hwc_ext_properties:nabla}. Parts \ref{hwc_ext_properties:zero_for_large} and \ref{hwc_ext_properties:simples} are \cite[Lemma 3.2(b)]{CPS} and \cite[Lemma 3.8(c)]{CPS}, respectively. Part \ref{hwc_ext_properties:end_delta} follows from $\Ext_{\mathcal{G}(A)}^1(\Delta(\lambda),\Delta(\lambda)) = 0$.
\end{proof}

Since all simple objects of $\mathcal{G}(A)$ are absolutely simple by Proposition \ref{splitting}, the very last statement of Corollary \ref{hwc_ext_properties} simplifies to:

\begin{cor} \label{hom_delta_delta_cor}
$\End_{\mathcal{G}(A)}(\Delta(\lambda)) \simeq K$ and $\End_{\mathcal{G}(A)}(\nabla(\lambda)) \simeq K$.
\end{cor}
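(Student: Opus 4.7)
The statement is an immediate consequence of what has already been established, so the plan is short. By Corollary \ref{hwc_ext_properties}\ref{hwc_ext_properties:end_delta}, there are isomorphisms
\begin{equation*}
\End_{\mathcal{G}(A)}(\Delta(\lambda)) \simeq \End_{\mathcal{G}(A)}(L(\lambda)) \simeq \End_{\mathcal{G}(A)}(\nabla(\lambda)),
\end{equation*}
so it suffices to show that $\End_{\mathcal{G}(A)}(L(\lambda)) \simeq K$.

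My plan is to deduce this from the splitting result, Proposition \ref{splitting}, which gives $\End_{\mathcal{M}(A)}(L(\lambda)) = K$ (absolute simplicity). The forgetful functor $F \from \mathcal{G}(A) \to \mathcal{M}(A)$ is faithful, so it induces an injective $K$-algebra homomorphism
\begin{equation*}
\End_{\mathcal{G}(A)}(L(\lambda)) \hookrightarrow \End_{\mathcal{M}(A)}(F(L(\lambda))) = K.
\end{equation*}
Since the left-hand side is a non-zero $K$-algebra (containing the scalar multiples of the identity), the inclusion is forced to be an equality. Combining this with the displayed isomorphisms above yields the claim.

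There is no real obstacle here: the genuine content has already been packaged into Corollary \ref{hwc_ext_properties}\ref{hwc_ext_properties:end_delta} (which in turn follows from the standard highest weight vanishing $\Ext^1(\Delta(\lambda),\Delta(\lambda)) = 0$) and into Proposition \ref{splitting}. The corollary is simply the observation that, under the running hypothesis that $T$ is split, the general identification of $\End(\Delta(\lambda))$ and $\End(\nabla(\lambda))$ with $\End(L(\lambda))$ valid in any highest weight category collapses to $K$ because $L(\lambda)$ is absolutely simple.
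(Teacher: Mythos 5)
Your proof is correct and follows essentially the same route as the paper: combine Corollary~\ref{hwc_ext_properties}\ref{hwc_ext_properties:end_delta} with Proposition~\ref{splitting}. One small redundancy: Proposition~\ref{splitting} is stated for $\mathcal{C}(A)$ with $\mathcal{C} \in \{\mathcal{M}, \mathcal{G}\}$, so it already gives $\End_{\mathcal{G}(A)}(L(\lambda)) = K$ directly, and the extra faithfulness-of-$F$ argument you insert to pass from $\mathcal{M}(A)$ to $\mathcal{G}(A)$ is not needed (it is in fact exactly the first line of the proof of Proposition~\ref{splitting}).
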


\begin{lem}\label{claim:filtr}
If $M \in \mc{G}^{\Delta}(A)$, then $M$ has a filtration $M = M_0 \supset M_1 \supset \cdots \supset M_s = \{ 0 \}$ such that $M_i / M_{i+1} \simeq \Delta(\lambda_i)$ and $\deg \lambda_i \le \deg \lambda_{i+1}$ for all $i = 0, \ds, s-1$.
\end{lem}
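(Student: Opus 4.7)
The plan is to prove the result by a bubble-sort argument, exploiting the $\Ext^1$-vanishing between standard modules given by Corollary~\ref{hwc_ext_properties}\ref{hwc_ext_properties:delta}.

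I would start with an arbitrary standard filtration
\begin{equation*}
M = N_0 \supset N_1 \supset \cdots \supset N_s = 0, \qquad N_i/N_{i+1} \simeq \Delta(\mu_i),
\end{equation*}
which exists by hypothesis. Define the number of \emph{inversions} of such a filtration to be $\#\{ i \mid \deg \mu_i > \deg \mu_{i+1}\}$. The proof will proceed by induction on this number: if it is zero, the filtration already has the desired property, and otherwise I will exhibit an adjacent-swap operation that strictly decreases it.

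Suppose $\deg \mu_i > \deg \mu_{i+1}$ for some index $i$. Consider the length-two subquotient $Q \dopgleich N_i/N_{i+2}$, sitting in the short exact sequence
\begin{equation*}
0 \to \Delta(\mu_{i+1}) \to Q \to \Delta(\mu_i) \to 0.
\end{equation*}
Since $\deg \mu_{i+1} < \deg \mu_i$, the elements $\mu_{i+1}$ and $\mu_i$ satisfy $\mu_{i+1} \not\ge \mu_i$ in the partial order on $\Irr \mc{G}(T)$ (whose only relations come from strict inequalities of degrees). Corollary~\ref{hwc_ext_properties}\ref{hwc_ext_properties:delta} applied with $\lambda = \mu_i$, $\mu = \mu_{i+1}$ then forces $\Ext^1_{\mc{G}(A)}(\Delta(\mu_i), \Delta(\mu_{i+1})) = 0$, so the displayed sequence splits: $Q \simeq \Delta(\mu_i) \oplus \Delta(\mu_{i+1})$.

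Using the splitting, I let $N_i' \subset N_i$ be the preimage of the direct summand $\Delta(\mu_i) \subset Q$ under the canonical map $N_i \twoheadrightarrow Q$. Then $N_{i+2} \subset N_i' \subset N_i$, with $N_i/N_i' \simeq \Delta(\mu_{i+1})$ and $N_i'/N_{i+2} \simeq \Delta(\mu_i)$. Replacing $N_{i+1}$ by $N_i'$ in the original filtration yields a new standard filtration whose sequence of quotients is obtained from $(\mu_0,\dots,\mu_{s-1})$ by transposing $\mu_i$ and $\mu_{i+1}$. The number of inversions strictly decreases, so iterating the procedure terminates in a filtration $M_0 \supset M_1 \supset \cdots \supset M_s = 0$ with $M_i/M_{i+1} \simeq \Delta(\lambda_i)$ and $\deg \lambda_i \le \deg \lambda_{i+1}$. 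The only delicate point is verifying the splitting, and this is handled cleanly by the Ext-vanishing; everything else is just a bookkeeping step for the adjacent transposition.
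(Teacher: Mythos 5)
Your bubble-sort strategy is genuinely different from the paper's argument, which proceeds by structural induction on filtration length: the paper orders a filtration of a submodule $M'$ with $M/M' \simeq \Delta(\lambda)$ by induction, inserts $\Delta(\lambda)$ into its correct slot using an Ext-vanishing argument to split the relevant extension, and reassembles. Your approach instead directly swaps adjacent quotients that are out of order. Both rely on the same key input, the $\Ext^1$-vanishing in Corollary~\ref{hwc_ext_properties}, and the splitting and the construction of $N_i'$ in your proof are correct. The insertion-sort version gives a cleaner induction; the bubble-sort version is arguably more concrete but needs a termination argument, and that is where your proof has a gap.

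The quantity you chose does not strictly decrease under an adjacent swap. You defined the number of inversions as the number of indices $i$ with $\deg \mu_i > \deg \mu_{i+1}$, i.e.\ the number of \emph{adjacent} descents. An adjacent transposition can increase this quantity: if the degree sequence is $(2,3,1,2)$ (one adjacent descent), swapping the middle pair gives $(2,1,3,2)$ (two adjacent descents). So the claim ``the number of inversions strictly decreases'' is false and the induction does not close. The fix is standard: replace the count of adjacent descents by the count of \emph{all} inversions, $\#\{(j,k) \mid j < k, \ \deg\mu_j > \deg\mu_k\}$. Transposing two adjacent out-of-order entries changes only the relative order of that single pair and leaves every other pair's relative order unchanged, so the total inversion count drops by exactly one, and the algorithm terminates. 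With that substitution your proof is correct.
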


\begin{proof}
The proof is by induction on the length of a (any) standard filtration of $M$. Choose $M' \subset M$ such that $M' \in \mc{G}^{\Delta}(A)$ and $M / M' \simeq \Delta(\lambda)$, for some $\lambda$. Then we can choose a filtration $M' = M_1' \supset M_2' \supset \cdots \supset M_s' = \{ 0 \}$ such that $M_i' / M_{i+1}' \simeq \Delta(\lambda_i)$ and $\deg \lambda_i \le \deg \lambda_{i+1}$. There exists some $j$ such that $\deg \lambda_{j-1} < \deg \lambda \le \deg \lambda_{j}$. We may then, without loss of generality, quotient $M$ by $M_j'$ and assume that $j = s$ i.e. $\deg \lambda_i < \deg \lambda$ for all $i < s$. Then we claim that the short exact sequence 
$$
0 \rightarrow M' \rightarrow M \rightarrow \Delta(\lambda) \rightarrow 0
$$
splits. This follows by induction on $s$, using the fact that $\Ext^1_{\mc{G}(A)}(\Delta(\lambda),\Delta(\lambda_i)) = 0$ for all $i < s$; see Corollary \ref{hwc_ext_properties} (a). Thus, $M \simeq M' \oplus \Delta(\lambda)$, and it is clear that we can cook up a filtration on $M$ with the desired properties. 
\end{proof}

\begin{lem} \label{P_filtration_ordered_by_deg}
For any $\lambda \in \Irr \mathcal{G}(T)$ there is a standard filtration of $P(\lambda)$ as in (\ref{verma_coverma_properties:verma_filtration_equ}) with the additional property that $\deg \lambda_i \leq \deg \lambda_{i+1}$
for all $0 \leq i<l$, where $\lambda_0 = \lambda$. The analogous statement for $I(\lambda)$ also holds.
\end{lem}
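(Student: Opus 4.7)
The plan is to deduce the statement directly from Lemma~\ref{claim:filtr} together with the multiplicity information from Corollary~\ref{proj_inj_lowest_weight}. By Corollary~\ref{projectives_standard_filt}, $P(\lambda) \in \mc{G}^{\Delta}(A)$, so Lemma~\ref{claim:filtr} yields a filtration
\begin{equation*}
P(\lambda) = M_0 \supset M_1 \supset \cdots \supset M_l = 0
\end{equation*}
with $M_i / M_{i+1} \simeq \Delta(\lambda_i)$ and $\deg \lambda_i \le \deg \lambda_{i+1}$ for $0 \le i < l$. The only thing left to verify is that $\lambda_0 = \lambda$.

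Since the multiplicities $[P(\lambda):\Delta(\mu)]$ are independent of the chosen standard filtration (as follows from Lemma~\ref{holmes-nakano-facts}\ref{holmes-nakano-facts:ext} and Brauer reciprocity, Proposition~\ref{brauer_reciprocity}), Corollary~\ref{proj_inj_lowest_weight} tells us that $[P(\lambda):\Delta(\lambda)] = 1$ and that $[P(\lambda):\Delta(\mu)] \ne 0$ forces $\mu \ge \lambda$. Because $\mu > \lambda$ means $\deg \mu > \deg \lambda$, the weight $\lambda$ is the unique element of strictly minimal degree in the multiset $\{\lambda_0, \ldots, \lambda_{l-1}\}$ and it appears there exactly once. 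Since $\deg \lambda_0 \le \deg \lambda_i$ for every $i$, we must therefore have $\lambda_0 = \lambda$, as required.

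For $I(\lambda)$ the plan is to transport the statement through standard duality. By (\ref{proj_cov_inj_hull_dual}) we have $I(\lambda) \simeq P(\lambda^*)^*$, and by (\ref{standard_dual}) standard duality interchanges $\Delta(\mu^*) \in \mc{G}(A^\op)$ with $\nabla(\mu) \in \mc{G}(A)$. Apply the projective case of the lemma to $P(\lambda^*) \in \mc{G}(A^{\op})$, yielding a decreasing filtration with successive quotients $\Delta(\lambda_i^*)$ satisfying $\lambda_0^* = \lambda^*$ and $\deg \lambda_i^* \le \deg \lambda_{i+1}^*$. Taking orthogonal complements $N \mapsto N^\perp$ inside $P(\lambda^*)^* = I(\lambda)$ turns this into an increasing filtration $0 = F_0 \subset F_1 \subset \cdots \subset F_l = I(\lambda)$ whose quotients $F_i/F_{i-1}$ are the duals of the corresponding standard subquotients; renaming indices so that $F_1$ corresponds to $\lambda_0^*$ and using that $\deg (\mu^*) = \deg \mu$ (see \S\ref{section_t_modules}) produces exactly the desired filtration.

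The only mild issue is the bookkeeping in the duality argument—matching up indices and the direction of inclusions—but no new ideas are needed. The non-trivial content of the lemma is really contained in Lemma~\ref{claim:filtr}; the point here is simply to observe that the unique minimal-degree entry in the standard multiplicity vector of $P(\lambda)$ is $\lambda$ itself, so the Ext-vanishing argument of Lemma~\ref{claim:filtr} is forced to put $\Delta(\lambda)$ at the top.
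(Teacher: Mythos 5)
Your proof is correct and uses exactly the same two ingredients as the paper's (Lemma~\ref{claim:filtr} and Corollary~\ref{proj_inj_lowest_weight}), merely applied in the opposite order: the paper first takes the filtration from Corollary~\ref{proj_inj_lowest_weight} with $\Delta(\lambda)$ on top and then reorders $F_1$ via Lemma~\ref{claim:filtr}, whereas you apply Lemma~\ref{claim:filtr} to all of $P(\lambda)$ and then use the multiplicity information from Corollary~\ref{proj_inj_lowest_weight} to pin down $\lambda_0 = \lambda$. Your explicit duality argument for $I(\lambda)$ is also sound and matches what the paper leaves implicit.
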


\begin{proof}
By Corollary \ref{proj_inj_lowest_weight}, $P(\lambda)$ has a filtration $P(\lambda) = F_0 \supset F_1 \supset \cdots $ such that $F_0 / F_1 \simeq \Delta(\lambda)$ and $F_i / F_{i+1} \simeq \Delta(\lambda_i)$, with $\deg \lambda_i > \deg \lambda$, for $i > 0$. Now the lemma follows by applying Lemma \ref{claim:filtr} to $F_1 \in \mc{G}^{\Delta}(A)$. 
\end{proof}


\subsection{BGG property} \label{BGG_section}

By analogy with category $\mc{O}$ for a semisimple complex Lie algebra, we introduce the following terminology:

\begin{defn}
$A$ is \word{BGG} if $\lbrack \ol{\Delta}(\lambda) \rbrack = \lbrack \ol{\nabla}(\lambda) \rbrack$ in $\K_0(\mathcal{G}(A))$ for all $\lambda \in \Irr \mathcal{G}(T)$. 
\end{defn}

Because of the compatibility of standard and costandard modules with respect to shifts, see (\ref{standard_compat_with_shift}), it is sufficient to check the equality $\lbrack \ol{\Delta}(\lambda) \rbrack = \lbrack \ol{\nabla}(\lambda) \rbrack$ only for $\lambda \in \Irr \mathcal{G}_0(T)$. Below we show that the BGG property is equivalent to a symmetry between the left and right Borel subalgebras, and this property can easily be verified in examples, in particular for the VIP examples. From Proposition \ref{prop:injectiveK} we immediately obtain:

\begin{cor}\label{cor:chi_BGG}
If $\chi( \lbrack \ol{\Delta}(\lambda) \rbrack) = \chi(\lbrack \ol{\nabla}(\lambda) \rbrack)$ for all $\lambda \in \Irr \mathcal{G}_0(T)$, then $A$ is BGG.\qed
\end{cor}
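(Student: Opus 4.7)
The plan is to deduce this immediately from the injectivity of the graded character map, which is the content of Proposition \ref{prop:injectiveK}. First, I would unpack what BGG means: we must show that $[\ol{\Delta}(\lambda)] = [\ol{\nabla}(\lambda)]$ in $\K_0(\mc{G}(A))$ for every $\lambda \in \Irr \mc{G}(T)$. As noted in the paragraph preceding the corollary, using the shift compatibility (\ref{standard_compat_with_shift}), namely $\ol{\Delta}(\lambda[n]) = \ol{\Delta}(\lambda)[n]$ and $\ol{\nabla}(\lambda[n]) = \ol{\nabla}(\lambda)[n]$, together with the fact that the shift $[1]$ acts on $\K_0(\mc{G}(A))$ as a $\Z[t,t^{-1}]$-module automorphism, it is enough to establish the equality for $\lambda \in \Irr \mc{G}_0(T)$.

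Next, for such $\lambda$ the hypothesis provides the equality $\chi([\ol{\Delta}(\lambda)]) = \chi([\ol{\nabla}(\lambda)])$ in $\K_0(\mc{G}(T))$. Equivalently,
\begin{equation}
\chi\bigl([\ol{\Delta}(\lambda)] - [\ol{\nabla}(\lambda)]\bigr) = 0.
\end{equation}
Since $\chi \from \K_0(\mc{G}(A)) \to \K_0(\mc{G}(T))$ is a $\Z[t,t^{-1}]$-module morphism which is injective by Proposition \ref{prop:injectiveK}, we conclude that $[\ol{\Delta}(\lambda)] = [\ol{\nabla}(\lambda)]$ in $\K_0(\mc{G}(A))$, which is precisely the BGG property.

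There is no real obstacle here; the corollary is a direct application of the injectivity of $\chi$ to the specific classes $[\ol{\Delta}(\lambda)] - [\ol{\nabla}(\lambda)]$. The only (minor) bookkeeping point is the reduction from arbitrary $\lambda \in \Irr \mc{G}(T)$ to $\lambda \in \Irr \mc{G}_0(T)$, which is entirely formal thanks to (\ref{standard_compat_with_shift}) and the $\Z[t,t^{-1}]$-module structure on the Grothendieck groups.
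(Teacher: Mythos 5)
Your argument is correct and matches the paper's intended proof exactly: reduce to $\lambda \in \Irr \mathcal{G}_0(T)$ via the shift compatibility (\ref{standard_compat_with_shift}) and the $\Z[t,t^{-1}]$-module structure, then invoke the injectivity of $\chi$ from Proposition \ref{prop:injectiveK}. The paper leaves the corollary with a bare \qed precisely because it is this immediate consequence.
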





\begin{prop}\label{prop:BGGbimodule}
Suppose that $T$ is semisimple. Then $A$ is BGG if and only if $B^- \simeq (B^+)^\circledast$ as graded $T$-bimodules (i.e., $B_i^- \simeq (B_{-i}^+)^\circledast$ as $T$-bimodules for all $i \in \bbZ$).
\end{prop}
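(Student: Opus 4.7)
The plan is to compute both sides in terms of the multiplicities of the simple graded $(T,T)$-bimodule constituents of $B^-$ and $B^+$, and show that the BGG property gives a symmetric equality of these multiplicities that is equivalent to the bimodule isomorphism. Throughout, I will work over the split semisimple algebra $T$, so that the category of (graded) $(T,T)$-bimodules is itself semisimple with simples indexed by pairs of simple left $T$-modules: for $\nu, \rho \in \Irr \mathcal{G}_0(T)$ let $S_{\nu,\rho}$ denote the simple bimodule obtained from $\nu \otimes_K \rho^*$, and write
\[
B^{\pm}_i \simeq \bigoplus_{\nu,\rho} S_{\nu,\rho}^{\oplus m^{\pm}_{\nu,\rho,i}}
\]
as $(T,T)$-bimodules, so that the graded bimodule structure of $B^\pm$ is entirely encoded by the integers $m^\pm_{\nu,\rho,i}$.

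First I will compute $\chi([\ol{\Delta}(\lambda)])$ for $\lambda \in \Irr \mathcal{G}_0(T)$. By Lemma \ref{basic_delta_lemma}, $\ol{\Delta}(\lambda) \simeq B^- \otimes_T \lambda$ as graded $T$-modules, with $\ol{\Delta}(\lambda)_i = B^-_i \otimes_T \lambda$. For split semisimple $T$ one has $\rho^\circledast \otimes_T \lambda \simeq K$ if $\rho \simeq \lambda$ and $0$ otherwise, which together with the decomposition of $B^-_i$ yields
\[
[\ol{\Delta}(\lambda)_i : \mu]_T = m^-_{\mu,\lambda,i} \qquad \text{for all } \mu \in \Irr \mathcal{G}_0(T).
\]
Dually, using Lemma \ref{basic_nabla_lemma_2}, $\ol{\nabla}(\lambda)_i = (\lambda^\circledast \otimes_T B^+_{-i})^\circledast$, and the same semisimple calculation (now with $\lambda^\circledast$ tensoring on the left) gives
\[
[\ol{\nabla}(\lambda)_i : \mu]_T = m^+_{\lambda,\mu,-i}.
\]

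Now for the equivalence. The simple bimodule $S_{\nu,\rho}^\circledast$ is canonically isomorphic to $S_{\rho,\nu}$ (dualising $\nu \otimes_K \rho^*$ transposes the roles of left and right $T$), and the shift in $(-)^\circledast$ sends degree $j$ to degree $-j$. Hence
\[
\bigl((B^+)^\circledast\bigr)_i \simeq (B^+_{-i})^\circledast \simeq \bigoplus_{\nu,\rho} S_{\rho,\nu}^{\oplus m^+_{\nu,\rho,-i}},
\]
so $B^- \simeq (B^+)^\circledast$ as graded $T$-bimodules if and only if $m^-_{\nu,\rho,i} = m^+_{\rho,\nu,-i}$ for all $\nu,\rho$ and $i \in \mathbb{Z}$. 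Comparing with the formulas above, this is exactly the condition that $[\ol{\Delta}(\lambda)_i:\mu]_T = [\ol{\nabla}(\lambda)_i:\mu]_T$ for all $\lambda, \mu \in \Irr \mathcal{G}_0(T)$ and all $i$, i.e., $\chi([\ol{\Delta}(\lambda)]) = \chi([\ol{\nabla}(\lambda)])$ in $\K_0(\mathcal{G}(T))$ for all $\lambda$.

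Finally, the two directions are glued together using Proposition \ref{prop:injectiveK} and Corollary \ref{cor:chi_BGG}: BGG is equivalent to $[\ol{\Delta}(\lambda)] = [\ol{\nabla}(\lambda)]$ in $\K_0(\mathcal{G}(A))$, which by injectivity of $\chi$ is equivalent to $\chi([\ol{\Delta}(\lambda)]) = \chi([\ol{\nabla}(\lambda)])$ in $\K_0(\mathcal{G}(T))$, which by the previous paragraph is equivalent to the bimodule isomorphism $B^- \simeq (B^+)^\circledast$. The main thing to be careful with is the bookkeeping of the left/right $T$-actions when identifying $S_{\nu,\rho}^\circledast$ with $S_{\rho,\nu}$ and when tracking which action on $B^\pm$ is consumed by the tensor products defining $\ol{\Delta}(\lambda)$ and $\ol{\nabla}(\lambda)$; everything else reduces to the standard character theory of the semisimple algebra $T \otimes_K T^{\op}$.
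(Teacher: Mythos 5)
Your proof is correct and follows essentially the same route as the paper's: decompose $B^\pm_i$ into simple $(T,T)$-bimodules (using split semisimplicity of $T$, hence of $T\otimes_K T^{\op}$), compute that these multiplicities equal $[\ol{\Delta}(\lambda)_i:\mu]$ and $[\ol{\nabla}(\lambda)_i:\mu]$, observe that $(-)^\circledast$ exchanges $S_{\nu,\rho}$ with $S_{\rho,\nu}$ and negates the degree, and close the loop with the injectivity of $\chi$ (Proposition \ref{prop:injectiveK} / Corollary \ref{cor:chi_BGG}). The only cosmetic difference is that you decompose $B^+_{-i}$ first and then dualize, whereas the paper parametrizes $(B^+_{-i})^\circledast$ directly; the bookkeeping identities $m^-_{\mu,\lambda,i}=[\ol{\Delta}(\lambda)_i:\mu]$ and $m^+_{\lambda,\mu,-i}=[\ol{\nabla}(\lambda)_i:\mu]$ are identical to the paper's $n^{(i)}_{\mu\lambda}$ and $m^{(i)}_{\mu\lambda}$.
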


\begin{proof}
By Lemma \ref{basic_delta_lemma_2} and Lemma \ref{basic_nabla_lemma_2} we have $\ol{\Delta}(\lambda)_i \simeq B_i^- \otimes_T \lambda$ and $\ol{\nabla}(\lambda)_i \simeq (\lambda^\circledast \otimes_T B_{-i}^+)^\circledast$ as  $T$-modules, for all $\lambda \in \Irr \mathcal{G}_0(T) \simeq \Irr \mathcal{M}(T)$ and all $i \in \bbZ$. Since $T$ is split semisimple by assumption, it is separable and it follows from \cite[\S7, Ex. 20]{Bourb-Alg-8} that $T \otimes_K T^{\mrm{op}}$ is semisimple. Hence, the category of (graded) $T$-bimodules is semisimple. Furthermore, it follows from \cite[\S7, No. 4, Théorème 2]{Bourb-Alg-8} and \cite[\S7, No. 7, Proposition 8]{Bourb-Alg-8} that the  simple $T$-bimodules are precisely the modules $\mu \otimes_K \lambda^\circledast$ with $\lambda,\mu \in \Irr \mathcal{M}(T)$. Hence, in the category of $T$-bimodules we can write 
\[
B^-_i = \bigoplus_{\lambda,\mu \in \Irr {\mathcal{M}(T)}} (\mu \otimes_K \lambda^\circledast)^{\oplus n_{ \mu \lambda}^{(i)}}
\]
for each $i \in \bbZ$ and some $n_{\mu \lambda}^{(i)} \in \bbN$. Since $T$ is split semisimple, we have
\[
(\mu \otimes_K \lambda^\circledast) \otimes_T \nu = \mu \otimes_K (\lambda^\circledast \otimes_T \nu) = \mu \otimes_K \Hom_T(\lambda,\nu) = \left\lbrace \begin{array}{ll} \mu & \tn{if } \lambda = \nu \\ 0 & \tn{else} \end{array} \right.
\]
in $\mathcal{M}(T)$ for every $\nu \in \Irr {\mathcal{M}(T)}$. Hence,
\[
\ol{\Delta}(\lambda)_i \simeq B_i^- \otimes_T \lambda = \bigoplus_{\mu \in \Irr {\mathcal{M}(T)} }\mu^{\oplus n_{\mu \lambda}^{(i)}} 
\]
in $\mathcal{M}(T)$ and therefore
\begin{equation} \label{bgg_property_1}
n_{\mu \lambda}^{(i)} = \lbrack \ol{\Delta}(\lambda)_i : \mu \rbrack \;.
\end{equation}
In a similar fashion, we can write 
\[
(B_{-i}^+)^\circledast = \bigoplus_{\lambda,\mu \in \Irr {\mathcal{M}(T)}} (\mu \otimes_K \lambda^\circledast)^{\oplus m_{\mu \lambda}^{(i)}}
\]
for some $m_{\mu \lambda}^{(i)} \in \bbN$. From this we get
\[
B_{-i}^- = \bigoplus_{\lambda,\mu \in \Irr {\mathcal{M}(T)}} (\lambda \otimes_K \mu^\circledast)^{\oplus m_{\mu \lambda}^{(i)}} \;,
\]
hence
\[
\lambda^\circledast \otimes_T B_{-i}^+ = \bigoplus_{\mu \in \Irr {\mathcal{M}(T)}} (\mu^\circledast)^{\oplus m_{\mu \lambda}^{(i)}} \;,
\]
in $\mathcal{M}(T^{\op})$, and therefore
\[
\ol{\nabla}(\lambda)_i \simeq (\lambda^\circledast \otimes_T B_{-i}^+)^\circledast = \bigoplus_{\mu \in \Irr {\mathcal{M}(T)}} \mu^{\oplus m_{\mu \lambda}^{(i)}} 
\]
in $\mathcal{M}(T)$. Consequently,
\begin{equation}\label{bgg_property_2}
m_{\mu \lambda}^{(i)} = \lbrack \ol{\nabla}(\lambda)_i: \mu \rbrack \;.
\end{equation}
The claim now follows at once from equations (\ref{bgg_property_1}) and (\ref{bgg_property_2}) together with Corollary \ref{cor:chi_BGG}.
\end{proof}




\subsection{Families and standard families}  \label{sec_standard_families}

Since $A$ is a finite-dimensional algebra, it has a decomposition $A = \bigoplus_{i=1}^n B_i$ into indecomposable subrings $B_i$, given as $B_i = Ac_i$ for the primitive idempotents $c_i$ of the center of $A$. This induces a decomposition of the module category $\mc{M}(A) = \bigoplus_{i=1}^n \mc{M}(B_i)$. In particular, every simple $A$-module belongs to a unique block $B_i$. This induces a partition of $\Irr \mc{M}(A)$. We can pull this back to a partition of $\Irr \mc{M}(T)$ using the bijection $L \from \Irr \mc{M}(T) \to \Irr \mc{M}(A)$. The parts of this partition are called the \word{families} of $A$, and the partition is denoted $\mrm{Fam}(A)$. Note that, even though not encoded in the notation, the families depend on the choice of a triangular decomposition. 

Consider the graph with vertices $\Irr \mc{M}(T)$ and an edge between $\lambda$ and $\mu$ if they occur in the same proper standard module $\ol{\Delta}(\eta)$ for some $\eta$. We call the connected components of this graph the \word{standard families} of $A$, denoted $\mrm{Std}(A)$. Since a proper standard module is indecomposable, all its constituents lie in the same block. Hence if $L(\lambda)$ and $L(\mu)$ lie in the same standard family, they also lie in the same family.  

\begin{lem}
If $A$ is BGG, the standard families are equal to the families.
\end{lem}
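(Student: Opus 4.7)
The inclusion $\mathrm{Std}(A) \preceq \mathrm{Fam}(A)$ (i.e.\ standard families refine families) has already been observed in the preceding paragraph, since each proper standard module is indecomposable. So the plan is to prove the reverse refinement under the BGG hypothesis: I need to show that if $L(\lambda)$ and $L(\mu)$ lie in the same block of $A$, then $\lambda$ and $\mu$ lie in the same standard family.

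The key step is to show that, for each $\lambda \in \Irr \mathcal{M}(T)$, every composition factor of $P(\lambda)$ lies in the standard family of $\lambda$. By Corollary~\ref{proj_inj_lowest_weight}, fix a standard filtration $P(\lambda) = F^0 \supset F^1 \supset \cdots \supset F^l = 0$ with $F^i/F^{i+1} \simeq \Delta(\lambda_i) = \ol{\Delta}(\lambda_i)$ (using that $T$ is semisimple), where $\lambda_0 = \lambda$. Combining Brauer reciprocity (Proposition~\ref{brauer_reciprocity}) with the BGG hypothesis $[\ol{\Delta}(\lambda_i)] = [\ol{\nabla}(\lambda_i)]$ in $\K_0(\mathcal{G}(A))$ (which forces equality of ungraded composition multiplicities), I obtain
\[
[P(\lambda) : \Delta(\lambda_i)] \;=\; [\ol{\nabla}(\lambda_i) : L(\lambda)] \;=\; [\ol{\Delta}(\lambda_i) : L(\lambda)].
\]
Since the left-hand side is positive for every $\lambda_i$ occurring in the filtration, $L(\lambda)$ is a composition factor of $\ol{\Delta}(\lambda_i)$. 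Thus $\lambda$ and $\lambda_i$ both occur in $\ol{\Delta}(\lambda_i)$, so they lie in the same standard family for every $i$.

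To finish, take any composition factor $L(\mu)$ of $P(\lambda)$. Since the composition factors of $P(\lambda)$ are exactly those appearing in the subquotients $\Delta(\lambda_i) = \ol{\Delta}(\lambda_i)$, there exists $i$ with $L(\mu)$ a constituent of $\ol{\Delta}(\lambda_i)$. Then $\mu$ and $\lambda_i$ lie in the same standard family, and by the previous paragraph $\lambda_i$ lies in the same standard family as $\lambda$; by transitivity, so does $\mu$. Hence all composition factors of $P(\lambda)$ lie in a single standard family. Because the block containing $L(\lambda)$ is the transitive closure of ``appearing as a composition factor of a common indecomposable projective,'' the family of $\lambda$ is contained in its standard family. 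Combined with the reverse refinement, this gives $\mathrm{Fam}(A) = \mathrm{Std}(A)$.

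There is no serious obstacle here; the proof is a direct unwinding of definitions once one notices that BGG upgrades Brauer reciprocity $[P(\lambda):\Delta(\mu)] = [\ol{\nabla}(\mu):L(\lambda)]$ to the symmetric statement $[P(\lambda):\Delta(\mu)] = [\ol{\Delta}(\mu):L(\lambda)]$, which is precisely what is needed to transport block information along standard modules. The only mild care required is to pass between the graded identity provided by BGG and the ungraded multiplicities used to define blocks and families, but this is immediate since equality of classes in $\K_0(\mathcal{G}(A))$ implies equality of ungraded composition multiplicities.
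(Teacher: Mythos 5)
Your argument is correct and follows essentially the same route as the paper: the central move in both is to combine Brauer reciprocity with the BGG property to upgrade $[P(\lambda):\Delta(\mu)] = [\ol{\nabla}(\mu):L(\lambda)]$ to the symmetric statement $[P(\lambda):\Delta(\mu)] = [\ol{\Delta}(\mu):L(\lambda)]$, and then use a standard filtration of a projective to propagate block membership to standard families. The paper links two simples occurring in a common $P(\eta)$ both back to $\eta$, whereas you show all constituents of $P(\lambda)$ lie in the standard family of $\lambda$ and then invoke transitivity of the block relation; these are the same argument, just packaged slightly differently.
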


\begin{proof}
For $\lambda, \mu \in \Irr \mc{M}(T)$ let us write $\lambda \sim \mu$ if  $L(\lambda)$ and $L(\mu)$ are constituents of $P(\eta)$ for some $\eta$. Assume that this is the case. Since $P(\eta)$ has a standard filtration, there is $\alpha$ with $\lbrack P(\eta):\Delta(\alpha) \rbrack \neq 0$ and $\lbrack \Delta(\alpha) : L(\lambda) \rbrack \neq 0$. Similarly, there is $\beta$ such that  $\lbrack P(\eta):\Delta(\beta) \rbrack \neq 0$ and $\lbrack \Delta(\beta) : L(\mu) \rbrack \neq 0$. Now, by Brauer reciprocity (\ref{eq:BGG}) and the BGG property we obtain $\lbrack \Delta(\alpha):L(\eta) \rbrack \neq 0$ and $\lbrack \Delta(\beta) : L(\eta) \rbrack \neq 0$. We thus see that $\lambda$ and $\eta$ lie in the same standard family, and also $\mu$ and $\eta$ lie in the same standard family. Hence, $\lambda$ and $\mu$ lie in the same standard family. Since the block relation is generated by $\sim$, this proves the claim.
\end{proof}

\section{Tilting theory}\label{more_on_tilting}

An object $M \in \mathcal{G}(A)$ is said to be \word{tilting} if $M \in \mc{G}^t(A) \dopgleich \mc{G}^{\Delta}(A) \cap \mc{G}^{\nabla}(A)$, i.e., $M$ has both a standard and a costandard filtration. It follows directly from Corollary \ref{projectives_standard_filt} and its dual version that $\mc{G}^t(A)$ is closed under direct sums and under direct summands in $\mathcal{G}(A)$, so it is a Krull–Schmidt category. By Corollary \ref{projectives_standard_filt} the projective-injective objects are tilting. We show that the converse holds.

\begin{thm} \label{tilings_projinj}
The tilting objects in $\mathcal{G}(A)$ are precisely the projective-injective objects.
\end{thm}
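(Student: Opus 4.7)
The direction ``projective-injective $\Rightarrow$ tilting'' is immediate from Corollary \ref{projectives_standard_filt} together with its dual applied to $\mc{T}^\op$: projectives are standardly filtered, injectives are costandardly filtered, so a projective-injective object lies in $\mc{G}^\Delta(A) \cap \mc{G}^\nabla(A) = \mc{G}^t(A)$.

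For the converse, let $M \in \mc{G}^t(A)$. The first reduction is that it suffices to prove $M$ is projective: applying the entire theory to the opposite triangular decomposition $\mc{T}^\op$ and invoking the standard duality $(-)^*$, which swaps projectives with injectives by (\ref{proj_cov_inj_hull_dual}) and standards with costandards by (\ref{standard_dual}), yields that $M$ projective forces $M^*$ injective, i.e., $M$ injective. So the target is projectivity of $M$.

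To show $M$ is projective, I would take a projective cover $\pi \from P \twoheadrightarrow M$ with kernel $K$ and try to split the short exact sequence
\bdm
0 \to K \to P \to M \to 0,
\edm
which is equivalent to $\Ext^1_{\mc{G}(A)}(M, K) = 0$. Since $M \in \mc{G}^\Delta(A)$, the Ext-vanishing Lemma \ref{holmes-nakano-facts}\ref{holmes-nakano-facts:ext} (extended from $\Delta,\nabla$ to $\Delta$-filtered and $\nabla$-filtered modules by induction on filtration length) would give this vanishing provided $K \in \mc{G}^\nabla(A)$, i.e., $K$ is itself tilting. The easy half, $K \in \mc{G}^\Delta(A)$, falls out: applying $\Hom(-, \nabla(\lambda))$ to the sequence and using $\Ext^{\geq 1}(P, \nabla(\lambda)) = \Ext^{\geq 1}(M, \nabla(\lambda)) = 0$ in the long exact sequence makes $K$ satisfy the corresponding Ext-vanishing, which by Lemma \ref{holmes-nakano-facts} characterizes membership in $\mc{G}^\Delta(A)$.

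The main obstacle will be showing $K \in \mc{G}^\nabla(A)$. Dually applying $\Hom(\Delta(\lambda), -)$ to the same sequence gives, using $\Ext^{\geq 1}(\Delta(\lambda), M) = 0$, the surjection
\bdm
\Hom(\Delta(\lambda), M) \twoheadrightarrow \Ext^1(\Delta(\lambda), K) \twoheadrightarrow \Ext^1(\Delta(\lambda), P) \to 0,
\edm
so one is forced to control $\Ext^1(\Delta(\lambda), P)$ — which, for a general projective cover, need not vanish. My strategy would be to exploit the rigid graded structure: the top-degree component $M_{d_{\max}}$ with $d_{\max} = \max \Supp M$ is a $T$-module (since $A^{\pm}_{\neq 0}$ act trivially there by degree reasons), and is simultaneously described by the top-degree standards and top-degree costandards in the two filtrations of $M$, forcing these multiplicities to match. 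Using Lemma \ref{P_filtration_ordered_by_deg} to order the standard filtrations of the indecomposable summands of $P$ by degree, and peeling off a summand of $P$ matching $M_{d_{\max}}$, one should then induct on $|\Supp M|$ (or the length of a standard filtration), with base case $M \simeq \Delta(\lambda)$, in which $M$ tilting forces $\Delta(\lambda) \simeq \nabla(\lambda) \simeq L(\lambda)$ to be simple and, by the reciprocity relations of Proposition \ref{brauer_reciprocity} together with the Ext-vanishing, both projective and injective.
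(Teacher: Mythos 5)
The forward direction (projective-injective $\Rightarrow$ tilting) and the duality reduction to ``tilting $\Rightarrow$ projective'' are sound and essentially match the paper. The converse, however, is not proved. Your plan is to take a projective cover $P \twoheadrightarrow M$ with kernel $K$ and show $K \in \mathcal{G}^\nabla(A)$, so that $\Ext^1_{\mathcal{G}(A)}(M,K) = 0$ splits the sequence. But a projective cover is a minimal epimorphism, so $0 \to K \to P \to M \to 0$ splits if and only if $K = 0$; hence ``$K \in \mathcal{G}^\nabla(A)$'' is logically equivalent to ``$M$ is projective'', and you have restated the goal rather than reduced it. You yourself identify the residual obstruction $\Ext^1_{\mathcal{G}(A)}(\Delta(\lambda),P)$, and its vanishing for all $\lambda$ is precisely the assertion that $P$ is injective, i.e., (modulo the duality reduction) the content of the theorem. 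The sketch via top-degree components and peeling off summands of $P$ is a programme, not an argument: it is unclear how matching $M_{d_{\max}}$ against projective summands breaks this circularity, and no inductive step is actually carried out.

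The paper's proof is structurally different and never forms a projective cover of $M$. It fixes a standard filtration of $M$ with top quotient $q \colon M \twoheadrightarrow \Delta(\lambda_s)$, lifts $q$ through $\pi \colon P(\lambda_s) \twoheadrightarrow \Delta(\lambda_s)$ (the top piece of a standard filtration of $P(\lambda_s)$; the lift uses that $\Ker \pi$ is standardly filtered, $M$ is costandardly filtered, and an Ext-vanishing), composes with the projective lift $P(\lambda_s) \to M$, and invokes uniqueness of projective covers of $L(\lambda_s)$ to conclude $P(\lambda_s)$ is a direct summand of $M$; induction on the length of the standard filtration finishes the job. Two smaller remarks: the ``Ext-vanishing characterizes $\mathcal{G}^\Delta(A)$'' principle you invoke is not stated in the paper, though $K \in \mathcal{G}^\Delta(A)$ is recoverable directly from Lemma \ref{holmes-nakano-facts} by restricting $0 \to K \to P \to M \to 0$ to $B^-$, where $\Res^A_{B^-} M$ being projective splits it, exhibiting $\Res^A_{B^-} K$ as a summand of the projective $\Res^A_{B^-} P$; and ``$M$ projective forces $M^*$ injective, i.e., $M$ injective'' should read that $M^*$ is tilting in $\mathcal{G}(A^\op)$, hence projective by the same argument applied to $\mathcal{T}^\op$, hence $M$ is injective.
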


\begin{proof}
Suppose that $M \in {\mathcal{G}}(A)$ is projective-injective. Since $M$ is projective, clearly $\Res_{B^-}^A M$ is projective, so $M \in \mathcal{G}^{\Delta}(A)$ by Lemma \ref{holmes-nakano-facts}. Moreover, since $M$ is injective, its dual $M^*$ is projective, so $\Res_{B^+}^A M^*$ is projective, hence its dual $\Res_{B^+}^A M$ is injective. The dual version of Corollary \ref{projectives_standard_filt} thus shows that $M \in \mathcal{G}^{\nabla}(A)$. Consequently, $M \in \mathcal{G}^{t}(A)$.  Conversely, suppose that $M \in \mathcal{G}^{t}(A)$. Let $0 = M_0 \subset M_1 \subset \cdots \subset M_s = M$ be a standard filtration with $M_j/M_{j-1} \simeq \Delta(\lambda_j)$. Let $q:M \twoheadrightarrow \Delta(\lambda_s)$ be the quotient morphism. We know from Proposition \ref{proj_inj_lowest_weight} that $\Delta(\lambda_s)$ is at the top of a standard filtration of $P(\lambda_s)$, so we have a quotient morphism $\pi:P(\lambda_s) \twoheadrightarrow \Delta(\lambda_s)$. Due to the projectivity of $P(\lambda_s)$ there is a morphism $\eta:P(\lambda_s) \rarr M$ making the diagram
\begin{equation} \label{tilting_proj_inj_diag1}
\begin{tikzcd}
& M \arrow[twoheadrightarrow]{d}{q} \\
P(\lambda_s) \arrow[twoheadrightarrow]{r}[swap]{\pi} \arrow[dashed]{ru}{\eta} & \Delta(\lambda_s)
\end{tikzcd}
\end{equation}
commutative. By Proposition \ref{proj_inj_lowest_weight}, $\Ker \pi$ also has a standard filtration. Since $M$ is tilting, it also has a costandard filtration. An inductive application of the Ext-vanishing statement in Lemma \ref{holmes-nakano-facts}\ref{holmes-nakano-facts:ext} thus shows that $\Ext^1_{\mathcal{G}(A)}(M,\Ker \pi) = 0$. Hence, applying $\Hom_{\mathcal{G}(A)}(M,-)$ to the exact sequence
\[
\begin{tikzcd}[column sep=small]
0 \arrow{r} & \Ker \pi \arrow{r} & P(\lambda_s) \arrow{r}{\pi} & \Delta(\lambda_s) \arrow{r} & 0 
\end{tikzcd}
\]
yields an exact sequence
\[
\begin{tikzcd}[column sep=small]
0 \arrow{r} & \Hom_{\mathcal{G}(A)}(M,\Ker \pi) \arrow{r} & \Hom_{\mathcal{G}}(M,P(\lambda_s)) \arrow{r} & \Hom_{\mathcal{G}(A)}(M,\Delta(\lambda_s)) \arrow{r} & 0 \;.
\end{tikzcd}
\]
In particular, there is $\nu \in \Hom_{\mathcal{G}(A)}(M,P(\lambda_s))$ making the diagram
\begin{equation} \label{tilting_proj_inj_diag2}
\begin{tikzcd}
M \arrow{r}{\nu} \arrow[twoheadrightarrow]{d}[swap]{q} & P(\lambda_s)  \arrow[twoheadrightarrow]{dl}{\pi}  \\
\Delta(\lambda_s)
\end{tikzcd}
\end{equation}
commutative. From Diagrams (\ref{tilting_proj_inj_diag1}) and (\ref{tilting_proj_inj_diag2}) we obtain a commutative diagram
\[
\begin{tikzcd}
P(\lambda_s) \arrow{rr}{\nu \circ \eta} \arrow[twoheadrightarrow]{dr}[swap]{\pi} && P(\lambda_s) \arrow[twoheadrightarrow]{dl}{\pi} \\
& \Delta(\lambda_s) \arrow[twoheadrightarrow]{d} \\
& L(\lambda_s)
\end{tikzcd}
\]  
The uniqueness of projective covers of $L(\lambda_s)$ now shows that $\nu \circ \eta$ is an isomorphism. In particular, $\nu$ is surjective and therefore $P(\lambda_s)$ is a direct summand of $M$. We have thus shown that the projective cover corresponding to the top part in a standard filtration of a tilting object is a direct summand. By induction on the length of the standard filtration we obtain that $M$ is in fact projective. Now, the same result applies to $\mathcal{T}^\op$, showing that the tilting object $M^* \in \mathcal{G}^{t}(A^\op)$ is projective. Hence $M$ is injective.
\end{proof}

\subsection{Self-injectivity} \label{self_injectivity}

For the moment, $A$ can be an arbitrary finite-dimensional graded $K$-algebra. Recall that $A$ is said to be \word{self-injective} if the left $A$-module $A$ is injective. This is equivalent to the class of projective objects of $\mathcal{M}(A)$ being equal to the class of injective objects of $\mathcal{M}(A)$. Using Lemma \ref{forget_functor} we see:

\begin{cor} \label{self_injective_proj_inj}
The algebra $A$ is self-injective if and only if the class of projective objects in $\mathcal{G}(A)$ is equal to the class of injective objects in $\mathcal{G}(A)$. \qed
\end{cor}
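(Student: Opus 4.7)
The plan is to use Lemma \ref{forget_functor} in both directions; no additional machinery from the triangular decomposition is needed. The key facts I will invoke are (i) the forgetful functor $F \colon \mathcal{G}(A) \to \mathcal{M}(A)$ preserves \emph{and} reflects projectives and injectives, and (ii) every projective and every injective object of $\mathcal{M}(A)$ is gradable, i.e.\ lies in the essential image of $F$.

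For the forward direction, suppose $A$ is self-injective. Let $P \in \mathcal{G}(A)$ be projective. Then $F(P)$ is projective in $\mathcal{M}(A)$ by the preservation part of Lemma \ref{forget_functor}(a), so by self-injectivity $F(P)$ is injective in $\mathcal{M}(A)$; applying the reflection part of Lemma \ref{forget_functor}(a) for injectives, we conclude $P$ is injective in $\mathcal{G}(A)$. The symmetric argument, starting with an injective object of $\mathcal{G}(A)$, shows it is also projective. Thus the two classes coincide in $\mathcal{G}(A)$.

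For the converse, suppose the classes of projective and injective objects in $\mathcal{G}(A)$ coincide. Let $Q \in \mathcal{M}(A)$ be projective. By Lemma \ref{forget_functor}(c), $Q$ is gradable, so we may write $Q \simeq F(\widetilde Q)$ for some $\widetilde Q \in \mathcal{G}(A)$. By the reflection statement of Lemma \ref{forget_functor}(a) applied to projectives, $\widetilde Q$ is projective in $\mathcal{G}(A)$; by hypothesis, $\widetilde Q$ is then injective in $\mathcal{G}(A)$. Applying the preservation statement of Lemma \ref{forget_functor}(a) for injectives, $Q = F(\widetilde Q)$ is injective in $\mathcal{M}(A)$. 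Hence every projective in $\mathcal{M}(A)$ is injective, i.e.\ $A$ is self-injective.

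There is no real obstacle here: the statement is an immediate consequence of the Gordon--Green lemma cited above, and the only subtlety is that the backward direction requires the gradability of projectives so that one can transfer a question about $\mathcal{M}(A)$ to a question about $\mathcal{G}(A)$; without gradability the reflection properties alone would not suffice.
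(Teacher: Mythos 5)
Your proof is correct and follows exactly the route the paper intends: the paper's entire ``proof'' is the phrase ``Using Lemma \ref{forget_functor} we see:'' followed by the \qed, and you have simply unfolded that appeal to the Gordon--Green lemma into its two directions, using preservation/reflection of (in)jectivity together with gradability of projectives for the converse.
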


Suppose that $A$ is self-injective. Then the projective cover $P(S)$ for $S \in \Irr \mathcal{C}(A)$ is also an indecomposable injective object and so it has a simple socle, say $\nu_\mathcal{C}(S)$. In other words, $\Soc P(S) \simeq \Hd P(\nu_\mathcal{C}(S))$. We get a map 
\begin{equation}
\nu_\mathcal{C}: \Irr \mathcal{C}(A) \rarr \Irr \mathcal{C}(A) \;,
\end{equation}
called the (graded) \word{Nakayama permutation} of $A$. In the ungraded case it is well-known that this is indeed a permutation. In the graded setting, note that $\Soc P(S \lbrack n \rbrack) \simeq \Soc P(S) \lbrack n \rbrack$. Hence, $\nu_\mathcal{G}(S \lbrack n \rbrack) = \nu_\mathcal{G}(S) \lbrack n \rbrack$ for any $S \in \Irr \mathcal{G}(A)$, so once we know $\nu_\mathcal{G}(S)$ for all $S \in \Irr_0 \mathcal{G}(A)$, we know $\nu_\mathcal{G}(S)$ for all $S \in \Irr \mathcal{G}(A)$, and this shows that $\nu_\mathcal{G}$ is also a permutation.

\begin{defn} \label{graded_frob_defn}
The algebra $A$ is said to be \word{$d$-Frobenius} if there is a linear map $\Phi : A \rightarrow K$ such that $\Phi(A_i) = 0$ for all $i \neq d$ and $\Ker \Phi$ does not contain any non-trivial left (equivalently, right) ideal of $A$. If, in addition, $\Phi(ab) = \Phi(ba)$ for all $a,b \in A$, then we say that $A$ is \word{$d$-symmetric}. If $d = 0$, then we say that $A$ is \word{graded Frobenius}, resp. \word{graded symmetric}. 
\end{defn}

\begin{lem}\label{lem:gradedhomcheck}
Let $\Phi: A \rightarrow K$ be a linear map such that $\Phi(A_i) = 0$ for all $i \neq d$. Then $A$ is $d$-Frobenius if and only if $\Phi$ does not contain any non-trivial \textnormal{graded} left (equivalently, right) ideal of $A$.
\end{lem}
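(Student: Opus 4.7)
The plan is as follows. The forward implication is trivial: if $\Ker \Phi$ contains no nontrivial left ideal, it certainly contains no nontrivial graded one. For the nontrivial direction, I will argue contrapositively. Suppose $I \subseteq \Ker \Phi$ is an arbitrary nontrivial left ideal; the key step is to extract from $I$ a nontrivial \emph{graded} left ideal still contained in $\Ker \Phi$. The homogeneity of $\Phi$ (supported in degree $d$) is what makes this possible.

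Concretely, I would pick $0 \neq a \in I$ and write its homogeneous decomposition $a = \sum_i a_i$. The claim is that $A a_i \subseteq \Ker \Phi$ for every $i$. To verify this, fix $i$ and take a homogeneous element $b \in A_j$. Then $b a_i \in A_{i+j}$, so $\Phi(b a_i) = 0$ automatically unless $i + j = d$. In the one remaining case $j = d - i$, the product $b a = \sum_k b a_k$ has its unique degree-$d$ component equal to $b a_i$, so
\begin{equation*}
\Phi(b a_i) = \Phi((ba)_d) = \Phi(ba) = 0,
\end{equation*}
where the last equality uses $ba \in I \subseteq \Ker \Phi$. Extending linearly in $b$ yields $A a_i \subseteq \Ker \Phi$. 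Since $a \neq 0$, some $a_i \neq 0$, and then $A a_i$ is a nontrivial graded left ideal of $A$ contained in $\Ker \Phi$, contradicting the graded hypothesis.

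The equivalence with the right-sided formulation follows by applying the same argument to $A^{\op}$, which is graded by \cite[1.2.4]{Methods-of-graded-rings} and on which $\Phi$ remains a homogeneous linear functional of degree $d$. There is no serious obstacle in the proof; the only point to keep track of is that $A a_i$ is automatically a \emph{graded} submodule of $A$ precisely because $a_i$ is homogeneous, so the extracted ideal lands in the restricted class required by the graded hypothesis.
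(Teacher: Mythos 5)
Your proof is correct and uses essentially the same idea as the paper's: the key observation that, for homogeneous $b$ of degree $d-i$, $\Phi(ba) = \Phi(ba_i)$ because $\Phi$ is supported only in degree $d$, combined with the fact that $Aa_i$ is a graded left ideal. You phrase it contrapositively (show $Aa_i \subseteq \Ker\Phi$ and derive a contradiction) while the paper argues directly (extract a homogeneous $b$ with $\Phi(ba_i) \neq 0$, hence $\Phi(ba) \neq 0$), but this is the same argument.
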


\begin{proof}
If $A$ is $d$-Frobenius, the condition clearly holds. Conversely, let $I$ be a non-zero left ideal. We need to show that $\Phi(I) \neq 0$. Let $a \in I$ be non-zero. We can write $a = \sum_{ j \in \Z} a_j$, with $0 \neq a_i \in A_i$. Since $Aa_i$ is a graded left ideal, we have $\Phi(A a_i) \neq 0$. So, there exists a homogeneous element $b \in A$, $\deg b = d - i$, such that $\Phi(b a_i) \neq 0$. But then $\Phi(b a) = \sum_j \Phi(b a_j) = \Phi(b a_i) \neq 0$
since $\Phi(b a_j) = 0$ for all $j \neq i$. 
\end{proof}

Clearly, if $A$ is $d$-Frobenius, it is Frobenius in the usual sense, thus self-injective. The proof of the following is easily adapted from the non-graded setting, c.f. \cite[\S 1.6]{Benson}. 

\begin{lem} \label{frobenius_dual_iso}
The algebra $A$ is $d$-Frobenius if and only if 
\begin{equation}
({}_{A^{\op}} A^{\op})^* \simeq A[-d]
\end{equation}
as graded $A$-modules.
\end{lem}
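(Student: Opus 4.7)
The plan is to convert the Frobenius form $\Phi$ into the desired module isomorphism via the standard bilinear form trick. Given a linear map $\Phi : A \to K$ with $\Phi(A_i) = 0$ for $i \neq d$, I would define
\[
\phi : A \longrightarrow ({}_{A^{\op}}A^{\op})^{*}, \qquad \phi(a)(b) := \Phi(ba).
\]
The $A$-action on $({}_{A^{\op}}A^{\op})^{*}$ coming from Section~\ref{sec:standarddual} is $(c \cdot f)(b) = f(bc)$, so a direct check gives $\phi(ca)(b) = \Phi(bca) = \phi(a)(bc) = (c \cdot \phi(a))(b)$; hence $\phi$ is left $A$-linear. For the grading, an element $a \in A_i$ has degree $i - d$ in $A[-d]$, while $\phi(a)$ is supported on $A_{d-i}$, which corresponds to degree $i - d$ in $A^{\op}$ under the reversed grading. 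Thus $\phi(a)$ lies in $(({}_{A^{\op}}A^{\op})^{*})_{i-d}$, and $\phi$ is a morphism $A[-d] \to ({}_{A^{\op}}A^{\op})^{*}$ of graded left $A$-modules.

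Since source and target have the same $K$-dimension, $\phi$ is an isomorphism if and only if it is injective. The next step is to identify this injectivity with the $d$-Frobenius condition. By Lemma~\ref{lem:gradedhomcheck}, the latter is equivalent to $\Ker \Phi$ containing no non-zero graded left ideal. If a homogeneous $a \in \Ker \phi$ is given, then $\Phi(Aa) = 0$, so $Aa$ is a graded left ideal sitting inside $\Ker \Phi$; conversely, any non-zero graded left ideal $I \subseteq \Ker \Phi$ contains a homogeneous $a$ with $\phi(a)(b) = \Phi(ba) \in \Phi(I) = 0$, forcing $\phi(a) = 0$. Hence $\phi$ is injective if and only if $A$ is $d$-Frobenius, establishing the forward direction.

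For the converse, suppose $\psi : A[-d] \rightsim ({}_{A^{\op}}A^{\op})^{*}$ is any isomorphism of graded left $A$-modules. Setting $\Phi := \psi(1)$: since $1 \in A$ sits in degree $-d$ of $A[-d]$, the grading analysis above forces $\Phi$ to vanish on $A_j$ for all $j \neq d$. Using left $A$-linearity, $\psi(a) = a \cdot \psi(1) = a \cdot \Phi$, so $\psi(a)(b) = \Phi(ba)$, meaning $\psi$ coincides with the $\phi$ built from this $\Phi$. Its injectivity then yields the Frobenius condition for $\Phi$ exactly as in the forward argument.

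The only real difficulty is the bookkeeping: keeping straight the reversed grading on $A^{\op}$, the shift convention $M[n]_i = M_{i-n}$, and the $A$-action transported through the dual. Once these are lined up, the whole argument reduces to the classical ungraded correspondence between Frobenius forms and isomorphisms $A \simeq A^{*}$ recalled in \cite[\S 1.6]{Benson}, which is the proof strategy the authors explicitly point to.
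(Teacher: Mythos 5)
Your proof is correct and matches the intended approach: the paper gives no explicit argument for this lemma, only a pointer to the ungraded case in Benson, and you have carried out exactly the graded adaptation the authors had in mind. The degree bookkeeping is right: with the convention $M[n]_i = M_{i-n}$ one has $A[-d]_j = A_{j+d}$, so $a \in A_i$ sits in degree $i-d$; and since $\phi(a) = \Phi(\,\cdot\,a)$ is supported on $A_{d-i} = (A^{\op})_{i-d}$, the map $\phi$ is indeed degree-preserving. The $A$-linearity check, the dimension count, and the identification of $\Ker\phi$ with left ideals inside $\Ker\Phi$ (using that $\Ker\phi$ is a graded submodule so it contains a homogeneous element if nonzero) are all sound, as is reconstructing $\Phi = \psi(1)$ in the converse direction. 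One small remark: the appeal to Lemma \ref{lem:gradedhomcheck} is not strictly needed—since $\Ker\phi$ is automatically graded, the argument already produces a graded left ideal, and conversely any (not necessarily graded) left ideal $I \subseteq \Ker\Phi$ gives a nonzero $a \in I$ with $Aa \subseteq \Ker\Phi$, hence $\phi(a)=0$—but invoking it does no harm.
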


\begin{lem} \label{graded_symmetric_trivial_nakayama}
If $A$ is graded symmetric then the graded Nakayama permutation is trivial. Hence $\Soc P \simeq \Hd P$ in $\mc{G}(A)$ for any projective indecomposable object $P \in \mc{G}(A)$.
\end{lem}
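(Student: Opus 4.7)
The plan is to reduce the claim to the vanishing of the graded Nakayama automorphism of $A$.

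Since $A$ is graded $0$-Frobenius (being $0$-symmetric), Lemma \ref{frobenius_dual_iso} provides a graded isomorphism $\phi \colon A \rightsim A^*$ of graded left $A$-modules, with the explicit representative $\phi(a)(b) = \Phi(ba)$. The map $\phi$ is automatically left $A$-linear by direct computation; however, the right $A$-action on $A$ transported across $\phi$ differs from the standard right $A$-action on $A^*$ by a graded algebra automorphism $\alpha \colon A \to A$ of degree zero, the graded Nakayama automorphism of $A$. Explicitly, $\alpha$ is characterized by the identity $\Phi(xc) = \Phi(\alpha(c) x)$ for all $x, c \in A$.

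The assumed symmetry $\Phi(xc) = \Phi(cx)$, combined with the defining identity of $\alpha$, yields $\Phi(\alpha(c) x) = \Phi(cx)$ for all $x$; non-degeneracy of $\Phi$ then forces $\alpha(c) = c$, so $\alpha = \id_A$. As in the ungraded setting, the graded Nakayama permutation $\nu_\mc{G}$ is induced by the twist $L \mapsto L^{\alpha^{-1}}$ on $\Irr \mc{G}(A)$, an operation which, by triviality of $\alpha$, is the identity. Hence $\nu_\mc{G}(S) \simeq S$ for every $S \in \Irr \mc{G}(A)$. The ``Hence'' clause is then immediate: for any indecomposable projective $P \in \mc{G}(A)$, setting $S = \Hd P$ gives $\Soc P \simeq \nu_\mc{G}(S) \simeq S = \Hd P$.

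The main technical point is the identification of the graded Nakayama permutation with the twist by $\alpha$. This is classical in the ungraded case, and to upgrade it to the graded setting it is enough to observe that $\alpha$, being a graded automorphism of degree zero, acts on $\Irr \mc{G}(A)$ by degree-preserving twists, so triviality of $\alpha$ gives triviality of $\nu_\mc{G}$ without introducing any degree shifts. Concretely, for a primitive homogeneous idempotent $e$, restricting $\phi$ to $Ae$ and quotienting by the radical shows that $\Soc(Ae)$ agrees with $\Hd(Ae)$ after twisting by $\alpha$, so the trivial twist $\alpha = \id$ gives the graded isomorphism $\Soc(Ae) \simeq \Hd(Ae)$ directly.
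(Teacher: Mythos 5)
Your proof is correct and follows essentially the same route as the paper's: identify the Nakayama automorphism $\alpha$ from the graded Frobenius form, observe that graded symmetry forces $\alpha=\id$ by non-degeneracy, and then deduce triviality of the Nakayama permutation since the twist action is degree-preserving. The paper's proof is terser (it cites the ungraded statement from Benson and remarks that the graded version follows because indecomposable graded projectives are of the form $A[d]e$ with $e$ a primitive idempotent of degree zero), but the underlying argument is exactly the one you have spelled out.
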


\begin{proof}
The statement for the ungraded Nakayama permutation is well-known, see \cite[Theorem 1.6.3]{Bensonfd}. In the graded case, an indecomposable projective objects of $\mc{G}(A)$ is of the form $A\lbrack d\rbrack e$ for a primitive idempotent of $A$ of degree zero and some integer $d \in \bbZ$, see \cite[Proposition 5.8]{GG-Graded-Artin}. One can now use the same proof as in \textit{loc. cit.} to prove the statement in the graded setting.
\end{proof}

\subsection{Ringel's tilting objects}

We return to assuming that $A$ has a triangular decomposition. Using Corollary \ref{self_injective_proj_inj} we obtain from Theorem \ref{tilings_projinj}:

\begin{cor} \label{self_inj_tiltings_projective}
If $A$ is self-injective, then the tilting objects in $\mathcal{G}(A)$ are precisely the projective objects. In particular, the indecomposable tilting objects are precisely the $P(\lambda)$ for $\lambda \in \Irr \mathcal{G}(T)$.
\end{cor}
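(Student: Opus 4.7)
The plan is to combine the two immediately preceding results and bootstrap the indecomposable statement using Krull--Schmidt. By Theorem \ref{tilings_projinj}, the tilting objects in $\mathcal{G}(A)$ are precisely the projective-injective objects, with no hypothesis beyond the triangular decomposition. Independently, Corollary \ref{self_injective_proj_inj} says that under self-injectivity the class of projective objects in $\mathcal{G}(A)$ coincides with the class of injective objects. Intersecting these two classes therefore just gives the projectives, so the first assertion is immediate: under self-injectivity, projective $=$ injective $=$ projective-injective $=$ tilting.

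For the second assertion, I would invoke the fact recorded in Section \ref{graded_mod_cat_notations} that $\mathcal{G}(A)$ is a Krull--Schmidt category. Any tilting object then decomposes uniquely into indecomposable tilting summands, each of which is indecomposable projective by the first part. By Theorem \ref{heads} the simple objects of $\mathcal{G}(A)$ are exactly the $L(\lambda)$ for $\lambda \in \Irr \mathcal{G}(T)$, and so the indecomposable projectives are exactly their projective covers $P(\lambda)$. This identifies the set of indecomposable tilting objects with $\{P(\lambda) \mid \lambda \in \Irr \mathcal{G}(T)\}$.

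There is no serious obstacle to overcome here: all the work has already been done in Theorem \ref{tilings_projinj} and Corollary \ref{self_injective_proj_inj}. The only thing to be careful about is that both the characterisation of tilting objects and the equivalent formulation of self-injectivity are being used in the \emph{graded} setting $\mathcal{G}(A)$, and not merely the ungraded one; but Corollary \ref{self_injective_proj_inj} was precisely formulated to bridge this by means of Lemma \ref{forget_functor}, so the argument goes through verbatim.
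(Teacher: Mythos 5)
Your proof is correct and matches the paper's approach exactly: the corollary is obtained by combining Theorem \ref{tilings_projinj} (tilting $=$ projective-injective) with Corollary \ref{self_injective_proj_inj} (self-injective $\Leftrightarrow$ projectives $=$ injectives in $\mathcal{G}(A)$), and the indecomposable statement then follows from Krull--Schmidt together with the parametrization of simple objects in Theorem \ref{heads}.
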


In this section, we show that $\mc{G}(A)$ admits an abstract tilting theory, in the sense of \cite[Appendix A]{hwtpaper2}. Recall from \S\ref{self_injectivity} that if $A$ is self-injective, then we have a graded Nakayama permutation $\nu:\Irr \mc{G}(A) \rarr \Irr \mc{G}(A)$ defined by
\begin{equation} \label{nakayama_perm}
\Soc P(\lambda) \simeq L(\nu(\lambda)) \;,
\end{equation}
Since $P(\lambda)$ is indecomposable and injective, it is the injective hull of its socle, so 
\begin{equation} \label{p_inj_nu}
P(\lambda) = I(\nu(\lambda)) 
\end{equation}
and therefore
\begin{equation}
\Hd I(\lambda) = L(\nu^{-1}(\lambda)) \;.
\end{equation}
The algebra $A^\op$ is self-injective too, and using duality we obtain
\begin{equation} \label{nakayama_dual}
\nu(\lambda^*) = \nu^{-1}(\lambda)^* \;,
\end{equation}
where we have denoted the Nakayama permutation of $A^\op$ again by $\nu$.

\begin{thm} \label{tiltings_main_thm}
Suppose that $A$ is self-injective. Then $P(\lambda)$ is a \textnormal{highest} weight object for any $\lambda \in \Irr \mathcal{G}(T)$. Moreover:
\begin{enum_thm}
\item If $\lambda^h$ denotes the highest weight of $P(\lambda)$, then the map $\lambda \mapsto \lambda^h$ is a permutation on $\Irr \mathcal{G}(T)$. 

\item $\Soc \Delta(\lambda) \simeq L(\lambda^\dagger)$, where $(-)^\dagger \dopgleich \nu \circ (-)^{h^{-1}}$ is a permutation on $\Irr \mathcal{G}(T)$. 

\item \label{tiltings_main_thm:Delta_bottom} $\Delta(\lambda^h)$ is at the bottom of any standard filtration of $P(\lambda)$.

\item \label{tiltings_main_thm:h_dual} We have $(\lambda^h)^* = (\nu(\lambda)^*)^h$.

\item $\nabla(\lambda^h)$ is at the top of any costandard filtration of $P(\lambda)$.

\item $\Hd \nabla(\lambda) = L(\nu^{-1}(\lambda^\dagger)) = L(\lambda^{h^{-1}})$.

\end{enum_thm}
\end{thm}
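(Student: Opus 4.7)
The plan is to reduce the entire theorem to two inputs: (A) the socle $\Soc\Delta(\sigma)$ is simple for every $\sigma\in\Irr\mc{G}(T)$, and (B) the resulting map $\dagger\colon\Irr\mc{G}(T)\to\Irr\mc{G}(T)$, characterized by $\Soc\Delta(\sigma)\simeq L(\sigma^\dagger)$, is a bijection. Granted these, I will set $h\dopgleich\nu^{-1}\circ\dagger$; this is automatically a permutation, so (i) and (ii) hold by construction, and the remaining parts will follow from Brauer reciprocity together with the degree-ordered (co)standard filtrations of $P(\lambda)$.

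For (A), I proceed in two stages. For the first stage, observe that the bottom factor $\Delta(\lambda_{l-1})$ of a degree-ordered standard filtration of $P(\lambda)$ (Lemma \ref{P_filtration_ordered_by_deg}) is a submodule of $P(\lambda)$, so $\Soc\Delta(\lambda_{l-1})\subseteq\Soc P(\lambda)=L(\nu(\lambda))$ is simple; set $\lambda^h\dopgleich\lambda_{l-1}$. The harder stage is to extend simplicity of $\Soc\Delta(\sigma)$ to an arbitrary $\sigma$. My approach would be to pick any simple summand $L(\tau)\subset\Soc\Delta(\sigma)$ and to use injectivity of $I(L(\tau))=P(\nu^{-1}(\tau))$ to extend $L(\tau)\hookrightarrow\Delta(\sigma)$ to a morphism $\varphi\colon\Delta(\sigma)\to P(\nu^{-1}(\tau))$, and then to argue that $\varphi$ is injective. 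This step requires a careful analysis of the essential embedding $\Delta(\sigma)\hookrightarrow I(\Soc\Delta(\sigma))=\bigoplus_i P(\nu^{-1}(\tau_i))^{n_i}$ using the indecomposability of $\Delta(\sigma)$ together with the simplicity of its head $L(\sigma)$ (Theorem \ref{heads}), so as to rule out multiple isotypic types in $\Soc\Delta(\sigma)$; once $\Soc\Delta(\sigma)$ is isotypic and $\varphi$ factors through a single summand, the socle-containment argument of the first stage forces simplicity. With (A) established, surjectivity of $\dagger$ is immediate (as $\lambda$ varies, $\nu(\lambda)=\lambda_{l-1}^\dagger$ ranges over all of $\Irr\mc{G}(T)$); injectivity reduces to the ungraded category $\mc{M}(A)$, where $\Irr\mc{M}(T)$ is finite and surjectivity forces bijectivity, and the graded version follows since $\dagger$ commutes with the shift $[n]$.

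Parts (iii)--(vi) then drop out with little further work. For (iii): if $\Delta(\mu)\subset P(\lambda)$ is the bottom of any standard filtration, then $\mu^\dagger=\nu(\lambda)$ by the socle-containment argument, forcing $\mu=\lambda^h$ by injectivity of $\dagger$. That $P(\lambda)$ is a highest-weight object in the sense of Definition \ref{highest_weight_def} follows from the degree-ordered filtration, which confines all composition factors to degrees $\le\deg\lambda^h$, combined with Brauer reciprocity: $[P(\lambda):\Delta(\lambda^h)]=[\nabla(\lambda^h):L(\lambda)]=1$, where the last equality is the $\mathcal{T}^\op$-analogue of (A) giving $\Hd\nabla(\lambda^h)=L(\lambda)$. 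This $\mathcal{T}^\op$-analogue is precisely part (vi), once rewritten as $\Hd\nabla(\lambda)=L(\ddagger(\lambda))$ with $\ddagger\dopgleich\nu^{-1}\circ\dagger=h^{-1}$. Parts (iv) and (v) are the duals of (i) and (iii) under the contravariant equivalence $(-)^*\colon\mc{G}(A)\to\mc{G}(A^\op)^\circ$, using the compatibility $\nu(\lambda^*)=\nu^{-1}(\lambda)^*$ of equation (\ref{nakayama_dual}) to transport the $h$-permutation between $A$ and $A^\op$. The main obstacle is the injectivity of $\varphi$ in the proof of (A), which is where the self-injectivity hypothesis is used in an essential way and where I expect the proof to require the most care.
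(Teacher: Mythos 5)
Your overall plan—stage~1 of (A) and the derivation of parts (iii)--(vi) once (A) and (B) are in hand—is aligned with the paper's proof, but there are two genuine gaps and one small slip.

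\textbf{First gap: stage~2 of (A).} Your proposed argument (pick a simple summand $L(\tau)\subseteq\Soc\Delta(\sigma)$, extend to $\varphi\colon\Delta(\sigma)\to P(\nu^{-1}(\tau))$ by injectivity, show $\varphi$ is injective) does not go through: if $\Soc\Delta(\sigma)$ had another simple constituent $L(\tau')$, then $\varphi$ would kill $L(\tau')$, so $\varphi$ need not be injective, and indecomposability of $\Delta(\sigma)$ plus simplicity of its head gives no control on the isotypy of its socle. The paper avoids stage~2 altogether. Once stage~1 gives $\Soc\Delta(\lambda^h)\simeq L(\nu(\lambda))$, the map $\lambda\mapsto\lambda^h$ is injective (because $\nu$ is a permutation and the socle of $\Delta(\lambda^h)$ recovers $\nu(\lambda)$). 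Since $(-)^h$ commutes with the shift $[n]$, it descends to an injective self-map of the finite set $\Irr\mc{M}(T)$, hence a bijection there, hence a bijection on $\Irr\mc{G}(T)$. Thus \emph{every} $\sigma$ is of the form $\lambda^h$, and stage~1 already covers all standard objects. You noticed a finiteness argument of this kind, but placed it after (A); it should replace your stage~2.

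\textbf{Second gap: the highest-weight property of $P(\lambda)$.} Your route $[P(\lambda):\Delta(\lambda^h)] = [\nabla(\lambda^h):L(\lambda)] = 1$ is both circular (it invokes the $\mc{T}^\op$-analogue of (A), which in the paper is itself a consequence of this step) and incomplete, since $\Hd\nabla(\lambda^h)\simeq L(\lambda)$ gives only $[\nabla(\lambda^h):L(\lambda)]\ge 1$. The key step in the paper is self-contained: in the degree-ordered filtration $F^0\supset\cdots\supset F^l=0$, if $\deg\lambda_{l-2}=\deg\lambda_{l-1}$, then $\lambda_{l-2}\not<\lambda_{l-1}$, so $\Ext^1_{\mc{G}(A)}(\Delta(\lambda_{l-2}),\Delta(\lambda_{l-1}))=0$ by Corollary~\ref{hwc_ext_properties}, forcing the extension $0\to\Delta(\lambda_{l-1})\to F^{l-2}\to\Delta(\lambda_{l-2})\to 0$ to split, which contradicts the simplicity of $\Soc P(\lambda)$. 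Hence $\deg\lambda_i<\deg\lambda_{l-1}$ for all $i<l-1$, giving both the highest-weight property and $[P(\lambda):\Delta(\lambda^h)]=1$ in one stroke. Without this splitting argument your claim that $P(\lambda)$ is a highest-weight object is not established.

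\textbf{Minor slip.} Since $\Soc\Delta(\lambda^h)\simeq L(\nu(\lambda))$ reads as $\dagger\circ h=\nu$, the permutation you should set from $\dagger$ and $\nu$ is $h^{-1}=\nu^{-1}\circ\dagger$, not $h=\nu^{-1}\circ\dagger$; your later identification $\lambda^h\dopgleich\lambda_{l-1}$ is the correct $h$, but it conflicts with your opening definition.
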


\begin{proof}
Let $\lambda \in \Irr \mathcal{G}(T)$. Let us first show that $P(\lambda)$ is a highest weight object. We know from Lemma \ref{P_filtration_ordered_by_deg} that there is a standard filtration $P(\lambda) = F^0 \supset F^1 \supset \ldots \supset F^{l-1} \supset F^l = 0$ with quotients $F^i/F^{i+1} \simeq \Delta(\lambda_i)$ such that additionally $\deg \lambda_{i} \leq \deg \lambda_{i+1}$ for all $0 \leq i < l$. We claim that $\deg \lambda_{l-2} < \deg \lambda_{l-1}$. Suppose that $\deg \lambda_{l-2} = \deg \lambda_{l-1}$. Then $\lambda_{l-2}$ and $\lambda_{l-1}$ are not comparable, so certainly $\lambda_{l-2} \not< \lambda_{l-1}$, hence 
\[
\Ext_{\mathcal{G}(A)}^1(\Delta(\lambda_{l-2}),\Delta(\lambda_{l-1})) = 0
\]
by Corollary \ref{hwc_ext_properties}. Consequently, the exact sequence
\[
\begin{tikzcd}[column sep=small]
0 \arrow{r} & F^{l-1} = \Delta(\lambda_{l-1}) \arrow{r} & F^{l-2} \arrow{r} & F^{l-2}/F^{l-1} = \Delta(\lambda_{l-2}) \arrow{r} & 0 
\end{tikzcd}
\]
splits so that $F^{l-2} \simeq \Delta(\lambda_{l-1}) \oplus \Delta(\lambda_{l-2})$. But this clearly contradicts the simplicity of the socle of $P(\lambda)$. We thus must have $\deg \lambda_{l-2} <  \deg \lambda_{l-1}$. This then implies that in fact $\deg \lambda_i < \deg \lambda_{l-1}$ for all $0 \leq i<l-1$, so $\lambda_{i} < \lambda_{l-1}$ for all $0 \leq i < l-1$. Hence, $P(\lambda)$ has highest weight $\lambda_{l-1} \gleichdop \lambda^h$. Since $P(\lambda)$ has simple socle $L(\nu(\lambda))$, we must have
\begin{equation}
L( (\lambda^h)^\dagger) \simeq \Soc \Delta(\lambda^h) \simeq L(\nu(\lambda)) \;.
\end{equation}
Hence, all standard objects $\Delta(\lambda)$ have pairwise non-isomorphic simple socle, and this forces $\Delta(\lambda^h)$ to appear at the bottom of any standard filtration of $P(\lambda)$. Since $P(\lambda)$ has highest weight $\lambda^h$, it follows that $P(\lambda)^*$ has highest weight $(\lambda^h)^*$. Now, since $A$ is self-injective, $P(\lambda)^*$ is projective with head $\Soc P(\lambda)^* = L(\nu(\lambda))^* = L(\nu(\lambda)^*)$, so $P(\lambda^*) = P(\nu(\lambda)^*)$. It follows that $(\lambda^h)^* = (\nu(\lambda)^*)^h$. We thus know from part \ref{tiltings_main_thm:Delta_bottom} that $\Delta( (\lambda^h)^* )$ is at the bottom of any standard filtration of $P(\lambda)^*$. Dualizing shows that $\nabla(\lambda^h)$ is at the top of any costandard filtration of $P(\lambda)$. Moreover,
\[
\Soc \Delta((\lambda^h)^*) = \Soc \Delta( (\nu(\lambda)^*)^h) = L(\nu (\nu(\lambda^*))) = L(\lambda^*) \;,
\]
using (\ref{nakayama_dual}), so $\Hd \nabla(\lambda^h) \simeq L(\lambda)$. 
\end{proof}

From Theorem \ref{tiltings_main_thm} we immediately obtain:

\begin{cor} \label{good_tilting_theory}
Suppose that $A$ is self-injective. For $\lambda \in \Irr \mathcal{G}(T)$ define
\begin{equation} \label{tilting_object_defn}
T(\lambda) \dopgleich P(\nu^{-1}(\lambda^\dagger)) = P(\lambda^{h^{-1}}) \;.
\end{equation}
This is an indecomposable tilting object in $\mathcal{G}(A)$. It has highest weight $\lambda$, an injection $\Delta(\lambda) \hookrightarrow T(\lambda)$, and a projection $T(\lambda) \twoheadrightarrow \nabla(\lambda)$. Moreover, the map $\lambda \mapsto T(\lambda)$ is a bijection between $\Irr \mathcal{G}(T)$ and the isomorphism classes of indecomposable tilting objects in $\mathcal{G}(A)$. \qed
\end{cor}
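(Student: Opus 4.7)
The plan is to obtain everything as a direct consequence of Theorem \ref{tiltings_main_thm}, which already does the heavy lifting; this corollary is essentially a reindexing. First I would recall from Corollary \ref{self_inj_tiltings_projective} that, under self-injectivity, the indecomposable tiltings of $\mathcal{G}(A)$ are precisely the $P(\mu)$ for $\mu \in \Irr \mathcal{G}(T)$. So if I can simply verify that the definition $T(\lambda) \dopgleich P(\nu^{-1}(\lambda^\dagger))$ is consistent with $P(\lambda^{h^{-1}})$ and produces an object with the listed highest-weight/embedding/surjection properties, indecomposability and tiltingness come for free.

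The equality $\nu^{-1}(\lambda^\dagger) = \lambda^{h^{-1}}$ follows immediately from the definition $(-)^\dagger = \nu \circ (-)^{h^{-1}}$ given in Theorem \ref{tiltings_main_thm}(b), applied by composing on the left with $\nu^{-1}$. For the highest weight, I will invoke Theorem \ref{tiltings_main_thm} and part (a), which states that $P(\mu)$ has highest weight $\mu^h$; substituting $\mu = \lambda^{h^{-1}}$ yields highest weight $\lambda$ for $T(\lambda)$. For the embedding $\Delta(\lambda) \hookrightarrow T(\lambda)$, I appeal to part (c): $\Delta(\mu^h)$ appears at the bottom of any standard filtration of $P(\mu)$, so in particular there is an inclusion $\Delta(\mu^h) \hookrightarrow P(\mu)$; again taking $\mu = \lambda^{h^{-1}}$ does it. The surjection $T(\lambda) \twoheadrightarrow \nabla(\lambda)$ comes from the dual statement part (e), which provides a quotient $P(\mu) \twoheadrightarrow \nabla(\mu^h)$ from the top of a costandard filtration.

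Finally, for bijectivity: since $h$ is a permutation of $\Irr \mathcal{G}(T)$ by Theorem \ref{tiltings_main_thm}(a), so is $h^{-1}$, and the map $\mu \mapsto P(\mu)$ is a bijection between $\Irr \mathcal{G}(T)$ and the isomorphism classes of indecomposable projective objects in $\mathcal{G}(A)$. Combining these two bijections via $\lambda \mapsto \lambda^{h^{-1}} \mapsto P(\lambda^{h^{-1}}) = T(\lambda)$ gives the claimed bijection onto indecomposable projectives, which by Corollary \ref{self_inj_tiltings_projective} coincide with the indecomposable tilting objects.

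There is no real obstacle here: the substantive structural content (existence of the permutation $h$, location of $\Delta(\lambda^h)$ and $\nabla(\lambda^h)$ in the respective filtrations, simplicity of socles of standards) is exactly what Theorem \ref{tiltings_main_thm} provides. The only thing to be slightly careful with is the bookkeeping of the two permutations $h$ and $\nu$ and the convention that $P(\mu)$ has head $L(\mu)$ (so that $T(\lambda)$ is automatically indecomposable with prescribed head $L(\lambda^{h^{-1}})$), but this is routine.
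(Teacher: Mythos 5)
Your proof is correct and takes exactly the same route the paper does: the paper literally states that Corollary~\ref{good_tilting_theory} follows "immediately" from Theorem~\ref{tiltings_main_thm} and provides no further argument, and your proposal simply fills in the routine reindexing and cites the relevant parts of that theorem together with Corollary~\ref{self_inj_tiltings_projective}.
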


We record the following consequence of Corollary \ref{self_inj_tiltings_projective} and Brauer reciprocity:

\begin{cor}
Suppose that $A$ is self-injective and BGG. Then 
\begin{equation}
\lbrack T:\Delta(\lambda) \rbrack = \lbrack T:\nabla(\lambda) \rbrack
\end{equation}
for any tilting object $T \in \mathcal{G}(A)$. \qed
\end{cor}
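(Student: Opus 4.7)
The strategy is to work in the graded Grothendieck group $\K_0(\mc{G}(A))$ and reduce the identity to a linear independence statement. Since $T$ is tilting, it admits both a standard and a costandard filtration, so
\begin{equation*}
[T] \;=\; \sum_{\lambda \in \Irr \mc{G}(T)} [T:\Delta(\lambda)]\,[\Delta(\lambda)] \;=\; \sum_{\lambda \in \Irr \mc{G}(T)} [T:\nabla(\lambda)]\,[\nabla(\lambda)]
\end{equation*}
in $\K_0(\mc{G}(A))$, with only finitely many non-zero summands in each sum. The BGG hypothesis gives $[\Delta(\lambda)] = [\nabla(\lambda)]$ for every $\lambda$, and subtracting the two expressions for $[T]$ yields
\begin{equation*}
\sum_{\lambda} \bigl([T:\Delta(\lambda)] - [T:\nabla(\lambda)]\bigr)\,[\Delta(\lambda)] \;=\; 0.
\end{equation*}
It therefore suffices to prove that $\{[\Delta(\lambda)] \mid \lambda \in \Irr \mc{G}(T)\}$ is $\bbZ$-linearly independent in $\K_0(\mc{G}(A))$, or equivalently, by (\ref{standard_compat_with_shift}) and the $\bbZ[t,t^{-1}]$-module structure on $\K_0$, that $\{[\Delta(\mu)] \mid \mu \in \Irr_0 \mc{G}(T)\}$ is $\bbZ[t,t^{-1}]$-linearly independent.

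For this linear independence I would apply the character map $\chi \from \K_0(\mc{G}(A)) \to \K_0(\mc{G}(T))$, which is injective by Proposition \ref{prop:injectiveK}, so it is enough to check that the classes $\chi([\Delta(\mu)])$ are $\bbZ[t,t^{-1}]$-linearly independent in $\K_0(\mc{G}(T))$. Their coordinates in the basis $\Irr_0 \mc{G}(T)$ form precisely the matrix $\mathbf{C}_{\ol{\Delta}}$ of Section \ref{sec_dec_matrices}. Now Lemma \ref{basic_delta_lemma_2} gives $\Supp \Delta(\mu) \subset -\bbN$ with $\Delta(\mu)_0 \simeq \mu$ as $T$-modules for every $\mu \in \Irr_0 \mc{G}(T)$, so the entries of $\mathbf{C}_{\ol{\Delta}}$ lie in $\bbZ[t^{-1}]$, and the $t^0$-coefficient of the $(\mu, \eta)$-entry is $[\mu : \eta]_T = \delta_{\mu, \eta}$ by semisimplicity of $T$. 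Hence $\mathbf{C}_{\ol{\Delta}} = \mathrm{Id} + t^{-1}\mathbf{N}$ for some matrix $\mathbf{N}$ with entries in $\bbZ[t^{-1}]$, and $\det \mathbf{C}_{\ol{\Delta}} \in 1 + t^{-1}\bbZ[t^{-1}]$ is a non-zero Laurent polynomial, hence invertible in $\bbQ(t)$. Therefore $\mathbf{C}_{\ol{\Delta}}$ is invertible over $\bbQ(t)$, which delivers the required linear independence.

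I expect the linear-independence step to be the main content of the proof. In a finite-dimensional highest weight category one usually invokes unitriangularity of the decomposition matrix $\mathbf{D}_{\ol{\Delta}}$, but in our setting elements of $\Irr \mc{G}(T)$ of equal degree are incomparable, so $\mathbf{D}_{\ol{\Delta}}$ need not be unitriangular. The remedy is to pass to $\mathbf{C}_{\ol{\Delta}}$ instead, where the constraint $\Supp A^- \subset -\bbN$ makes the matrix unitriangular in the grading variable $t$, after which injectivity of $\chi$ yields the desired linear independence of the $[\Delta(\mu)]$ in $\K_0(\mc{G}(A))$. Note that self-injectivity is not strictly required in this argument; its role in the hypothesis is rather to guarantee, via Corollary \ref{self_inj_tiltings_projective}, that $\mc{G}(A)$ has a plentiful supply of tilting objects to which the identity applies.
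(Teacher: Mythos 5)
Your proof is correct, and the Grothendieck--group argument is a natural and clean way to get the result. The key point is exactly as you say: the classes $[\Delta(\mu)]$, $\mu \in \Irr_0\mc{G}(T)$, are $\bbZ[t,t^{-1}]$-linearly independent in $\K_0(\mc{G}(A))$, which one reads off from the unitriangular-in-$t$ structure of $\mathbf{C}_{\ol\Delta}$: since $\Delta(\mu)_0 \simeq \mu$ and $\Supp\Delta(\mu) \subset -\bbN$, the $t^0$-layer of $\mathbf{C}_{\ol\Delta}$ is the identity, so $\det\mathbf{C}_{\ol\Delta} \in 1 + t^{-1}\bbZ[t^{-1}]$ is non-zero and $\mathbf{C}_{\ol\Delta}$ is invertible over $\bbQ(t)$. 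One small remark: you do not actually need the injectivity of $\chi$ from Proposition \ref{prop:injectiveK} here; invertibility of $\mathbf{C}_{\ol\Delta}$ over $\bbQ(t)$ already forces all coefficients in $\sum_\mu c_\mu \chi([\Delta(\mu)]) = 0$ to vanish, which is all the argument requires.

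The paper records the corollary with no written proof, citing Corollary \ref{self_inj_tiltings_projective} and Brauer reciprocity, so the intended route is different in flavour from yours. A direct attempt along those lines --- reduce by Krull--Schmidt to $T = P(\lambda) = I(\nu(\lambda))$, compute $[T:\Delta(\mu)] = [\nabla(\mu):L(\lambda)]$ via Brauer reciprocity and $[T:\nabla(\mu)] = [\Delta(\mu):L(\nu(\lambda))]$ via its dual, then apply BGG to the latter --- leaves one needing $[\nabla(\mu):L(\lambda)] = [\nabla(\mu):L(\nu(\lambda))]$, which is not a formal consequence of the stated hypotheses unless the Nakayama permutation is trivial (as it is, say, in the graded symmetric case). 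Your approach avoids the Nakayama permutation entirely and so is more robust, at the price of invoking the character-matrix machinery of Section \ref{sec_dec_matrices} rather than only the two ingredients the paper names. You are also right that self-injectivity plays no role in the identity itself; it only guarantees, via Corollary \ref{self_inj_tiltings_projective}, that $\mc{G}(A)$ has tilting objects to apply it to.
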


Note that if $A$ is self-injective, so is $A^\op$ and therefore we have an analogous tilting theory in $\mathcal{G}(A^\op)$. This is linked to the one in $\mathcal{G}(A)$ by duality:

\begin{lem}
Suppose that $A$ is self-injective. Then $T(\lambda^*) = T(\lambda)^*$ in $\mathcal{G}(A^\op)$. \qed
\end{lem}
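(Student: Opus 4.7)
The plan is to unfold the definition of $T(\lambda)$ given in \eqref{tilting_object_defn} and reduce the claim to an identity that is already essentially contained in Theorem \ref{tiltings_main_thm}\ref{tiltings_main_thm:h_dual}, being careful throughout to distinguish the permutations $h$ and $\nu$ of $\mathcal{G}(A)$ from their counterparts $h^\op$ and $\nu^\op$ of $\mathcal{G}(A^\op)$.

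First I would write $T(\lambda) = P(\lambda^{h^{-1}})$ and, using that $A$ is self-injective, apply the identity $P(\mu)^* \simeq P_{A^\op}(\nu(\mu)^*)$ established in the proof of Theorem \ref{tiltings_main_thm} (this is what the line ``$P(\lambda^*) = P(\nu(\lambda)^*)$'' in that proof amounts to, when read as an identification in $\mathcal{G}(A^\op)$). This gives
\bdm
T(\lambda)^* \;\simeq\; P_{A^\op}\bigl(\nu(\lambda^{h^{-1}})^*\bigr).
\edm
On the other hand, by definition of $T$ applied in $\mathcal{G}(A^\op)$,
\bdm
T(\lambda^*) \;=\; P_{A^\op}\bigl((\lambda^*)^{(h^\op)^{-1}}\bigr).
\edm
So it remains to prove the equality $(\lambda^*)^{(h^\op)^{-1}} = \nu(\lambda^{h^{-1}})^*$ of elements of $\Irr \mathcal{G}(T^\op)$.

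The second step is to extract this equality from Theorem \ref{tiltings_main_thm}\ref{tiltings_main_thm:h_dual}, whose content, read carefully, is $(\mu^h)^* = (\nu(\mu)^*)^{h^\op}$ for every $\mu \in \Irr \mathcal{G}(T)$. Setting $\mu = \lambda^{h^{-1}}$ gives $\lambda^* = (\nu(\lambda^{h^{-1}})^*)^{h^\op}$, and applying $(h^\op)^{-1}$ to both sides yields exactly the required identity. Combining with the first step gives $T(\lambda^*) \simeq T(\lambda)^*$ in $\mathcal{G}(A^\op)$.

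The main obstacle is purely bookkeeping: one must consistently track which permutation lives on which indexing set, since $h$ and $\nu$ act on $\Irr \mathcal{G}(T)$ while $h^\op$ and $\nu^\op$ act on $\Irr \mathcal{G}(T^\op)$, related by the duality $(-)^*$. Once that is set up, the proof is a one-line substitution into part \ref{tiltings_main_thm:h_dual} of Theorem \ref{tiltings_main_thm}; there is no genuinely new homological or structural input required beyond the self-injective hypothesis, which has already been used to define $\nu$ and to identify $P(\mu)^*$ with a projective in $\mathcal{G}(A^\op)$.
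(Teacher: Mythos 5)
Your proof is correct, and your careful disambiguation of $h$ versus $h^{\op}$ is exactly the right way to read part~\ref{tiltings_main_thm:h_dual} of Theorem~\ref{tiltings_main_thm}: the right-hand exponent there must indeed be the permutation on $\Irr\mathcal{G}(T^{\op})$, and the line ``$P(\lambda^*)=P(\nu(\lambda)^*)$'' in that proof should be read as ``$P(\lambda)^* = P_{A^{\op}}(\nu(\lambda)^*)$'', as you do. The paper leaves this lemma unproved with a $\qed$, and your substitution of $\mu=\lambda^{h^{-1}}$ into the formula $T(\lambda)=P(\lambda^{h^{-1}})$ together with part~\ref{tiltings_main_thm:h_dual} is a sound way to discharge it. A slightly slicker alternative, probably closer to what prompted the $\qed$, avoids the permutation bookkeeping entirely: by Corollary~\ref{good_tilting_theory} together with the fact that $h$ is a permutation, $T(\lambda)$ is characterized as the unique indecomposable tilting object of $\mathcal{G}(A)$ with highest weight $\lambda$; since duality carries projective-injectives to projective-injectives (Theorem~\ref{tilings_projinj}) and sends highest weight $\lambda$ to $\lambda^*$ (because $\deg\lambda^*=\deg\lambda$), the object $T(\lambda)^*$ is the unique indecomposable tilting in $\mathcal{G}(A^{\op})$ of highest weight $\lambda^*$, which is $T(\lambda^*)$. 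Both routes use only self-injectivity and give the same result; yours trades the abstract characterization for an explicit computation with $h$ and $\nu$.
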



\begin{ex} \label{example_no_tilting}
Here is an example where the category $\mathcal{G}(A)$ does not contain any tilting objects. Recall from Example \ref{first_examples_of_triang_dec} the triangular decomposition $K\lbrack x \rbrack/(x^2) \otimes_K K \otimes_K K \lbrack y \rbrack/(y^2)$ of $A = K \langle x,y \rangle/ \langle x^2,yx,y^2 \rangle$ with $\deg(x)= -1$ and $\deg(y)= 1$. It follows from Theorem \ref{heads} that $A$ has only one simple module $L$ up to isomorphism, so it only has one indecomposable projective module $P$ and only one indecomposable injective module $I$ (again up to isomorphism). Suppose that we can show that $A$ is not self-injective. Then there exists a finitely generated projective $A$-module which is not injective. But this must imply that $I$ is not isomorphic to $P$ as otherwise all projective modules would be injective. This in turn implies that there is no projective-injective $A$-module, so there is no tilting object in $\mathcal{G}(A)$ by Theorem \ref{tilings_projinj}. 

We now  argue that $A$ is not self-injective. The subspace $\langle x,y,xy \rangle_K$ of $A$ is clearly a nilpotent ideal. For dimension reasons the Jacobson radical is then already equal to this ideal. From this one easily obtains that
\[
\Soc(_AA) = \lbrace a \in A \mid \Rad(A)a = 0 \rbrace = \langle x,xy \rangle_K
\]
and
\[
\Soc(A_A) = \lbrace a \in A \mid a \Rad(A) = 0 \rbrace = \langle y,xy \rangle_K \;,
\]
where we use the elementary description of the left (right) socle as the left (right) annihilator of the radical, see \cite[Lemma 58.3]{CR}. Hence, $\Soc(_A A ) \neq \Soc(A_A)$, so $A$ is not self-injective by \cite[Theorem 58.12]{CR}. 
\end{ex}

\section{Triangular dualities} \label{duality_from_anti}

If $A$ is self-injective, then $A$ itself is a tilting object in $\mathcal{G}(A)$. Our aim is to show that if $A$ is graded Frobenius, then this tilting object is fixed by certain dualities on $\mathcal{G}(A)$. This property will play a key role in \cite{hwtpaper2}.

Note that a (graded) automorphism $\tau$ of $A$ induces an equivalence $\,^\tau(-): \mathcal{C}(A) \rarr \mathcal{C}(A)$, called the \word{twist} by $\tau$. For $M \in {\mathcal{C}}(A)$ the action of $A$ on the twisted module $\,^\tau M \in {\mathcal{C}}(A)$ is given by $a \star m := \tau(a) m$ for $a \in A$ and $m \in M$. Hence, if $\tau$ is an (anti-graded) anti-automorphism of $A$, it is a graded isomorphism $A \rarr A^\op$ and so we get an equivalence 
\begin{equation}
\,^\tau (-): \mathcal{C}(A) \rarr \mathcal{C}(A^\op) \;.
\end{equation} 
In order to make meaningful statements, the anti-involution is required to respect the triangular structure on $A$. More precisely:

\begin{defn}\label{defn:triantinv}
An anti-graded anti-automorphism $\tau$ of $A$ is said to be a \textit{triangular anti-involution} if it satisfies the following conditions:
\begin{enum_thm}
\item $\tau$ is of order $2$.
\item \label{duality_assumption_T_swap} $\tau(A^-) = A^+$.
\item \label{duality_assumption_T_simples} $\,^\tau \lambda \simeq \lambda^*$ as $T^\op$-modules for all $\lambda \in \Irr \mathcal{M}(T)$.
\end{enum_thm}
\end{defn}

We assume now that $\tau$ is triangular. Since $\tau$ is anti-graded, property \ref{duality_assumption_T_swap} implies that $\tau$ stabilizes $T$ and so it induces an anti-automorphism of $T$. The induced twist $\mathcal{C}(T) \rarr \mathcal{C}(T^\op)$ is of course just the restriction of the twist $\mathcal{C}(A) \rarr \mathcal{C}(A^\op)$. Property  \ref{duality_assumption_T_simples} concerns this restriction. Note that $\tau(B^-) = B^+$. A straightforward check shows that we have an equality of functors
 \begin{equation}
\,^\tau(-) \circ (-)^* = (-)^* \circ \,^\tau(-) : \mathcal{C}(A) \rarr \mathcal{C}(A)^\circ
\end{equation}
and we denote this functor by $\bbD$. The above equation shows directly that $\mathbb{D}^2 \simeq \id_{\mathcal{C}(A)}$, so $\bbD$ is a contravariant involution on $\mathcal{C}(A)$. 
%
 %

The following theorem is essentially due to Holmes and Nakano \cite[Theorem 5.1]{HN}. It shows that $\bbD$ is a duality on $\mc{G}(A)$ fixing the simple objects, so it is a \word{strong duality} in the sense of Cline–Parshall–Scott \cite[\S1.2]{StratifiedEnd}.

\begin{thm}[Holmes--Nakano] \label{triangular_duality}
Assume that $A$ is equipped with a triangular anti-involution~$\tau$. Then for any $\lambda \in \Irr \mathcal{C}(T)$ we have canonical isomorphisms
\[
\mathbb{D}(\Delta(\lambda)) \simeq \nabla(\lambda) \;, \quad \mathbb{D}(L(\lambda)) \simeq L(\lambda) \;, \quad \mathbb{D}(P(\lambda)) \simeq I(\lambda) \;, \quad \mathbb{D}(I(\lambda)) \simeq P(\lambda) 
\]
in $\mathcal{C}(A)$. Moreover, $A$ is BGG.
\end{thm}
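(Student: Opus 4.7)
The plan is to reduce everything to the first isomorphism $\mathbb{D}(\Delta(\lambda)) \simeq \nabla(\lambda)$, from which the remaining claims follow formally from the fact that $\mathbb{D}$ is an exact contravariant self-equivalence of $\mc{C}(A)$ satisfying $\mathbb{D}^2 \simeq \id$.

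For the first isomorphism I would compute $\,^\tau\Delta(\lambda)$ explicitly. Because $\tau$ is an anti-graded anti-automorphism with $\tau(A^+) = A^-$ and $\tau(T) = T$, it restricts to a graded algebra isomorphism $B^+ \rightsim (B^-)^\op$, which intertwines the quotient $B^+ \twoheadrightarrow T$ with the quotient $(B^-)^\op \twoheadrightarrow T^\op$. Combined with property \ref{duality_assumption_T_simples} of Definition \ref{defn:triantinv}, which gives $\,^\tau\lambda \simeq \lambda^*$ in $\mc{C}(T^\op)$, transport of structure along $\tau$ yields a natural isomorphism
\[
\,^\tau\Delta(\lambda) \;=\; \,^\tau\bigl(A \otimes_{B^+} \Inf_T^{B^+}\lambda\bigr) \;\simeq\; A^\op \otimes_{(B^-)^\op} \Inf_{T^\op}^{(B^-)^\op}\lambda^*
\]
in $\mc{C}(A^\op)$, and the right hand side is precisely the proper standard module $\ol{\Delta}(\lambda^*)$ for the opposite triangular decomposition $\mc{T}^\op$. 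Since $T$ is semisimple, $\Delta(\lambda) = \ol{\Delta}(\lambda)$, and the definition $\ol{\nabla}(\lambda) = \ol{\Delta}(\lambda^*)^*$ (with the inner $\ol{\Delta}$ understood relative to $\mc{T}^\op$) then gives
\[
\mathbb{D}(\Delta(\lambda)) \;=\; \bigl(\,^\tau\Delta(\lambda)\bigr)^* \;\simeq\; \ol{\Delta}(\lambda^*)^* \;=\; \nabla(\lambda).
\]

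The remaining isomorphisms follow quickly. From $\mathbb{D}^2 \simeq \id$ we get $\mathbb{D}(\nabla(\lambda)) \simeq \Delta(\lambda)$. Because $\mathbb{D}$ is an exact contravariant equivalence it swaps heads and socles, so applying it to $L(\lambda) \simeq \Soc\nabla(\lambda)$ from Theorem \ref{heads} yields $\mathbb{D}(L(\lambda)) \simeq \Hd\Delta(\lambda) \simeq L(\lambda)$. Then $\mathbb{D}(P(\lambda))$ is injective with socle $\mathbb{D}(\Hd P(\lambda)) \simeq L(\lambda)$, hence $\mathbb{D}(P(\lambda)) \simeq I(\lambda)$; applying $\mathbb{D}$ once more gives $\mathbb{D}(I(\lambda)) \simeq P(\lambda)$. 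For the BGG property, note that a contravariant exact equivalence fixing every simple object preserves composition multiplicities, so $[\Delta(\lambda)] = [\mathbb{D}(\Delta(\lambda))] = [\nabla(\lambda)]$ in $\K_0(\mc{G}(A))$.

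The main obstacle is the clean identification $\,^\tau\Delta(\lambda) \simeq \ol{\Delta}(\lambda^*)$ relative to $\mc{T}^\op$. The slickest route is via universal properties: the twist by $\tau$ intertwines the induction $A \otimes_{B^+} -$ with the induction $A^\op \otimes_{(B^-)^\op} -$, and sends the inflation of $\lambda$ along $B^+ \twoheadrightarrow T$ to the inflation of $\,^\tau\lambda \simeq \lambda^*$ along $(B^-)^\op \twoheadrightarrow T^\op$; both sides therefore represent the same functor on $\mc{C}(A^\op)$ by Frobenius reciprocity. A hands-on alternative, unpacking the action $a^\op \cdot (a' \otimes v) = \tau(a) a' \otimes v$ on $\,^\tau\Delta(\lambda)$ and using a fixed $T^\op$-isomorphism $\,^\tau\lambda \rightsim \lambda^*$, also works without difficulty.
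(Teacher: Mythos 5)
Your proof is correct and follows essentially the same route as the paper's: both reduce to the single key isomorphism $\mathbb{D}(\Delta(\lambda)) \simeq \nabla(\lambda)$, which rests on identifying $\,^\tau\Delta(\lambda)$ with the proper standard module $\ol{\Delta}(\lambda^*)$ for $\mc{T}^\op$, and then the remaining three isomorphisms and the BGG property fall out formally from $\mathbb{D}$ being an exact contravariant self-equivalence fixing every simple. The one cosmetic difference is that the paper dualizes first (writing $\nabla(\lambda) \simeq (\,^\tau\lambda \otimes_{B^-} A)^\circledast$) and then constructs the comparison map $\phi$ to $\,^\tau\bigl((A \otimes_{B^+}\lambda)^*\bigr)$ explicitly, verifying well-definedness on the tensor product and $A$-linearity by hand; this is precisely the ``hands-on alternative'' you sketch in your final paragraph, so your universal-property/Frobenius-reciprocity formulation is a slightly tidier packaging of the same underlying isomorphism rather than a genuinely different argument.
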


\begin{proof}
We assume that $\lambda \in \Irr \mathcal{G}_0(T)$, the general result follows by degree shifting. By definition, we have $\nabla(\lambda) = (A^\op \otimes_{(B^-)^\op} \lambda^*)^* = (\lambda^\circledast \otimes_{B^-} A)^\circledast$. Since $\,^\tau \lambda \simeq \lambda^*$ by assumption, we have $\nabla(\lambda) \simeq ( \,^\tau\lambda \otimes_{B^-} A)^\circledast$. We claim that there is an $A$-module isomorphism
\[
\phi:  ( \,^\tau \lambda \otimes_{B^-} A)^\circledast \longrightarrow \,^\tau \left((A \otimes_{B^{+}} \lambda)^* \right) = \mathbb{D}(\Delta(\lambda)) \;.
\] 
This proves that $\mathbb{D}(\Delta(\lambda)) \simeq \nabla(\lambda)$. Note that the vector space structure is not affected by twisting. Thus, for $f \in (\,^\tau \lambda \otimes_{B^-} A)^\circledast$, we define a $K$-linear function $\phi(f):A \otimes_{B^{+}} \lambda \rarr K$, i.e. an element $\phi(f) \in \,^\tau \left((A \otimes_{B^{+}} \lambda)^* \right)$, by
\[
\phi(f)(\underbrace{a \otimes v}_{\in A \otimes_{B^+}\lambda}) \dopgleich f(\underbrace{v \otimes \tau(a)}_{\in \,^\tau \lambda \otimes_{B^-} A}) 
\]
for $a \in A$ and $v \in \lambda$. We first need to check that $F \dopgleich \phi(f)$ is indeed a $K$-linear map $A \otimes_{B^+} \lambda \rarr K$. This amounts to showing that 
\[
F( a b^+ \otimes v) = F(a \otimes b^+ v)
\]
for all $b^+ \in B^+$. In fact, by definition we have
\[
F( a b^+ \otimes v) = f(v \otimes \tau(ab^+)) = f(v \otimes \tau(b^+) \tau(a)) = f( b^+ v  \otimes \tau(a)) = F(a \otimes b^+ v)\;,
\]
using the fact that $\tau(b^+) \in B^-$ and that $v \otimes \tau(a) \in \,^\tau \lambda \otimes_{B^-} A$. Hence, $\phi$ is well-defined. It is clear that $\phi$ is a $K$-vector space morphism, and if $\phi(f) = 0$, then clearly $f=0$. Thus, $\phi$ is injective. Since the $K$-vector space dimensions of the domain and codomain of $\phi$ are equal, it follows immediately that $\phi$ is a $K$-vector space isomorphism. All that remains to show is that $\phi$ is an $A$-module morphism. Therefore, let $a' \in A$. We need to show that
\begin{equation} \label{phi_a_morphism}
\phi(a'f) = a'\phi(f) \;.
\end{equation}
Recall the definition of the $A$-action on a dual module given in \S\ref{notations}. For the left hand side of (\ref{phi_a_morphism}) we have
\[
\phi(a'f)(a \otimes v) = (a'f)(v \otimes \tau(a)) = f(v \otimes \tau(a)a') \;.
\]
Noting that the codomain of $\phi$ is a $\tau$-twisted module we get for the right hand side
\begin{align*}
(a'\phi(f))(a \otimes v) &= (\phi(f) \tau(a'))(a \otimes v) = \phi(f)(\tau(a') a \otimes v) = f(v \otimes \tau(\tau(a')a)) \\ &= f(v \otimes \tau(a) a') \;.
\end{align*}
Hence, we indeed have equality in (\ref{phi_a_morphism}). This shows that $\mathbb{D}(\Delta(\lambda)) \simeq \nabla(\lambda)$. This implies $\,^\tau \nabla(\lambda) \simeq \Delta(\lambda)^*$. Since $\Soc (\nabla(\lambda)) \simeq L(\lambda)$, we get
\[
\,^\tau L(\lambda) \simeq \Soc (\,^\tau \nabla(\lambda)) \simeq \Soc (\Delta(\lambda)^*) \simeq (\Hd(\Delta(\lambda)))^* \simeq L(\lambda)^* \;,
\]
so $\mathbb{D}(L(\lambda)) \simeq L(\lambda)$.

Since $\mathbb{D}$ is a duality, it maps projective covers to injective hulls, so $\mathbb{D}(P(\lambda))$ is an injective hull of a simple module. Applying $\mathbb{D}$ to the epimorphism $P(\lambda) \twoheadrightarrow L(\lambda)$ yields a monomorphism $L(\lambda) \simeq \mathbb{D}(L(\lambda)) \hookrightarrow \mathbb{D}(P(\lambda))$, showing that $\mathbb{D}(P(\lambda))$ is the injective hull of $L(\lambda)$, so $\mathbb{D}(P(\lambda)) \simeq I(\lambda)$ by uniqueness of the injective hull. From this we immediately obtain $\mathbb{D}(I(\lambda)) \simeq P(\lambda)$. For all $\lambda,\mu \in \Irr \mathcal{G}(T)$ we now have 
\[
\lbrack \Delta(\lambda):L(\mu) \rbrack = \lbrack \bbD( \Delta(\lambda)):\bbD (L(\mu)) \rbrack = \lbrack \nabla(\lambda):L(\mu) \rbrack,
\]
and this shows that $A$ is BGG.
\end{proof}

\begin{lem} \label{duality_functor_dual}
We have $\bbD(_AA) \simeq \left(_{A^\op}A^\op\right)^*$ in $\mathcal{G}(A)$.
\end{lem}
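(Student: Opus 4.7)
The plan is to exhibit a concrete isomorphism. Unravelling definitions, $\bbD({}_A A) = {}^\tau((_AA)^*)$ and $({}_{A^\op} A^\op)^*$ both have underlying graded vector space $A^*$, once one tracks the convention for the grading on the $K$-dual of a graded module and for the opposite grading on $A^\op$. I would propose the map
\[
\Phi \colon ({}_{A^\op} A^\op)^* \longrightarrow \bbD({}_A A), \qquad \Phi(f) := f \circ \tau,
\]
i.e.\ $\Phi(f)(m) = f(\tau(m))$, and verify it is a graded $A$-module isomorphism.

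Bijectivity is automatic from $\tau^2 = \id$, which makes $\Phi$ its own inverse at the level of vector spaces. For compatibility with gradings, note that $\tau$ is anti-graded, so $\tau(A_i) = A_{-i}$; combined with the fact that the degree $i$ component of $({}_{A^\op}A^\op)^*$ is naturally supported on $A_{-i}$ (because the grading on $A^\op$ is reversed), pre-composing with $\tau$ lands in the degree $i$ part of $\bbD({}_A A) = {}^\tau(A^*)$, so the two ``flips'' cancel.

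The only substantive point is $A$-linearity. Unravelling the standard duality, the left $A$-action on $({}_{A^\op}A^\op)^*$ works out to $(a\cdot f)(m) = f(ma)$, while the twisted action on $\bbD({}_A A)$ works out to $(a\cdot g)(m) = g(\tau(a)m)$. A short manipulation using $\tau(xy) = \tau(y)\tau(x)$ and $\tau^2 = \id$ reduces both $\Phi(a\cdot f)(m)$ and $(a\cdot \Phi(f))(m)$ to $f(\tau(m)a)$. The main ``obstacle'' here is purely bookkeeping: keeping straight the sign/direction conventions for $(-)^{\op}$, $(-)^*$, and ${}^\tau(-)$ on each side. Once fixed, the verification is mechanical, and indeed $\Phi$ may be viewed abstractly as reflecting the fact that $\tau\colon A\to A^{\op}$ is a graded algebra isomorphism carrying the left regular $A$-module to the left regular $A^{\op}$-module.
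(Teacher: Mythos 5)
Your proof is correct, and it is a genuinely different argument from the paper's. You give an explicit isomorphism $\Phi(g) = g \circ \tau$ and verify directly, by unwinding the conventions, that it is a graded $A$-module map; the only nontrivial computation is $\Phi(a\cdot g)(m) = g(\tau(m)a) = (a\star\Phi(g))(m)$, using $\tau(xy)=\tau(y)\tau(x)$ and $\tau^2=\id$, and the degree check works because the grading reversal on $A^\op$ is cancelled by the anti-gradedness of $\tau$. The paper instead argues indirectly: both sides are injective, so it decomposes ${}_A A = \bigoplus_\lambda P(\lambda)^{\oplus n_\lambda}$ with $n_\lambda = \dim L(\lambda)_0$, invokes Theorem~\ref{triangular_duality} to turn this into $\bbD(A) \simeq \bigoplus_\lambda I(\lambda)^{\oplus n_\lambda}$, writes $({}_{A^\op}A^\op)^* \simeq \bigoplus_\lambda I(\lambda)^{\oplus m_\lambda}$ with $m_\lambda = \dim L(\lambda)^*_0$, and concludes from $\dim L(\lambda)^*_0 = \dim L(\lambda)_0$. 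Your approach is more elementary — it does not use Theorem~\ref{triangular_duality} at all, and it produces a concrete isomorphism rather than just a multiplicity match — whereas the paper's version is structural and sits more naturally in the Krull–Schmidt framework being developed. One small clarification worth spelling out if you write this up: there is only one genuine grading ``flip'' (on $({}_{A^\op}A^\op)^*$, where the degree-$i$ piece is $(A_{-i})^*$), since $\bbD({}_A A)$ has its degree-$i$ piece equal to $(A_i)^*$ — restriction along the graded map $\tau\colon A \to A^\op$ does not alter the underlying graded space — and it is the anti-gradedness of $\tau$ that compensates for that single flip.
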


\begin{proof}
Since $_AA \in \mathcal{G}(A)$ is projective and since $\bbD$ is a contravariant equivalence, the object $\bbD(_AA) \in \mathcal{G}(A)$ is injective. If we decompose ${}_A A = \bigoplus_{\lambda \in \Irr \mc{G}(T)} P(\lambda)^{\oplus n_{\lambda}}$, then 
$$
n_{\lambda} = \dim \Hom_{\mc{G}(A)}(A,L(\lambda)) = \dim L(\lambda)_0,
$$
and Theorem \ref{triangular_duality} implies that $\bbD(A) = \bigoplus_{\lambda \in \Irr \mc{G}(T)} I(\lambda)^{\oplus n_{\lambda}}$. Analogously, we have $\left(_{A^\op}A^\op\right)^* = \bigoplus_{ \lambda \in \Irr \mathcal{G}(T)} I(\lambda)^{\oplus m_\lambda}$ with 
\begin{align*}
m_{\lambda} & = \dim \Hom_{\mc{G}(A)}\left( L(\lambda),\left(_{A^\op}A^\op\right)^* \right) \\
 & = \dim \Hom_{\mc{G}(A^{\op})}( {}_{A^\op}A^\op, L(\lambda)^*) = \dim L(\lambda)^*_0. 
\end{align*}
Since standard duality preserves the grading, $\dim L(\lambda)^*_ 0 = \dim L(\lambda)_0$ and hence $m_{\lambda} = n_{\lambda}$ for all $\lambda \in \Irr \mc{G}(T)$. 
\end{proof}

\begin{cor}\label{cor:gradedFrobeniusduality}
If $A$ is graded Frobenius, then $\bbD(A) \simeq A$ in $\mathcal{G}(A)$.
\end{cor}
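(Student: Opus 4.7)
The plan is to chain together the two immediately preceding results. By Lemma \ref{duality_functor_dual}, we already have a canonical isomorphism
\begin{equation*}
\bbD({}_A A) \simeq ({}_{A^\op} A^\op)^{*}
\end{equation*}
in $\mathcal{G}(A)$, and this holds without any Frobenius hypothesis. So the only remaining task is to identify the right-hand side with $A$ itself as a graded left $A$-module.

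This is exactly where the graded Frobenius hypothesis enters. By Lemma \ref{frobenius_dual_iso}, $A$ being $d$-Frobenius is equivalent to $({}_{A^\op}A^\op)^{*} \simeq A[-d]$ in $\mathcal{G}(A)$. Specializing to $d = 0$ (the graded Frobenius case, by Definition \ref{graded_frob_defn}), this gives $({}_{A^\op}A^\op)^{*} \simeq A$ in $\mathcal{G}(A)$.

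Composing these two isomorphisms produces $\bbD(A) \simeq A$ in $\mathcal{G}(A)$, which is the desired conclusion. There is no real obstacle here; the substantive content has already been established in Lemma \ref{duality_functor_dual} (where the grading had to be tracked carefully via the multiplicities $\dim L(\lambda)_{0}$ on both sides) and in the graded version of the standard Frobenius characterization recalled in Lemma \ref{frobenius_dual_iso}. The only thing to emphasize in the write-up is that the degree shift in Lemma \ref{frobenius_dual_iso} is trivial precisely because ``graded Frobenius'' means $d = 0$ in Definition \ref{graded_frob_defn}; if $A$ were merely $d$-Frobenius for some $d \neq 0$, one would instead obtain $\bbD(A) \simeq A[-d]$.
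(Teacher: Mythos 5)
Your proof is correct and follows the same route as the paper: chain Lemma \ref{duality_functor_dual} ($\bbD({}_A A) \simeq ({}_{A^\op}A^\op)^*$) with Lemma \ref{frobenius_dual_iso} applied at $d=0$. The paper's proof is exactly this composition, stated in one line.
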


\begin{proof}
This follows from Lemma \ref{duality_functor_dual} and Lemma \ref{frobenius_dual_iso}.
\end{proof}

\section{Abstract Kazhdan–Lusztig theory}  \label{sec:kl_theory}

The degree function $\deg : \Irr \mc{G}(T) \rightarrow \Z$ can be thought of as a length function on $\mc{G}(A)$, in the sense of \cite{CPSKL}. Then it is natural to ask when $\mc{G}(A)$ admits an abstract Kazhdan-Lusztig theory, as in \emph{loc. cit.} In our setting, this means that 
\begin{equation}\label{eq:KL1}
\Ext_{\mc{G}(A)}^m(\Delta(\lambda),L(\mu)) \neq 0 \quad \Rightarrow \quad m \equiv \deg \lambda - \deg \mu \ \mod \ 2, 
\end{equation}
and 
\begin{equation}\label{eq:KL2}
\Ext_{\mc{G}(A)}^m(L(\mu),\nabla(\lambda)) \neq 0 \quad \Rightarrow \quad m \equiv \deg \lambda - \deg \mu \ \mod  \ 2,  
\end{equation}
for all $\lambda,\mu \in \Irr \mc{G}(T)$. 

In all that follows, an abstract Kazhdan-Lusztig theory will always be in relation to the function $\deg$. We say that $A$ satisfies the KL-property if both (\ref{eq:KL1}) and (\ref{eq:KL2}) hold. 
Recall that we have:

\begin{lem}
There are canonical isomorphisms
\begin{equation}\label{eq:KL1a}
\Ext_{\mc{G}(A)}^m(\Delta(\lambda),L(\mu)) \simeq \Ext_{\mc{G}(B^{+})}^m(\lambda,L(\mu)),
\end{equation}
and 
\begin{equation}\label{eq:KL2a}
\Ext^m_{\mc{G}(A)}(L(\mu),\nabla(\lambda)) \simeq \Ext^m_{\mc{G}(B^{+})}(L(\mu), \lambda).
\end{equation}
\end{lem}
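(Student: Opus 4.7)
The plan is to obtain both isomorphisms as instances of the Eckmann--Shapiro lemma, leveraging the freeness properties of $A$ over the Borel subalgebras provided by Lemma~\ref{borel_tensor_dec}. In what follows, $\lambda$ is regarded as a $B^+$-module via the augmentation $B^+ \twoheadrightarrow T$.

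For (\ref{eq:KL1a}), since $T$ is semisimple we have $\Delta(\lambda) = \ol{\Delta}(\lambda) = A \otimes_{B^+} \lambda$ by definition. The induction functor $A \otimes_{B^+} (-) : \mc{G}(B^+) \to \mc{G}(A)$ is left adjoint to the exact restriction $\Res^A_{B^+}$, hence it preserves projectives; and since $A$ is a free right $B^+$-module by Lemma~\ref{borel_tensor_dec}, it is itself exact. A projective resolution $P_\bullet \twoheadrightarrow \lambda$ in $\mc{G}(B^+)$ therefore yields a projective resolution $A \otimes_{B^+} P_\bullet \twoheadrightarrow \Delta(\lambda)$ in $\mc{G}(A)$. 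Applying $\Hom_{\mc{G}(A)}(-, L(\mu))$ and invoking Frobenius reciprocity produces a natural isomorphism of cochain complexes; passing to cohomology yields (\ref{eq:KL1a}).

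For (\ref{eq:KL2a}), the key step is to identify $\nabla(\lambda)$ as a coinduction. Unwinding the definition $\nabla(\lambda) = \ol{\Delta}^{\op}(\lambda^*)^*$, where $\ol{\Delta}^{\op}$ denotes the proper standard functor for $\mc{T}^{\op}$, and applying the tensor--hom adjunction produces a canonical $A$-module isomorphism $\nabla(\lambda) \simeq \Hom_{B^+}(A, \lambda)$ of graded $A$-modules. The coinduction functor $\Hom_{B^+}(A, -)$ is right adjoint to $\Res^A_{B^+}$ and hence preserves injectives; by the freeness of $A$ over $B^+$ on the appropriate side, it is also exact. Given an injective resolution $\lambda \hookrightarrow I^\bullet$ in $\mc{G}(B^+)$, one obtains an injective resolution $\nabla(\lambda) \hookrightarrow \Hom_{B^+}(A, I^\bullet)$ in $\mc{G}(A)$, and the coinduction adjunction then gives (\ref{eq:KL2a}) after applying $\Hom_{\mc{G}(A)}(L(\mu), -)$ and passing to cohomology.

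The main technical point is the identification of $\nabla(\lambda)$ with the coinduction $\Hom_{B^+}(A, \lambda)$, which requires carefully chasing through the definition of $\ol{\nabla}$ via standard duality and the opposite triangular decomposition; once this is in hand, both isomorphisms follow formally from the general principle that an exact left (resp.\ right) adjoint to an exact functor transfers projective (resp.\ injective) resolutions, so the underlying adjunctions promote to isomorphisms of derived functors.
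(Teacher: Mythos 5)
Your argument for~(\ref{eq:KL1a}) is correct: since $T$ is semisimple here, $\Delta(\lambda) = A \otimes_{B^+}\lambda$, the functor $A\otimes_{B^+}(-)$ is exact (Lemma~\ref{borel_tensor_dec} gives $A$ free as a right $B^+$-module) and preserves projectives, and Eckmann--Shapiro gives~(\ref{eq:KL1a}). The paper does not supply a proof of this lemma, but that is the expected argument.

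Your argument for~(\ref{eq:KL2a}), however, contains a genuine error in the ``main technical point''. Unwinding the definition gives $\nabla(\lambda)=\bigl(A^\op\otimes_{(B^-)^\op}\lambda^*\bigr)^*$, and the tensor--hom adjunction identifies this with $\Hom_{B^-}(A,\lambda)$, \emph{not} $\Hom_{B^+}(A,\lambda)$. Concretely: $A^\op\otimes_{(B^-)^\op}\lambda^*$ is $\lambda^\circledast\otimes_{B^-}A$ as a right $A$-module, and $(\lambda^\circledast\otimes_{B^-}A)^\circledast\simeq\Hom_{B^-}(A,\lambda)$. One can also see this by counting: $\dim\nabla(\lambda)=\dim\lambda\cdot\dim A^+$, which equals $\dim\Hom_{B^-}(A,\lambda)$ (as $A$ is free of rank $\dim A^+$ as a left $B^-$-module), whereas $\Hom_{B^+}(A,\lambda)$ can have a different dimension --- for instance, in Example~\ref{non_ambidext} with $\lambda=K$, one has $\dim\nabla(K)=2$ but $\dim\Hom_{B^+}(A,K)=3$. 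Moreover the coinduction Eckmann--Shapiro step requires $A$ to be projective as a \emph{left} $B^+$-module, which Lemma~\ref{borel_tensor_dec} does not give (it gives freeness on the left over $B^-$ and on the right over $B^+$); without ambidexterity $A$ need not be free as a left $B^+$-module. So the correct statement, and the one used to deduce~(\ref{eq:KL4}), is $\Ext^m_{\mc{G}(A)}(L(\mu),\nabla(\lambda))\simeq\Ext^m_{\mc{G}(B^-)}(L(\mu),\lambda)$; the ``$B^+$'' in the displayed~(\ref{eq:KL2a}) appears to be a typographical slip, which your proposal has reproduced rather than corrected. A tidier route to the corrected~(\ref{eq:KL2a}) is simply to apply~(\ref{eq:KL1a}) to $\mc{T}^\op$ (whose positive Borel is $(B^-)^\op$) and conjugate by standard duality, using $\nabla(\lambda)^*\simeq\Delta(\lambda^*)$ and $L(\mu)^*\simeq L(\mu^*)$.
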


The above adjunctions make it clear that the KL-property is really about the structure of $L(\mu)$ as an $A^{-}$-module and as an $A^{+}$-module. Let $K$ denote the trivial $A^{-}$-module, resp. the trivial $A^{+}$-module, concentrated in degree zero.  

\begin{prop}
The algebra $A$ has the KL-property if and only if 
\begin{equation}\label{eq:KL3}
\Ext_{\mc{G}(A^+)}^m(K,L(\mu)) \neq 0 \quad \Rightarrow \quad m \equiv \deg \mu \ \mod \ 2, 
\end{equation}
and 
\begin{equation}\label{eq:KL4}
\Ext_{\mc{G}(A^-)}^m(L(\mu),K) \neq 0 \quad \Rightarrow \quad m \equiv \deg \mu \ \mod  \ 2,
\end{equation}
for all $\mu \in \Lambda$. 
\end{prop}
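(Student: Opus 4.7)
\emph{Proof plan.} The strategy is to reduce the $A^\pm$-Ext groups in (\ref{eq:KL3})--(\ref{eq:KL4}) to the $B^\pm$-Ext groups provided by the preceding lemma, via a change-of-rings argument using the semisimplicity of $T$, and then match parities by a routine degree-shift argument.

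The key reduction is the isomorphism
\[
\Ext^m_{\mc{G}(A^+)}(K, N) \simeq \bigoplus_{\lambda \in \Irr \mc{G}_0(T)} \Ext^m_{\mc{G}(B^+)}(\lambda, N)^{\oplus \dim_K \lambda} \qquad (\star)
\]
for every $N \in \mc{G}(B^+)$. To prove it, note that $B^+$ is free as a right $A^+$-module by Lemma \ref{borel_tensor_dec}, so induction $B^+ \otimes_{A^+} -$ is exact and preserves projectives. Since $B^+ \cdot A^+_{>0} = J^+$, we have $B^+ \otimes_{A^+} K \simeq T$ as $B^+$-modules, with $T$ carrying the $B^+$-action by inflation along $B^+ \twoheadrightarrow T$. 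Applying $B^+ \otimes_{A^+} -$ to a graded projective resolution of $K$ over $A^+$ therefore yields a graded projective resolution of $T$ over $B^+$, and the induction--restriction adjunction gives $\Ext^m_{\mc{G}(B^+)}(T, N) \simeq \Ext^m_{\mc{G}(A^+)}(K, N)$. Because $T$ is split semisimple and concentrated in degree zero, $T \simeq \bigoplus_{\lambda \in \Irr \mc{G}_0(T)} \lambda^{\oplus \dim_K \lambda}$ in $\mc{G}(T)$, and inflation preserves this decomposition, producing $(\star)$. The analogous formula
\[
\Ext^m_{\mc{G}(A^-)}(M, K) \simeq \bigoplus_{\lambda \in \Irr \mc{G}_0(T)} \Ext^m_{\mc{G}(B^-)}(M, \lambda)^{\oplus n_\lambda} \qquad (\star\star)
\]
(with multiplicities $n_\lambda$ whose precise values will not matter) follows dually, using the coinduction $\Hom_{A^-}(B^-, -)$, which is exact because $B^-$ is free as a left $A^-$-module and preserves injectives by adjunction, together with the identification of $\Hom_{A^-}(B^-, K)$ as a $B^-$-module inflated from the semisimple category $\mc{G}(T)$.

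With these reductions in hand, the parity check is immediate. Combining $(\star)$ with the preceding lemma, $\Ext^m_{\mc{G}(A^+)}(K, L(\mu)) \neq 0$ if and only if $\Ext^m_{\mc{G}(A)}(\Delta(\lambda), L(\mu)) \neq 0$ for some $\lambda \in \Irr \mc{G}_0(T)$, necessarily of degree zero. The KL-property then forces $m \equiv \deg \lambda - \deg \mu \equiv \deg \mu \pmod 2$, which is (\ref{eq:KL3}). Conversely, assuming (\ref{eq:KL3}), let $\lambda \in \Irr \mc{G}(T)$ be arbitrary with $d := \deg \lambda$; the degree-shift identity
\[
\Ext^m_{\mc{G}(A)}(\Delta(\lambda), L(\mu)) \simeq \Ext^m_{\mc{G}(A)}(\Delta(\lambda[-d]), L(\mu[-d]))
\]
reduces to the degree-zero case, and applying (\ref{eq:KL3}) to $\mu[-d]$ yields $m \equiv \deg \mu - d \equiv \deg \lambda - \deg \mu \pmod 2$. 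The equivalence (\ref{eq:KL2}) $\Leftrightarrow$ (\ref{eq:KL4}) is obtained identically, using $(\star\star)$ in place of $(\star)$.

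The main obstacle is establishing $(\star\star)$, specifically the $B^-$-module description of $\Hom_{A^-}(B^-, K)$: one must check that $J^-$ acts trivially on it so that it is an inflation from $\mc{G}(T)$ and hence decomposes as a sum of inflated simples. The underlying calculation $\Hom_{A^-}(B^-, K) = \Hom_K(B^-/A^-_{<0}B^-, K) = \Hom_K(T, K)$ is elementary, but tracking the induced $B^-$-action through the duality $(-)^\circledast$ requires some care with left/right conventions. Everything else is bookkeeping with degree shifts.
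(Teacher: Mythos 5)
Your proof is correct and follows essentially the same route as the paper's: reduce to $\deg\lambda = 0$ by shifting, collapse the sum over $\lambda \in \Irr\mc{G}_0(T)$ by using the regular module $T$ (split semisimple), and then perform a change of rings from $B^\pm$ to $A^\pm$ using the freeness of $B^\pm$ over $A^\pm$. Your isomorphism $(\star)$ packages the last two steps together; the paper states them in one line via $T = B^+ \otimes_{A^+} K$ (there is a typo $A^-$ for $A^+$ in the paper) and, for the costandard side, $T \simeq (K \otimes_{A^-} B^-)^*$, which is the same coinduced module $\Hom_{A^-}(B^-,K)$ you use in $(\star\star)$, written dually. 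The only place you stop short is verifying that $J^-$ acts trivially on $\Hom_{A^-}(B^-,K)$; this is indeed the case: any $f \in \Hom_{A^-}(B^-,K)$ vanishes on $A^-_{<0}B^-$ (since $A^-_{<0}$ kills $K$ and $f$ is left $A^-$-linear), and $B^- J^- \subseteq J^- = A^-_{<0}B^-$, so $(j\cdot f)(x) = f(xj) = 0$. With that detail supplied, your argument is complete.
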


\begin{proof}
Firstly, it is clear that one can just take $\deg \lambda = 0$ in (\ref{eq:KL1a}) and (\ref{eq:KL2a}), provided $\mu$ ranges over the whole of $\Irr \mc{G}(T)$. Moreover, if we think of $T$ as being the regular representation, concentrated in degree zero then (\ref{eq:KL1a}) and (\ref{eq:KL2a}) hold if and only if
$$
\Ext_{\mc{G}(A)}^m(\Delta(T),L(\mu)) \neq 0 \quad \Rightarrow \quad m \equiv \deg \mu \ \mod \ 2, 
$$
and 
$$
\Ext_{\mc{G}(A)}^m(L(\mu),\nabla(T)) \neq 0 \quad \Rightarrow \quad m \equiv \deg \mu \ \mod  \ 2.  
$$
Then (\ref{eq:KL3}) follows from (\ref{eq:KL1a}) because $T = B^+ \otimes_{A^-} K$ as graded left $B^+$-modules. Similarly, (\ref{eq:KL4}) follows from (\ref{eq:KL2a}) because
$$
T \simeq T^* \simeq (K \otimes_{A^-} B^-)^*
$$
as graded left $B^-$-modules. 
\end{proof} 

In order to have concrete examples of algebras satisfying the KL-property, we consider the case where both $A^-$ and $A^+$ are commutative local complete intersections. That is, we assume that there exists a positively graded vector space $U$ and a homogeneous subspace $V \subset K[U]$, with $V \cap U^* = \{ 0 \}$ such that $A^- = K[U] / \langle V \rangle$ is a complete intersection. In particular, $\dim V = \dim U$. Dually, $A^+ = K[U^*] / \langle V^* \rangle$. Let $x_1, \ds, x_n$ be a homogeneous basis of $U^* \subset K[U]$ and $f_1, \ds, f_n$ a homogeneous  basis of $V$. 

\begin{prop}\label{prop:degreesKL}
Let $A^-$ and $A^+$ be as above and assume that every $L(\lambda) \simeq \lambda$ is irreducible as a $T$-module, i.e., all simple $A$-modules are rigid. Then $A$ has the KL-property if and only if every $\deg x_i$ is odd and every $\deg f_j$ is even. 
\end{prop}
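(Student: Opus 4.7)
The plan is to reduce the KL-property to a parity condition on the bigraded structure of $\Ext^{\bullet}_{\mc{G}(A^\pm)}(K,K)$, and then to compute the latter via the Tate--Gulliksen minimal free resolution of $K$ over the complete intersection $A^\pm$. First, since each $L(\mu) \simeq \mu$ is rigid, both $A^-_{<0}$ and $A^+_{>0}$ act by zero on $L(\mu)$, so as a graded $A^+$-module (respectively, $A^-$-module) we have $L(\mu) \simeq K(\deg \mu)^{\oplus \dim \mu}$. Since $\deg \mu$ ranges over all of $\Z$ as $\mu$ varies in $\Irr \mc{G}(T)$, the reformulation in the preceding proposition becomes:
$$\Ext^m_{\mc{G}(A^{\pm})}(K, K(d)) \neq 0 \implies m \equiv d \pmod{2}, \quad \forall\, m \in \N,\, d \in \Z,$$
where for the $A^-$ side one uses $\Ext^m(K(e),K) \cong \Ext^m(K,K(-e))$ and observes that the parity condition is symmetric in $d$.

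Next, I would invoke the Tate--Gulliksen minimal free resolution of $K$ over $A^\pm$. The hypothesis $V \cap U^* = \{0\}$ together with homogeneity forces $V \subset \mathfrak{m}^2$, so the $x_i$'s and $f_j$'s constitute a minimal presentation. Tate's iterative construction then yields a free DG $A^\pm$-algebra with divided powers
$$F_\bullet \;=\; A^{\pm} \langle\, \xi_1,\ldots,\xi_n;\, s_1,\ldots,s_n\,\rangle,$$
where the $\xi_i$ are exterior in bidegree (homological, internal) $(1,\deg x_i)$ and the $s_j$ are divided-power generators in bidegree $(2,\deg f_j)$, with $d \xi_i = x_i$ and $d s_j$ a chosen Koszul lift of $f_j$. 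Gulliksen's theorem ensures that this is a \emph{minimal} free resolution of $K$ over $A^\pm$ under the current hypotheses. Consequently a $K$-basis of $F_\bullet \otimes_{A^\pm} K$, and hence of $\Ext^\bullet_{A^\pm}(K, K)$ after dualizing, is indexed by the monomials $\xi_I s^{(\mathbf{b})}$ for $I \subseteq \{1,\ldots,n\}$ and $\mathbf{b} \in \N^n$, in bidegree $\bigl(|I| + 2|\mathbf{b}|,\, \sum_{i \in I}\deg x_i + \sum_j b_j \deg f_j\bigr)$.

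The KL-condition then becomes the arithmetic statement that for every $(I,\mathbf{b})$,
$$|I| + 2|\mathbf{b}| \;\equiv\; \sum_{i \in I} \deg x_i + \sum_{j} b_j \deg f_j \pmod{2},$$
equivalently, after dropping the even term $2|\mathbf{b}|$, that $|I| \equiv \sum_{i \in I} \deg x_i + \sum_j b_j \deg f_j \pmod{2}$. Specializing to $(I,\mathbf{b}) = (\{i\}, 0)$ forces each $\deg x_i$ to be odd, and to $(\emptyset, \delta_k)$ forces each $\deg f_k$ to be even. Conversely, if all $\deg x_i$ are odd and all $\deg f_j$ are even, then $\sum_{i \in I}\deg x_i \equiv |I|$ and $\sum_j b_j \deg f_j \equiv 0 \pmod{2}$, so the congruence holds for every $(I,\mathbf{b})$. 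This proves the equivalence.

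The main obstacle is setting up the Tate--Gulliksen minimal free resolution in the bigraded setting and verifying its minimality: this requires careful tracking of the internal grading through the iterative adjunction of exterior and divided-power variables, together with Gulliksen's classical result that the Tate construction terminates at the divided-power level for complete intersections with relations in $\mathfrak{m}^2$. Once this homological input is secured, the remainder of the proof is a direct parity bookkeeping against the bidegrees of the generators $\xi_i$ and $s_j$.
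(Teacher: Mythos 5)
Your proposal is correct and follows essentially the same route as the paper: reduce the KL-property to a parity condition on $\Ext^\bullet_{\mc{G}(A^\pm)}(K,K[i])$ using rigidity of the simples, then compute that Ext-algebra via Tate's resolution of $K$ over the complete intersection $A^\pm$ (the paper cites this as $\Ext^m_{\mc{G}(A^-)}(K,K[i])^* \simeq \bigl(\bigoplus_{\alpha+2\beta=m}\wedge^\alpha U^*\otimes\mathrm{Sym}^\beta V\bigr)_i$, which is precisely your $\xi_I s^{(\mathbf b)}$ basis with $\alpha=|I|$, $\beta=|\mathbf b|$), and conclude by parity bookkeeping. You are somewhat more explicit about Gulliksen's minimality criterion and the DG-algebra structure, but the key homological input and the final congruence argument are identical.
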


\begin{proof}
We begin by noting that our assumption on $L(\lambda)$ implies that $L(\lambda)$ restricts to $\dim \lambda$ copies of the trivial representation (suitably shifted) for $A^-$ and for $A^+$. Therefore conditions (\ref{eq:KL3}) and (\ref{eq:KL4}) reduce to 
$$
\Ext_{\mc{G}(A^+)}^n(K,K[i]) \neq 0 \ \textrm{or} \ \Ext_{\mc{G}(A^-)}^n(K,K[i]) \neq 0 \quad \Rightarrow \quad n \equiv i \ \mod \ 2. 
$$
We consider only $A^-$ since the situation for $A^+$ is identical. Tate \cite{Tate} gives an explicit graded free resolution of the trivial $A^-$-module $K$ in the case of complete intersections. His construction implies that 
$$
\Ext^m_{\mc{G}(A^-)}(K,K[i])^* \simeq \left( \bigoplus_{\alpha + 2 \beta = m} \wedge^{\alpha} U^* \otimes \mathrm{Sym}^{\beta} V \right)_i. 
$$
The claim of the proposition follows.   
\end{proof}

We note that the space $\Ext^{\idot}_{\mc{M}(A^+)}(K,L(\mu))$ is a bigraded left $\Ext^{\idot}_{\mc{M}(A^+)}(K,K)$-module. The degree zero subspace (with respect to the internal grading) of this module is $\Ext_{\mc{G}(A^{+})}^{\idot}(K,L(\mu))$, which is a graded left $\Ext_{\mc{G}(A^{+})}^{\idot}(K,K)$-module. If we are still in the situation where $A^{+} = K[U^*] / \langle V^* \rangle$ is a complete intersection, then $\Ext^{\idot}_{\mc{M}(A^{+})}(K,L(\mu))$ is, in particular, a bigraded module over 
$$
\mathrm{Sym}^{\beta} V \subset \Ext^{\idot}_{\mc{M}(A^{+})}(K,K), 
$$
see \cite[Theorem 5]{Sjodin}. Thus, the support of $\Ext^{\idot}_{\mc{M}(A^{+})}(K,L(\mu))$ is a closed subvariety $V(\lambda)$ of $V^*$. It would be interesting to study these closed subvarieties for restricted rational Cherednik algebras, in the way that support theory is used in the study of restricted enveloping algebras. 

\begin{ex}
If $A = K[x,y] / (x^n,y^n)$, then Proposition \ref{prop:degreesKL} implies that $A$ has the KL-property if and only if $n$ is even. 
\end{ex}

There are several other special situations where one can check the KL-property. For instance, recall that a positively graded, connected algebra $R$ (not necessarily commutative) is \emph{Koszul} if the trivial module admits a graded free resolution $P^{\idot}$ with $P^i = R^{m_i} [-i]$. Then it is immediate that:

\begin{lem}
If $T = K$ and both $A^-$ and $A^+$ are Koszul, then $A$ satisfies the KL-property.  
\end{lem}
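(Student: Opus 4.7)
The plan is to verify the equivalent characterization of the KL-property given in the preceding proposition. Since $T = K$, the isomorphism classes of simples in $\mc G(T)$ are exactly $\{K[n] : n \in \bbZ\}$, and $L(K[n]) = L[n]$, where $L := L(K)$ is the unique simple in degree zero. Thus I need to show that $\Ext^m_{\mc G(A^+)}(K, L[n]) \neq 0 \Rightarrow m \equiv n \pmod{2}$, together with the analogous statement for $A^-$.

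I would proceed in parallel with the proof of Proposition \ref{prop:degreesKL}, replacing Tate's resolution of the trivial module over a complete intersection by the minimal linear projective resolution $P_\bullet \to K$ of the trivial $A^+$-module guaranteed by Koszulity. By definition, each $P_m$ is freely generated in internal degree $m$, so $\Hom_{\mc G(A^+)}(P_m, L[n]) \simeq L_{m-n}^{d_m}$ with $d_m = \dim_K (A^+)^!_m$. Consequently $\Ext^m_{\mc G(A^+)}(K, L[n])$ is a subquotient of $L_{m-n}^{d_m}$, and in particular vanishes unless $m - n \in \Supp L$.

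The remaining step---and what I expect to be the main obstacle---is to show that nonvanishing of this cohomology forces $m - n$ to be even. In the easy case where the simple modules are rigid (so $L \simeq K$), $\Supp L = \{0\}$ and the claim is immediate: $\Ext^m_{\mc G(A^+)}(K, K[n]) \neq 0$ forces $m = n$ by Koszulity, which implies $m \equiv n \pmod 2$. In general $L$ need not be rigid, but the differentials of the complex $\Hom_{\mc G(A^+)}(P_\bullet, L[n])$ are given by multiplication by elements of $A^+_1$ (by the linearity of the resolution), and the required cancellation should be organized through Koszul duality: $\Ext^\bullet_{\mc G(A^+)}(K, L)$ is a bigraded module over the Koszul dual $(A^+)^!$ whose bigrading (homological equal to $(A^+)^!$-internal) forces the parity condition, while the Koszulity of $A^-$ enters through its control of the $A^-$-module structure of $L$ and hence, via the triangular decomposition, of the action of $A^+_1$ on $L$. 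The analogous argument applied to the opposite triangular decomposition $\mc T^{\op}$ then yields the second parity condition for $A^-$.
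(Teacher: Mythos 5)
Your reduction to the preceding proposition and the computation of the cochain groups from a linear resolution are correct, and you handle the rigid case ($L \simeq K$) completely: Koszulity gives $\Ext^m_{\mc G(A^+)}(K,K[n]) \neq 0 \Rightarrow m = n$, which is stronger than the required parity. This is almost certainly the content of the paper's ``it is immediate,'' and it mirrors the rigidity hypothesis made explicit in Proposition~\ref{prop:degreesKL}.

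The gap, which you correctly flag but leave open, is the non-rigid case, and I do not think the Koszul-duality gesture closes it. The $(A^+)^!$-module structure on $\Ext^{\bullet}_{\mc G(A^+)}(K,L)$ has $(A^+)^!_k$ in bidegree $(k,-k)$, so its action preserves the sum $m+j$ of homological and internal degrees; this says nothing about the \emph{parity} of $m+j$ on the classes themselves, which is precisely what is at issue. The required vanishing is a statement about the complex $(A^+)^!_{\bullet}\otimes L_{\bullet+j}$, equivalently about how the $A^+_1$-action on $L$ interacts with the quadratic relations of $A^+$, and any proof has to use the $A^-$-module structure of $L$ (through the commutation relations and the Koszulity of $A^-$), since $(A^+)^!$ alone does not force it. Non-rigid simples do arise with $T=K$ and both $A^{\pm}$ Koszul---for instance $A=K\langle x,y\rangle/(x^2,y^2,yx-xy-1)$ in characteristic $2$, where $L$ is two-dimensional and restricts to $A^+[-1]$---so the case cannot be dismissed. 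To be complete, your argument would need to actually establish the parity for arbitrary simple $L$, or else the lemma should carry a rigidity hypothesis as in Proposition~\ref{prop:degreesKL}.
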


\section{Examples} \label{example_section} 

In this final section, we explore the implications of our results for various examples. We first address some ``toy'' examples to illustrate the various pathologies that can occur within the general framework. Then we consider the more substantial examples mentioned in the introduction:
\begin{enumerate}
\item Restricted enveloping algebras $\overline{U}(\mf{g}_K)$;
\item Lusztig's small quantum groups $\mathbf{u}_{\zeta}(\mf{g})$, at a root of unity $\zeta$; \item Hyperalgebras $\hypalg_r(\mf{g}) \dopgleich \mathrm{Dist}(G_r)$ on the Frobenius kernel $G_r$; 
\item Finite quantum groups $\mc{D}$ associated to a finite group $G$;
\item Restricted rational Cherednik algebras $\overline{\H}_{\bc}(W)$ at $t = 0$;
\item The center of smooth blocks of RRCAs at $t=0$;
\item RRCAs $\overline{\H}_{1,\bc}(W)$ at $t = 1$ in positive characteristic.
\end{enumerate}

\subsection{Toy examples} \label{sec:toy}

\begin{ex}
If $T$ is any split $K$-algebra, considered as a graded algebra concentrated in degree zero, then $A = T$ admits a triangular decomposition with $A^{-} = A^{+} = K$. 
\end{ex}

\begin{ex}\label{ex:commutativetridecomp}
Let $A^{+}$ be any $\bbN$-graded, connected commutative finite dimensional algebra and $A^{-}$ the same ring but with opposite grading. Then $A = A^{-} \otimes_K A^{+}$ is $\Z$-graded with triangular decomposition. Notice that if $A^{+}$ is chosen to be Gorenstein, then so too is $A$.  
\end{ex}

\begin{ex}
Let $V$ be a $K$-vector space and $W \subset \mathrm{GL}(V)$ a finite group. Let $K[V]_W$ denote the coinvariant algebra, which is $\bbN$-graded connected with $V^*$ in degree one. Then $A = K[V]_W \rtimes W$ admits a triangular decomposition with $A^{-} = K$, $T = K W$ and $A^{+} = K[V]_W$. 
\end{ex}

\subsection{Hyperalgebras}\label{sec:hyperalg}

Let $G$ be a connected, finite dimensional semisimple algebraic group over $\C$ and $G_{\Z}$ the corresponding split Chevalley $\Z$-group, with split maximal torus $T_{\Z}$ as defined in Section II 1.1 of \cite{Jantzen}. We fix a field $K$ of characteristic $p > 0$. Set $G_K = G_{\Z} \otimes_{\Z} K$ and $T_K = T_{\Z} \otimes_{\Z} K$. We follow the conventions of \textit{loc. cit.} throughout this section. Let $\mf{g}_{\Z} = \mathrm{Lie}\ G_{\Z}$ and $\mf{g}_K = \mf{g}_{\Z} \otimes_{\Z} K$. We assume that
\begin{enumerate}
\item $p$ is odd and a good prime for $G_K$. 
\item $\mf{g}_K$ has a non-degenerate $G_K$-invariant bilinear form. 
\item $K$ contains the algebraic closure of $\mathbb{F}_p$. 
\end{enumerate}

For each $r \ge 1$, let $G_r$ denote the $r$-th Frobenius kernel of $G_K$. Then $K[G_r]$ is a finite dimensional Hopf algebra and its dual $\hypalg_r(\mf{g}) := K[G_r]^*$ is the \word{$r$-th hyperalgebra} of $G_K$. In particular, when $r = 1$, $\hypalg_1(\mf{g}) = \overline{U}(\mf{g}_K)$ is the restricted enveloping algebra of $\mf{g}_K$. 

Let $X = \Hom(T_K,K^{\times})$ denote the weight lattice and $R \subset X$ the set of roots of $\mf{g}_K$ with respect to $T_K$. Notice that $X$ is independent of the choice of $K$ since $T_K$ is split. Let $\Delta = \{ \alpha_1, \ds, \alpha_s \}$ denote the set of simple roots in $R$ with respect to some polarization $R^+ \subset R$. Set $s := |\Delta|$ to be the rank of $\mf{g}$. If $\langle - , - \rangle$ is the pairing between $X$ and $Y = X^*$, then let $\varpi_i \in Y$ be the fundamental coweights, with $\langle \alpha_i, \varpi_j \rangle = \delta_{i,j}$. Let $\rho$ be the half-sum of positive roots and $\rho^{\vee} = \sum_{i = 1}^s \varpi_i$. The group $G_K$ acts on $\hypalg_r(\mf{g})$ by conjugation. By restriction, so too does $T_K$. This makes $\hypalg_r(\mf{g})$ into an $X$-graded algebra. Define a $\Z$-grading on $\hypalg_r(\mf{g})$ by 
\begin{equation}\label{eq:Zgradinghyper}
\hypalg_r(\mf{g})_i = \bigoplus_{\langle \lambda , \rho^{\vee}\rangle = i} \hypalg_r(\mf{g})_{\lambda}.
\end{equation}
As defined in Section II 3.1 of \cite{Jantzen}, the algebra $\hypalg_r(\mf{g})$ is generated by 
$$
\{ X_{\alpha,n(\alpha)}, H_{i,m(i)}, U_{\alpha,n(\alpha)} \ | \ \alpha \in R^+, i = 1, \ds, s, \ 0 \le n(\alpha),m(i) < p^r \}.
$$
Let $\hypalg_r^+(\mf{g})$ be the subalgebra generated by all $\{ X_{\alpha,n(\alpha)} \ | \ \alpha \in R^+, \ 0 \le n(\alpha),m(i) < p^r \}$, $\hypalg_r^0(\mf{g})$ the subalgebra generated by $\{ H_{i,m(i)} \ | \ i = 1, \ds, s, \ 0 \le m(i) < p^r \}$ and $\hypalg_r^-(\mf{g})$ the subalgebra generated by $\{ U_{\alpha,n(\alpha)} \ | \ \alpha \in R^+, \ 0 \le n(\alpha),m(i) < p^r \}$. Then \cite[II, Lemma 3.3]{Jantzen} implies that $\hypalg_r(\mf{g})$ admits an ambidextrous triangular decomposition
\begin{equation}\label{eq:hyperamb}
\hypalg_r^-(\mf{g}) \otimes \hypalg_r^0(\mf{g}) \otimes \hypalg_r^+(\mf{g}) \stackrel{\sim}{\longrightarrow}  \hypalg_r(\mf{g}) \stackrel{\sim}{\longleftarrow}  \hypalg_r^+(\mf{g}) \otimes \hypalg_r^0(\mf{g}) \otimes \hypalg_r^-(\mf{g}),
\end{equation}
as $\Z$-graded algebras. The algebra $\hypalg_r^+(\mf{g})$ has $K$-basis $\{ \prod_{\alpha \in R^+} X_{\alpha,n(\alpha)} \ | \ 0 \le n(\alpha) < p^r \}$, $\hypalg_r^0(\mf{g})$ has $K$-basis $\{ \prod_{i = 1}^s H_{i,m(i)} \ | \ 0 \le m(i) < p^r \}$ and $\hypalg_r^-(\mf{g})$ has $K$-basis $\{ \prod_{\alpha \in R^+} U_{-\alpha,n'(\alpha)} \ | \ 0 \le n'(\alpha) < p^r\}$. Using (\ref{eq:hyperamb}), these basis give a $K$-basis of $ \hypalg_r(\mf{g})$. 

Let $\mc{G}_X(\hypalg_r(\mf{g}))$ denote the category of $X$-graded $\hypalg_r(\mf{g})$-modules. The commutative algebra $\hypalg_r^0(\mf{g})$ is split semi-simple by assumption (3) above. Let 
\begin{equation}\label{eq:Xi}
X_m := \{ \lambda \in X \ | \ 0 \le \langle \lambda,\varpi_i \rangle < m, \ \forall \ i = 1, \ds, s \}.
\end{equation}
As explained in \cite[\S II 3.7]{Jantzen}, $\Spec \hypalg_r^0(\mf{g})$ equals $X / p^r X$. The set $X_{p^r}$ is a natural section of the quotient map $X \rightarrow X / p^r X$. This defines canonical bijections
$$
\Irr \mc{M}(\hypalg_r(\mf{g})) \ \stackrel{1:1}{\longleftrightarrow} \ X/ p^r X \ \stackrel{1:1}{\longleftrightarrow}\  X_{p^r}.
$$
For $\lambda \in X$, write $\overline{\lambda}$ for its image in $X / p^r X$. 

\begin{prop}\label{prop:hyperproperties} \hfill

\begin{enum_thm}
\item The hyperalgebra $\hypalg_r(\mf{g})$ is equipped with a triangular anti-involution. 
\item The hyperalgebra $\hypalg_r(\mf{g})$ is BGG. 
\item The hyperalgebra $\hypalg_r(\mf{g})$ is graded symmetric and $\hypalg_r^-(\mf{g})$, $\hypalg_r^+(\mf{g})$ are Frobenius. 
\item If $N$ is the top non-zero degree of $\hypalg_r^+(\mf{g})$, then $N = 2 \langle \rho, \rho^{\vee} \rangle (p^r - 1)$.
\end{enum_thm}
\end{prop}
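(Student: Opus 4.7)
The plan is to handle the four parts in turn, with (a) being the main technical step that feeds (b) via Theorem~\ref{triangular_duality}, while (c) and (d) are essentially independent.

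For (a), I would exhibit the anti-involution $\tau$ on $\hypalg_r(\mf{g})$ determined on the divided-power generators by
\[
\tau(X_{\alpha, n}) = U_{\alpha, n}, \qquad \tau(U_{\alpha, n}) = X_{\alpha, n}, \qquad \tau(H_{i, m}) = H_{i, m},
\]
and extended as an anti-algebra homomorphism. This is the hyperalgebra analogue of the classical Chevalley transposition on $U(\mf{g})$; the defining relations between the $X$'s, $U$'s and $H$'s are manifestly invariant under the swap, giving a well-defined anti-automorphism of order $2$. Conditions (i) and (ii) of Definition~\ref{defn:triantinv} are then immediate: $\tau$ interchanges $\hypalg_r^-(\mf{g})$ and $\hypalg_r^+(\mf{g})$ by construction, and it is anti-graded with respect to~(\ref{eq:Zgradinghyper}) because it negates each $T_K$-weight. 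For condition (iii), the key observation is that $\tau|_{\hypalg_r^0(\mf{g})}$ is the identity; since $\hypalg_r^0(\mf{g})$ is commutative split semisimple with one-dimensional simple modules $\lambda = \chi_\lambda$, we get ${}^\tau\lambda \simeq \lambda \simeq \lambda^\ast$ as $\hypalg_r^0(\mf{g})^{\mathrm{op}}$-modules. Part (b) then follows at once from (a) via Theorem~\ref{triangular_duality}.

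For (c), both $\hypalg_r^{\pm}(\mf{g})$ and $\hypalg_r(\mf{g}) = \mathrm{Dist}(G_r)$ are finite-dimensional Hopf algebras, hence automatically Frobenius by Larson--Sweedler, which settles the second claim. For graded symmetry of $\hypalg_r(\mf{g})$, I would appeal to the unimodularity of $G_r$ for $G$ reductive: when $r=1$, the symmetry of $\overline{U}(\mf{g}_K)$ for semisimple $\mf{g}_K$ is classical; the argument extends to general $r$ using the two-sided Hopf-algebra integral $\Phi$ on $\mathrm{Dist}(G_r)$, which is then a trace. To see that $\Phi$ can be taken in degree zero, combine the anti-involution $\tau$ from (a) with the uniqueness (up to scalar) of the two-sided integral: $\tau$ forces $\Phi(ab) = \Phi(\tau(b)\tau(a))$, so the induced bilinear form pairs $\hypalg_r(\mf{g})_i$ nontrivially only with $\hypalg_r(\mf{g})_{-i}$, and by Lemma~\ref{lem:gradedhomcheck} this is equivalent to $\Phi$ being in degree~$0$.

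Part (d) is a direct calculation. Each generator $X_{\alpha, n(\alpha)}$ has $T_K$-weight $n(\alpha)\alpha$, hence $\Z$-degree $n(\alpha)\langle \alpha, \rho^\vee\rangle$ under~(\ref{eq:Zgradinghyper}), so a basis element $\prod_{\alpha \in R^+} X_{\alpha, n(\alpha)}$ has degree $\sum_\alpha n(\alpha)\langle \alpha, \rho^\vee\rangle$. Maximizing at $n(\alpha) = p^r - 1$ for every $\alpha$ and using $\sum_{\alpha \in R^+} \alpha = 2\rho$ gives
\[
N \;=\; (p^r - 1)\sum_{\alpha \in R^+} \langle \alpha, \rho^\vee\rangle \;=\; (p^r - 1)\langle 2\rho, \rho^\vee\rangle \;=\; 2(p^r - 1)\langle \rho, \rho^\vee\rangle.
\]
I expect the main obstacle to be the graded-symmetry portion of (c): pinning down that the Hopf-algebra integral genuinely lives in degree~$0$ rather than in some nontrivial shift. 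The anti-involution from (a) is the essential input here, as it is precisely what forces the support of the integral to be symmetric about~$0$.
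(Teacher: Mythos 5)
Parts (a), (b) and (d) are fine and essentially coincide with the paper's proof: for (a) you write out the Chevalley-type anti-involution explicitly whereas the paper simply cites \cite[II, 1.16 and 9.4]{Jantzen}, but the content (swap $X$'s and $U$'s, fix the torus, deduce (i)--(iii) of Definition~\ref{defn:triantinv}) is the same; (b) follows from (a) via Theorem~\ref{triangular_duality} in both; and (d) is the identical weight computation with $\sum_{\alpha\in R^+}\alpha = 2\rho$.

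Part (c) is where you diverge and where the argument has a genuine gap. Your Larson--Sweedler observation that $\hypalg_r^{\pm}(\mf{g})=\mathrm{Dist}(U^{\pm}_r)$ are finite-dimensional Hopf algebras, hence Frobenius, is a legitimate alternative to the paper's citation of \cite[Lemma 3.1]{Drupieski2}. The problem is the graded-symmetry argument. First, ``unimodularity of $G_r$'' is not a freebie; it is, in effect, the substance of what Humphreys proves (the paper simply cites \cite{HumpSymmetric}), and for your argument to be self-contained you would need to establish it. Second, and more seriously, the passage from $\Phi(ab)=\Phi(\tau(b)\tau(a))$ to ``the form pairs $\hypalg_r(\mf{g})_i$ nontrivially only with $\hypalg_r(\mf{g})_{-i}$'' does not follow: for $a\in A_i$, $b\in A_j$ the identity merely relates $\Phi$ on $A_{i+j}$ to $\Phi$ on $A_{-(i+j)}$, which tells you nothing unless you already know $\Phi$ is concentrated in a single degree --- exactly what you are trying to establish. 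The invocation of Lemma~\ref{lem:gradedhomcheck} is also a misapplication: that lemma takes homogeneity of $\Phi$ as its \emph{hypothesis} and concerns checking the Frobenius property on graded ideals; it does not force $\Phi$ into degree zero. The missing step in your approach is to show directly that the right integral $\Phi\in H^*$ is homogeneous (this follows from one-dimensionality of the integral space plus the fact that the comultiplication is $T_K$-equivariant, hence graded), and only then does $\Phi\circ\tau=\pm\Phi$, combined with $\tau$ being anti-graded, force that degree to be $0$. As written, the argument is circular. The paper avoids all of this by citing Humphreys for graded symmetry outright.
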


\begin{proof}
As explained in \cite[II, 1.16 and 9.4]{Jantzen}, there is an anti-graded anti-involution $\tau : \hypalg_r(\mf{g}) \rightarrow \hypalg_r(\mf{g})$ such that $\tau(\hypalg_r^+(\mf{g})) = \hypalg_r^-(\mf{g})$ and $\tau$ is the identity on the canonical generators of $\hypalg_r^0(\mf{g})$. This implies that $\tau$ is a triangular anti-involution. By Theorem \ref{triangular_duality}, this implies that $\hypalg_r(\mf{g})$ is also BGG. 

The fact that the hyperalgebra $\hypalg_r(\mf{g})$ is a graded symmetric algebra was shown by Humphreys \cite{HumpSymmetric}. The fact that $\hypalg_r^-(\mf{g})$ and $\hypalg_r^+(\mf{g})$ are Frobenius follows from \cite[Lemma 3.1]{Drupieski2}.

Recall the basis of $\hypalg_r^+(\mf{g})$ described above. The element $\prod_{\alpha \in R^+} X_{\alpha, n(\alpha)}$ has degree $\sum_{\alpha \in R^+} \mathrm{ht}(\alpha) n(\alpha)$. Therefore, the element of highest degree is  $\prod_{\alpha \in R^+} X_{\alpha, p^r - 1}$, which has degree
$$
(p^r - 1) \sum_{\alpha \in R^+} \mathrm{ht}(\alpha) = 2 \langle \rho, \rho^{\vee} \rangle (p^r - 1) \;.
$$
\end{proof}

Corollary \ref{cor:multigrad} implies that $\mc{G}_X(\hypalg_r(\mf{g}))$ is a highest weight category. This category was considered in \cite{AndersenKaneda}, though not from the point of view of highest weight categories.

We note that it is not true, except when $r = 1$, that the subalgebra $\hypalg_r^+(\mf{g})$ is generated by $\hypalg_r^+(\mf{g})_1$, and similarly for $\hypalg_r^-(\mf{g})$.

As explained in \cite[\S 2.1]{JantzenBlocks} (see also \cite[Chapter II.9]{Jantzen}), the category $\mc{G}_X(\hypalg_r(\mf{g}))$ is very closely related to the category of $G_r T$-modules; the latter is the full subcategory of the former defined by conditions (1) and (2) of \cite[Definition 2.1]{JantzenBlocks}.  Applying Corollary \ref{cor:multigrad} to the category $\mc{G}_X(\hypalg_r(\mf{g}))$, we recover the well-known result \cite[Example 6.4]{CPSOttowa} that the category of $G_r T$-modules, with the dominance ordering, is a highest weight category. Since $\mathrm{Irr} \ \hypalg_r(\mf{g})_0$ is in bijection with $X / p^r X$, the set $\Irr \mc{G}(\hypalg_r(\mf{g}))$ is in bijection with $X / p^r X \times \Z$. In this case, one can use results from the literature to compute the permutation $h$ on $\Irr \mc{G}(\hypalg_r(\mf{g}))$. Let $w_0 \in W$ be the longest element. 

\begin{lem}\label{lem:permuhyper}
	If $\lambda = \left(\overline{\lambda}, i\right) \in X / p^r X \times \Z$, then 
	$$
	\lambda^h = \left(\overline{w_0 \lambda - 2 \rho}, i + 2(p^r - 1) \langle \rho, \rho^{\vee} \rangle- 2 \langle \lambda_0, \rho^{\vee} \rangle \right),  
	$$
	where $\lambda_0$ is the unique lift of $\overline{\lambda}$ in $X_{p^r}$. 
\end{lem}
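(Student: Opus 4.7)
My plan is to identify $\lambda^h$ by lifting to the $X$-graded category and invoking the classical identification of projective-injective $G_rT$-modules with tilting modules. First, Proposition~\ref{prop:hyperproperties} says $\hypalg_r(\mf{g})$ is graded symmetric, so Lemma~\ref{graded_symmetric_trivial_nakayama} gives trivial graded Nakayama permutation. Combined with Corollary~\ref{good_tilting_theory}, this means $P(\lambda) \simeq T(\lambda^h)$ in $\mc G(\hypalg_r(\mf g))$, so $\lambda^h$ is the highest weight of the indecomposable tilting module isomorphic to $P(\lambda)$.

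Since shifting commutes with taking projective covers, one has $\lambda[k]^h = \lambda^h[k]$, so it suffices to verify the formula for a single representative $i$ of the shift class attached to $\overline\lambda$; I take $i = \langle \lambda_0, \rho^\vee\rangle$, so that $L(\lambda)$ lifts canonically to $\hat L_r(\lambda_0)$ in the $X$-graded category $\mc G_X(\hypalg_r(\mf g))$. This category is equivalent to $G_rT\text{-}\mathsf{mod}$ (see \cite[II, 9]{Jantzen}), and by Corollary~\ref{cor:multigrad} it is a highest weight category indexed by $X$ with $v \preceq w$ iff $w - v \in \N R^+$. For $\lambda_0 \in X_{p^r}$, a classical computation on $G_rT$-modules (e.g., \cite[II, 11]{Jantzen}) identifies the projective-injective $\hat P_r(\lambda_0) \simeq \hat T_r(v^\sharp)$ with highest weight $v^\sharp := 2(p^r-1)\rho + w_0\lambda_0$; the underlying input is that the baby Verma $\hat Z_r(\lambda_0)$ has simple socle whose highest $X$-weight is congruent to $w_0\lambda_0 - 2\rho$ modulo $p^r X$.

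It remains to project $v^\sharp$ back to the $\mc G$-label $(\overline{v^\sharp}, \langle v^\sharp, \rho^\vee\rangle)$. Reduction modulo $p^rX$, together with $w_0\rho = -\rho$, yields first coordinate $\overline{w_0\lambda_0 + 2(p^r-1)\rho} = \overline{w_0\lambda_0 - 2\rho} = \overline{w_0\lambda - 2\rho}$. Pairing with $\rho^\vee$ and using $\langle w_0\lambda_0, \rho^\vee\rangle = -\langle \lambda_0, \rho^\vee\rangle$ (from $w_0\rho^\vee = -\rho^\vee$), the second coordinate is $N - \langle \lambda_0, \rho^\vee\rangle$, which equals $i + N - 2\langle \lambda_0, \rho^\vee\rangle$ under our choice $i = \langle \lambda_0, \rho^\vee\rangle$; shift invariance then extends the formula to all $i \in \Z$. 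The main obstacle is the input from step two — the $G_rT$-theoretic identification of $\hat P_r(\lambda_0)$ as the tilting $\hat T_r(2(p^r-1)\rho + w_0\lambda_0)$ — which I would cite from Jantzen's book rather than rederive here.
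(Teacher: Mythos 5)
Your argument is correct and follows essentially the same route as the paper's: reduce via shift-equivariance of $h$ to a single $\Z$-degree, lift $P(\lambda)$ to the $G_rT$-setting, cite Jantzen's Lemma II.11.6 for the $X$-highest weight of the projective indecomposable $\widehat{Q}_r(\lambda_0)$, and project back to the $\Z$-graded label. One small imprecision: $\mc{G}_X(\hypalg_r(\mf g))$ is not equivalent to the category of $G_rT$-modules — the latter is a proper full subcategory, as the paper notes just before the lemma — but this does not affect your argument, since all you need is that $P(\lambda)$ admits a lift to that subcategory and that the highest-weight computation can be carried out there.
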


\begin{proof}
	It follows from the definition of $h$ that $(\lambda[j])^h = (\lambda^h)[j]$. Therefore it suffices to compute $\lambda^h$ for some choice of $i$. Also, by definition $\lambda^h$ is the highest weight of the projective module $P(\lambda)$. If $F_X : \mc{G}_X(\hypalg_r(\mf{g})) \rightarrow \mc{G}(\hypalg_r(\mf{g}))$ is the forgetful functor, then we wish to lift $P(\lambda)$ to an object in the full subcategory of $\mc{G}_X(\hypalg_r(\mf{g}))$ consisting of $G_r T$-modules. This will allow us to apply results about projective $G_r T$-modules. Using the notation of \cite{Jantzen}, for $\lambda \in X$, one has $F_X(\widehat{Q}_r(\lambda)) \simeq P\left(\overline{\lambda},\langle \lambda, \rho^{\vee} \rangle \right)$. By Lemma II 11.6 of \textit{loc. cit.}, the highest weight of $\widehat{Q}_r(\lambda)$ is $w_0 \lambda_0 + 2 (p^r - 1) \rho + p^r \lambda_1$, where $\lambda = \lambda_0 + p^r \lambda_1$ with $\lambda_0 \in X_{p^r}$ and $\lambda_1 \in X$. This implies that $P\left(\overline{\lambda},\langle \lambda, \rho^{\vee} \rangle \right)$ has highest weight 
	$$
	\left(\overline{w_0 \lambda_0 + 2 (p^r - 1) \rho + p^r \lambda_1}, \langle w_0 \lambda_0 + 2 (p^r - 1) \rho + p^r \lambda_1, \rho^{\vee} \rangle \right). 
	$$
	The result follows. 
\end{proof}

When considering rational $G$-modules, there is also a natural definition of tilting modules:  those with both a ``good'' filtration and a Weyl filtration. The indecomposable tilting modules $T_G(\mu)$ for $G$ are naturally labelled by the dominant weights $\mu \in X^+$. Restricting to $G_r T$ and applying $F_X$, we get modules $F_X(T_G(\mu) |_{G_r T})$. In general it is hard to describe these modules. In particular, they are \textit{not} tilting modules in our sense. However, using results in the literature, one can show that every tilting module in $\mc{G}(A)$ admits (up to a shift in grading) a lift to a tilting module for $G$. More precisely, it is a consequence of \cite[II E.9 (1)]{Jantzen} that for $p \ge 2 \mathbf{h} - 2$ (where $\mathbf{h}$ is the Coxeter number):

\begin{prop}\label{prop:tiltingrestilting}
	For each $\mu \in (p^r -1)\rho + X_{p^r}$, $F_X(T_G(\mu) |_{G_r T}) \simeq T(\overline{\mu},\langle \mu, \rho^{\vee} \rangle)$.
\end{prop}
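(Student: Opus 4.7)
The plan is to reduce the assertion to the case of indecomposable projective $G_r T$-modules, then translate across $F_X$ into our highest weight framework, and finally match parameters using the formula for the permutation $h$. First I would invoke \cite[II E.9 (1)]{Jantzen}, which applies because of the standing hypothesis $p \geq 2 \mathbf{h} - 2$ and the assumption $\mu \in (p^r-1)\rho + X_{p^r}$. It yields an isomorphism of $G_r T$-modules
\[
T_G(\mu) |_{G_r T} \;\simeq\; \widehat{Q}_r(\lambda)
\]
for a uniquely determined $\lambda \in X$. Comparing highest weights of the two sides, and using the formula $w_0 \lambda_0 + 2(p^r - 1)\rho + p^r \lambda_1$ for the highest weight of $\widehat{Q}_r(\lambda)$ recalled in the proof of Lemma \ref{lem:permuhyper} (where $\lambda = \lambda_0 + p^r \lambda_1$ with $\lambda_0 \in X_{p^r}$), one reads off that $\lambda$ satisfies $\mu = w_0 \lambda_0 + 2(p^r-1)\rho + p^r \lambda_1$. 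In particular, $T_G(\mu) |_{G_r T}$ is indecomposable and projective (hence injective) in the $G_r T$-module category.

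Next I apply the forgetful functor $F_X$. The identification established in the proof of Lemma \ref{lem:permuhyper} gives
\[
F_X\bigl(\widehat{Q}_r(\lambda)\bigr) \;\simeq\; P\bigl(\overline{\lambda},\, \langle \lambda, \rho^{\vee} \rangle\bigr)
\]
in $\mc{G}(\hypalg_r(\mf{g}))$, so $F_X\bigl(T_G(\mu)|_{G_r T}\bigr) \simeq P\bigl(\overline{\lambda}, \langle \lambda, \rho^{\vee}\rangle\bigr)$. Since $\hypalg_r(\mf{g})$ is self-injective by Proposition \ref{prop:hyperproperties}(3), Corollary \ref{self_inj_tiltings_projective} guarantees that this projective indecomposable is a tilting object. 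By the explicit form $T(\nu) = P(\nu^{h^{-1}})$ from Corollary \ref{good_tilting_theory}, the desired isomorphism $F_X(T_G(\mu)|_{G_r T}) \simeq T(\overline{\mu}, \langle \mu, \rho^{\vee}\rangle)$ is thus equivalent to the identity
\[
(\overline{\lambda},\, \langle \lambda, \rho^{\vee}\rangle)^{h} \;=\; (\overline{\mu},\, \langle \mu, \rho^{\vee}\rangle)
\]
in $\Irr \mc{G}_0(\hypalg_r(\mf{g})) \times \Z \simeq X/p^r X \times \Z$.

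The last step is a direct check using the explicit formula from Lemma \ref{lem:permuhyper}. For the first coordinate, reducing $\mu = w_0 \lambda_0 + 2(p^r-1)\rho + p^r \lambda_1$ modulo $p^r X$ gives $\overline{\mu} = \overline{w_0 \lambda_0 - 2\rho}$, which is exactly the first coordinate of $(\overline{\lambda}, \langle \lambda, \rho^{\vee}\rangle)^h$. For the second coordinate one needs
\[
\langle \lambda, \rho^{\vee}\rangle + 2(p^r - 1)\langle \rho, \rho^{\vee}\rangle - 2 \langle \lambda_0, \rho^{\vee}\rangle \;=\; \langle \mu, \rho^{\vee}\rangle,
\]
and the key input here is the identity $w_0 \rho^{\vee} = -\rho^{\vee}$, which gives $\langle w_0 \lambda_0, \rho^{\vee}\rangle = -\langle \lambda_0, \rho^{\vee}\rangle$; substituting the formula for $\mu$ into the right-hand side produces exactly the left-hand side. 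I expect the only real obstacle to be this degree bookkeeping: one must ensure that the highest-weight formula for $\widehat{Q}_r(\lambda)$, the definition of the grading (\ref{eq:Zgradinghyper}), and the shift correction appearing in $h$ all balance, and this ultimately hinges on the $w_0$-antiinvariance of $\rho^{\vee}$.
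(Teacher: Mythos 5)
Your proof is correct and is exactly the verification the paper leaves implicit: the text simply asserts the proposition as "a consequence of [Jantzen, II E.9(1)]" without spelling out the bookkeeping. You correctly invoke Jantzen to identify $T_G(\mu)|_{G_rT}$ with $\widehat{Q}_r(\lambda)$, use the highest-weight formula from Lemma~\ref{lem:permuhyper} to pin down $\lambda$, pass through $F_X$ to land on $P(\overline{\lambda},\langle\lambda,\rho^\vee\rangle)$, and then the parameter check via $h$ reduces, as you note, to $w_0\rho^\vee=-\rho^\vee$; the arithmetic on the second coordinate balances. One very minor point worth tightening: Jantzen's statement produces $\lambda\in X_{p^r}$ (so $\lambda_1=0$), which is consistent with your setup since $\mu\in(p^r-1)\rho+X_{p^r}$ forces this; your computation happens to hold for general $\lambda_1$, so nothing breaks, but stating $\lambda\in X_{p^r}$ would make the appeal to II~E.9(1) cleaner.
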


\subsection{Restricted enveloping algebras}

We assume that conditions (1)-(3) from Section \ref{sec:hyperalg} continue to hold. As noted above, the first hyperalgebra $\hypalg_1$ is just the restricted enveloping algebra $\overline{U}(\mf{g}_K)$ of $\mf{g}_K$. Set $\overline{U}(\mf{n}^-) := \hypalg_1^-(\mf{g})$, $\overline{U}(\mf{n}^+) := \hypalg_1^+(\mf{g})$ and $\overline{U}(\mf{h}_K) :=\hypalg_1^0(\mf{g})$. As a special case of Proposition \ref{prop:hyperproperties} above, we note that:

\begin{cor}\hfill
\begin{enum_thm}
\item The restricted enveloping algebra $\overline{U}(\mf{g}_K)$ is equipped with a triangular anti-involution. 
\item $\overline{U}(\mf{g}_K)$ is BGG. 
\item $\overline{U}(\mf{g}_K)$ is graded symmetric and $\overline{U}(\mf{n}^-)$ and $\overline{U}(\mf{n}^+)$ are Frobenius. 
\item If $N$ is the top non-zero degree of $\overline{U}(\mf{n}^+)$, then $N = 2 \langle \rho, \rho^{\vee} \rangle (p - 1)$.
\item The subalgebra $\overline{U}(\mf{n}^+)$ is generated by $\overline{U}(\mf{n}^+)_1$, and $\overline{U}(\mf{n}^-)$ is generated by $\overline{U}(\mf{n}^+)_{-1}$.
\end{enum_thm}
\end{cor}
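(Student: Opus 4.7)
Parts (a)--(d) should follow immediately by specializing Proposition \ref{prop:hyperproperties} to the case $r = 1$. Indeed, by construction $\overline{U}(\mf{g}_K) = \hypalg_1(\mf{g})$, and under this identification $\overline{U}(\mf{n}^\pm) = \hypalg_1^\pm(\mf{g})$ and $\overline{U}(\mf{h}_K) = \hypalg_1^0(\mf{g})$ as graded subalgebras. Assertions (a), (b), (c) are then verbatim the corresponding assertions of Proposition \ref{prop:hyperproperties}, while (d) is obtained by substituting $r=1$ into the formula $N = 2 \langle \rho, \rho^{\vee} \rangle (p^r - 1)$ proven there.

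For part (e), the main point is to identify the degree $\pm 1$ components explicitly. Recall that the grading is given by (\ref{eq:Zgradinghyper}), so that
\[
\overline{U}(\mf{n}^+)_1 \;=\; \bigoplus_{\substack{\alpha \in R^+ \\ \langle \alpha, \rho^{\vee}\rangle = 1}} \overline{U}(\mf{n}^+)_{\alpha}.
\]
Since $\langle \alpha, \rho^{\vee}\rangle = \sum_i \langle \alpha, \varpi_i \rangle$ is exactly the height of $\alpha$ expressed in the basis $\Delta$, the positive roots of height one are precisely the simple roots. Combined with the description of the canonical basis of $\hypalg_1^+(\mf{g})$ given before Proposition \ref{prop:hyperproperties}, this shows $\overline{U}(\mf{n}^+)_1 = \bigoplus_{i=1}^{s} K \cdot X_{\alpha_i,0}$, i.e. the span of the simple positive root vectors. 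Symmetrically, $\overline{U}(\mf{n}^-)_{-1} = \bigoplus_{i=1}^{s} K \cdot U_{\alpha_i,0}$ is the span of the simple negative root vectors.

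To conclude (e), I would invoke the standard fact from the structure theory of finite-dimensional semisimple Lie algebras that $\mf{n}^+$ is generated, as a Lie algebra, by the simple positive root vectors $\{e_{\alpha_i} : \alpha_i \in \Delta\}$ (and dually for $\mf{n}^-$). Since $\overline{U}(\mf{n}^+)$ is by definition a quotient of the associative algebra $U(\mf{n}^+)$, which is generated as an associative algebra by $\mf{n}^+$ (hence by any Lie generating set of $\mf{n}^+$), it follows that $\overline{U}(\mf{n}^+)$ is generated as an associative algebra by the images of the simple root vectors, i.e. by $\overline{U}(\mf{n}^+)_1$. The same argument, applied to the opposite triangular factor, gives the statement for $\overline{U}(\mf{n}^-)$. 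There is no real obstacle here, beyond the (very standard) verification of which root vectors sit in the extreme degrees; the parenthetical remark after Proposition \ref{prop:hyperproperties} makes clear that this low-degree generation phenomenon is peculiar to $r=1$ and breaks down for larger $r$, so the argument genuinely uses that we are in the restricted enveloping case.
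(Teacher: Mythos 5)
Your proposal is correct and takes essentially the same approach as the paper, which introduces this corollary with the phrase ``As a special case of Proposition \ref{prop:hyperproperties} above, we note that'' and gives no further proof; part (e) is covered in the paper only by the remark after Proposition \ref{prop:hyperproperties} that degree-one generation holds precisely when $r = 1$, and your argument via the simple root vectors generating $\mf{n}^\pm$ as a Lie algebra is exactly the intended justification. One minor slip: the degree-one elements are the first divided powers $X_{\alpha_i,1}$ (resp.\ $U_{\alpha_i,1}$), not $X_{\alpha_i,0}$, since the latter is the identity; this does not affect the argument.
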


Not only does one get highest weight categories by considering the category $\mc{G}_X(\overline{U}(\mf{g}_K))$ of $X$-graded $\overline{U}(\mf{g}_K)$-modules, or the corresponding category $\mc{G}(\overline{U}(\mf{g}_K))$ of $\Z$-graded modules, but one can also change the grading. These standardly stratified categories play an important role in \cite{JantzenModular}.

\subsection{Lusztig's small quantum groups}

Let $G$, $G_{\Z}$ etc. be as in Section \ref{sec:hyperalg}, but take now $K = \C$. Let $\ell > 1$ be an odd number, coprime to $3$ if $G$ is of type $\mathsf{G}_2$, and let $\zeta$ be a primitive $\ell$th root in $\C$ (our assumptions ensure that the $l_i$ in \cite[\S 8.1]{QuantumGroupsat1} equal $\ell$ for all $i$). If $q$ is an indeterminate, then we denote by $U_q(\mf{g})$ the Drinfeld-Jimbo quantum group, over $\Q(q)$, associated to the simple Lie algebra $\mf{g}_{\C}$. The algebra $U_q(\mf{g})$ is generated by $\{ E_i, F_i, K_i^{\pm 1} \}_{i = 1}^r$, satisfying the relations \cite[(a1)-(a5)]{QuantumGroupsat1}. Let $\mc{A} = \Z[q,q^{-1}]$. Then $U_q(\mf{g})_{\Z}$ is the $\mc{A}$-subalgebra of $U_q(\mf{g})$ generated by all divided powers 
$$
E_i^{(k)} := \frac{E_i^k}{[k]!}, \quad F_i^{(k)} := \frac{F_i^k}{[k]!}, \quad K_i^{\pm 1},
$$
where $i = 1, \ds, r$ and $k \in \N$. Here $[k]! = \prod_{j = 1}^k \frac{q^{j} - q^{-j}}{q - q^{-1}}$ is the quantum factorial. 

The restricted quantum group $U_{\zeta}(\mf{g})$ is defined to be the algebra $U_q(\mf{g})_{\Z} \otimes_{\mc{A}} \C$, where $\mc{A} \rightarrow \C$ sends $q$ to $\zeta$. Then $E_i^{\ell} = F_i^{\ell} = 0$, $K_i^{2 \ell} = 1$ and $K_i^{\ell}$ is central in $U_{\zeta}(\mf{g})$. Finally, Lusztig's small quantum group $\mathbf{u}_{\zeta}$ is the subalgebra of $U_{\zeta}(\mf{g})$ generated by all $E_i, F_i$ and $K_i$, where $i = 1, \ds r$. It is a Hopf algebra of dimension $2^r \ell^{\dim \mf{g}}$. Again, both $U_{\zeta}(\mf{g})$ and $\mathbf{u}_{\zeta}$ are $X$-graded with 
\begin{equation}\label{eq:gradequantum}
\deg E_i^{(k)} = k \alpha_i, \quad \deg F_i^{(k)} = -k \alpha_i, \quad \deg K_i^{\pm 1} = 0.
\end{equation}
As in (\ref{eq:Zgradinghyper}), this makes $U_{\zeta}(\mf{g})$ and $\mathbf{u}_{\zeta}$ into $\Z$-graded algebras by pairing weights with $\rho$. Let $\mathbf{u}_{\zeta}^-$, be the subalgebra generated by $\{ F_i \}_{i = 1}^r$,  $\mathbf{u}_{\zeta}^+$ the subalgebra generated by $\{ E_i \}_{i = 1}^r$, and $\mathbf{u}_{\zeta}^0$ be the subalgebra generated by $\{ K_i^{\pm 1} \}_{i = 1}^r$. By \cite[Theorem 8.3]{QuantumGroupsat1}, we have:

\begin{lem}
Multiplication defines a triangular decomposition 
$$
\mathbf{u}_{\zeta}^- \otimes_{\C} \mathbf{u}_{\zeta}^0 \otimes_{\C} \mathbf{u}_{\zeta}^+ \stackrel{\sim}{\longrightarrow}  \mathbf{u}_{\zeta},
$$
of $\mathbf{u}_{\zeta}$. The algebra is ambidextrous. 
\end{lem}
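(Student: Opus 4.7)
The plan is to verify, in order, the five axioms of Definition~\ref{defn:triangle} for the triple $(\mathbf{u}_\zeta^-, \mathbf{u}_\zeta^0, \mathbf{u}_\zeta^+)$, and then deduce ambidexterity. The crucial input is the PBW theorem for $\mathbf{u}_\zeta$ cited as \cite[Theorem 8.3]{QuantumGroupsat1}: this supplies bases of $\mathbf{u}_\zeta^\pm$ consisting of ordered monomials $\prod_{\beta} F_\beta^{a_\beta}$, resp. $\prod_{\beta} E_\beta^{a_\beta}$ with $0 \leq a_\beta < \ell$ in the quantum root vectors indexed by positive roots $\beta$, together with a basis $\{ K_1^{b_1}\cdots K_r^{b_r} \mid 0 \leq b_i < 2\ell\}$ of $\mathbf{u}_\zeta^0$. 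Multiplying these bases together produces a basis of $\mathbf{u}_\zeta$, which is precisely axiom (a).

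Axioms (b) and (c) are immediate consequences of the $\Z$-grading introduced in~(\ref{eq:gradequantum}): each $E_\beta$ has degree $\mathrm{ht}(\beta) > 0$, each $F_\beta$ has degree $-\mathrm{ht}(\beta) < 0$, and each $K_i^{\pm 1}$ has degree zero, so the supports satisfy the required containments and the degree zero components of $\mathbf{u}_\zeta^\pm$ collapse to $\C \cdot 1$. For axiom (d), I will invoke the relations $K_i E_j = \zeta^{\langle \alpha_j, \alpha_i^\vee\rangle} E_j K_i$, and the mirror relations for the $F_j$; an induction on the length of monomials then gives $K_i \mathbf{u}_\zeta^\pm \subseteq \mathbf{u}_\zeta^\pm K_i$, and a dimension count upgrades the inclusion to the equality $\mathbf{u}_\zeta^0 \mathbf{u}_\zeta^\pm = \mathbf{u}_\zeta^\pm \mathbf{u}_\zeta^0$. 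Axiom (e) is automatic because $\mathbf{u}_\zeta^0 \simeq \C\lbrack (\Z/2\ell\Z)^r \rbrack$ is the group algebra over $\C$ of a finite abelian group, hence split semisimple.

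For ambidexterity, the cleanest approach is to use the anti-graded anti-algebra automorphism $\Omega$ of $\mathbf{u}_\zeta$ characterised by $\Omega(E_i) = F_i$, $\Omega(F_i) = E_i$, $\Omega(K_i) = K_i^{-1}$ --- the small quantum group analogue of the Chevalley anti-involution on $U_\zeta(\mf{g})$, whose existence and preservation of $\mathbf{u}_\zeta$ are standard. Writing $m^-$ and $m^+$ for the two multiplication maps and $\sigma$ for the bijection $a \otimes t \otimes b \mapsto \Omega(b) \otimes \Omega(t) \otimes \Omega(a)$ on the corresponding triple tensor products, the identity $\Omega \circ m^+ = m^- \circ \sigma$ exhibits $m^+$ as a composition of bijections, yielding ambidexterity. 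No individual step presents a real obstacle; the argument is essentially bookkeeping once the PBW theorem is in hand. The one subtlety worth flagging is verifying that the quantum root vectors $E_\beta, F_\beta$ entering the PBW basis genuinely lie in the subalgebras $\mathbf{u}_\zeta^\pm$ generated by the Chevalley generators (rather than just in the ambient $U_\zeta(\mf{g})$); this is standard in the structure theory of Lusztig's small quantum group but should be cited explicitly.
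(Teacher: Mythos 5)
Your overall strategy matches the paper: invoke Lusztig's PBW theorem for $\mathbf{u}_\zeta$ and read off the axioms of Definition~\ref{defn:triangle}. The paper itself records only the citation "By [Lusztig, Theorem~8.3]" and leaves the verification implicit, so your more explicit check of (a)--(e) is a reasonable expansion. Your treatment of axioms (a)--(e) is fine: the PBW basis gives the vector-space isomorphism, the $\Z$-grading from~(\ref{eq:gradequantum}) gives connectedness and the support conditions, the $K_i E_j K_i^{-1}$ relations give $\mathbf{u}_\zeta^0 \mathbf{u}_\zeta^\pm = \mathbf{u}_\zeta^\pm \mathbf{u}_\zeta^0$, and $\mathbf{u}_\zeta^0 \simeq \C\lbrack(\Z/2\ell\Z)^r\rbrack$ is split semisimple over $\C$.

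There is, however, a concrete error in the ambidexterity step. The map $\Omega$ with $\Omega(E_i) = F_i$, $\Omega(F_i) = E_i$, $\Omega(K_i) = K_i^{-1}$ is an \emph{algebra automorphism}, not an anti-automorphism. To see this, apply it to $[E_i,F_i] = (K_i - K_i^{-1})/(q_i - q_i^{-1})$: for an anti-map, $\Omega([E_i,F_i]) = \Omega(F_i)\Omega(E_i) - \Omega(E_i)\Omega(F_i) = [E_i,F_i]$, yet $\Omega\bigl((K_i - K_i^{-1})/(q_i - q_i^{-1})\bigr) = -(K_i - K_i^{-1})/(q_i - q_i^{-1})$, a contradiction. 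The anti-automorphism that swaps $E_i$ and $F_i$ must \emph{fix} $K_i$; this is precisely the $\tau$ used in the paper's Proposition~\ref{prop:smallquant}. So your argument needs one of two repairs: (i) replace $\Omega$ by that $\tau$ (with $\tau(K_i) = K_i$) and keep your order-reversing transposition $\sigma(a\otimes t\otimes b) = \tau(b)\otimes\tau(t)\otimes\tau(a)$, giving $\tau\circ m^+ = m^- \circ\sigma$; or (ii) keep the automorphism $\omega$ (with $\omega(K_i) = K_i^{-1}$) but use the non-reversing transposition $\sigma'(a\otimes t\otimes b) = \omega(a)\otimes\omega(t)\otimes\omega(b)$, giving $\omega\circ m^+ = m^-\circ\sigma'$. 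Either route exhibits $m^+$ as a composition of bijections, as you intended. The conceptual idea is sound; only the specific map was misidentified.
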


The algebra $\mathbf{u}_{\zeta}^0$ is the quotient of $\C[K_1^{\pm 1}, \ds, K_r^{\pm 1}]$ by the ideal generated by all $K_i^{2 \ell}  - 1$. Thus, we can identify $\Spec \mathbf{u}_{\zeta}^0$ with 
$$
X_{2 \ell} := \{ \lambda \in X \ | \ 0 \le \langle \lambda, \varpi_i \rangle < 2 \ell, \ i = 1, \ds, s \},
$$
as in (\ref{eq:Xi}). Hence, we have a natural identification $\Irr \mc{M}(\mathbf{u}_{\zeta}) = X_{2 \ell}$. 

\begin{prop}\label{prop:smallquant}
Let $\mathbf{u}_{\zeta}$ denote Lusztig's  small quantum group.
\begin{enum_thm}
\item $\mathbf{u}_{\zeta}$ is equipped with a triangular anti-involution. 
\item $\mathbf{u}_{\zeta}$ is BGG. 
\item $\mathbf{u}_{\zeta}$ is graded symmetric and $\mathbf{u}_{\zeta}^-$, $\mathbf{u}_{\zeta}^+$ are Frobenius. 
\item The subalgebra $\mathbf{u}_{\zeta}^+$ is generated by $(\mathbf{u}_{\zeta}^+)_1$, and $\mathbf{u}_{\zeta}^-$ is generated by $(\mathbf{u}_{\zeta}^-)_{-1}$.
\item If $N$ is the top non-zero degree of $\mathbf{u}_{\zeta}^+$, then $N = 2 \langle \rho, \rho^{\vee} \rangle (\ell - 1)$.
\end{enum_thm}
\end{prop}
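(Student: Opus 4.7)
The plan is to mirror the proof of Proposition~\ref{prop:hyperproperties}, importing well-known structural facts about the small quantum group in place of the corresponding facts about hyperalgebras.

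For (i), I would exhibit explicitly the standard anti-algebra involution $\tau$ of $\mathbf{u}_{\zeta}$, characterized on generators by $\tau(E_i)=F_i$, $\tau(F_i)=E_i$, $\tau(K_i)=K_i^{-1}$. Checking that these assignments preserve the Drinfeld--Jimbo relations is a direct computation, so $\tau$ is a well-defined anti-automorphism of order 2. With respect to the $\Z$-grading (\ref{eq:gradequantum}) it is anti-graded, and by construction $\tau(\mathbf{u}_\zeta^-)=\mathbf{u}_\zeta^+$ and $\tau$ preserves $\mathbf{u}_\zeta^0$. Finally, for each character $\lambda\in X_{2\ell}$ of $\mathbf{u}_\zeta^0$, twisting by $\tau$ negates the action of each $K_i$, so $\,^\tau\lambda$ has character $-\lambda$; this coincides with the character of $\lambda^*$. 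Thus $\tau$ satisfies all conditions of Definition~\ref{defn:triantinv}, and (ii) follows at once by applying Theorem~\ref{triangular_duality}.

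For (iv) and (v), I would use the Lusztig/De Concini--Kac PBW-type basis of $\mathbf{u}_\zeta^+$, namely $\{\prod_{\alpha\in R^+} E_\alpha^{n_\alpha}\mid 0\le n_\alpha<\ell\}$ (for a fixed convex ordering of $R^+$), together with the observation that $E_\alpha$ has $\Z$-degree $\langle \alpha,\rho^\vee\rangle=\mathrm{ht}(\alpha)$. Since $\deg E_i=1$ for $i=1,\dots,r$ and $\mathbf{u}_\zeta^+$ is by definition generated by the $E_i$, property (iv) is immediate (dually for $\mathbf{u}_\zeta^-$). The top degree in $\mathbf{u}_\zeta^+$ is attained by the element $\prod_{\alpha\in R^+} E_\alpha^{\ell-1}$, giving
\begin{equation*}
N=(\ell-1)\sum_{\alpha\in R^+}\mathrm{ht}(\alpha)=2(\ell-1)\langle\rho,\rho^\vee\rangle,
\end{equation*}
using the standard identity $\sum_{\alpha\in R^+}\mathrm{ht}(\alpha)=2\langle\rho,\rho^\vee\rangle$.

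For (iii), I would import Kerler's theorem that $\mathbf{u}_\zeta$ is a symmetric algebra, upgrading it to \emph{graded} symmetric by verifying that the trace form vanishes on homogeneous components of nonzero $\Z$-degree. Using Lemma~\ref{lem:gradedhomcheck}, this reduces to exhibiting a linear map $\Phi\colon\mathbf{u}_\zeta\to\C$ supported in degree zero whose kernel contains no nontrivial graded left ideal, and which satisfies $\Phi(ab)=\Phi(ba)$. The natural candidate is the linear functional picking out the coefficient of the top PBW monomial $\prod_\alpha F_\alpha^{\ell-1}\cdot 1\cdot \prod_\alpha E_\alpha^{\ell-1}$ (possibly twisted by a character of $\mathbf{u}_\zeta^0$); it is supported in degree $N+(-N)=0$ by the computation above. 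The Frobenius property of $\mathbf{u}_\zeta^\pm$ then follows from the PBW basis, which displays each as a graded algebra with one-dimensional socle spanned by $\prod_\alpha E_\alpha^{\ell-1}$ (resp.\ $\prod_\alpha F_\alpha^{\ell-1}$).

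The main obstacle I expect is (iii): identifying the correct normalization of the trace form so that it becomes \emph{graded} symmetric, and, in the same vein, fixing a triangular anti-involution inside $\mathbf{u}_\zeta^0$ whose twist acts on characters by $\lambda\mapsto -\lambda=\lambda^*$ (this is automatic for $\tau(K_i)=K_i^{-1}$, but would require more care if a different Hopf-theoretic convention were adopted).
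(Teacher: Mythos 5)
Your treatment of (iv) and (v) is essentially identical to the paper's: you invoke the PBW basis of $\mathbf{u}_\zeta^+$ and read off the top degree from the product $\prod_\alpha E_\alpha^{\ell-1}$. For (iii) you propose constructing the graded symmetric form by hand via a top-degree functional, whereas the paper simply cites Kulshammer for graded symmetry of $\mathbf{u}_\zeta$ and Drupieski for the Frobenius property of $\mathbf{u}_\zeta^\pm$; your route is more laborious but your observation that the PBW basis exhibits $\mathbf{u}_\zeta^\pm$ as graded local with one-dimensional socle does give the Frobenius claim cleanly.

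The genuine gap is in part (i), which then infects (ii). You propose $\tau(E_i)=F_i$, $\tau(F_i)=E_i$, $\tau(K_i)=K_i^{-1}$. These assignments do \emph{not} extend to an anti-automorphism of $\mathbf{u}_\zeta$: applying them to the Serre relation $E_iF_i-F_iE_i=\frac{K_i-K_i^{-1}}{q_i-q_i^{-1}}$ one gets $E_iF_i-F_iE_i$ on the left but $-\frac{K_i-K_i^{-1}}{q_i-q_i^{-1}}$ on the right, a sign mismatch. The correct triangular anti-involution, and the one the paper uses (descending from Lusztig's $\Z[q,q^{-1}]$-form), is the Chevalley anti-involution with $\tau(K_i)=K_i$; one checks directly that this does preserve the defining relations. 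Moreover, your verification of condition \ref{duality_assumption_T_simples} of Definition~\ref{defn:triantinv} rests on the assertion that $\lambda^*$ has character $-\lambda$. This is the \emph{Hopf-algebraic} dual (twisting by the antipode $S(K_i)=K_i^{-1}$), not the standard duality of Section~\ref{sec:standarddual}. For the commutative algebra $T=\mathbf{u}_\zeta^0$ one has $T^\op=T$ and the standard dual of a one-dimensional module satisfies $\lambda^*\simeq\lambda$, with the \emph{same} character. So condition~\ref{duality_assumption_T_simples} actually requires $\,^\tau\lambda\simeq\lambda$, i.e.\ $\tau$ should fix the $K_i$ — precisely the paper's choice. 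With your hypothetical $\tau$ the condition would fail whenever $\lambda^2\neq 0$, which is the generic case.
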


\begin{proof}
The anti-involution $\tau$ on $U_q(\mf{g})_{\Z}$ swapping $E_i^{(n)}$ and $F^{(n)}_i$, and fixing each $K_i$, descends to an anti-graded anti-involution $\tau : \mathbf{u}_{\zeta} \rightarrow \mathbf{u}_{\zeta}$ such that $\tau(\mathbf{u}_{\zeta}^+) = \mathbf{u}_{\zeta}^-$, and $\tau$ is the identity on the canonical generators of $\mathbf{u}_{\zeta}^0$. This implies that $\tau$ is a triangular anti-involution. By Theorem \ref{triangular_duality}, this implies that $\mathbf{u}_{\zeta}$ is also BGG. 

The fact that the small quantum group $\mathbf{u}_{\zeta}$ is a graded symmetric algebra is noted in \cite[Proposition 3.1]{KulshammerRep}. The fact that $\mathbf{u}_{\zeta}^-$ and $\mathbf{u}_{\zeta}^+$ are Frobenius follows from \cite[Lemma 3.1]{Drupieski2}.

The algebra $\mathbf{u}_{\zeta}^+$ has a basis 
$$
\left\{ \prod_{\alpha \in R^+} E_{\alpha}^{(n(\alpha))} \  \bigg| \ 0 \le n(\alpha) < \ell \right\}.
$$
Here each $E_{\alpha}^{(m)}$ is a certain product of Lusztig's braid automorphisms applied to a generator $E_i$. The element $\prod_{\alpha \in R^+} E_{\alpha}^{(n(\alpha))}$ has degree $\sum_{\alpha \in R^+} \mathrm{ht}(\alpha) n(\alpha)$. Therefore, the element of highest degree is  $\prod_{\alpha \in R^+} E_{\alpha}^{(\ell - 1)}$, which has degree
$$
(\ell- 1) \sum_{\alpha \in R^+} \mathrm{ht}(\alpha) = 2 \langle \rho, \rho^{\vee} \rangle (\ell - 1) \;.
$$
\end{proof}

Analogues of Lemma \ref{lem:permuhyper} and Proposition \ref{lem:permuhyper} hold for Lusztig's small quantum group too, see \cite[Section 5]{AndersenTilting} and \cite[Proposition 3.1]{AndersenRigid}.

\subsection{Finite quantum groups}\label{sec:finiteqauntumgroups}

While this paper was in preparation, the preprint \cite{Vay} appeared. In that preprint the author studies finite quantum groups $\mc{D}$ associated to a finite group $G$. First we recall briefly the definition of a finite quantum group, as defined in \textit{loc. cit.}. Let $G$ be a finite group, $K$ an algebraically closed field of characteristic zero, and $V$ be a Yetter-Drinfeld module for $K G$, such that the associated Nichols algebra $\mf{B}(V)$ is finite dimensional. Let $\mf{B}(\overline{V})$ be the Nichols algebra of the Yetter-Drinfeld module $\overline{V}$ determined by the isomorphism $\mf{B}(\overline{V}) \rtimes K G \simeq [ (\mf{B}(V) \rtimes K G)^*]^{\op}$. Then $\mc{D}$ is defined to be the Drinfeld double of the bosonization $\mf{B}(V) \rtimes K G$. Let $\mc{D}(G)$ denote the Drinfeld double of $G$. Then $\mc{D}$ admits an ambidextrous triangular decomposition 
$$
\mf{B}(V) \otimes_K \mc{D}(G) \otimes_K  \mf{B}(\overline{V}) \stackrel{\sim}{\longrightarrow} \mc{D} \stackrel{\sim}{\longleftarrow} \mf{B}(\overline{V}) \otimes_K \mc{D}(G) \otimes_K  \mf{B}(V).
$$
Here $\mc{D}$ is $\Z$-graded by putting $\mc{D}(G)$ in degree zero, the degree of $\mf{B}^i(V)$ is $-i$ and the degree of $\mf{B}^j(\overline{V})$ is $j$. The algebras $\mf{B}(V) \rtimes K G$ and $\mf{B}(\overline{V}) \rtimes K G$ are graded subalgebras of $\mc{D}$.  

It follows \cite[Equation (1)]{Vay} that $\mc{D}$ is BGG. It is also noted in \textit{loc. cit.} that the algebras $\mf{B}(\overline{V}) $ and $\mf{B}(V)$ are Frobenius and $\mc{D}$ is graded symmetric. Therefore the results of this article are applicable to $\mc{D}$. In this way one can recover some of the results of \textit{loc. cit.} from our general framework.

\subsection{Restricted rational Cherednik algebras}\label{sec:RRCA}

Let $(\h,W)$ be a finite complex reflection group. That is, $W$ is a non-trivial finite subgroup of $\GL(\h)$, for some finite-dimensional complex vector space $\h$, such that $W$ is generated by its set $\mathrm{Ref}(W)$ of reflections, i.e., by those elements $s \in W$ such that $\Ker(\id_\fh - s)$ is of codimension one in $\fh$. Let $( \cdot, \cdot ) : \mathfrak{h} \times \mathfrak{h}^* \rightarrow \C$ be the natural pairing defined by $(y,x) = x(y)$. For $s \in \mathrm{Ref}(W)$ we fix $\alpha_s \in \mathfrak{h}^*$ to be a basis of the one-dimensional space $\Im (s - 1)|_{\mathfrak{h}^*}$ and $\alpha_s^{\vee} \in \mathfrak{h}$ to be a basis of the one-dimensional space $\Im (s - 1)|_{\mathfrak{h}}$, normalized so that $\alpha_s(\alpha_s^\vee) = 2$. The group $W$ acts on $\mathrm{Ref}(W)$ by conjugation. Choose a function $\mathbf{c} : \mathrm{Ref}(W) \rightarrow \C$ which is invariant under $W$-conjugation and choose a complex number $t \in \bbC$. 

The rational Cherednik algebra $\H_{t,\mathbf{c}}(W)$, as introduced by Etingof and Ginzburg \cite{EG}, is the quotient of the skew group algebra of the tensor algebra, $T(\mf{h} \oplus \mf{h}^*) \rtimes W$, by the ideal generated by the relations $[x,x'] = [y,y'] = 0$ for all $x,x' \in \mathfrak{h}^*$ and $y,y' \in \mathfrak{h}$, and 
\begin{equation}\label{eq:rel}
[y,x] = t(y,x)  - \!\! \sum_{s \in \mathrm{Ref}(W)} \bc(s) (y,\alpha_s)(\alpha_s^\vee,x) s \;, \quad \forall \ y \in \h, \ x \in \h^* \;. 
\end{equation}
We are mainly concerned with the case $t=0$, and set $\H_\bc(W) \dopgleich \H_{0,\bc}(W)$. 

A fundamental result for rational Cherednik algebras, proved by Etingof and Ginzburg \cite[Theorem 1.3]{EG}, is that the \word{Poincar\'e-Birkhoff-Witt (PBW) property} holds for all $\mbf{c}$. This says that multiplication  
\begin{equation}\label{eq:PBW}
 \C [\h] \otimes_\C \C W \otimes_\C \C [\h^*] \rarr \H_\bc(W) 
\end{equation}
is an isomorphism of $\bbC$-vector spaces. The rational Cherednik algebra is naturally $\Z$-graded by $\deg x = -1$ for $x \in \h^*$, $\deg y = +1$ for $y \in \h$, and $\deg w = 0$ for $w \in W$. If $R^{-} := \C[\h]^W$, $R^{+} := \C[\h^*]^W$ and $R = R^{-} \otimes_{\C} R^{+}$, then by \cite[Proposition 3.6]{Baby}, the ring  $R$ is a central subalgebra of $\H_{\mbf{c}}(W)$. 

The \textit{restricted rational Cherednik algebra} $\overline{\H}_{\mbf{c}}(W)$ is defined to be the quotient
\begin{displaymath}
\overline{\H}_{\mbf{c}}(W) = \frac{\H_{\mathbf{c}}(W)}{\left\langle R_+ \right\rangle} \;,
\end{displaymath}
where $R_+$ denotes the augmentation ideal of elements with zero constant term. This algebra was originally introduced, and extensively studied, by Gordon \cite{Baby}. The coinvariant algebras 
$$
\C[\h]^{\mrm{co} W} := \frac{\C[\h]}{\langle R^{-}_+ \rangle} \quad \textrm{and} \quad \C[\h^*]^{\mrm{co} W} := \frac{\C[\h^*]}{\langle R^{+}_+ \rangle}
$$
are graded subalgebras of $\overline{\H}_{\mbf{c}}(W)$. The PBW property implies that the algebra $\overline{\H}_{\mbf{c}}(W)$ admits an ambidextrous triangular decomposition  
\begin{equation} \label{rrca_triangular}
\C [\h]^{\mrm{co} W} \otimes_\C \C W \otimes_\C \C [\h^*]^{\mrm{co}W} \stackrel{\sim}{\longrightarrow}  \overline{\H}_{\mbf{c}}(W) \stackrel{\sim}{\longleftarrow} \C [\h^*]^{\mrm{co} W} \otimes_\C \C W \otimes_\C \C [\h]^{\mrm{co}W} \;.
\end{equation}


We record the relevant properties of the algebra here. As noted in the proof, they are all consequences of known results in the literature. 

\begin{prop}\label{prop:propertiesRRCA} 
	Let $\overline{\H}_{\mbf{c}}(W)$ be a restricted rational Cherednik algebra. 
	\begin{enum_thm} 
		\item \label{prop:propertiesRRCA:symm} $\overline{\H}_{\mbf{c}}(W)$ is graded symmetric and $\C [\h]^{\mrm{co} W}$ and $\C [\h^*]^{\mrm{co}W}$ are Frobenius. 
		\item \label{prop:propertiesRRCA:top} If $N$ is the top non-zero degree of $\C [\h]^{\mrm{co} W}$, then $N = | \mathrm{Ref} (W)|$. 
		\item \label{prop:propertiesRRCA:wellgen} The algebra $\overline{\H}_{\mbf{c}}(W)$ is well-generated i.e. the subalgebra $\C [\h]^{\mrm{co} W}$ is generated by $\C [\h]^{\mrm{co} W}_1$, and $\C [\h^*]^{\mrm{co} W}$ is generated by $\C [\h^*]^{\mrm{co} W}_{-1}$. 
		\item \label{prop:propertiesRRCA:inv} If $W$ is a Coxeter group, then $\overline{\H}_{\mbf{c}}(W)$ admits a triangular anti-involution. 
	\end{enum_thm} 
\end{prop}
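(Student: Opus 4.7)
The plan is to treat each of the four items separately, invoking classical invariant theory and known results for rational Cherednik algebras where appropriate. For (a), the Frobenius property of the coinvariant algebras $\C[\h]^{\mathrm{co}W}$ and $\C[\h^*]^{\mathrm{co}W}$ is a classical consequence of the Chevalley--Shephard--Todd theorem: since $W$ is a complex reflection group, $\C[\h]$ is free over $\C[\h]^W$ and the coinvariant quotient is a graded Poincaré duality algebra, hence graded Frobenius with one-dimensional socle in top degree. The symmetric property of $\overline{\H}_{\mathbf{c}}(W)$ itself was established by Brown--Gordon; it can alternatively be produced directly by combining the Frobenius forms on the two coinvariant algebras with the canonical symmetric form on $\C W$ via the triangular decomposition (\ref{rrca_triangular}), using Lemma \ref{lem:gradedhomcheck} to reduce the non-degeneracy check to graded ideals. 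For (b) I would compute the top degree of $\C[\h]^{\mathrm{co}W}$ from the Chevalley presentation: writing $\C[\h]^W = \C[f_1,\dots,f_n]$ with $\deg f_i = d_i$, the coinvariant algebra is a complete intersection whose socle sits in degree $\sum_{i=1}^n(d_i - 1)$, and the classical identity $\sum_{i=1}^n(d_i - 1) = |\mathrm{Ref}(W)|$ (which comes from computing the degree of the Jacobian of the fundamental invariants, whose vanishing divisor is the reflection arrangement) then yields $N = |\mathrm{Ref}(W)|$.

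Part (c) is essentially automatic from the presentation: under the grading convention in use, $\C[\h]$ is generated as an algebra by $\h^* \subset \C[\h]_{-1}$, and the coinvariant quotient is therefore still generated in that single degree; the dual argument applies to $\C[\h^*]^{\mathrm{co}W}$. For (d), assume $W$ is a Coxeter group and fix a $W$-invariant non-degenerate symmetric bilinear form on $\h$, with induced $W$-equivariant isomorphism $\phi \colon \h \overset{\sim}{\longrightarrow} \h^*$. Define an anti-involution $\tau$ on $T(\h \oplus \h^*) \rtimes W$ by $\tau(y) = \phi(y)$ for $y \in \h$, $\tau(x) = \phi^{-1}(x)$ for $x \in \h^*$, and $\tau(w) = w^{-1}$ for $w \in W$, and check that the defining relation (\ref{eq:rel}) of $\H_{\mathbf{c}}(W)$ is preserved; this reduces to the identity $\phi(\alpha_s^\vee) \in \C \cdot \alpha_s$, which holds for every reflection $s$ in a Coxeter group because roots and coroots are identified (up to scalar) by the invariant form. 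The map $\tau$ then descends to $\overline{\H}_{\mathbf{c}}(W)$, is of order two, anti-graded, swaps the two triangular halves $\C[\h]^{\mathrm{co}W}$ and $\C[\h^*]^{\mathrm{co}W}$, and acts on $\C W \subset \overline{\H}_{\mathbf{c}}(W)$ by $w \mapsto w^{-1}$, so that $\,^\tau\lambda \simeq \lambda^*$ for every irreducible $\C W$-module $\lambda$. Hence $\tau$ satisfies Definition \ref{defn:triantinv}.

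The main obstacle I expect is (d): the construction relies crucially on the existence of a $W$-invariant non-degenerate symmetric bilinear form on $\h$, which exists precisely in the Coxeter case, and verifying that (\ref{eq:rel}) is preserved under $\tau$ is a short but delicate calculation that must be done with care about the normalization of $\alpha_s$ and $\alpha_s^\vee$. For (a), the cleanest route is to invoke the existing literature for the symmetric property rather than reconstructing the symmetrizing form from scratch, since the latter requires a compatible choice of Frobenius forms on the two coinvariant halves that pair correctly across the triangular decomposition.
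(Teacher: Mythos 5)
Your proposal is correct and follows the same conceptual route as the paper: the paper's own proof is essentially a list of citations (Brown--Gordon for the graded symmetric property of $\overline{\H}_{\bc}(W)$, Kane's book for the top degree, Gordon's paper for the triangular anti-involution), and what you provide are exactly the classical arguments those references contain—Chevalley--Shephard--Todd for the Frobenius property, the complete-intersection socle degree together with the Jacobian identity $\sum(d_i-1) = |\mathrm{Ref}(W)|$ for part (b), and the $W$-invariant bilinear form on $\h$ with the key observation $\phi(\alpha_s^\vee) \in \C\cdot\alpha_s$ for part (d). The extra content over the paper is that you spell out the verification that (\ref{eq:rel}) is preserved under $\tau$, which is indeed a short computation in which the proportionality constants $c_s$ relating $\phi(\alpha_s^\vee)$ to $\alpha_s$ cancel, so the argument goes through as you anticipate.
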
 

\begin{proof} 
	Part \ref{prop:propertiesRRCA:symm}: It was shown in \cite{BGS} that $\overline{\H}_{\mbf{c}}(W)$ is graded symmetric, and it is well-known that the coinvariant algebras are Frobenius. Part \ref{prop:propertiesRRCA:top} follows from \cite[17-4 Theorem A, 23-1 Theorem A]{Kane}. Part \ref{prop:propertiesRRCA:wellgen} is clear, and Part \ref{prop:propertiesRRCA:inv} is explained in \cite[\S 4.7]{Baby}. 
\end{proof} 

Crucially, results of Section \ref{BGG_section} allow us to show that restricted rational Cherednik algebra is BGG; this was not known previously to hold in complete generality. 

\begin{lem} \label{lem:propertiesRRCABGG} 
	The restricted rational Cherednik algebra $\overline{\H}_{\mbf{c}}(W)$ is BGG. 
\end{lem}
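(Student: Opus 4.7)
The plan is to invoke Proposition \ref{prop:BGGbimodule}; its hypothesis is satisfied since $T = \C W$ is split semisimple. It then suffices to exhibit a graded $T$-bimodule isomorphism $B^{-} \simeq (B^{+})^{\circledast}$, where, by the triangular decomposition \eqref{rrca_triangular}, $B^{-} = \C[\h]^{\mathrm{co} W} \rtimes W$ and $B^{+} = \C[\h^{*}]^{\mathrm{co} W} \rtimes W$.

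I would first reduce this bimodule identification to a purely $W$-module statement. Both $B^{-}_{-k}$ and $(B^{+}_{k})^{*}$ have the form of a $W$-module tensored with (the dual of) $\C W$, carrying a smash-product $T$-bimodule structure that couples the two factors. Using the standard self-duality of $\C W$ as a $(W,W)$-bimodule via $w \mapsto w^{-1}$, any $W$-equivariant perfect pairing
$\langle -, - \rangle : \C[\h]^{\mathrm{co} W}_{k} \otimes \C[\h^{*}]^{\mathrm{co} W}_{k} \to \C$
extends to a perfect $T$-bimodule pairing $B^{-}_{-k} \otimes B^{+}_{k} \to \C$ via the explicit formula
\[
\langle f \otimes v,\; g \otimes u \rangle := \delta_{uv,\,1}\, \langle f,\; {}^{v} g \rangle, \quad f \in \C[\h]^{\mathrm{co} W}_{k},\; g \in \C[\h^{*}]^{\mathrm{co} W}_{k},\; u,v \in W.
\]
The $T$-bimodule compatibility of this pairing is a short direct check relying only on the $W$-invariance of $\langle -, - \rangle$, and non-degeneracy is immediate.

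It then remains to establish the $W$-module isomorphism $\C[\h]^{\mathrm{co} W}_{k} \simeq (\C[\h^{*}]^{\mathrm{co} W}_{k})^{*}$ for every $k \ge 0$. I would do this by comparing graded $W$-characters. By the Chevalley--Shephard--Todd theorem, $\C[\h] \simeq \C[\h]^{W} \otimes_{\C} \C[\h]^{\mathrm{co} W}$ as graded $W$-modules, and similarly for $\h^{*}$. Since $\h$ and $\h^{*}$ are dual $W$-representations, the invariant rings $\C[\h]^{W}$ and $\C[\h^{*}]^{W}$ are polynomial algebras on generators of the same degrees $d_{1}, \ldots, d_{s}$ carrying the trivial $W$-action; hence they have identical graded $W$-characters. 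Combined with the standard $W$-module identification $\mathrm{Sym}^{k}(\h^{*}) \simeq (\mathrm{Sym}^{k} \h)^{*}$, dividing out by this common factor yields equality of the graded $W$-characters of the two coinvariant algebras; semisimplicity of $\C W$ then promotes this to the required degree-wise $W$-module isomorphism.

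The main subtlety is keeping track of the interaction between the smash-product bimodule structures and the duality $\circledast$; however, once the problem is funneled through the explicit pairing, this becomes a short computation. Crucially, this approach requires no anti-involution on $\overline{\H}_{\mathbf{c}}(W)$, which is essential since for a general (non-Coxeter) complex reflection group $W$ no such anti-involution is known to exist.
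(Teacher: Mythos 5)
Your proof is correct and follows the same overall strategy as the paper: apply Proposition \ref{prop:BGGbimodule}, exhibit an explicit $T$-bimodule pairing on the Borel subalgebras (your $\delta_{uv,1}$ formula, a detail the paper compresses into ``it suffices to show''), and thereby reduce the question to a degree-wise left $W$-module isomorphism between $\C[\h]^{\mathrm{co}W}$ and the dual of $\C[\h^*]^{\mathrm{co}W}$. The only genuine divergence is in how this last isomorphism is established: the paper produces a direct $W$-equivariant perfect pairing $(f,p) \mapsto p(f)(0)$ by realizing $\mathrm{Sym}\,\h$ as constant-coefficient differential operators on $\h$, whereas you argue via graded $W$-characters, combining the Chevalley--Shephard--Todd factorization with the coincidence of the invariant degrees $d_1, \ldots, d_s$ for $\C[\h]^W$ and $\C[\h^*]^W$ (a Molien-series observation) and semisimplicity of $\C W$. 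Both routes are valid and of comparable length; the paper's is slightly more direct, while yours avoids having to verify that the differential-operator pairing descends modulo the ideal of positive-degree invariants.
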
 

\begin{proof} 
	The result is a consequence of Proposition \ref{prop:BGGbimodule} if we can show that there is an isomorphism of graded $W$-bimodules 
	$$ 
	\left( \C[\h]^{\mrm{co} W} \rtimes W \right)^* \simeq \C[\h^*]^{\mrm{co} W} \rtimes W. 
	$$ 
	It suffices to show that there is an isomorphism of left $W$-module $\left( \C[\h]^{\mrm{co} W} \right)^*_i \simeq \C[\h^*]^{\mrm{co} W}_i$ for all $i \in \Z$. If we identify $\C[\h^*] = \mathrm{Sym} \ \h$ with constant coefficient differential operators on $\h$, then this is a consequence of the fact that we have a graded $W$-equivariant perfect pairing 
	$$ 
	( - , - ) : \C[\h] \times \mathrm{Sym} \ \h \rightarrow \C, \quad (f,p) := p(f) (0). 
	$$ 
\end{proof} 

\subsection{Triangular decomposition of the centre of a smooth block} \label{center_smooth_block}

In Example \ref{ex:commutativetridecomp}, we described how one can create families of examples of commutative algebras with triangular decomposition, by starting with a positively graded commutative algebra. In this section, we show that such examples arise ``in nature''; they correspond (up to Morita equivalence) to blocks of the restricted rational Cherednik algebra with only one simple module. See the main result, Theorem \ref{thm:RRCAblock}, for the precise statement. Another reason for focusing on blocks with only one simple module is that when the parameter $\bc$ is generic, a ``typical'' block (i.e. most blocks of $\overline{\H}_{\bc}(W)$) has only one simple module. 

In order to better understand the graded structure of those blocks of $\overline{\H}_{\bc}(W)$ that contain only one simple object, we need to use the fact that $\overline{\H}_{\bc}(W)$ is a quotient of $\H_{\bc}(W)$. Let $H := \H_{\bc}(W)$ and let $Z$ denote the centre of $H$. The inclusion of $R$ into $Z$ defines a finite surjective morphism $\Upsilon : \Spec Z \longrightarrow \Spec R$. Recall that $R^{-}$ and $R^{+}$ are graded subalgebras, isomorphic to polynomial rings, with $\Supp R^{-} \subset -\N$ and $\Supp R^{+} \subset \N$ and  
$$
\overline{H} := \overline{\H}_{\bc}(W) = \frac{H}{\langle R_+ \rangle}.
$$
We choose a point $a \in \Upsilon^{-1}(0)$ that is contained in the smooth locus of $\Spec Z$. We summarize standard facts about this situation, relating the representation theory of $\overline{H}$ to the geometry of $\Spec Z$; see \cite{Baby} for details. Firstly, the blocks of $\overline{H}$ are in bijection with the closed points in $\Upsilon^{-1}(0)$. Secondly, since $a \in \Upsilon^{-1}(0)$ lies in the smooth locus of $\Spec Z$, it is shown in \cite[\S 5.3]{Baby} that the corresponding block $\mc{B}_{a}$ of $\overline{H}$ only has one simple module, $L(\lambda)$ say. We may assume that $\lambda$ is concentrated in degree zero. Let $\mf{m} \subset Z$ be the maximal ideal corresponding to the closed point $a$. The cotangent space $V := \mf{m} / \mf{m}^2$ is a graded vector space with a homogeneous symplectic form of degree zero; see \cite{BellEnd} for details. This implies that it decomposes into a direct sum $V = V^+ \oplus V^-$ of strictly positive weights $V^+$ and strictly negative weights $V^-$. Choose homogeneous  elements $x_1, \ds, x_n, y_1, \ds, y_n$ of $Z$ such that the image of the $x_i$ in $V^+$, resp. of the $y_j$ in $V^-$, are a basis. Then Nakyama's Lemma implies that $\{ x_1, \ds, x_n, y_1, \ds, y_n \}$ generate $\mf{m}$ in the local ring $Z_{(a)}$. We let $D = K[x_1, \ds, x_n, y_1, \ds, y_n]$ denote the subring of $Z$ generated by the $x_i$ and $y_j$. Let $Z_a$ denote the image of $Z$ in the block $\mc{B}_a$.

\begin{lem}\label{lem:lemtrcentre}\hfill

\begin{enum_thm}
\item \label{lem:lemtrcentre:pol} The ring $D$ is a polynomial ring.
\item \label{lem:lemtrcentre:equ}The centre of $\mc{B}_a$ equals $Z_a$. 
\item \label{lem:lemtrcentre:surj} The canonical map $D \rightarrow Z_a$ is surjective. 
\end{enum_thm}
\end{lem}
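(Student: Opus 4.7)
The plan is to combine three standard ingredients: regularity of the local ring $Z_{(a)}$ at the smooth point $a$, Etingof--Ginzburg's Azumaya theorem for $H$, and Nakayama's lemma.

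For \ref{lem:lemtrcentre:pol}, I will use that since $a$ is smooth, $Z_{(a)}$ is a regular local ring of Krull dimension $2n = \dim V$. By construction the $x_i$ and $y_j$ descend to a basis of $V = \mf{m}/\mf{m}^2$, so they form a regular system of parameters for $Z_{(a)}$. Since a regular system of parameters in a regular local ring is algebraically independent over the residue field, $D$ is a polynomial ring on $2n$ variables.

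For \ref{lem:lemtrcentre:equ}, the plan is to exploit the fact that $H$ is Azumaya over the smooth locus of $\Spec Z$, as proved in \cite{EG}. First I would identify $\overline{H} = H \otimes_R K = H \otimes_Z \overline{Z}$, where $\overline{Z} := Z/R_+ Z$, and decompose $\overline{Z} = \prod_{a'} \overline{Z}_{a'}$ into its local Artinian factors indexed by the finite set $\Upsilon^{-1}(0)$. This yields $\mc{B}_a = H \otimes_Z \overline{Z}_a$; since $H \otimes_Z Z_{(a)}$ is Azumaya over $Z_{(a)}$, base change along $Z_{(a)} \twoheadrightarrow \overline{Z}_a$ makes $\mc{B}_a$ an Azumaya algebra over $\overline{Z}_a$. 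The centre of an Azumaya algebra equals its base, and $\overline{Z}_a$ is by construction the image of $Z$ in $\mc{B}_a$, so we obtain $Z(\mc{B}_a) = \overline{Z}_a = Z_a$.

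Finally, \ref{lem:lemtrcentre:surj} is a Nakayama argument. The local ring $Z_a = \overline{Z}_a$ has residue field $K$ and nilpotent maximal ideal $\overline{\mf{m}}$, and the surjection $Z_{(a)} \twoheadrightarrow \overline{Z}_a$ induces a surjection $V \twoheadrightarrow \overline{\mf{m}}/\overline{\mf{m}}^2$. Hence the images of the $x_i, y_j$ span $\overline{\mf{m}}/\overline{\mf{m}}^2$, and Nakayama combined with nilpotency of $\overline{\mf{m}}$ implies that monomials in them span $Z_a$ as a $K$-vector space, giving surjectivity of $D \to Z_a$. The main obstacle is the middle part: one has to unravel the definitions carefully to confirm that $\mc{B}_a$ really equals $H \otimes_Z \overline{Z}_a$ and that base change preserves the Azumaya property as claimed; once these bookkeeping steps are in place, everything else is routine.
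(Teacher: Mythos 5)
Your argument for \ref{lem:lemtrcentre:pol} and \ref{lem:lemtrcentre:surj} matches the paper's almost exactly: regularity of $Z_{(a)}$ plus the fact that the $x_i, y_j$ give a regular system of parameters gives (a), and Nakayama in the Artinian local quotient gives (c). For \ref{lem:lemtrcentre:equ} the two routes differ, but only in where the black box sits. The paper cites Gordon's result \cite{Baby} that $\mc{B}_a \simeq \mathrm{Mat}_{|W|}(Z_a)$ directly, with the bonus of exhibiting the concrete idempotent $b=e$ (which gets used in the subsequent Lemma~\ref{lem:EndGorenstein}); you instead re-derive the ``centre $= Z_a$'' statement from the Etingof–Ginzburg Azumaya theorem by base change. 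That is legitimate, and Gordon's matrix description is itself a consequence of the Azumaya theorem, so you are essentially unfolding the citation one level. Your flagged obstacle --- identifying $\mc{B}_a$ with $H \otimes_Z \overline{Z}_a$ --- is in fact handled by the very Azumaya property you invoke: once $H \otimes_Z \overline{Z}_a$ is Azumaya over the \emph{local} ring $\overline{Z}_a$, its centre is local, so it is ring-indecomposable, hence an indecomposable direct factor (block) of $\overline{H} = \prod_{a'} H \otimes_Z \overline{Z}_{a'}$, and this is exactly the block labelled by $a$ in the bijection the paper records as standard. The one thing you lose relative to the paper's citation is the explicit Morita data ($e\mc{B}_a e \simeq Z_a$, $\dim_K eL(\lambda)=1$), which the paper reuses in the proof of Lemma~\ref{lem:EndGorenstein}; if you took your route you would still want to extract that information.
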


\begin{proof}
For part \ref{lem:lemtrcentre:pol} it suffices to show that the image of $D$ inside the localization $Z_{(a)}$ is a polynomial ring. This follows from the fact that $Z_{(a)}$ is a regular local ring and $\{ x_1, \ds, x_n, y_1, \ds, y_n \}$ form a basis of $\mf{m}  / \mf{m}^2$. Part \ref{lem:lemtrcentre:equ} is shown in \cite{Baby}, where it is noted that the block $\mc{B}_a$, having only one simple module, is isomorphic to matrices of size $|W|$ over $Z_a$. This means that there exists an idempotent $b \in \mc{B}_a$ such that $b \mc{B}_a b \simeq Z_a$ and $\mc{B}_a b$ is the projective cover of $L(\lambda)$. In this instance, one can take $b = e$, the trivial idempotent in $\C W$. Notice that $b$ is homogeneous of degree zero. Also, the Morita equivalence implies that $\dim_K b L(\lambda) = 1$. Part \ref{lem:lemtrcentre:surj} is a consequence of the fact that $Z_a$ is a quotient of $Z_{(a)}$ so it is generated by the image of $\mf{m}$.  
\end{proof}

Next, if we think of $\Delta(\lambda)$ as a ``baby Verma module'' for $H$, then we also have the usual Verma module, and its opposite:
$$
\boldsymbol{\Delta}(\lambda) = H \otimes_{\C[\h^*] \rtimes W} \lambda, \quad \boldsymbol{\Delta}^{\dagger}(\lambda) = H \otimes_{\C[\h] \rtimes W} \lambda.
$$
The action of the centre $Z$ on these modules defines morphisms $D \rightarrow \End_{H}\left(\boldsymbol{\Delta}(\lambda)\right)$ and $D \rightarrow \End_{H}\left(\boldsymbol{\Delta}^{\dagger}(\lambda)\right)$. By \cite[Corollary 4.4]{BellEnd}, both morphisms are surjective, and their images can be identified with $\C[x_1, \ds, x_n]$ and $\C[y_1, \ds, y_n]$ respectively. Let $Z_a^{-}$, resp.  $Z_a^{+}$, be the subalgebra of $Z_a$ generated by $x_1, \ds, x_n$, resp. by $y_1, \ds, y_n$. 

\begin{lem}\label{lem:EndGorenstein}
If we set $E^{-} := \End_{\overline{H}}(\Delta(\lambda))$ and $E^{+} := \End_{\overline{H}}(\Delta^{\dagger}(\lambda))$, then the action of $Z$ on $\Delta(\lambda)$ and $\Delta^{\dagger}(\lambda)$ induce isomorphisms
$$
Z_a^{-} \stackrel{\sim}{\longrightarrow} E^{-}, \quad Z_a^{+} \stackrel{\sim}{\longrightarrow} E^{+}.
$$
Moreover, the algebras $Z_a^{-}$ and $Z_a^{+}$ are Frobenius.  
\end{lem}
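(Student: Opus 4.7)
My approach combines two threads: (a) relating $E^{\pm}$ to the known structure of the endomorphism rings of the ordinary Verma modules $\boldsymbol{\Delta}(\lambda)$ and $\boldsymbol{\Delta}^{\dagger}(\lambda)$ via \cite[Corollary 4.4]{BellEnd}, and (b) analyzing the local structure of $\Spec Z$ at the smooth point $a$ to extract Frobenius properties from $Z_a$. Throughout I focus on the ``minus'' case; the ``plus'' case follows by applying the argument to the opposite triangular decomposition $\mc{T}^{\op}$.

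\textbf{The isomorphism $Z_a^{-} \simeq E^{-}$.} Since $R^{+}_+$ acts as zero on $\boldsymbol{\Delta}(\lambda) = H \otimes_{\C[\h^*] \rtimes W} \lambda$, we have $\Delta(\lambda) \simeq \boldsymbol{\Delta}(\lambda) / R^{-}_+ \boldsymbol{\Delta}(\lambda)$, and $\boldsymbol{\Delta}(\lambda) \simeq \C[\h] \otimes \lambda$ is free of rank $|W| \dim \lambda$ over $R^{-} = \C[\h]^W$ by Chevalley--Shephard--Todd. The central action of $Z$ on $\Delta(\lambda)$ necessarily factors through $Z \twoheadrightarrow Z_a$ because $\Delta(\lambda)$ lies in $\mathcal{B}_a$; since each $y_j$ acts as zero on $\boldsymbol{\Delta}(\lambda)$ by \cite[Corollary 4.4]{BellEnd}, and therefore on $\Delta(\lambda)$, this further descends to a map $Z_a^{-} \to E^{-}$. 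For surjectivity, I use the adjunction $\Hom_H(\boldsymbol{\Delta}(\lambda),-) \simeq \Hom_{\C[\h^*] \rtimes W}(\lambda,-)$ together with the $R^{-}$-freeness of $\boldsymbol{\Delta}(\lambda)$ to lift any element of $E^{-}$ to an endomorphism of $\boldsymbol{\Delta}(\lambda)$, which is captured by \cite[Corollary 4.4]{BellEnd} as an element coming from $D$ and hence lands in $Z_a^{-}$. For injectivity, the key point is that $\Delta(\lambda)$ is a faithful module over $Z_a^{-}$: the freeness of $\boldsymbol{\Delta}(\lambda)$ over $\C[x_1,\ldots,x_n]$ descends, upon base change along $\C[x_1,\ldots,x_n] \twoheadrightarrow Z_a^{-}$, to make $\Delta(\lambda)$ free over $Z_a^{-}$.

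\textbf{Frobenius property.} The smoothness of $\Spec Z$ at $a$ makes the local ring $Z_{\mf{m}_a}$ regular of dimension $2n$, with completion $\widehat{Z}_{\mf{m}_a} \simeq \C[[x_1,\ldots,x_n,y_1,\ldots,y_n]]$. Because $\Spec Z \to \Spec R$ is a finite surjective morphism between equidimensional varieties, it is flat at the smooth point $a$ by miracle flatness, so $R_+$ maps to a regular sequence of length $2n$ in $Z_{\mf{m}_a}$. Hence $Z_a = Z_{\mf{m}_a}/R_+ Z_{\mf{m}_a}$ is a $0$-dimensional complete intersection, so Gorenstein and therefore Frobenius. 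To transfer this to the factors, I will establish a graded tensor decomposition $Z_a \simeq Z_a^{-} \otimes_\C Z_a^{+}$: the fact that $R^{-}_+$ has strictly negative weight forces its image in $\widehat{Z}_{\mf{m}_a}$ into the ideal $\langle y_1,\ldots,y_n \rangle$, and symmetrically $R^{+}_+$ maps into $\langle x_1,\ldots,x_n \rangle$; combined with the ambidexterity of the triangular decomposition (Proposition \ref{prop:propertiesRRCA}), this cleanly separates the ideal $R_+ \widehat{Z}_{\mf{m}_a}$ along the $x$/$y$ splitting of $\widehat{Z}_{\mf{m}_a}$. Since $\Soc$ of a tensor product of commutative Artinian local algebras is the tensor product of the socles, $\dim_\C \Soc Z_a = 1$ forces $\dim_\C \Soc Z_a^{\pm} = 1$, making $Z_a^{\pm}$ Frobenius.

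\textbf{Main obstacle.} The most delicate step is establishing the graded tensor decomposition $Z_a \simeq Z_a^{-} \otimes_\C Z_a^{+}$. While the weight considerations above strongly suggest that the ideal $R_+ \widehat{Z}_{\mf{m}_a}$ splits along the $x$/$y$ decomposition, a rigorous proof requires careful accounting of the interaction between the grading on the centre $Z$ and the regular sequence property of $R_+$ in the completed local ring. The ambidexterity of the triangular decomposition is essential here: it ensures that both $R^{-}_+$ and $R^{+}_+$ behave symmetrically, so the resulting regular sequences in $\widehat{Z}_{\mf{m}_a}$ can be chosen purely within the $y$-subring and $x$-subring, respectively. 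The remaining steps---the surjectivity/injectivity of $Z_a^{-} \to E^{-}$, and the reduction of Frobeniusness of $Z_a$ to miracle flatness---reduce to rather direct applications of \cite[Corollary 4.4]{BellEnd} and standard commutative algebra.
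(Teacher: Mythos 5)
Your argument for the isomorphism $Z_a^{-}\simeq E^{-}$ is in the right spirit, but the surjectivity step via adjunction is more delicate than you indicate: you need to lift a $\C[\h^*]\rtimes W$-map $\lambda \to \Delta(\lambda)$ through the quotient $\boldsymbol{\Delta}(\lambda) \twoheadrightarrow \Delta(\lambda)$, and freeness over $R^{-}$ alone does not obviously provide this. The paper avoids the lifting entirely: applying $e(-) = \Hom_{\overline{H}}(P(\lambda),-)$ to the surjection $P(\lambda)\twoheadrightarrow \Delta(\lambda)$ and using $P(\lambda) = \mathcal{B}_a e$ yields a surjection $Z_a \simeq e\mathcal{B}_a e \twoheadrightarrow e\Delta(\lambda)$, so $e\Delta(\lambda)$ is cyclic over $Z_a$ and hence its endomorphism ring is a quotient of $Z_a$; positivity of the grading then localizes the image to $Z_a^{-}$. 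Your injectivity argument via faithfulness is essentially the paper's.

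The real problem is the Frobenius part, where you have a potential circularity. Your plan is: $Z_a$ is a complete intersection by smoothness and miracle flatness, then transfer Frobeniusness to the factors via a tensor decomposition $Z_a \simeq Z_a^{-}\otimes_{\C} Z_a^{+}$. But that decomposition \emph{is} Theorem \ref{thm:RRCAblock}, which comes after this lemma and whose proof \emph{uses} this lemma (via the dimension count $\dim Z_a = (\dim\lambda)^2 = \dim E^{-}\cdot\dim E^{+}$). You correctly flag the decomposition as your ``main obstacle,'' but you do not supply an independent proof, and the weight-splitting heuristic (``$R^{-}_+$ lands in $\langle y_1,\ldots,y_n\rangle$'') would not by itself show the ideal $R_+\widehat{Z}_{\mathfrak{m}_a}$ factors cleanly. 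The paper's route is both simpler and non-circular: it observes directly that $E^{-} = \End_H(\boldsymbol{\Delta}(\lambda))/\langle R^{-}_+\rangle \simeq \C[x_1,\ldots,x_n]/\langle R^{-}_+\rangle$, and since both $\C[x_1,\ldots,x_n]$ and $R^{-}$ are polynomial rings of the same Krull dimension $n$ and the quotient is finite-dimensional, $E^{-}$ is a graded complete intersection, hence Gorenstein, hence Frobenius. No tensor decomposition of $Z_a$ is needed for this; it is a local statement about $E^{-}$ alone. You should reorganize so that the Frobeniusness of $Z_a^{\pm}$ is deduced from the complete-intersection presentation of $E^{\pm}$, and reserve the tensor decomposition of $Z_a$ for Theorem \ref{thm:RRCAblock}, where it is proved as a consequence of this lemma.
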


\begin{proof}
We consider only $E^{-}$, since the argument for $E^{+}$ is identical. We have an exact sequence $P(\lambda) \rightarrow \Delta(\lambda) \rightarrow 0$. Applying $\Hom_{\overline{H}}(P(\lambda), - )$ and using the identification $P(\lambda) = \mc{B}_a e$, we get an exact sequence
$$
e \mc{B}_a e \rightarrow e \Delta(\lambda) \rightarrow 0. 
$$
This implies that $e \Delta(\lambda)$ is a cyclic $Z_a$-module. Hence, $E^{-}$ is a quotient of $Z_a$. Since $Z_a$ surjects onto $\End_{\overline{H}}(\Delta(\lambda)) \simeq \End_{Z_a}(b \Delta(\lambda))$, and $b \Delta(\lambda)$ is a positively graded, cyclic $Z_a$-module, it follows that $Z_a^{-}$ surjects onto $\End_{Z_a}(b \Delta(\lambda))$. Therefore the difficulty is in showing that $b \Delta(\lambda)$ is a faithful $Z_a^{-}$-module. Assume that $f \in Z_a^{-}$ is a non-zero homogeneous element that acts as zero on $\Delta(\lambda)$. Then $\deg(f) > 0$ since $\deg(x_i) > 0$ for all $i$. We can choose a homogeneous lift $f' \in \C[x_1, \ds, x_n ]$ of $f$. Then $f'$ acts faithfully on $e \boldsymbol{\Delta}(\lambda)$, but by zero on 
$$
e \Delta(\lambda) = \frac{e \boldsymbol{\Delta}(\lambda)}{R^{-}_+ \cdot e \boldsymbol{\Delta}(\lambda)}.
$$
Since $e \boldsymbol{\Delta}(\lambda)$ is a free rank one module over $\C[x_1, \ds, x_n]$, this implies that $f' \in R^{-}_+$. Hence $f = 0$, contradicting our initial assumption.  

It also follows that $E^{-}$ equals  $\End_{H}(\boldsymbol{\Delta}(\lambda)) / \langle R^{-}_+ \rangle $. Since $E^{-}$ is finite dimensional and both of the algebras 
$$
\End_{H}(\boldsymbol{\Delta}(\lambda)) \simeq \C[x_1, \ds, x_n], \quad \textrm{and} \quad R^{-}
$$
are both polynomial rings of the same dimension, the algebra $E^{-}$ is a graded complete intersection. In particular, it is Frobenius. 
\end{proof} 

The goal of this section is to show that the block $\mc{B}_a$, up to graded Morita equivalence is described entirely in terms of $Z_a^{-}$ and $Z_a^{+}$.  Since the algebras $Z_a^{-}$ and $Z_a^{+}$ are Frobenius, the algebra $Z_a^{-} \otimes_{\C} Z_a^{+}$ is also naturally a Frobenius algebra.   

\begin{thm}\label{thm:RRCAblock}
Multiplication 
$$
\mu :  Z^{-}_a \otimes_{\C} Z^{+}_a \longrightarrow  Z_a
$$ 
is an isomorphism of Frobenius algebras. In particular,  $Z^{-}_a \otimes_{\C} Z^{+}_a$ is a graded symmetric algebra. 
\end{thm}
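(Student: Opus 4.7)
The plan is to show $\mu$ is an algebra isomorphism by a dimension count, after which the Frobenius and graded symmetric structures follow.

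Surjectivity of $\mu$ is immediate: Lemma \ref{lem:lemtrcentre}\ref{lem:lemtrcentre:surj} gives a surjection $D = \C[x_1, \ds, x_n, y_1, \ds, y_n] \twoheadrightarrow Z_a$, and since $D \simeq \C[x_\bullet] \otimes_\C \C[y_\bullet]$ with $Z^{\pm}_a$ defined as the images of the respective polynomial factors, this surjection factors through $Z^-_a \otimes_\C Z^+_a$.

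For injectivity I count dimensions. The proof of Lemma \ref{lem:EndGorenstein} shows that $e\Delta(\lambda)$ is cyclic and faithful over $Z^-_a$, hence $e\Delta(\lambda) \simeq Z^-_a$ as $Z^-_a$-modules, giving $\dim_\C Z^-_a = \dim_\C \Delta(\lambda)^W$. Since $\Delta(\lambda) \simeq \C[\h]^{\mathrm{co}W} \otimes_\C \lambda \simeq \C W \otimes_\C \lambda$ as $W$-modules (using Chevalley's theorem that the coinvariant algebra is the regular representation), a direct calculation with $\C W \simeq \bigoplus_\chi \chi^{\oplus \dim \chi}$ gives $\dim_\C \Delta(\lambda)^W = d_\lambda$, where $d_\lambda := \dim_\C \lambda$; similarly $\dim_\C Z^+_a = d_\lambda$. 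To compute $\dim_\C Z_a$, Gordon's description of smooth blocks \cite{Baby} gives $\dim L(\lambda) = |W|$ and $\mc B_a \simeq \mathrm{Mat}_{|W|}(Z_a)$, so $\dim \mc B_a = |W|^2 \dim Z_a$. But also $\mc B_a \simeq P(\lambda)^{\oplus |W|}$ as a left module (only one projective indecomposable, with multiplicity $\dim L(\lambda) = |W|$), so $\dim \mc B_a = |W| \dim P(\lambda)$. By Brauer reciprocity (\ref{eq:BGG}), $[P(\lambda) : \Delta(\lambda)] = [\nabla(\lambda) : L(\lambda)]$, and the right-hand side equals $\dim \nabla(\lambda)/\dim L(\lambda) = d_\lambda$ since $L(\lambda)$ is the only simple in $\mc B_a$. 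Thus $\dim P(\lambda) = |W| d_\lambda^2$ and $\dim Z_a = d_\lambda^2 = \dim Z^-_a \cdot \dim Z^+_a$, so $\mu$ is an isomorphism.

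By Lemma \ref{lem:EndGorenstein}, $Z^-_a$ and $Z^+_a$ are Frobenius, hence so is their tensor product, and $\mu$ is an isomorphism of Frobenius algebras (Frobenius forms on commutative Frobenius algebras being unique up to scaling by units). For the graded symmetric claim, $\mc B_a$ is graded symmetric as a direct summand of the graded symmetric algebra $\overline H$ (Proposition \ref{prop:propertiesRRCA}\ref{prop:propertiesRRCA:symm}); under the graded Morita equivalence $\mc B_a \simeq \mathrm{Mat}_{|W|}(Z_a)$, the restriction of the degree-zero Frobenius form to the center endows $Z_a$ with the structure of a graded symmetric algebra, which then transports via $\mu$ to $Z^-_a \otimes_\C Z^+_a$. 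The main obstacle is the dimension identity $\dim Z_a = d_\lambda^2$, which combines Gordon's Morita description of smooth blocks with Brauer reciprocity; everything else is routine.
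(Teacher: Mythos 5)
Your proof is correct and the overall strategy (surjectivity plus a dimension count, then uniqueness of the Frobenius/symmetric structure) is the same as the paper's. The genuine difference is in how you obtain the dimension identity $\dim Z_a = (\dim \lambda)^2 = \dim Z^-_a \cdot \dim Z^+_a$: the paper simply cites Gordon \cite[Corollary 5.8]{Baby} for $\dim Z_a = (\dim\lambda)^2$ and reads $\dim E^{\pm} = \dim\lambda$ off Lemma~\ref{lem:EndGorenstein}, whereas you rederive everything internally --- using $e\Delta(\lambda) \simeq Z^-_a$ (a cyclic faithful module over a commutative ring is its own endomorphism ring), the Chevalley isomorphism $\C[\h]^{\mrm{co}\,W} \simeq \C W$ to get $\dim \Delta(\lambda)^W = \dim\lambda$, and then Brauer reciprocity $[P(\lambda):\Delta(\lambda)]=[\ol\nabla(\lambda):L(\lambda)]=\dim\lambda$ together with $\mc B_a \simeq \mathrm{Mat}_{|W|}(Z_a)$ to pin down $\dim Z_a$. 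This buys a self-contained argument that stays entirely within the framework of the paper, at the cost of length. Your route to graded symmetry (transporting the symmetric form of $\ol H$ through the Morita equivalence $\mc B_a \simeq \mathrm{Mat}_{|W|}(Z_a)$) is also slightly different from the paper's, which argues abstractly that a commutative graded local algebra has a one-dimensional socle and hence a symmetric form unique up to scalar; both are fine, and indeed equivalent once one knows $Z_a$ is Frobenius, which your argument also supplies.
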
 

\begin{proof}
Since $Z_a$ is generated by $x_1, \ds, x_n, y_1, \ds, y_n$, $\mu$ is surjective. On the other hand, it is well-known, e.g. \cite[Corollary 5.8]{Baby}, that 
$$
\dim Z_a = (\dim \lambda)^2 = \dim E^{-} \cdot \dim E^{+}.
$$
This, combined with the isomorphisms of Lemma \ref{lem:EndGorenstein}, implies that $\mu$ is an isomorphism. Since the symmetric structure on a commutative, graded local algebra is uniquely defined up to a scalar (the socle is one-dimensional), the isomorphism can be made to identify the symmetric form on the algebras. 
\end{proof}  




\subsection{The Kazhdan-Lusztig property}

In this section we consider the question when restricted rational Cherednik algebras satisfy the KL-property. Beginning with the case $\bc = 0$, Proposition \ref{prop:degreesKL} immediately implies:

\begin{prop}
	The restricted rational Cherednik algebra $\overline{\H}_{0}(W)$ at $\bc = 0$ satisfies the KL-property if and only if the degrees of a set of homogeneous algebraically independent generators of $\C[\h]^W$ are all even. 
\end{prop}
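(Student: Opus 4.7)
The plan is to reduce the statement to a direct application of Proposition \ref{prop:degreesKL}, so I need to verify the two hypotheses of that proposition: that every simple module of $\overline{\H}_0(W)$ is rigid, and that $A^{-}, A^{+}$ are complete intersections in the prescribed form.

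First I would handle rigidity. At $\mbf{c} = 0$, relation (\ref{eq:rel}) collapses to $[y,x] = 0$, so in $\overline{\H}_0(W)$ the subalgebras $A^{-} = \C[\h]^{\mathrm{co} W}$ and $A^{+} = \C[\h^*]^{\mathrm{co} W}$ commute with each other (while $W$ normalizes each of them). Consequently the two-sided ideal $I$ generated by $A^{-}_{< 0}$ and $A^{+}_{>0}$ meets $T = \C W$ trivially, and the rigid quotient $\check{A} = \overline{\H}_0(W)/I$ is isomorphic to $\C W = T$. Hence every simple $T$-module is rigid and every simple $\overline{\H}_0(W)$-module satisfies $L(\lambda) \simeq \lambda$ as a $T$-module.

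Next I would verify the complete intersection structure. By the Chevalley--Shephard--Todd theorem, $\C[\h]^W$ is a polynomial ring on algebraically independent homogeneous generators $f_1,\ldots,f_n$ of degrees $d_1, \ldots, d_n$, so
\begin{equation}
A^{-} = \C[\h]^{\mathrm{co} W} = \C[x_1,\ldots,x_n]/\langle f_1,\ldots,f_n\rangle
\end{equation}
with $x_1,\ldots,x_n$ a basis of $\h^*$; since each $\deg x_i = \pm 1$ in the grading of $A^{-}$ (depending on the sign convention chosen for the triangular decomposition) and there are exactly $n$ relations $f_j$ in the same number of variables, this is a homogeneous complete intersection of the form required in Proposition \ref{prop:degreesKL}. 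Dually, $A^{+} = \C[\h^*]^{\mathrm{co} W}$ has the same presentation with the fundamental degrees $d_j$ of $\C[\h^*]^W$, which coincide with those of $\C[\h]^W$ via the natural $W$-equivariant pairing.

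Finally, Proposition \ref{prop:degreesKL} together with the previous two steps says that $\overline{\H}_0(W)$ has the KL-property if and only if every $\deg x_i$ is odd and every $\deg f_j$ is even. Since $|\deg x_i| = 1$ is automatically odd, the condition reduces precisely to the requirement that each $d_j$ be even. As these are the degrees of a set of homogeneous algebraically independent generators of $\C[\h]^W$, this is the claim. There is no real obstacle here: the proof is essentially a verification that the hypotheses of Proposition \ref{prop:degreesKL} are satisfied, with the rigidity check being the only non-formal ingredient, and it in turn hinges only on the vanishing of $[y,x]$ when $\mbf{c} = 0$.
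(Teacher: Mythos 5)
Your proof is correct and is precisely the argument the paper leaves implicit (the paper simply states that Proposition~\ref{prop:degreesKL} ``immediately implies'' the result). Your verification that the hypotheses of Proposition~\ref{prop:degreesKL} are met is the right filling-in: at $\bc=0$ the relation $[y,x]=0$ makes $\overline{\H}_0(W) \simeq (\C[\h]^{\mathrm{co}W}\otimes_\C\C[\h^*]^{\mathrm{co}W})\rtimes W$, so the rigid quotient is $\C W$ and every simple is rigid; the Chevalley--Shephard--Todd theorem gives the required complete-intersection presentations with $|\deg x_i| = 1$ and $|\deg f_j| = d_j$, and the conclusion follows.
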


In particular, we see that $\overline{\H}_{0}(W)$ has the KL-property when $W$ is the wreath product $\Z_{2 \ell} \wr \s_n$, provided $\ell > 0$. For $W = \s_n$ with $n \ge 3$, $\overline{\H}_{0}(W)$ does not have the KL-property. 

At the other extreme, we consider the set-up of Section \ref{center_smooth_block}. Thus, $a \in \Upsilon^{-1}(0)$ is a closed point, contained in the smooth locus of $\Spec Z$, and $\mc{B}_a$ is the corresponding block of $\overline{\H}_{\bc}(W)$ (which has only one simple module). By Theorem \ref{thm:RRCAblock}, $\mc{B}_a$ is graded equivalent to a commutative ring $ Z^-_a \otimes_{\C} Z^{+}_a$. Here $Z^{-}_a$ is a local complete intersection ring. Moreover, one can (in most cases) explicitly compute the degrees of the generators $x_i$ and relations $f_j$, as in Proposition \ref{prop:degreesKL}. If $\lambda \in \Irr W$ labels the unique simple in the block $\mc{B}_a$, then let $f_{\lambda}(q)$ be the fake polynomial associated to $\lambda$ and $b_{\lambda}$ its trailing degree. Recall that this means that 
\begin{align*}
f_{\lambda}(q) & = \sum_{i \ge 0} q^i \dim \Hom_{W}(\lambda,\C[\mf{h}]^{\mathrm{co} W}_i) \\
 & = \alpha q^{b_{\lambda}} + \textrm{ higher order terms.}
\end{align*}
Here $q$ is just a formal variable, and $\alpha \in \Z$ is non-zero. If $d_1, \ds, d_n$ are the degrees of $(W,\mf{h})$ then \cite[Theorem 4.1]{BellEnd} implies that there exist positive integers $e_1, \ds, e_n$ such that 
$$
\prod_{i = 1}^n \frac{1 - q^{d_i}}{1 - q^{e_i}} = q^{-b_{\lambda}} f_{\lambda}(q). 
$$ 

\begin{prop}\label{prop:RRCAsingleblock}
	Assume that $\mc{B}_a$ is a block of $\overline{\H}_{c}(W)$ with unique simple $L(\lambda)$. 
	\begin{enum_thm}
		\item If $\{ d_1, \ds, d_n \} \subset 2 \Z$ and $\{ e_1, \ds, e_n \} \subset 1 + 2 \Z$ then $\mc{B}_a$ has the KL-property. 
		\item The block $\mc{B}_a$ does not satisfy the KL-property if
		$$
		\{ d_1, \ds, d_n \} \smallsetminus \{ e_1, \ds, e_n \} \not\subset 2 \Z \quad \textrm{or} \quad \{ e_1, \ds, e_n \} \smallsetminus \{ d_1, \ds, d_n \} \not\subset 1 + 2 \Z.
		$$
	\end{enum_thm}
\end{prop}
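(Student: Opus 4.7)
The strategy is to translate the question to the commutative algebra $Z_a$ using Theorem~\ref{thm:RRCAblock} and then to apply Proposition~\ref{prop:degreesKL}.

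Firstly, by Theorem~\ref{thm:RRCAblock}, the block $\mc{B}_a$ is graded Morita equivalent to $Z_a \simeq Z_a^- \otimes_\C Z_a^+$. The equivalence identifies the unique simple $L(\lambda)$ with the trivial $Z_a$-module $\C$, preserves $\Ext$-groups, and (by compatibility with the triangular structures) sends the standard module $\Delta(\lambda)$ to the standard module $Z_a \otimes_{Z_a^+} \C \simeq Z_a^-$ of the natural commutative triangular decomposition $Z_a = Z_a^- \otimes \C \otimes Z_a^+$. Hence the KL-property for $\mc{B}_a$ is equivalent to the KL-property for this decomposition of $Z_a$.

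Secondly, $Z_a$ has a single simple module (the trivial one), which is automatically rigid; and by Lemma~\ref{lem:EndGorenstein} both $Z_a^-$ and $Z_a^+$ are graded local complete intersection rings of Krull dimension zero. Proposition~\ref{prop:degreesKL} then reduces the KL-property to the statement that, in the minimal graded presentation of $Z_a^-$ (and symmetrically of $Z_a^+$), every generator of the maximal ideal has odd degree and every defining relation has even degree.

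Thirdly, we identify these minimal degrees. By construction the elements $x_1,\ds,x_n$ give generators of $Z_a^-$ in degrees $e_1,\ds,e_n$, and the images of the fundamental invariants of $R^-$ give relations in degrees $d_1,\ds,d_n$, so
\[
H_{Z_a^-}(q) \;=\; \prod_{i=1}^n \frac{1-q^{d_i}}{1-q^{e_i}} \;=\; q^{-b_\lambda} f_\lambda(q).
\]
Since $Z_a^-$ is a graded complete intersection of Krull dimension zero, its Hilbert series, in lowest form, uniquely determines the multisets of degrees of its minimal generators and minimal relations. Cancelling each pair with $d_i = e_j$, the remaining denominator degrees form the multiset $\{e_j\}\smallsetminus\{d_i\}$ and the remaining numerator degrees form $\{d_i\}\smallsetminus\{e_j\}$; these are precisely the multisets of minimal generator and relation degrees of $Z_a^-$. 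The same description applies to $Z_a^+$ symmetrically.

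For part (a), the hypotheses $\{d_i\}\subset 2\Z$ and $\{e_j\}\subset 1+2\Z$ force these multisets to be disjoint, so no cancellation occurs; all minimal generator degrees are odd and all minimal relation degrees are even, and Proposition~\ref{prop:degreesKL} yields the KL-property. For part (b), the hypothesis $\{d_i\}\smallsetminus\{e_j\}\not\subset 2\Z$ produces an odd-degree minimal relation, while $\{e_j\}\smallsetminus\{d_i\}\not\subset 1+2\Z$ produces an even-degree minimal generator; either violates Proposition~\ref{prop:degreesKL}, so the KL-property fails.

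The main obstacle lies in Step~3: rigorously establishing that the minimal presentation of the graded complete intersection $Z_a^-$ has generator and relation degrees determined by the Hilbert series in its reduced form, which is where the Krull-dimension-zero hypothesis is essential. A secondary point, more notational than substantive, is keeping careful track of gradings through the Morita equivalence of Step~1 so that the standard modules are correctly matched.
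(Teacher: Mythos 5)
Your overall strategy — translate to the commutative algebra $Z_a$ via Theorem~\ref{thm:RRCAblock} and Lemma~\ref{lem:EndGorenstein}, then invoke Proposition~\ref{prop:degreesKL} — is the same route the paper takes. But Step~3 contains a genuine error, not merely a technicality to be filled in. You assert that the reduced Hilbert series of a zero-dimensional graded complete intersection uniquely determines the multisets of minimal generator and minimal relation degrees, and that these are $\{e_j\}\smallsetminus\{d_i\}$ and $\{d_i\}\smallsetminus\{e_j\}$. This is false. When one minimizes a presentation $K[U]/\langle V\rangle$, redundant generators are eliminated together with the relations that express them, so generators and relations are removed in pairs of equal degree; the minimal generator degrees are thus $\{e_j\}\smallsetminus S$ and the minimal relation degrees $\{d_i\}\smallsetminus S$ for \emph{some} multiset $S$ contained in the multiset intersection $\{e_j\}\cap\{d_i\}$. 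But $S$ need not equal the full intersection, and all choices of $S$ yield identical reduced Hilbert series, so the Hilbert series alone cannot determine $S$. The paper's Example immediately following this proposition makes precisely this point: with $\{d_1,d_2\}=\{1,2\}$ and $\{e_1,e_2\}=\{1,3\}$, the ``fully cancelled'' data would give one generator in degree $3$ and one relation in degree $2$, which is impossible, so $S=\emptyset$ must hold there even though the fraction cancels formally. Consequently the combinatorics alone cannot guarantee that $V\cap U^*=\{0\}$, which is the hypothesis of Proposition~\ref{prop:degreesKL}.

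Your conclusions nevertheless survive, but for different reasons than you give, and the fixes coincide with the paper's actual proof. For part~(a), disjoint parities force $\{e_j\}\cap\{d_i\}=\emptyset$, which \emph{directly} implies $V\cap U^*=\{0\}$ (a relation of even degree cannot have a linear term of odd degree); Proposition~\ref{prop:degreesKL} then applies to the original presentation with no minimization step. For part~(b), one should argue only with the weaker containment: since $S\subset\{e_j\}\cap\{d_i\}$, any element of $\{d_i\}\smallsetminus\{e_j\}$ cannot lie in $S$ and therefore survives as a minimal relation degree, and symmetrically any element of $\{e_j\}\smallsetminus\{d_i\}$ survives as a minimal generator degree; then apply Proposition~\ref{prop:degreesKL} to the minimized presentation $\C[U_1^*]/\langle V_1^*\rangle$, where $U_1\cap V_1^*=\{0\}$ by construction. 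This is exactly the paper's argument. The conclusion you state is right, but the claimed determination of minimal degrees from the reduced Hilbert series is not.
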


\begin{proof}
	By \cite[Theorem 4.1]{BellEnd} and Theorem \ref{thm:RRCAblock}, $A^{+} = \C[U^*] / \langle V^* \rangle$, where the character of $U \subset \C[U^*]$ is $\sum_{i = 1}^n q^{-e_i}$ and the character of $V^*$ is $\sum_{i = 1}^n q^{-d_i}$. If $\{ d_1, \ds, d_n \} \subset 2 \Z$ and $\{ e_1, \ds, e_n \} \subset 1 + 2 \Z$ then $U \cap V^* = \{ 0 \}$ and it follows from Proposition \ref{prop:degreesKL} that $\mathcal{B}_a$ satisfies the KL-property. 
	
	On the other hand, if either of the conditions in (2) hold, then there are homogeneous subspaces $U_1 \subset U$ and $V_1^* \subset V^*$ such that $U_1 \cap V_1^* = \{ 0 \}$, $A^{+} \simeq \C[U^*_1] / \langle V^*_1 \rangle$ and either the weights in $U_1$ are not all odd, or the weights of $V_1^*$ are not all even. Either way, it follows from Proposition \ref{prop:degreesKL} that $\mc{B}_a$ does not satisfy the KL-property. 
\end{proof}

\begin{ex}
	The two situations considered in Proposition \ref{prop:RRCAsingleblock} do not exhaust all possibilities, e.g. $\{ d_1, d_2 \} = \{ 1,2 \}$ and $\{ e_1, e_2 \} = \{ 1, 3 \}$. This is because, in situations such as this, one is not able to guarantee just from the combinatorics that the requirement $V \cap U^* = \{ 0 \}$ holds. This condition is essential in the proof of Proposition \ref{prop:degreesKL}. 
\end{ex}

\begin{ex}
	Let $W = \s_n$, the symmetric group, and take $\bc \neq 0$. In this case the variety $\Spec Z$ is smooth. Therefore, as explained in Section \ref{center_smooth_block}, every block of $\overline{\H}_{\bc}(\s_n)$ contains only one simple object. Thus, we may apply Proposition \ref{prop:RRCAsingleblock}. If $n \ge 4$ and we take $\lambda$ to be the two row partition $(n-2,2)$ of $n$, then in this case $\{ e_1, \ds, e_n \} \smallsetminus \{ d_1, \ds, d_n \} = \{ 1, 2 \}$. This implies that $\overline{\H}_{\bc}(\s_n)$ does not satisfy the KL-property when $\bc \neq 0$. On the other hand, $\overline{\H}_{\bc}(\s_n)$ does satisfy the KL-property when $n = 2,3$ and $\bc \neq 0$. More generally, if $W = \Z_{\ell} \wr \s_n$ and $\underline{\lambda} = (\lambda^{(1)}, \dots, \lambda^{(\ell)})$ is an $\ell$-multipartition of $n$ then $\{ d_1, \dots, d_n \} = \{ m,2m,\dots, nm \}$ and 
	$$
	\{ e_1, \dots, e_n\} = \left\{ m h_{\lambda^{(i)}}(x) \ | \ x \in \lambda^{(i)}, \ i = 1, \dots, n \right\}.
	$$ 
	From this one can deduce that $\overline{\H}_{\bc}(\Z_{\ell} \wr \s_n )$ does not satisfy the KL-property when $n \ge 4$ and $\bc$ is generic (again, the variety $\Spec Z$ is smooth in this case and the results of Section \ref{center_smooth_block} apply). 
\end{ex}

\subsection{Restricted rational Cherednik algebras in positive characteristic}

We note briefly that restricted rational Cherednik algebras at $t = 1$ in positive characteristic, as studied in \cite{MoPositiveChar}, are also examples of graded algebras admitting a triangular decomposition. Let $K$ be an algebraically closed field of characteristic $p > 0$ and $(\h,W)$ a pseudo-reflection group over $K$, as defined in \cite[\S 2.1]{MoPositiveChar}. As in \textit{loc. cit.} we assume that $p$ does not divide $|W|$. The associated restricted rational Cherednik algebra $\overline{\H}_{1,\bc}(W)$ is defined in Section 3.3 of \cite{MoPositiveChar}. As in \textit{loc. cit.}, 
$$
K[\h]^{p  \mrm{co} W} := K[\h] / \left\langle K\left[\h^{(1)}\right]_+^W \right\rangle
$$
denotes the $p$-coinvariant ring. Here $\h^{(1)}$ is the Frobenius twist of $\h$. 

\begin{lem}\label{lem:gradedWisop}
As graded left $W$-modules, 
$$
\left( K[\h]^{p \mrm{co} W} \right)^* \simeq K [\h^*]^{p \mrm{co} W}.
$$
\end{lem}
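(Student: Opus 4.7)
The plan is to match the graded $W$-characters of both sides and invoke the semisimplicity of $KW$ (which holds because $p \nmid |W|$) to deduce an isomorphism of graded $W$-modules.

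The key step would be to show that $K[\h]$ is free as a graded, $W$-equivariant $K[\h^{(1)}]^W$-module. From the tower $K[\h^{(1)}]^W \subset K[\h^{(1)}] \subset K[\h]$, the second inclusion is free of rank $p^n$ (since, inside $K[\h]$, the subring $K[\h^{(1)}]$ is generated by $p$-th powers of a basis of $\h^*$), and the first inclusion is free of rank $|W|$ by Chevalley--Shephard--Todd applied to the pseudo-reflection action of $W$ on $\h^{(1)}$ (valid in characteristic $p$ with $p \nmid |W|$, since the reflections are unchanged by the Frobenius twist). Because $W$ acts trivially on $K[\h^{(1)}]^W$ and $KW$ is semisimple, one can split this freeness $W$-equivariantly in each graded degree to obtain
\begin{equation*}
K[\h] \;\simeq\; K[\h^{(1)}]^W \otimes_K K[\h]^{p\mrm{co}W}
\end{equation*}
as graded $W$-modules. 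If $d_1, \ds, d_n$ are the fundamental invariant degrees of $(W,\h)$, the generators of $K[\h^{(1)}]^W$ have degrees $pd_1, \ds, pd_n$ in the $K[\h]$-grading. Combined with the standard identity $\mrm{ch}_W(K[\h])(w,t) = 1/\det_{\h^*}(1 - tw)$, this yields
\begin{equation*}
\mrm{ch}_W\!\left(K[\h]^{p\mrm{co}W}\right)(w,t) \;=\; \frac{\prod_{i=1}^n (1 - t^{pd_i})}{\det_{\h^*}(1 - tw)}.
\end{equation*}

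Standard duality preserves the grading and sends a $W$-character $\chi(w)$ to $\chi(w^{-1})$, while running the analogous argument for $\h^*$ in place of $\h$ (noting that $(W,\h^*)$ has the same fundamental invariant degrees as $(W,\h)$) gives
\begin{equation*}
\mrm{ch}_W\!\left(K[\h^*]^{p\mrm{co}W}\right)(w,t) \;=\; \frac{\prod_{i=1}^n (1 - t^{pd_i})}{\det_{\h}(1 - tw)}.
\end{equation*}
The two graded $W$-characters then agree via the identity $\det_{\h^*}(1 - tw^{-1}) = \det_{\h}(1 - tw)$, which holds because the eigenvalues of $w$ on $\h^*$ are the inverses of those on $\h$. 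By semisimplicity of $KW$ in each homogeneous degree, this forces an isomorphism of graded $W$-modules.

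The main technical hurdle is verifying the $W$-equivariant graded freeness of $K[\h]$ over $K[\h^{(1)}]^W$: the underlying algebraic freeness is standard given Chevalley--Shephard--Todd in positive characteristic with $p \nmid |W|$, and compatibility with the grading and $W$-action reduces to semisimplicity of $KW$ in each homogeneous component.
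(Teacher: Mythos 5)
Your approach is correct in outline and genuinely different from the paper's. The paper proceeds via explicit $W$-equivariant perfect pairings: it first factors $K[\h]^{p\mrm{co}W}$ as a graded $W$-module, using \cite[Lemma 2.4]{MoPositiveChar}, into a tensor product of the ``Frobenius neighborhood'' $K[\h]/\langle K[\h^{(1)}]_+\rangle$ with the coinvariant ring $K[\h^{(1)}]^{\mrm{co}W}$, and then exhibits an explicit pairing for each factor (the polynomial versus constant-coefficient crystalline differential operator pairing $(f,p) \mapsto p(f)(0)$). You instead compute and compare graded $W$-characters and invoke split semisimplicity of $KW$ (valid since $p \nmid |W|$ and $K$ is algebraically closed) to upgrade the character identity to an isomorphism in each homogeneous degree. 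Your key formulas are correct: once you have the $W$-equivariant tensor factorization $K[\h] \simeq K[\h^{(1)}]^W \otimes_K K[\h]^{p\mrm{co}W}$, the character of $K[\h]^{p\mrm{co}W}$ is $\prod_i(1-t^{p d_i})/\det_{\h^*}(1-tw)$; standard duality replaces $\chi(w)$ by $\chi(w^{-1})$; and $\det_{\h^*}(1-tw^{-1}) = \det_{\h}(1-tw)$ because the eigenvalues of $w$ on $\h^*$ are the inverses of those on $\h$.

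The one step that needs tightening is the claim that one may ``split this freeness $W$-equivariantly in each graded degree'' to obtain the tensor factorization: splitting the short exact sequence $0 \to \langle K[\h^{(1)}]^W_+\rangle \to K[\h] \to K[\h]^{p\mrm{co}W} \to 0$ degree by degree does not by itself give a tensor product. The clean argument is: use semisimplicity of $KW$ to choose a $W$-stable graded complement $V$ to $\langle K[\h^{(1)}]^W_+\rangle$ in $K[\h]$; then $V \simeq K[\h]^{p\mrm{co}W}$ as graded $W$-modules, and graded Nakayama together with freeness of $K[\h]$ over $K[\h^{(1)}]^W$ (which holds by the tower $K[\h^{(1)}]^W \subset K[\h^{(1)}] \subset K[\h]$, with the first inclusion free by Chevalley--Shephard--Todd in characteristic $p \nmid |W|$ and the second free of rank $p^n$) shows that multiplication $K[\h^{(1)}]^W \otimes_K V \to K[\h]$ is an isomorphism of graded $W$-modules, $W$ acting trivially on the first factor. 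With this made precise, the proof goes through. The character route is less explicit than the paper's pairing construction but has the advantage of being uniform: it avoids both the intermediate decomposition of $K[\h]^{p\mrm{co}W}$ and the need to build separate pairings for the two factors.
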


\begin{proof}
This is done by breaking the statement into two. By \cite[Lemma 2.4]{MoPositiveChar}, we have isomorphisms of graded $W$-modules 
$$
K[\h]^{p \mrm{co} W} \simeq \left( K[\h] / \left\langle K\left[\h^{(1)}\right]_+ \right\rangle \right) \otimes_K K\left[\h^{(1)}\right]^{\mrm{co} W}, 
$$
$$
K[\h^*]^{p \mrm{co} W} \simeq \left( K[\h^*] / \left\langle K\left[(\h^*)^{(1)}\right]_+ \right\rangle \right) \otimes_K K\left[(\h^*)^{(1)}\right]^{\mrm{co} W},
$$
and hence
$$
\left( K[\h]^{p \mrm{co} W} \right)^*\simeq \left( K[\h] / \left\langle K\left[\h^{(1)}\right]_+ \right\rangle \right)^* \otimes_K \left( K\left[\h^{(1)}\right]^{\mrm{co} W} \right)^*.
$$
Thus, we must establish isomorphisms of graded $W$-modules, 
\begin{equation}\label{eq:Wgradedtwist}
\left( K[\h] / \langle K[\h^{(1)}]_+ \rangle \right)^* \simeq K[\h^*] / \langle K[(\h^*)^{(1)}]_+ \rangle
\end{equation}
and
\begin{equation}\label{eq:Wgradedtwist2}
\left( K[\h^{(1)}]^{\mrm{co} W} \right)^* \simeq K[(\h^*)^{(1)}]^{\mrm{co} W}. 
\end{equation}
Regarding the latter, we first note that $\h^{(1)} \simeq \h$ as $W$-modules. Therefore, in order to show the isomorphism (\ref{eq:Wgradedtwist2}), it suffices to show that 
\begin{equation}\label{eq:Wgradedtwist3}
\left( K[\h]^{\mrm{co} W} \right)^* \simeq K[\h^*]^{\mrm{co} W} 
\end{equation} 
holds. This follows from the proof of Lemma \ref{lem:propertiesRRCABGG}, by a suitable base change.  

We establish the isomorphism (\ref{eq:Wgradedtwist}). If we identify $K[\h^*] = \mathrm{Sym} \ \h$ with constant coefficient crystalline differential operators on $\h$, then we have a graded $W$-equivariant pairing 
$$
( - , - ) : K [\h] \times \mathrm{Sym} \ \h \rightarrow K, \quad (f,p) := p(f) (0). 
$$
This descends to a perfect pairing 
$$
( - , - ) : \frac{K[\h]}{\langle K[\h^{(1)}]_+ \rangle} \times \frac{\mathrm{Sym} \ \h}{\langle (\mathrm{Sym} \ \h^{(1)} )_+ \rangle}  \rightarrow K,
$$
proving (\ref{eq:Wgradedtwist}). 
\end{proof}

The analogue of Proposition \ref{prop:propertiesRRCA}, and Lemma \ref{lem:propertiesRRCABGG}, in this setting read:

\begin{prop}\label{prop:propertiesRRCAp}
Let $\overline{\H}_{1,\mbf{c}}(W)$ be a restricted rational Cherednik algebra, defined over a field $K$ of characteristic $p >0$, where $p$ does not divide the order of $W$. 
\begin{enum_thm}
\item \label{prop:propertiesRRCAp:BGG} $\overline{\H}_{\mbf{c}}(W)$ is BGG. 
\item \label{prop:propertiesRRCAp:symm}$\overline{\H}_{1,\mbf{c}}(W)$ is graded symmetric and $K [\h]^{p \mrm{co} W}$ and  $K [\h^*]^{p \mrm{co}W}$ are Frobenius. 
\item \label{prop:propertiesRRCAp:top}If $N$ is the top non-zero degree of $K [\h]^{p \mrm{co} W}$, then $N = (p-1) \dim \h + p | \mathrm{Ref} (W) |$.
\item \label{prop:propertiesRRCAp:well}The algebra $\overline{\H}_{1,\mbf{c}}(W)$ is well-generated i.e. the subalgebra $K [\h]^{p \mrm{co} W}$ is generated by $K [\h]^{p \mrm{co} W}_1$, and $K [\h^*]^{p \mrm{co} W}$ is generated by $K [\h^*]^{p \mrm{co} W}_{-1}$.
\item \label{prop:propertiesRRCAp:inv}If $W$ is a Coxeter group, then $\overline{\H}_{1,\mbf{c}}(W)$ admits a triangular anti-involution. 
\end{enum_thm}
\end{prop}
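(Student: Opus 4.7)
The strategy is to mirror, part by part, the proofs of Proposition \ref{prop:propertiesRRCA} and Lemma \ref{lem:propertiesRRCABGG}, adapting each argument to the modular, $t=1$ setting using the $p$-coinvariant rings in place of the ordinary coinvariant rings. The key input for several parts is the tensor-product decomposition
\[
K[\h]^{p\mrm{co} W} \simeq \left( K[\h]/\langle K[\h^{(1)}]_+ \rangle \right) \otimes_K K[\h^{(1)}]^{\mrm{co} W}
\]
(recorded in the proof of Lemma \ref{lem:gradedWisop}), together with the analogous statement for $K[\h^*]^{p\mrm{co} W}$.

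For part \ref{prop:propertiesRRCAp:BGG} I would invoke Proposition \ref{prop:BGGbimodule} with $T = KW$. This reduces the claim to constructing a graded $W$-bimodule isomorphism $B^- \simeq (B^+)^\circledast$, where $B^\pm = K[\h]^{p\mrm{co} W} \rtimes W$, resp. $K[\h^*]^{p\mrm{co} W} \rtimes W$. Lemma \ref{lem:gradedWisop} already gives such an isomorphism as graded \emph{left} $W$-modules via the $W$-equivariant pairing $(f,p) = p(f)(0)$; one checks that this pairing is in fact $W$-biequivariant by the $W$-equivariance of the identification $K[\h^*] = \sym \h$ and the $W$-invariance of evaluation at $0$, which promotes the isomorphism to one of $W$-bimodules. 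Part \ref{prop:propertiesRRCAp:well} is then immediate: the algebra $K[\h]^{p\mrm{co} W}$ is a quotient of $K[\h]$, which is generated in degree one by $\h^*$, and all defining relations of the quotient lie in degree $\geq p \geq 2$, so the algebra is generated in degree one; the argument for $K[\h^*]^{p\mrm{co} W}$ is symmetric.

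For part \ref{prop:propertiesRRCAp:top}, the first factor of the displayed decomposition is the truncated polynomial ring $K[x_1,\ldots,x_n]/(x_1^p,\ldots,x_n^p)$ with $n = \dim \h$, which has top degree $(p-1) \dim \h$. The second factor is the ordinary coinvariant ring of $W$ acting on $\h^{(1)} \simeq \h$, but with the generators of $(\h^{(1)})^*$ placed in degree $p$ rather than degree $1$; since the usual coinvariant ring has top degree $|\mathrm{Ref}(W)|$ by Proposition \ref{prop:propertiesRRCA}\ref{prop:propertiesRRCA:top}, rescaling degrees by a factor of $p$ yields top degree $p |\mathrm{Ref}(W)|$. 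Adding these contributions gives $(p-1)\dim\h + p |\mathrm{Ref}(W)|$ as claimed. Part \ref{prop:propertiesRRCAp:inv} is formally identical to the characteristic zero case: for a Coxeter group $W$, the $W$-invariant non-degenerate bilinear form on $\h$ yields a $W$-equivariant identification $\h \simeq \h^*$, and swapping $\h \leftrightarrow \h^*$ combined with $w \mapsto w^{-1}$ descends from $\H_{1,\bc}(W)$ to $\overline{\H}_{1,\bc}(W)$ and satisfies the three conditions of Definition \ref{defn:triantinv}.

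The main obstacle is part \ref{prop:propertiesRRCAp:symm}, the graded symmetric property. That $K[\h]^{p\mrm{co} W}$ and $K[\h^*]^{p\mrm{co} W}$ are Frobenius follows from the fact that each is a finite-dimensional connected graded complete intersection in the sense that it is cut out of a polynomial ring by a homogeneous regular sequence of length $n = \dim \h$, hence Gorenstein, hence (being connected and finite-dimensional) Frobenius. Showing that $\overline{\H}_{1,\bc}(W)$ itself is graded symmetric requires more: I would construct an explicit symmetrizing form by using the triangular decomposition to write every element of $\overline{\H}_{1,\bc}(W)$ uniquely in PBW form and projecting onto the one-dimensional top degree component of $K[\h]^{p\mrm{co} W} \otimes KW \otimes K[\h^*]^{p\mrm{co} W}$, using the symmetrizing forms on each tensor factor (with $KW$ symmetric because $p \nmid |W|$). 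The delicate point is verifying trace-symmetry $\Phi(ab) = \Phi(ba)$; this can be done by adapting the argument of [BGS] from the characteristic zero setting, with the assumption $p \nmid |W|$ ensuring semisimplicity of $KW$ and hence the non-degeneracy of the induced form on the whole algebra by Lemma \ref{lem:gradedhomcheck}.
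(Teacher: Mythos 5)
Your proposal follows the same skeleton as the paper's proof for parts~\ref{prop:propertiesRRCAp:BGG}, \ref{prop:propertiesRRCAp:top}, \ref{prop:propertiesRRCAp:well}, and \ref{prop:propertiesRRCAp:inv}: you invoke Proposition~\ref{prop:BGGbimodule} together with Lemma~\ref{lem:gradedWisop}, you read the degree computation off the tensor decomposition of the $p$-coinvariant ring, you note that the defining ideal of the quotient sits in degree at least $p$, and you transport the Coxeter anti-involution via the $W$-invariant bilinear form on $\h$. These are all the same steps the paper takes, with part~\ref{prop:propertiesRRCAp:top} spelled out slightly more explicitly than the paper's one-line citation.

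Two points merit comment. First, in part~\ref{prop:propertiesRRCAp:BGG} your phrase ``one checks that this pairing is in fact $W$-biequivariant \ldots which promotes the isomorphism to one of $W$-bimodules'' is not quite the right mechanism. The $p$-coinvariant rings carry only a one-sided $W$-action; the $(KW,KW)$-bimodule structure on $B^\pm = K[\h]^{p\,\mrm{co}\,W} \rtimes W$, resp.\ $K[\h^*]^{p\,\mrm{co}\,W}\rtimes W$, comes from the smash product, and what one actually uses is that this bimodule structure is functorial in the \emph{left} $W$-module structure of the coefficient ring, so a graded left $W$-module isomorphism of the $p$-coinvariant rings (Lemma~\ref{lem:gradedWisop}) automatically induces the required bimodule isomorphism $B^- \simeq (B^+)^\circledast$. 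This is what the paper asserts when it reduces to ``an isomorphism of graded left $W$-modules.'' The conclusion you reach is correct, but the route via ``biequivariance of the pairing'' is muddled.

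Second, and more importantly, part~\ref{prop:propertiesRRCAp:symm} is a genuine gap in your proposal. Your argument that the $p$-coinvariant rings are Frobenius is sound (they are finite-dimensional connected graded complete intersections, hence Gorenstein, hence Frobenius), but for the graded symmetric property of $\overline{\H}_{1,\bc}(W)$ itself you only sketch a strategy and explicitly concede that verifying $\Phi(ab) = \Phi(ba)$ requires ``adapting the argument of [BGS]'' without carrying this out. The paper instead cites the established result in Section~3.3 of the reference \cite{MoPositiveChar}, where the graded symmetric structure of the modular restricted rational Cherednik algebra is already proved. Without either completing the adaptation of the characteristic-zero argument or appealing to that reference, your proposal does not actually prove part~\ref{prop:propertiesRRCAp:symm}.
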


\begin{proof}
The proof of \ref{prop:propertiesRRCAp:BGG} is similar to the proof of Lemma \ref{lem:propertiesRRCABGG}. It is a consequence of Proposition \ref{prop:BGGbimodule} if we can show that there is an isomorphism of graded $W$-bimodules 
$$
\left( K[\h]^{p \mrm{co} W} \rtimes W \right)^* \simeq K[\h^*]^{p \mrm{co} W} \rtimes W.
$$
Again, it suffices to show that there is an isomorphism of graded left $W$-module $\left( K[\h]^{p \mrm{co} W} \right)^* \simeq K [\h^*]^{p \mrm{co} W}$. This is Lemma \ref{lem:gradedWisop}. Part \ref{prop:propertiesRRCAp:symm} is explained in Section 3.3 of \cite{MoPositiveChar}. Part (3) follows from the isomorphism of \cite[Lemma 2.3(2)]{MoPositiveChar} and Proposition \ref{prop:propertiesRRCA} \ref{prop:propertiesRRCAp:top}. Part \ref{prop:propertiesRRCAp:well} is clear. Finally, the triangular anti-involution in part \ref{prop:propertiesRRCAp:inv} is defined in exactly the same way as in characteristic zero, the key fact being that there is a non-degenerate $W$-equivariant bilinear form $( - , - ) : \h \times \h \rightarrow K$ since $\h$ is the reduction of the reflection representation in characteristic zero. 
\end{proof}


%

{\small
\bibliography{biblo}{}
\bibliographystyle{abbrv}
}

\end{document}